\newtheorem{thm}{Theorem}[section]
\newtheorem{lem}[thm]{Lemma}
\newtheorem{cor}[thm]{Corollary}
\newtheorem{prop}[thm]{Proposition}
\newenvironment{customthm}[1]
    {\innercustomthm}{\endinnercustomthm}
\newenvironment{customlem}[1]
	{\innercustomlem}
	{\endinnercustomlem}
\theoremstyle{definition}
\newtheorem{defn}[thm]{Definition}
\newtheorem{eg}[thm]{Example}
\theoremstyle{remark}
\newtheorem{rem}[thm]{Remark}
\numberwithin{equation}{section}
\newcommand{\mat}[1]{\ensuremath{
\left[\begin{matrix}#1
\end{matrix}\right]
}}
 \newcommand{\onto}{\twoheadrightarrow}
\DeclareMathOperator{\Hom}{Hom}%
\DeclareMathOperator{\Ext}{Ext}%
\DeclareMathOperator{\undim}{\underline{dim}}
\newcommand{\field}[1]{\mathbb{#1}}
\newcommand{\ZZ}{\ensuremath{{\field{Z}}}}
\newcommand{\CC}{\ensuremath{{\field{C}}}}
\newcommand{\RR}{\ensuremath{{\field{R}}}}
\newcommand{\PP}{\ensuremath{{\field{P}}}}
\newcommand{\commentout}[1]{}
\newcommand{\cA}{\ensuremath{{\mathcal{A}}}}
\newcommand{\cB}{\ensuremath{{\mathcal{B}}}}
\newcommand{\cC}{\ensuremath{{\mathcal{C}}}}
\newcommand{\cE}{\ensuremath{{\mathcal{E}}}}
\newcommand{\cF}{\ensuremath{{\mathcal{F}}}}
\newcommand{\cN}{\ensuremath{{\mathcal{N}}}}
\newcommand{\cR}{\ensuremath{{\mathcal{R}}}}
\newcommand{\cS}{\ensuremath{{\mathcal{S}}}}
\newcommand{\cT}{\ensuremath{{\mathcal{T}}}}
\newcommand{\cW}{\ensuremath{{\mathcal{W}}}}
\newcommand{\no}[1]{}
\title{Exceptional sequences of type $B_n/C_n$ and those in the abelian tube}
\author{Kiyoshi Igusa}
\email{igusa@brandeis.edu}
\author{Emre Sen}
\email{emresen641@gmail.com}
\keywords{relatively projective, signed exceptional sequence, oriented chord diagram, rooted labeled forest, tube category}
\subjclass[2020]{
16G20: 05C05}  	
\begin{document}

\begin{abstract} We examine clusters in the cluster tube of rank $n+1$ using exceptional sequences in the abelian tube of rank $n+1$. Although the abelian tube has more exceptional sequences than the module categories of type $B_{n}/C_{n}$, we obtain a bijection between the set of signed exceptional sequences of any length in these categories. This bijection gives a reinterpretation of the formula of Buan-Marsh-Vatne comparing clusters of type $B_n/C_n$ with maximal rigid objects in the cluster tube of rank $n+1$. The bijection goes through the set of ``augmented'' rooted labeled trees.
\end{abstract}

\maketitle



\section*{Introduction}

This paper is about exceptional sequences in tubes. These are abelian tubes, not the cluster tubes although we start with the known result about cluster tubes.

{
We recall that an \emph{exceptional sequence} of \emph{length} $k$ is a sequence $(E_1,\cdots,E_k)$ of rigid bricks in $mod\text-\Lambda$ for some finite dimensional hereditary algebra $\Lambda$ so that $\Hom(E_j,E_i)=0=\Ext(E_j,E_i)$ for $i<j$. $E_i$ being a ``brick'' means all nonzero endomorphisms are isomorphisms. The exceptional sequence is \emph{complete} if $k$ is maximal, i.e., $k=n$ the rank of $\Lambda$ which is the number of simple modules, or the number of vertices in the quiver of $\Lambda$. Exceptional sequences are widely studied in representation theory and combinatorics because of several well-known combinatorial models for exceptional sequences. For example, complete exceptional sequences for an algebra of type $A_n$ are well-known to be in bijection with trees with $n$ edges and $n+1$ labeled vertices. So, there are $(n+1)^{n-1}$ of them. \cite{Ig-Sen}, \cite{GIMO}.

An \emph{abelian tube} of rank $n$ is a component of the Auslander-Reiten quiver of a tame hereditary algebra which is $\tau$-periodic: $\tau^n=id$. See Figure \ref{AR quiver W4} for an example. The tube is an infinite tube but only the ``mouth'' (bottom) of the tube is presented since higher objects are not bricks. Also the objects $V_{kk}$ are bricks but not rigid since $\tau V_{ij}=V_{i-1,j-1}$ and $\Hom(V_{kk},V_{k-1,k-1})\neq0$. For a hereditary algebra, $M$ is rigid if and only if $\Hom(M,\tau M)=0$, i.e., $M$ is ``$\tau$-rigid.'' One good thing about tubes is that they are independent of the ambient category. So, we may choose a convenient ambient category $mod\text-\Lambda$ where the tube is embedded.
}

By Buan, Marsh and Vatne \cite{BMV}, isomorphism classes of clusters in the cluster category of type $B_n$ or $C_n$ are in bijection with maximal rigid objects in the cluster tube of rank $n+1$. In this paper, we examine this result in terms of exceptional sequences. We compute the number of length $k$ exceptional sequences for $C_{n-1}$ and for the abelian tube $\cW_n$ of rank $n$. These numbers do not agree! However, the number of length $k$ signed exceptional sequence for $C_{n-1}$ and $\cW_n$ are equal. For example, when $n=4$, $C_3$ have $3^3=27$ complete exceptional sequences and $4\cdot 5\cdot 6=120$ complete signed exceptional sequences. The tube $\cW_4$ has $4^3=64$ complete exceptional sequences and $120$ complete signed exceptional sequences which are in bijection with the complete signed exceptional sequences of $C_3$ and this bijection holds for length $k$ signed exceptional sequences.

The bijection for signed exceptional sequences of any length is expected since, by an extension of the bijection of \cite{IT13}, ordered clusters in the cluster tube are in bijection with signed exceptional sequences in the abelian tube $\cW_n$. We use $\cT_n$ for the set of rooted labeled trees with $n$ vertices. We also usually stick to type $C_n$ and reserve $B_n$ to denote the braid group on $n$ strands.



We start with the ``obvious'' bijection between indecomposable modules of type $C_n$ and the bricks in the tube $\cW_n$ including the non-rigid bricks. This bijection sends exceptional sequences of type $C_n$ to ``soft'' exceptional sequences in $\cW_n$ where ``soft'' means we allow non-rigid bricks in the exceptional sequence (See Figure \ref{AR quiver W4}). There is also the notion of ``weak'' exceptional sequences \cite{Sen2} in cyclic Nakayama algebras of rank $n$. These are combinatorially equivalent to soft exceptional sequences in the tube $\cW_n$. But we use $\cW_n$ in order to associate this with the work of \cite{BMV}.

\begin{figure}[htbp]
\begin{center}
\begin{tikzpicture}[xscale=1.9,yscale=1.3]
\draw[dashed] (0.2,0)--(5.5,0);
\foreach \x/\y in {(1,0)/30,(2,0)/01,(3,0)/12,(4,0)/23,(5,0)/30}
\draw[fill,white] \x circle[radius=3mm];

\foreach \x/\y in {(1,0)/30,(2,0)/01,(3,0)/12,(4,0)/23,(5,0)/30}
\draw \x node{$V_{\y}$};
\foreach \x in {(1,0),(2.5,2.25),(3,0),(3.5,0.75)}
\draw[thick] \x circle[radius=2.5mm];
\foreach \x/\y in {(0.5,.75)/20,(1.5,.75)/31,(2.5,0.75)/02,(3.5,0.75)/13,(4.5,0.75)/20}
\draw \x node{$V_{\y}$};
\foreach \x/\y in {(0.5,2.25)/11,(1.5,2.25)/22,(2.5,2.25)/33,(3.5,2.25)/00,(4.5,2.25)/11}
\draw \x node{$V_{\y}$};
\foreach \x/\y in {(1,1.5)/21,(2,1.5)/32,(3,1.5)/03,(4,1.5)/10,(5,1.5)/21}
\draw \x node{$V_{\y}$};
\foreach \x in {0,...,5}\draw (\x.25,.4) node{$\nearrow$};
\foreach \x in {0,...,4}\draw (\x.75,1.1) node{$\nearrow$};
\foreach \x in {0,...,5}\draw (\x.25,1.9) node{$\nearrow$};
\foreach \x in {0,...,4}\draw (\x.75,.35) node{$\searrow$};
\foreach \x in {0,...,5}\draw (\x.25,1.1) node{$\searrow$};
\foreach \x in {0,...,4}\draw (\x.75,1.85) node{$\searrow$};
\end{tikzpicture}
\caption{The Auslander-Reiten quiver at the mouth of a tube of rank $4$. The objects $V_{kk}$ are bricks which are not rigid. $(V_{30},V_{33},V_{12},V_{13})$ is a soft exceptional sequence in $\cW_4$. $(V_{30},V_{12},V_{13})$ is an exceptional sequence.}
\label{AR quiver W4}
\end{center}
\end{figure}
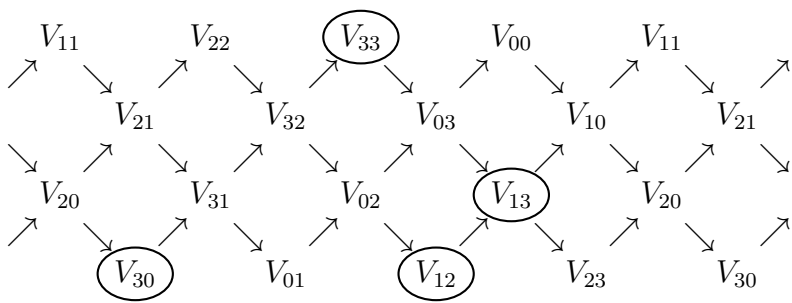

Every complete soft exceptional sequence for $\cW_n$ has exactly one nonrigid object $V_{kk}$. The positions in the Auslander-Reiten quiver determine a rooted labeled tree. We call value $\varepsilon=k$ the ``augmentation'' of the tree. This tells where the tree starts.

There is also an oriented chord diagram where object $V_{ij}$ is represented by a chord from $i$ to $j$ in a circle with $n$ marked points. The nonrigid object $V_{kk}$, which is the root of the tree, gives a loop at the augmentation point $k$ in the chord diagram. We call an oriented chord diagram with a loop a ``pointed'' chord diagram. The loop determines the orientation of the chords: clockwise around the loop. The correspondence is summarized as follows.

\begin{customthm}{A}\label{thm A}
For $k\le n$, there are natural bijections between the following sets having $n^k\binom nk$ elements.
\begin{enumerate}
	\item $\cE_n^{(k)}=$ the set of length $k$ exceptional sequences of $\Lambda$-modules for $\Lambda$ an hereditary algebra of type $B_n$ or $C_n$,
	\item $\cB_n^{(k)}=$ the set of length $k$ soft exceptional sequences for $\cW_n$, the abelian tube of rank $n$, where ``soft'' means the objects are not all rigid,
	\item $\widetilde\cC_n^{(k)}=$ the set of pointed chord diagrams with either $k$ oriented chords or $k-1$ oriented chords and one loop in a circle with $n$ marked points.
\end{enumerate}
Furthermore, for $k=n$, these sets are in bijection with $\widetilde\cT_n=$ the set of augmented rooted labeled trees with $n$ vertices.
\end{customthm}
Standard chord diagrams for $B_n/C_n$ are 2-periodic diagrams on circles with $2n$ marked points where our ``loops'' correspond to chords through the center of the circle. See \cite{Ath}, \cite{Reading} for related constructions. We prefer the smaller diagrams with $n$ marked points. Figure \ref{F2:W4 trees aug rtd} shows an example, for $n=4$. 
\begin{figure}[htbp]
\begin{center}
\begin{tikzpicture}[scale=.9]
\coordinate (T) at (-1.5,1);
\coordinate (E) at (2.5,1);
\coordinate (A) at (0,0.3);
\coordinate (B) at (1,1.7);
\coordinate (C) at (1.5,1);
\coordinate (D) at (1,0.3);
\begin{scope}
\draw (T) node{$(T,\varepsilon)=$};
\draw (E) node{,  $\varepsilon=3$};
\draw[thick] (A)--(B)--(C)--(D);
	\foreach \x/\y in {A/1,B/2,C/4,D/3}
	\draw[fill,white] (\x) circle[radius=2mm];
	\foreach \x/\y in {A/1,B/2,C/4,D/3}
	\draw[thick] (\x) circle[radius=2mm] node{$\y$};
\end{scope}
\begin{scope} [xshift=7cm,yshift=1cm,scale=.6] 
\draw[thick] (0,0) circle[radius=25mm];
\draw[very thick] (0,-2.5)--(2.5,0)--(-2.5,0)--(0,2.5);
\draw[ thick,->] (-2.5,0)--(-.5,2);
\draw[ thick,->] (-2.5,0)--(.5,0);
\draw[ thick,->] (2.5,0)--(.5,-2);
\draw (0,2.5) node[above]{$2$};
\draw (-2.5,0) node[left]{$1$};
\draw (2.5,0) node[right]{$3$};
\draw (0,-2.5) node[below]{$0$};
\draw (-1.5,1.5) node{\small $3$};
\draw (-.2,.3) node{\small $4$};
\draw[thick] (1.6,-.37) circle[radius=3mm] node{\small $2$};
\draw (1.3,-1.6) node{\small $1$};
\end{scope}
\end{tikzpicture}
\caption{Left is the augmented rooted labeled tree corresponding to the soft exceptional sequence in Figure \ref{AR quiver W4}. The root of the tree, vertex 2, with $\varepsilon=3$ corresponds to $V_{33}$, the second term of the sequence. At right is the corresponding pointed chord diagram. If $V_{ij}$ is the $k$th term in the soft exceptional sequence, arc $k$ will go from marked point $i$ to $j$.}
\label{F2:W4 trees aug rtd}
\end{center}
\end{figure}
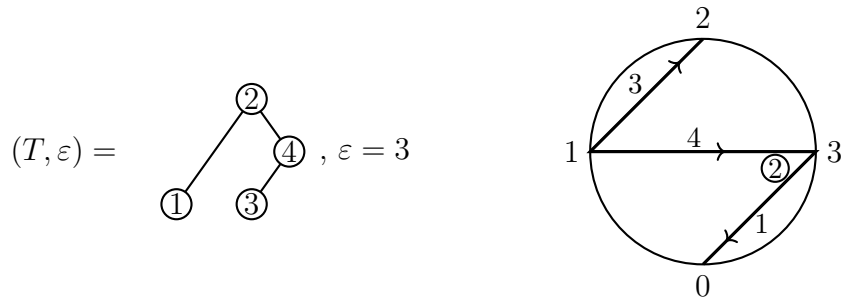

If we forget the root of the tree, but remember the augmentation of the root, we get an augmented forest with $n-1$ vertices. 
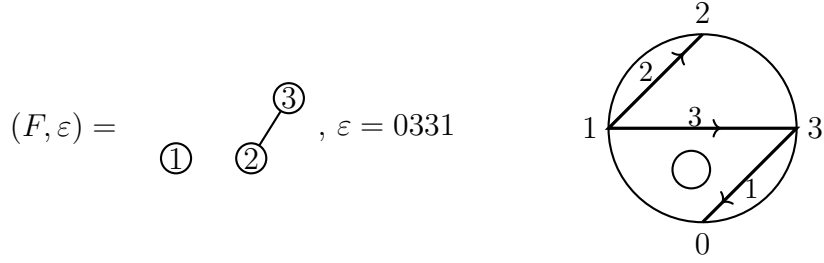
\begin{figure}[htbp]
\begin{center}
\begin{tikzpicture}
\coordinate (T) at (-1.5,.4);
\coordinate (E) at (2.8,.4);
\coordinate (A) at (0,0);
\coordinate (C) at (1.5,.8);
\coordinate (D) at (1,0);
\begin{scope}
\draw (T) node{$(F,\varepsilon)=$};
\draw (E) node{,  $\varepsilon=0331$};
\draw[thick] (A) (C)--(D);
	\foreach \x/\y in {A/1,C/3,D/2}
	\draw[fill,white] (\x) circle[radius=2mm];
	\foreach \x/\y in {A/1,C/3,D/2}
	\draw[thick] (\x) circle[radius=2mm] node{$\y$};
\end{scope}
\begin{scope} [xshift=7cm,yshift=4mm,scale=.5] 
\draw[thick] (0,0) circle[radius=25mm];
\draw[very thick] (0,-2.5)--(2.5,0)--(-2.5,0)--(0,2.5);
\draw[ thick,->] (-2.5,0)--(-.5,2);
\draw[ thick,->] (-2.5,0)--(.5,0);
\draw[ thick,->] (2.5,0)--(.5,-2);
\draw (0,2.5) node[above]{$2$};
\draw (-2.5,0) node[left]{$1$};
\draw (2.5,0) node[right]{$3$};
\draw (0,-2.5) node[below]{$0$};
\draw (-1.5,1.5) node{\small $2$};
\draw (-.2,.3) node{\small $3$};
\draw[thick] (-.3,-1.1) circle[radius=5mm] ;
\draw (1.3,-1.6) node{\small $1$};
\end{scope}
\end{tikzpicture}
\caption{Truncating the root in the augmented tree from Figure \ref{F2:W4 trees aug rtd} gives the  augmented forest on the left. The corresponding oriented chord diagram, on the right, has a unique central region where the missing loop used to be. All chords are oriented clockwise around it.}
\label{Fig: example of forest and directed chord diagram}
\end{center}
\end{figure}

There are $n$ possible ways to put back the root. In our example, $n=4$ and the augmentation sequence $0331$ assigns an augmentation for each possible root. Thus, there is an $n$-to-1 correspondence between augmented trees and augmented forests and there are $n^{n-1}$ augmented forests. Removing the root corresponds to removing the nonrigid object in a soft exceptional sequence to obtain a standard exceptional sequence. In the pointed chord diagram, this corresponds to removing the loop and producing a standard directed chord diagram. We obtain the following.

\begin{customthm}{B}\label{thm B}
For $k<n$ there is a natural bijection between the following $n^k\binom{n-1}k$ element sets.

$\cR_n^{(k)}=$ the set of length $k$ exceptional sequences in $\cW_n$,

$\cC_n^{(k)}=$ the set of length $k$ oriented chord diagrams on a circle with $n$ marked points.

\noindent Furthermore, for $k=n-1$ these sets are in bijection with $\widetilde\cF_{n-1}=$ the set of augmented forests with $n-1$ vertices.
\end{customthm}


Thus, $\cW_n$ has more exceptional sequences of length $k<n$ than $C_{n-1}$ (since $n^k>(n-1)^k$). However, they have the same number of signed exceptional sequences.

\begin{customthm}{C}\label{thm C}
There is a 1-1 correspondence between signed exceptional sequences in $\cW_n$ of any length $k<n$ and those in $mod\text-\Lambda$ of the same length for $\Lambda$ of type $C_{n-1}$.
\end{customthm}

This follows from Lemma \ref{lem D}. The size of these sets in \eqref{eq: number of length k signed exc seq} follows from Theorem \ref{thm E}.

\begin{customlem}{D}\label{lem D} Let $\cW$ be any wide subcategory of $mod\text-\Lambda$ without projective objects. Then
there is a bijection between signed exceptional sequences of length $k$ in $\cW$ and ordered $k$-tuples of ext-orthogonal rigid objects in $\cW$.
\end{customlem}

Theorem \ref{thm C} follows from Lemma \ref{lem D} and the results of \cite{BMV} and \cite{IT13} which, as we mentioned above, give a bijection between ordered $k$-tuples of ext-orthogonal rigid objects in $\cW_n$ and signed exceptional sequences of length $k$ for $C_{n-1}$. A detailed description of the bijection in Theorem \ref{thm C} is given in section \ref{ss: bijection signed exc seq Wn and Cn-1}.

We also compute the number of elements in this set using the probability distribution of relative projectives in exceptional sequences in $mod\text-\Lambda$.

\begin{customthm}{E}\label{thm E}
Let $(E_1,\cdots,E_k)$ be a random exceptional sequences of length $k$ for $C_n$. Then, the probability that $E_j$ is relatively projective is
\[
	\mathbb P(E_j\text{ is rel proj})={\frac{k+1-j}n}.
\]
Furthermore, these probabilities are independent.
\end{customthm}

Combining this with Theorem \ref{thm A}, we conclude that the number of signed exceptional sequences of length $k$ for $C_n$ is

\begin{equation}\label{eq: number of length k signed exc seq}
	n^k\binom{n}k\prod (1+\PP(E_j\text{ is rel proj}))=\frac{(n+k)!}{k!(n-k)!}.
\end{equation}

Although the following is redundant by Theorem \ref{thm C}, we also count the number of signed exceptional sequences for the tube $\cW_n$ in the next theorem. The key point is the description of the relatively projective objects.

\begin{customthm}{F}\label{thm F}
Let $V_\ast=(V_1,\cdots,V_{n-1})$ be a complete exceptional sequence in $\cW_n$ with corresponding augmented forest $(F,\varepsilon)$.

(a) $V_j$ is relatively projective in the exceptional sequence $V_\ast$ if and only if $v_j$ is a descending vertex in the corresponding forest $F$, i.e., $v_j$ is a child of $v_k$ for some $k>j$.

(b) The probability that $V_j$ is relatively projective in a random exceptional sequence is
\[
	\mathbb P(V_j\text{ is rel proj})={\frac{n-j-1}n}.
\]

(c) These probabilities are independent.
\end{customthm}

In the example in Figure \ref{Fig: example of forest and directed chord diagram}, $v_2$ is a descending vertex. So, $V_{12}$ is relatively projective in the exceptional sequence $(V_{30},V_{12},V_{13})$ and $V_{30},V_{13}$ are not. Both of the corresponding signed exceptional sequences for $C_3$ given by Theorem \ref{thm C} can be represented using Theorem \ref{thm A} by a pointed chord diagram as shown in Figure \ref{Fig: sin exc seq in C3}.
\begin{figure}[htbp]
\begin{center}
\begin{tikzpicture}[scale=.7]
\coordinate (A) at (-1.73,1);
\coordinate (B) at (1.73,1);
\coordinate (C) at (0,-2);
\coordinate (AC) at (-.58,-1);

\begin{scope} 
\draw[thick] (0,0) circle[radius=20mm];
\draw[very thick] (B)--(A)--(C);
\draw[ thick,->] (A)--(.5,1);
\draw[ thick,->] (A)--(AC);
\draw (A) node[left]{$1$};
\draw (B) node[right]{$2$};
\draw (C) node[below]{$0$};
\draw (0,1.3) node{\small $2$};
\draw (-.6,-.3) node{\small $3$};
\draw[ thick] (-.55,-1.6) circle[radius=2.4mm] node{\small $1$};
\end{scope}
\end{tikzpicture}
\caption{The signed exceptional sequence in $C_3$ corresponding to the example in Figure \ref{Fig: example of forest and directed chord diagram} is $(P_3[1],S_2[1],X_2)$ or $(P_3[1],S_2,X_2)$ depending on whether or not $V_{12}$ is shifted. Both of these are given by the pointed chord diagram above which comes from the oriented chord diagram in Figure \ref{Fig: example of forest and directed chord diagram} by collapsing the arc from 3 to 0.}
\label{Fig: sin exc seq in C3}
\end{center}
\end{figure}
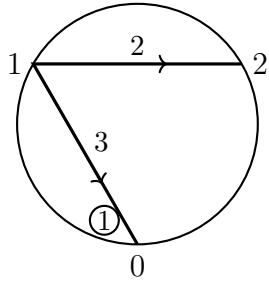

The fact that $\cW_n$ has no projective objects is reflected in the fact that the probability of the last object $V_{n-1}$ being relatively projective (and thus projective) is zero. Theorem \ref{thm F} implies that the number of complete signed exceptional sequences for $\cW_n$ is
\[
	n^{n-1}\prod (1+\PP(V_j\text{ is rel proj}))=\frac{(2n-2)!}{(n-1)!}
\]
which is the special case of \eqref{eq: number of length k signed exc seq} with $n,k$ both replaced with $n-1$.

Theorems \ref{thm E} and \ref{thm F} interpret the product formulas for the number of signed exceptional sequences and thus the number of clusters for $C_n$ or maximal rigid subsets of $\cW_n$ as a consequence of the product formula for the probability of independent events.

Finally, we discuss the action of the braid group $B_n$. By Ringel \cite{RingelExcSeq} (following {Crawley-Boevey} \cite{Crawley-Boevey} in the simply laced case), $B_n$ acts transitively on the set of complete exceptional sequence of type $C_n$. By Theorem \ref{thm A} we get an induced action of $B_n$ on the sets of augmented rooted labeled trees and pointed chord diagrams. We give a simple combinatorial description of these actions and deduce that the action of $B_n$ descends to an action on the smaller set of rooted labeled trees on $n$ vertices.

\section{Rooted labeled trees}\label{sec: rooted trees}

The relationship between trees, forests and exceptional sequences for quivers of type $A_{n-1}$ and $C_n$ is not new. It comes from \cite{Ringel2} and \cite{Ig-Sen}. In \cite{Ringel2} it is shown that complete exceptional sequences for hereditary algebras of type $C_n$ (or $B_n$) are given uniquely by a complete exceptional sequence of type $A_{n-1}$ and two integers modulo $n$. In \cite{Ig-Sen} a bijection is given between complete exceptional sequences for quivers of type $A_{n}$ and rooted labeled forests with $n$ vertices. This bijection was used to describe, in detail, the action of the braid group $B_n$ on this set of forests.

Thus, a complete exceptional sequence for an algebra of type $C_n$ is given by a rooted labeled forest with $n-1$ vertices together with two integers modulo $n$. In this paper, the first integer will be used to label a root and add it to the forest giving a rooted labeled tree with $n$ vertices. The second integer will added to the structure and be called an ``augmentation''. Thus, there will be $n^n$ augmented rooted labeled trees which, by \cite{Ringel2}, are in bijection with complete exceptional sequences for algebras of type $C_n$.

 The only new concept here is the action of $B_n$, the braid group on $n$ strands, on the set of trees and augmented trees. We also need to set up notation and terminology for the rest of the paper.

Let $\cT_n$ be the set of rooted trees with vertices labeled $1,\cdots,n$. Let $\cF_n$ be the set of rooted labeled forests with $n$ vertices. We sometimes consider $\cT_n$ as a subset of $\cF_n$. We also have a projection map
\[
    \pi:\cT_n\to \cF_{n-1}
\]
given by deleting the root of a tree, say $v_r$, and reducing by one the labels of vertices $v_i$ for $i>r$. Then $\pi$ is an $n$-to-$1$ map: for every $F\in \cF_{n-1}$ there are exactly $n$ elements in $\pi^{-1}(F)$ given by the possible labels of the root to be inserted. Since $\cF_{n-1}$ has $n^{n-2}$ elements, this implies that $\cT_n$ has $n^{n-1}$ elements.

\subsection{Braid group action on trees}\label{ss: braid action on trees}

Let $B_n$ be the braid group on $n$ strands. Recall that $B_n$ has $n-1$ generators $\sigma_1,\cdots,\sigma_{n-1}$ with the relations that $\sigma_i,\sigma_j$ commute if $|i-j|\ge2$ and
\begin{equation}\label{eq: braid relation}
\sigma_i\sigma_{i+1}\sigma_i=\sigma_{i+1}\sigma_i\sigma_{i+1}
\end{equation}
for $1\le i\le n-2$.

We recall from \cite{Ig-Sen} the action of $B_n$ on $\cF_n$, the set of rooted labeled forests. The action of a generator $\sigma_i\in B_n$ on a forest $F\in\cF_n$ depends on the relation between the vertices $v_i,v_{i+1}$ of $F$. There are several cases. Either one of these vertices is a child of the other, they are sibling, they are both roots, or they are not related in any of these ways. To reduce the number of cases, we add a root to the forest, label the root $v_0$ and denote by $F_+$ the resulting rooted tree. In $F_+$, $v_0$ is the parent of every root of $F$ and the roots of $F$ are sibling in $F_+$. Every vertex of $F$ has a unique parent in $F_+$. We write $a\to b$ if $a$ is the parent of $b$.
\begin{enumerate}
\item[(0)] When $v_i,v_{i+1}$ are not related, $F'=\sigma_iF$ is given by taking the same forest and switching the labels of $v_i$ and $v_{i+1}$. We write: $v_i'=v_{i+1}$ and $v_{i+1}'=v_i$.
\item If $v_i$ is a parent of $v_{i+1}$ and a child of $v_k$ (in $F_+$), i.e., $v_k\to v_i\to v_{i+1}$, then $F'=\sigma_iF$ is given by making $v_{i+1}$ the child of $v_k$ and the parent of $v_i$ and keeping all other relations the same. Thus $F'$ has new vertices $v_i',v_{i+1}'$ related by $v_k\to v_{i+1}'\to v_i'$ and, for $j\neq i,i+1$, $v_i'\to v_j$ if and only if $v_i\to v_j$ and $v_{i+1}'\to v_j$ if and only if $v_{i+1}\to v_j$.
\item If $v_i$ is a child of $v_{i+1}$: $v_k\to v_{i+1}\to v_i$ then $F'=\sigma_iF$ is given by making $v_i,v_{i+1}$ sibling with parent $v_k$, switching the children of $v_i$, $v_{i+1}$ and keeping all other relations the same. Thus, $v_k\to v_i'$, $v_k\to v_{i+1}'$ and, for $j\neq i,i+1$, $v_i'\to v_j$ if and only if $v_{i+1}\to v_j$ and $v_{i+1}'\to v_j$ if and only if $v_i\to v_j$.
\item $v_i,v_j$ are sibling (or they are both roots in $F$) with the same parent $v_k$ in $F_+$. In this case, $F'=\sigma_iF$ is given by, first having $v_i,v_{i+1}$ swap their children, then by making $v_{i+1}$ the child of $v_i$. Thus $v_k\to v_i'\to v_{i+1}'$.
\end{enumerate}

Note that $\sigma_i$ takes Case 1 to Case 2, Case 2 to Case 3 and Case 3 back to Case 1. The composition is the identity since $v_i$, $v_{i+1}$ swap children twice. Thus $\sigma_i^3=id$ in these cases. Case 0 goes to Case 0 and $\sigma_i^2=id$ that case.

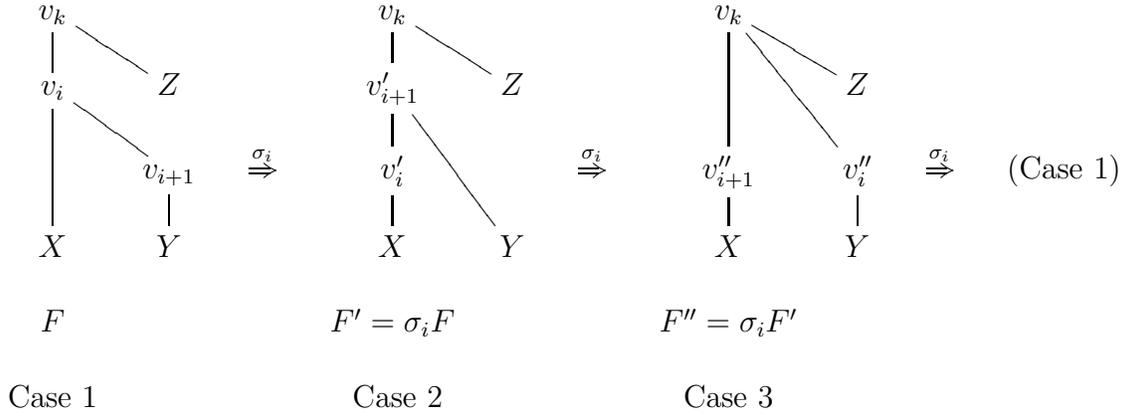
\begin{figure}[ht]
\[
\xymatrixrowsep{12pt}\xymatrixcolsep{10pt}
\xymatrix{
v_k\ar@{-}[d] &&&& v_k\ar@{-}[d] &  & && v_k\ar@{-}[d]\ar@{-}[dd]&  &  \\ 
v_i\ar@{-}[dd]\ar@{-}[dr]  & Z\ar@{-}[ul] &&& v_{i+1}'\ar@{-}[ddr] &Z\ar@{-}[ul]&&&  &  Z\ar@{-}[ul] &\\ 
 & v_{i+1}\ar@{-}[d] &   \ar@{=>}[r]^{\sigma_i}  & & v_i'\ar@{-}[d]\ar@{-}[u] & &  \ar@{=>}[r]^{\sigma_i}  && v_{i+1}''\ar@{-}[d] &  v_i''\ar@{-}[uul]\ar@{-}[d] & \ar@{=>}[r]^{\sigma_i} & & (\text{Case 1}) \\ 
	X&Y&&&X  &Y  & && X & Y\\
	F& &&&F'=\sigma_iF&   &&&F''=\sigma_iF' & && 
\\	\text{Case 1}  &&&& \text{ Case 2}  &&&&\text{Case 3} && 
	}
\]
\caption{Action of $\sigma_i$ on forests in Cases 1,2,3. $\sigma_i^3$ is the identity in these cases.}\label{fig: braid example}
\end{figure}

\begin{defn}\label{def: Bn action on Tn}
We define the action of the braid group $B_n$ on $\cT_n$ as follows. If $T\in \cT_n$ and $\sigma_i$ is a generator of $B_n$ we consider $T$ to be a forest and let $T'=\sigma_i(T)$ be given by the action of $\sigma_i$ on $\cF_n$ provided that $v_{i+1}$ is not the root of $T$. In these cases, $T'$ will be a tree. We have a \emph{special rule} when $v_{i+1}$ is the root of $T$: We define $T'=\sigma_i(T)$ to be the same tree with the labels of $v_i,v_{i+1}$ reversed. Thus, the root of $T'$ is its $i$th vertex $v_i'$ and $v_{i+1}'=v_i$.
\end{defn}

We will show that this defines an action of the braid group $B_n$ on the set $\cT_n$ using the fact that it is only slightly modified from the action of $B_n$ on $\cF_n$ from \cite{Ig-Sen} which was reviewed above. 

\begin{lem}\label{lem: action of Bn on Tn satisfies commutativity relations}
The action of $\sigma_i$ and $\sigma_j$ on $\cT_n$ commute if $|i-j|\ge2$.
\end{lem}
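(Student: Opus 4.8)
The plan is to reduce everything to the already-established fact that the generators $\sigma_i,\sigma_j$ commute on $\cF_n$ when $|i-j|\ge2$ (this is part of the braid action of \cite{Ig-Sen}), and to isolate the only genuinely new ingredient, the \emph{special rule}. Since $|i-j|\ge2$, the index sets $\{i,i+1\}$ and $\{j,j+1\}$ are disjoint, and by Definition \ref{def: Bn action on Tn} the tree action of $\sigma_i$ agrees with the forest action unless $v_{i+1}$ is the root of $T$, in which case it is simply the label transposition $\rho_i=(i\ i+1)$; likewise for $\sigma_j$.

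The first step I would prove is a \emph{stability lemma}: applying $\sigma_i$ to a tree can change the label of the root only when that label already lies in $\{i,i+1\}$, and then it merely toggles between $i$ and $i+1$. This is checked by running through Cases 0--3 and the special rule: in Cases 2, 3 and in Case 1 with a genuine parent $v_k\neq v_0$ the root is untouched, while Case 0, the special rule, and Case 1 with $v_k=v_0$ only exchange the labels $i\leftrightarrow i+1$ at the root. Because $j+1\notin\{i,i+1\}$, this immediately shows that whether the special rule for $\sigma_j$ is triggered (i.e. whether $v_{j+1}$ is the root) is unaffected by $\sigma_i$, and symmetrically. Consequently I can split into cases according to the root label of the starting tree $T$ and be sure each generator uses the same rule (forest versus special) in both orders of composition.

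If the root label is neither $i+1$ nor $j+1$, then by the stability lemma no special rule is ever invoked in computing either $\sigma_i\sigma_j T$ or $\sigma_j\sigma_i T$, so both composites coincide with the forest composites and are equal by the commutativity of the forest action on $\cF_n$. The remaining case is when the root is $v_{i+1}$ (the case of $v_{j+1}$ being symmetric): here $\sigma_i$ acts as the relabeling $\rho_i$ in both orders while $\sigma_j$ acts by the forest rule in both orders. I would finish with an \emph{equivariance} observation: the forest operation $\sigma_j$ is defined purely in terms of the tree structure and the distinguished vertices $v_j,v_{j+1}$, and it only ever permutes the two labels $j,j+1$, leaving every other label attached to its node; hence $\sigma_j$ commutes with any relabeling fixing $j$ and $j+1$, in particular with $\rho_i$. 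Therefore $\sigma_j\sigma_i=\sigma_j\rho_i=\rho_i\sigma_j=\sigma_i\sigma_j$.

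The main obstacle is precisely the bookkeeping around the special rule: one must confirm that the two generators never ``disagree'' about which rule applies when the order of application is reversed, which is exactly what the stability lemma guarantees, and that the special rule, being a relabeling, slides past the forest action of the distant generator. The overlap complications one might fear --- when the common parent $v_k$ of one pair is a vertex of the other pair, or when the subtree hanging below one pair is moved wholesale by the other --- never have to be analyzed by hand, since in the no-special-rule situation they are subsumed by the cited commutativity on $\cF_n$.
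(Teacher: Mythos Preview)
Your proof is correct and follows essentially the same strategy as the paper: reduce to the known commutativity on $\cF_n$ when no special rule fires, and observe that the special rule is just a label swap that commutes with the distant generator. The paper's version is much terser---it simply asserts that for $i,j\neq r-1$ both generators act as on $\cF_n$ and hence commute, and that when $i=r-1$ the label swap $\sigma_i$ ``has no affect on the action of $\sigma_j$''---whereas you supply the stability lemma to justify that the root cannot wander into $\{i+1\}$ after applying $\sigma_j$ (or vice versa), a point the paper leaves implicit.
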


\begin{proof}
For $i,j\neq r-1$ the action of $\sigma_i,\sigma_j$ are given by their action on $\cF_n$. So, they commute by \cite{Ig-Sen}. For $i=r-1$, $\sigma_i$ is given by the special rule which only changes the labels of vertices $v_i,v_{i+1}$. This has no affect on the action of $\sigma_j$, so $\sigma_i,\sigma_j$ commute in this case as well.
\end{proof}

Recall that the \emph{fundamental braid} $\delta_n$ is the element of $B_n$ given by
\[\delta_n=\sigma_1\sigma_2\cdots \sigma_{n-1}\]

\begin{prop}\label{prop: delta T= T+}
    The action of the fundamental braid $\delta_n$ on any tree $T$ is given by simply increasing the labels of the vertices by 1 modulo $n$. In particular
    \[
    	\delta_n \sigma_i=\sigma_{i+1}\delta_n
    \]
    for $1\le i\le n-2$.
\end{prop}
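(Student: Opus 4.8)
The plan is to prove the main assertion first — that $\delta_n$ acts on $\cT_n$ by the \emph{cyclic relabeling} $\rho$ that sends each vertex label $j$ to $j+1 \bmod n$ while leaving the underlying rooted tree shape fixed — and then to obtain the commutation $\delta_n\sigma_i=\sigma_{i+1}\delta_n$ as a formal consequence. The deduction is the easy half. I would first check that $\rho$ intertwines the generators, i.e. $\rho\sigma_i=\sigma_{i+1}\rho$ for $1\le i\le n-2$. The operator $\sigma_i$ is determined entirely by the mutual position of the two vertices labeled $i$ and $i+1$ (which of Cases 0--3 holds), together with the single exceptional datum of whether the larger-labeled vertex $v_{i+1}$ is the root. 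For $i\le n-2$ the relabeling $\rho$ carries the labels $i,i+1$ to $i+1,i+2$ with no wrap-around, preserves which of the two is smaller, and preserves the root/non-root status of the larger one (one checks $v_{i+1}$ is the root of $T$ iff $v_{i+2}$ is the root of $\rho T$, using $i+1\le n-1$). Hence $\sigma_{i+1}$ performs on $\rho T$ exactly the rearrangement that $\sigma_i$ performs on $T$, and $\rho\sigma_i=\sigma_{i+1}\rho$. Granting $\delta_n=\rho$, this gives $\delta_n\sigma_i=\rho\sigma_i=\sigma_{i+1}\rho=\sigma_{i+1}\delta_n$ immediately.

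The substance is therefore to prove $\delta_n T=\rho T$ for every $T\in\cT_n$, where $\delta_n=\sigma_1\cdots\sigma_{n-1}$ is read with the left-action convention, applying $\sigma_{n-1}$ first. I would argue by induction on $n$, peeling off the first generator $\sigma_{n-1}$ and reducing the remaining word $\sigma_1\cdots\sigma_{n-2}$ to the fundamental braid of $B_{n-1}$ acting on the labels $1,\dots,n-1$. The guiding point is that none of $\sigma_1,\dots,\sigma_{n-2}$ involves the label $n$, so their effect on the sub-configuration carried by the labels $1,\dots,n-1$ should be unaffected by the vertex $v_n$ except that $v_n$ is transported along as a child of whatever vertex it hangs from; deleting $v_n$ ought to commute with $\sigma_1\cdots\sigma_{n-2}$, after which the induction hypothesis relabels $1,\dots,n-1$ cyclically. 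The opening $\sigma_{n-1}$ is then what repositions $v_n$ so that the two relabelings — $+1$ modulo $n-1$ on the deleted tree and $+1$ modulo $n$ on the whole tree — become compatible, converting the would-be wrap-around $n-1\mapsto 1$ into $n-1\mapsto n\mapsto 1$.

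As scaffolding I would record two features visible in small cases. First, the label of the root changes during the passage $\sigma_{n-1},\dots,\sigma_1$ through a mixture of Case 1 forest moves that \emph{promote} a child to the root (raising the root label) and applications of the \emph{special rule}, which swap the root's label downward; it is precisely when $v_n$ is, or becomes, the root that the special rule must intervene rather than a restructuring move. Second, the composite permutation of labels must be the $n$-cycle $(1\,2\,\cdots\,n)$, mirroring the identity $s_1\cdots s_{n-1}=(1\,2\,\cdots\,n)$ for adjacent transpositions in $S_n$. This second fact is only a heuristic shadow — the generators do not act by label permutations — but it predicts the answer and tells me where each label should land once shape-preservation is established.

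The main obstacle is exactly that the individual generators do \emph{not} act by relabeling: they genuinely change the shape of the tree (a cherry can become a path and return to a cherry), so there is no step-by-step relabeling invariant to propagate, and one must show that all these restructurings cancel over the full word $\delta_n$. Concretely, the delicate bookkeeping sits in two places. One is verifying that deletion of $v_n$ commutes with $\sigma_1\cdots\sigma_{n-2}$, which requires checking that the case analysis for each $\sigma_i$ is insensitive to whether $v_n$ sits below $v_i$ or $v_{i+1}$, and — more seriously — handling the configurations in which $v_n$ is the root or is promoted to the root (where deletion leaves $\cT_{n-1}$), these being governed by the opening $\sigma_{n-1}$ and the special rule. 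The other is reconciling the modulus, since the inductive relabeling is $+1$ modulo $n-1$ while the target is $+1$ modulo $n$. I expect this wrap-around, and its resolution through the interaction of $\sigma_{n-1}$ with the special rule at the root, to be the crux of the argument.
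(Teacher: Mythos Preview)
Your deduction of the commutation $\delta_n\sigma_i=\sigma_{i+1}\delta_n$ from the relabeling statement is fine, and is indeed the easy half. The divergence from the paper, and the gap, is in how you attack the main claim $\delta_nT=T^+$.

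The paper does not induct on $n$. It splits on whether $v_n$ is the root. If it is, the special rule fires at every step of $\sigma_{n-1},\sigma_{n-2},\dots,\sigma_1$, merely swapping labels and walking the root label down to $1$; this is a direct verification. If $v_n$ is \emph{not} the root, the paper observes that the special rule is never invoked, so $\delta_n$ acts on $T$ exactly as on the underlying forest, and then imports the identity
\[
\delta_n(E_1,\dots,E_n)=(\tau^\ast E_n,E_1,\dots,E_{n-1})
\]
from \cite[Theorem 4.5]{Ig-Sen} on exceptional sequences. A short support/weight argument shows that $\tau E_n$ sits in the Hasse diagram in the same spot as $E_n$, so the tree shape is preserved and only the labels shift. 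The external input from \cite{Ig-Sen} is what absorbs the combinatorics.

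Your inductive plan has a genuine gap at the step ``deleting $v_n$ ought to commute with $\sigma_1\cdots\sigma_{n-2}$''. It does not. Take $n=3$ and $T$ with root $v_2$ and children $v_1,v_3$. Then $\sigma_2T$ is the path $v_3\to v_2\to v_1$ with root $v_3$. Deleting $v_3$ gives the $2$-tree $v_2\to v_1$; applying $\sigma_1$ in $\cT_2$ triggers the special rule (since $v_2$ is now the root) and yields $v_1\to v_2$. But applying $\sigma_1$ in $\cT_3$ first (no special rule, since $v_3$ is the root) gives the cherry $v_3\to\{v_1,v_2\}$, and deleting $v_3$ from that produces a two-component forest, not a tree at all. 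So the deletion does not land in $\cT_{n-1}$ in general, and even when it does the special rule fires at different moments in $\cT_n$ versus $\cT_{n-1}$, so the two actions of $\sigma_1\cdots\sigma_{n-2}$ disagree. You flag exactly this configuration (``$v_n$ is the root or is promoted to the root'') as the crux, but the proposal offers no mechanism to resolve it; the induction as stated does not close. A self-contained combinatorial argument would need either a different inductive invariant (e.g.\ tracking the root label through the word rather than deleting a vertex) or the two-case split the paper uses.
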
 

We use the notation $\delta_nT=T^+$ to indicate that the labels have been increased by $1$. An example is indicated in Figure \ref{Fig: example of delta3 on T3}.

Before the proof we show some consequences.

\begin{cor}\label{cor: braid relations hold}
Definition \ref{def: Bn action on Tn} gives an action of the braid group $B_n$ on $\cT_n$.
\end{cor}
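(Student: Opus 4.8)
The plan is to verify that the operators $\sigma_1,\dots,\sigma_{n-1}$ of Definition \ref{def: Bn action on Tn} satisfy the defining relations of $B_n$, which then produces a homomorphism $B_n\to\mathrm{Sym}(\cT_n)$, i.e. an action. Each $\sigma_i$ is readily checked to be a bijection of the finite set $\cT_n$: away from the root it is the forest action of \cite{Ig-Sen}, which is a bijection of $\cF_n$ and fails to preserve $\cT_n$ only through the Case~2 splitting move when $v_{i+1}$ is the root, and that is exactly where the special rule (an evident involution) takes over. The commutativity relations $\sigma_i\sigma_j=\sigma_j\sigma_i$ for $|i-j|\ge2$ are precisely Lemma \ref{lem: action of Bn on Tn satisfies commutativity relations}, so the entire content of the corollary is the braid relation \eqref{eq: braid relation}.

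First I would reduce to a single index. By Proposition \ref{prop: delta T= T+} we have $\delta_n\sigma_i\delta_n^{-1}=\sigma_{i+1}$ for $1\le i\le n-2$, and $\delta_n$ is a bijection of $\cT_n$ (it adds $1$ mod $n$ to all labels, with inverse subtracting $1$). Conjugating the single identity $\sigma_1\sigma_2\sigma_1=\sigma_2\sigma_1\sigma_2$ repeatedly by $\delta_n$ then yields $\sigma_i\sigma_{i+1}\sigma_i=\sigma_{i+1}\sigma_i\sigma_{i+1}$ for every $1\le i\le n-2$. Hence it suffices to prove the braid relation for the pair $(\sigma_1,\sigma_2)$ on every tree $T\in\cT_n$.

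Next I would split by the label $r$ of the root of $T$. The forest operators $\sigma_1,\sigma_2$ interchange labels only within $\{1,2\}$ and $\{2,3\}$ respectively (Case~0), while Cases~1--3 leave all labels fixed and only rearrange edges; thus no word in $\sigma_1,\sigma_2$ can move a label out of $\{1,2,3\}$, and the special rule of Definition \ref{def: Bn action on Tn} is invoked only when the current root label lies in $\{2,3\}$. Consequently, if $r\notin\{1,2,3\}$ then throughout the computation of both $\sigma_1\sigma_2\sigma_1(T)$ and $\sigma_2\sigma_1\sigma_2(T)$ the special rule never fires, both sides coincide with the forest action, and the braid relation holds by \cite{Ig-Sen}.

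The remaining, and main, obstacle is the finite collection of cases $r\in\{1,2,3\}$, where the special rule genuinely intervenes and the result is no longer governed by \cite{Ig-Sen}. Here I would compute both sides directly, organizing the cases by the mutual ancestor--descendant relations among $v_1,v_2,v_3$ in $T$ (which of them is the root, whether the other two are direct children, siblings, or only indirectly related, and where their remaining subtrees attach, as in Figure \ref{fig: braid example}). The key bookkeeping point is that the special rule replaces the Case~2 move exactly when that move would disconnect the tree at the root; tracking how the root label migrates through $\{1,2,3\}$ at each of the three applications, one checks configuration by configuration that $\sigma_1\sigma_2\sigma_1$ and $\sigma_2\sigma_1\sigma_2$ carry $T$ to the same tree. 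I expect this verification to be the only laborious part, everything else being formal.
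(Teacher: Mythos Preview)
Your reduction via Proposition \ref{prop: delta T= T+} to a single braid relation is the same as the paper's, and your treatment of $r\notin\{1,2,3\}$ is sound. Where you diverge is in the base case: you propose to verify $\sigma_1\sigma_2\sigma_1=\sigma_2\sigma_1\sigma_2$ on $\cT_n$ by a direct case analysis over the root label and the local configuration of $v_1,v_2,v_3$, whereas the paper avoids this entirely. The paper proves the base case at the \emph{other} end, $i=n-2$, purely algebraically: writing $\delta_n=\delta_{n-2}\sigma_{n-2}\sigma_{n-1}$, Proposition \ref{prop: delta T= T+} gives
\[
\delta_{n-2}\,\sigma_{n-2}\sigma_{n-1}\sigma_{n-2}=\delta_n\sigma_{n-2}=\sigma_{n-1}\delta_n=\sigma_{n-1}\,\delta_{n-2}\,\sigma_{n-2}\sigma_{n-1},
\]
and since $\delta_{n-2}$ is a word in $\sigma_1,\dots,\sigma_{n-3}$, Lemma \ref{lem: action of Bn on Tn satisfies commutativity relations} lets you move it past $\sigma_{n-1}$ and then cancel it. No configurations, no tracking of the root label; the already-established commutation relations and the $\delta_n$-shift do all the work. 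Your approach would certainly go through, but the case check you flag as ``the only laborious part'' is simply unnecessary.
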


\begin{proof}
By Lemma \ref{lem: action of Bn on Tn satisfies commutativity relations} it suffices to show the braid relation \eqref{eq: braid relation}. We do this first in the case $i=n-2$. In that case we have, by Proposition \ref{prop: delta T= T+}, that
\[
 \delta_{n-2}\sigma_{n-2}\sigma_{n-1}\sigma_{n-2}=\delta_n\sigma_{n-2}=\sigma_{n-1}\delta_n=\sigma_{n-1}\delta_{n-2}\sigma_{n-2}\sigma_{n-1}.
 \]
 But $\delta_{n-2}$ commutes with $\sigma_{n-1}$ since it is a product of $\sigma_j$ for $j<n-2$.  So
 \[
 	\sigma_{n-2}\sigma_{n-1}\sigma_{n-2}=\sigma_{n-1}\sigma_{n-2}\sigma_{n-1}.
 \]
 Conjugating this relation by $\delta_n^k$ for $k=n-2-i$ gives \eqref{eq: braid relation} for all other $i$.
\end{proof}

Recall \cite{Garside} that the \emph{Garside element} $\Delta\in B_n$, given by
\[
\Delta:=\delta_n\delta_{n-1}\cdots\delta_2,
\]
satisfies the well-known identity: $\Delta^2=\delta_n^n$. For example, for $n=3$ this says
\[
\Delta^2=(\delta_3\delta_2)^2=(\sigma_1\sigma_2\sigma_1)^2=(\sigma_1\sigma_2\sigma_1)(\sigma_2\sigma_1\sigma_2)=(\sigma_1\sigma_2)^3=\delta_3^3.
\]
 So, Proposition \ref{prop: delta T= T+} immediately implies the following.

\begin{cor}
$\Delta^2=\delta_n^n$ acts as the identity on $\cT_n$.\qed
\end{cor}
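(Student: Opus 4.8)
The plan is to deduce the corollary directly from Proposition \ref{prop: delta T= T+} together with the Garside identity $\Delta^2=\delta_n^n$ recalled just above the statement. Proposition \ref{prop: delta T= T+} identifies the action of $\delta_n$ on any $T\in\cT_n$ with the operation $T\mapsto T^+$ that increases every vertex label by $1$ modulo $n$ while leaving the underlying rooted tree structure — the parent–child relations, and in particular which vertex is the root — completely unchanged. So the entire content to be extracted is the behavior of this label shift under iteration.

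First I would observe that $T\mapsto T^+$ is a bijection of $\cT_n$ whose effect is purely on the labeling: the $m$-fold iterate increases each label by $m$ modulo $n$. Taking $m=n$, every label is increased by $n\equiv 0\pmod n$ and hence returned to its original value, while the tree shape was never touched at any stage. Therefore $\delta_n^n T=T$ for every $T\in\cT_n$, which is exactly the assertion that $\delta_n^n$ acts as the identity on $\cT_n$. Finally, since the Garside element satisfies $\Delta^2=\delta_n^n$ as an equation in the braid group $B_n$ itself (the $n=3$ instance being displayed explicitly just before the statement), and since $B_n$ genuinely acts on $\cT_n$ by Corollary \ref{cor: braid relations hold}, equal group elements induce equal permutations; thus $\Delta^2$ also acts as the identity.

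There is essentially no obstacle here: the only substantive input is Proposition \ref{prop: delta T= T+}, and once that is granted the result is forced, because a cyclic shift of labels by $1$ in a set of $n$ labels has order dividing $n$. The one step I would state with a little care is the passage from an identity of \emph{group elements} in $B_n$ to an identity of their \emph{actions} on $\cT_n$: this is legitimate precisely because Corollary \ref{cor: braid relations hold} establishes that the assignment $\sigma_i\mapsto(\text{action of }\sigma_i)$ extends to a homomorphism from $B_n$ to the group of permutations of $\cT_n$, so any relation holding in $B_n$ — in particular $\Delta^2=\delta_n^n$ — is respected by the action.
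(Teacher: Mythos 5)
Your proof is correct and follows exactly the paper's route: the paper also deduces the corollary immediately from Proposition \ref{prop: delta T= T+} (the action of $\delta_n$ is the label shift $T\mapsto T^+$, whose $n$th iterate is the identity) together with the Garside identity $\Delta^2=\delta_n^n$. Your explicit remark that the relation in $B_n$ transfers to the action via the homomorphism from Corollary \ref{cor: braid relations hold} is a point the paper leaves implicit, but it is the same argument.
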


\begin{proof}[Proof of Proposition \ref{prop: delta T= T+}]
First, consider the case when the root of $T$ is the last vertex $v_n$.  Then, by the special rule, $\sigma_{n-1}$ will switch the labels of the last two vertices and $v_{n-1}'$ will be the new root of $T'=\sigma_{n-1}T$. Then $\sigma_{n-2}T'$ uses the special rule again moving the root to position $n-2$ and increases by $1$ the labels of $v_{n-2},v_{n-1}$. Proceeding in this way, $\delta_n$ will move the root to position $1$ and increase by $1$ the labels of all other vertices. This gives $T^+$.

If $v_n$ is not the root of $T$ then $v_{n-1}'$ will not be the root of $T'=\sigma_{n-1}T$. Proceeding in this way, $\delta_nT$ is computed without using the special rule. Thus the action of $\delta_n$ on $T$ is given by the action of $\delta_n$ on the exceptional sequence $E_\ast=(E_1,\cdots,E_n)$ corresponding to $T$ considered as a rooted labeled forest using \cite{Ig-Sen}. By \cite[Theorem 4.5]{Ig-Sen} we have
\[
	\delta_n(E_1,\cdots,E_n)=(\tau^\ast E_n,E_1,\cdots,E_{n-1})
\]
where $\tau^\ast E_n=\tau E_n$ when $E_n$ is not projective. Since the corresponding vertex $v_n$ is not the root of $T$, it must be an ascending vertex. So, by \cite{Ig-Sen}, $E_n$ is relatively injective and not relatively projective. In particular, $E_n$ is not projective. So, $\tau^\ast E_n=\tau E_n$.

If the support of $E_n$ is $(i,j]$ (making its weight $j-i$) then the support of $\tau E_n$ is $(i-1,j-1]$ with the same weight. If $E_i$ for $i<n$ has support inside the support of $E_n$, it cannot have $j$ in its support, otherwise there would be an epimorphism $E_n\to E_i$ which is not allowed. So, the support of $E_i$ is contained in $(i-1,j-1]$, the support of $\tau E_n$. So, $E_i<E_n$ implies $E_i<\tau E_n$ and conversely since $E_n$ and $\tau E_n$ have the same weight.

Similarly, if $E_n<E_i$ then the support of $E_i$ must be $(a,b]$ for some $a<i$. So, $\tau E_n<E_i$ and conversely since the weight of $E_i$ is fixed. Therefore, the rooted forest for $E_\ast$ and $\delta_nE_\ast$ are the same with labels shifted up by $1$ except for the last object which becomes the first. So, $\delta_nT=T^+$ in this case.
\end{proof}

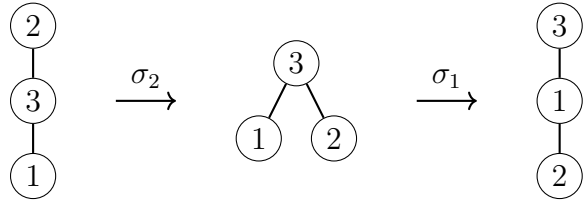
\begin{figure}[htbp]
\begin{center}
\begin{tikzpicture}
%
\begin{scope} 
\draw[thick] (0,1)--(0,-1);
\foreach \x/\y in {(0,0)/2,(0,1)/3,(0,-1)/1}
\draw[white,fill] \x circle[radius=3mm];
\foreach \x/\y in {(0,1)/2,(0,0)/3,(0,-1)/1}
\draw \x circle[radius=3mm] node{$\y$};
\end{scope}
\begin{scope}[xshift=-2cm]
\draw[thick,->] (3.1,0)--(3.9,0);
\draw (3.5,.3)node{$\sigma_2$};
\end{scope}
\begin{scope}[xshift=2.5cm]
\draw[thick] (0.5,-.5)--(1,.5)--(1.5,-.5);
\foreach \x/\y in {(1,.5)/3,(1.5,-.5)/2,(.5,-.5)/1}
\draw[white,fill] \x circle[radius=3mm];
\foreach \x/\y in {(1,.5)/3,(1.5,-.5)/2,(.5,-.5)/1}
\draw \x circle[radius=3mm] node{$\y$};
\end{scope}
\begin{scope}[xshift=2cm]
\draw[thick,->] (3.1,0)--(3.9,0);
\draw (3.5,.3)node{$\sigma_1$};
\end{scope}
\begin{scope}[xshift=7cm]
\draw[thick] (0,1)--(0,-1);
\foreach \x/\y in {(0,1)/3,(0,0)/1,(0,-1)/2}
\draw[white,fill] \x circle[radius=3mm];
\foreach \x/\y in {(0,1)/3,(0,0)/1,(0,-1)/2}
\draw \x circle[radius=3mm] node{$\y$};
\end{scope}
\end{tikzpicture}
\caption{$\delta_3=\sigma_1\sigma_2$ takes tree $T$ to the same tree with labels increased by 1 modulo $n=3$. In this example, the special rule is not invoked.}
\label{Fig: example of delta3 on T3}
\end{center}
\end{figure}

\subsection{Augmented trees and forests}\label{ss: augmented trees and forests}

We define an \emph{augmentation} of a tree $T$ to be an element of $\varepsilon\in\ZZ_n$ which we view as attached to the root of $T$ (independent of the label of the root). Thus, the set of augmented rooted labeled trees is
\[
\widetilde \cT_n=\cT_n\times \ZZ_n\cong \cF_{n-1}\times \{1,2,\cdots,n\}^2.
\]
Thus, $\widetilde\cT_n$ has $n^n$ elements which we already know, by \cite{Ringel2} and \cite{Ig-Sen}, are in bijection with complete exceptional sequences for algebras of type $C_n$. What may be more interesting is the set of augmented forests, defined below, which are in bijection with complete exceptional sequences in $\cW_n$, the tube of rank $n$.

Recall \cite{Ig-Sen} that the \emph{weight} of a vertex $v_i$ in a rooted forest $F$ is the number of vertices $\le v_i$ in $F$. We define the \emph{reduced (forest) weight} $\overline w(v_i)$ to be equal to the weight of $v_i$ if $v_i$ is a root of $F$, otherwise, $\overline w(v_i)=0$. Note that the sum of the weights of all the roots is equal to $n-1$, the number of vertices of $F$.

If $T\in\cT_n$ is a rooted labeled tree with root $v_r$ and $\pi(T)=F\in\cF_{n-1}$ is its underlying forest, recall first that the vertices of $F$ are $v_i^F=v_i^T$ if $i<r$ and $v_i^F=v_{i+1}^T$ if $i\ge r$. We define the \emph{reduced (tree) weight} $\overline w(v^T_i)$ or a vertex of $T$ to be the reduced forest weight of the corresponding vertex of $F$ if $i\neq r$ and $\overline w(v^T_r)=1$. Then the sum of reduced weights of the vertices of $T$ is equal to $n$, the number of vertices of $T$.

\begin{defn}\label{def: augmented forest}
An \emph{augmentation} of a rooted forest $F\in \cF_{n-1}$ is a function
\[
	\varepsilon:\{0,1,2,\cdots,n-1\}\to \mathbb Z_n
\]
with the property that
\[
	\varepsilon(i)=\varepsilon(i-1)-\overline w(v_i)
\]
for $0<i<n$. The pair $(F,\varepsilon)$ will be called an \emph{augmented (rooted) forest} and the set of all such pairs will be denoted $\widetilde\cF_{n-1}$.
\end{defn}
A forest augmentation $\varepsilon$ is uniquely determined by any of its values $\varepsilon(j)$ which can be chosen arbitrarily since
\[
	\varepsilon(k)=\varepsilon(j)-\sum_{j<i\le k}\overline w(v_i)
\]
for $k>j$ and similarly for $k<j$. In particular, $\varepsilon(n-1)=\varepsilon(0)+1$ (modulo $n$) since $\sum \overline w(v_i)=n-1$. Thus, $\widetilde\cF_{n-1}$ has $n^{n-1}$ elements.

There is an epimorphism $\widetilde\cT_n\to \widetilde\cF_{n-1}$ defined as follows. For any augmentation $\varepsilon_T$ of a tree $T$ with root $v_r$ we define the corresponding augmentation $\varepsilon_F$ of the underlying forest $\pi(T)=F$ to be the one determined by 
\begin{equation}\label{eq: def of epsilon F}
	\varepsilon_F(r-1)=\varepsilon_T.
\end{equation}
This implies:
\[
	\varepsilon_F(r-2)=\varepsilon_F(r-1)+\overline w(v_{r-1}^F)=\varepsilon_T +\overline w(v_{r-1}^T).
\]
Since $v_r$ is the root of $T$, $T'=\sigma_{r-1}T$ is given by the special rule which simply interchanges the labels of vertices $v_r$ and $v_{r-1}$. Thus $T,T'$ have the same underlying forest $F$, but $v^F_{r-1}=v^T_{r-1}=v^{T'}_r$. This implies that the augmented tree $(T',\varepsilon_T+\overline w(v_{r-1}^{T}))$ determines the same augmentation $\varepsilon_F$ on $F$ as does $(T,\varepsilon_T)$.  

\begin{defn}\label{def: action of Bn on (T,e)}
The action of the braid group $B_n$ on $\widetilde\cT_n$ can be defined as follows. If $T$ is a labeled tree with root $v_r$ and $1\le i<n$ then
\begin{enumerate}
    \item $\sigma_i(T,\varepsilon)=(\sigma_iT,\varepsilon)$ if $i\neq r-1$.
    \item $\sigma_{r-1}(T,\varepsilon)=(\sigma_{r-1}T,\varepsilon+\overline w(v_{r-1}^T))=(T',\varepsilon+\overline w(v_r^{T'})).$ 
\end{enumerate}
\end{defn}

As we shown above, $\sigma_{r-1}(T,\varepsilon)$ has the same underlying augmented forest as $(T,\varepsilon)$. Consequently, if $v_n$ is the root of $T$ and $\varepsilon$ is an augmentation for $T$ then $(T,\varepsilon)$, $\sigma_{n-1}(T,\varepsilon)$, $\sigma_{n-2}\sigma_{n-1}(T,\varepsilon),\cdots,\delta_n(T,\varepsilon)$ all have the same underlying augmented forest $(F,\varepsilon_F)$ with $\varepsilon_F$ given by \eqref{eq: def of epsilon F}. Since $\delta_nT=T^+$ has root $v_1$  and $\varepsilon_F(0)=\varepsilon-1$ this implies
\begin{equation}\label{eq: delta on (T,e) when vn is root}
	\delta_n(T,\varepsilon)=(T^+,\varepsilon-1)
\end{equation}
when $v_n$ is the root of $T$.

\begin{lem}\label{lem: delta on (T,e)}
If $v_r$ is the root of $T$ and $r<n$ then
\[
	\delta_n(T,\varepsilon)=(T^+,\varepsilon)
\]
and the root of $T^+$ has label $r+1$.
\end{lem}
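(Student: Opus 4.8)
The plan is to reduce everything to the proof of Proposition \ref{prop: delta T= T+} together with the observation that, under the augmented action of Definition \ref{def: action of Bn on (T,e)}, the augmentation $\varepsilon$ is altered only when the special rule is invoked. Indeed, case (1) of Definition \ref{def: action of Bn on (T,e)} leaves $\varepsilon$ untouched, while case (2)---which is applied precisely when $v_{i+1}$ is the current root, i.e. exactly when the special rule of Definition \ref{def: Bn action on Tn} fires---adds the reduced weight $\overline w(v_{r-1}^T)$. Thus the augmentation stays constant throughout any sequence of generators that uses only the ordinary forest action.

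First I would recall that $\delta_n=\sigma_1\cdots\sigma_{n-1}$ acts by applying $\sigma_{n-1}$ first, then $\sigma_{n-2}$, and so on. The proof of Proposition \ref{prop: delta T= T+} already shows that when the root $v_r$ of $T$ is not $v_n$ (that is, when $r<n$), the entire computation of $\delta_nT=T^+$ proceeds without ever invoking the special rule. Combining this with the previous paragraph, none of the generators in $\delta_n$ changes $\varepsilon$, so $\delta_n(T,\varepsilon)=(\delta_nT,\varepsilon)=(T^+,\varepsilon)$, which is the first assertion. For the statement about the root label I would appeal again to Proposition \ref{prop: delta T= T+}: $T^+$ is the same rooted tree as $T$ with every label increased by $1$ modulo $n$, so the root vertex is unchanged as a vertex; its label was $r$, hence in $T^+$ it is $r+1$, and since $r<n$ there is no reduction modulo $n$.

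The one point deserving care---and the place the argument would break if the hypothesis $r<n$ were dropped---is verifying that the root genuinely never reaches a position triggering the special rule. I expect this to be the main (though mild) obstacle, and I would handle it by tracking the root's label explicitly: it sits at $r$ and is untouched by $\sigma_{n-1},\dots,\sigma_{r+1}$, whose indices exceed $r$; the step $\sigma_r$ then moves it to $r+1$ by an ordinary Case 0 or Case 1 move, since a tree root can be neither a child nor a sibling of $v_{r+1}$ (its only parent in $T_+$ is the formal root $v_0$); and the remaining generators $\sigma_{r-1},\dots,\sigma_1$ all have index below $r+1$ and so leave it fixed. At no stage is $\sigma_i$ applied while the root sits at label $i+1$, confirming that the special rule is avoided.

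For contrast, this is exactly what fails when $v_n$ is the root: there the root descends from $n$ down to $1$ through repeated special-rule moves, producing the shift $\varepsilon\mapsto\varepsilon-1$ recorded in \eqref{eq: delta on (T,e) when vn is root}. The present lemma is thus the complementary case, where the special rule is never used and the augmentation is carried along unchanged.
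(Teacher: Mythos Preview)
Your proof is correct and follows essentially the same approach as the paper: both argue that when $r<n$ the computation of $\delta_nT$ never invokes the special rule, so by Definition \ref{def: action of Bn on (T,e)} the augmentation is carried along unchanged. The paper's version is terser, simply citing Proposition \ref{prop: delta T= T+} and asserting that the special rule is never applied; your explicit tracking of the root label through the successive generators $\sigma_{n-1},\dots,\sigma_1$ spells out what the paper leaves implicit (both here and in the proof of Proposition \ref{prop: delta T= T+} itself).
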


\begin{proof}
By Proposition \ref{prop: delta T= T+} we have $\delta_n(T,\varepsilon)=(T^+,\varepsilon')$. Since $r<n$, the special rule is never applied. So, the augmentation never changes and $\varepsilon'=\varepsilon$.
\end{proof}

If we iterate this $n$ times, the root of $T$ will be the last vertex only once. So, we subtract $1$ from $\varepsilon$ only once and we get:
\begin{thm}\label{thm: action of delta n on tilde Tn}
$\delta_n^n(T,\varepsilon)=(T,\varepsilon-1)$. Therefore, $\Delta^2=\delta_n^n$ acts as the identity on $\cT_n$ and $\Delta^{2n}=(\delta_n^n)^{n}$ acts as the identity on $\widetilde\cT_n$.\qed
\end{thm}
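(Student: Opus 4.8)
The plan is to prove the identity $\delta_n^n(T,\varepsilon)=(T,\varepsilon-1)$ by iterating the two cases already established for a single application of $\delta_n$, and then to deduce the two ``identity'' statements as formal consequences. Write $(T_0,\varepsilon_0)=(T,\varepsilon)$ and set $(T_{m+1},\varepsilon_{m+1})=\delta_n(T_m,\varepsilon_m)$, so that $(T_n,\varepsilon_n)=\delta_n^n(T,\varepsilon)$. By Proposition \ref{prop: delta T= T+}, each application of $\delta_n$ sends a tree to the same tree with all labels increased by $1$ modulo $n$; applying this $n$ times returns every label to its original value, so $T_n=T$. It therefore remains only to track the augmentation.

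First I would track the label of the root. If the root of $T$ is $v_r$, then the root labels of the successive trees $T_0,T_1,\dots,T_{n-1}$ run through $r,r+1,\dots$ modulo $n$, by Lemma \ref{lem: delta on (T,e)} (when the current root label is $<n$) together with \eqref{eq: delta on (T,e) when vn is root} (when it equals $n$). These $n$ consecutive residues modulo $n$ are all distinct, so exactly one of the trees $T_0,\dots,T_{n-1}$ has its root at the last vertex $v_n$. Next I would track the augmentation: by Lemma \ref{lem: delta on (T,e)} the passage $(T_m,\varepsilon_m)\mapsto(T_{m+1},\varepsilon_{m+1})$ leaves the augmentation unchanged whenever the root of $T_m$ is not $v_n$, while by \eqref{eq: delta on (T,e) when vn is root} it decreases $\varepsilon$ by $1$ precisely when the root of $T_m$ is $v_n$. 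By the previous sentence this latter case occurs exactly once among $m=0,\dots,n-1$, so $\varepsilon_n=\varepsilon-1$, giving $\delta_n^n(T,\varepsilon)=(T,\varepsilon-1)$.

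Finally, the two ``identity'' assertions follow formally. Since $\delta_n$ acts on $\cT_n$ (forgetting augmentations) by raising all labels by $1$ modulo $n$, the $n$-fold power $\delta_n^n$ fixes every tree; by the Garside identity $\Delta^2=\delta_n^n$ this is exactly the statement that $\Delta^2$ acts as the identity on $\cT_n$. For $\widetilde\cT_n$, iterating $\delta_n^n(T,\varepsilon)=(T,\varepsilon-1)$ a total of $n$ times yields $(\delta_n^n)^n(T,\varepsilon)=(T,\varepsilon-n)=(T,\varepsilon)$, because $\varepsilon\in\ZZ_n$; since $\Delta^{2n}=(\delta_n^n)^n$, this says $\Delta^{2n}$ acts as the identity on $\widetilde\cT_n$.

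The only real care-point---hardly an obstacle---is the bookkeeping in the middle paragraph: one must confirm that the root label advances by exactly $1$ at each step (so that the $n$ root labels form a complete set of residues modulo $n$), and that the single drop in $\varepsilon$ is correctly attributed to the unique step whose tree has root $v_n$. Everything else is a direct application of Proposition \ref{prop: delta T= T+}, Lemma \ref{lem: delta on (T,e)}, equation \eqref{eq: delta on (T,e) when vn is root}, and the Garside relation.
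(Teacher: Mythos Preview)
Your proof is correct and follows essentially the same approach as the paper. The paper's argument is the one-sentence remark preceding the theorem (``If we iterate this $n$ times, the root of $T$ will be the last vertex only once. So, we subtract $1$ from $\varepsilon$ only once''), which is exactly the iteration of Lemma~\ref{lem: delta on (T,e)} and equation~\eqref{eq: delta on (T,e) when vn is root} that you have spelled out in detail.
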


We need two lemmas to show that the action of $\sigma_i$ on $\widetilde T_n$ satisfies the braid relations.

\begin{lem}\label{lem: delta action on tilde Tn}
The action of $\delta_n$ on $\widetilde \cT_n$ satisfies the relation
\[
	\delta_n\sigma_i=\sigma_{i+1}\delta_n
\]
for $i\le n-2$.
\end{lem}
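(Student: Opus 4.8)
The plan is to reduce the claimed braid-commutation on $\widetilde\cT_n$ to two independent bookkeeping facts about augmentations, each of which hinges on the restriction $i\le n-2$.

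First I would note that Proposition \ref{prop: delta T= T+} already yields $\delta_n\sigma_i=\sigma_{i+1}\delta_n$ as maps on the \emph{underlying} trees in $\cT_n$, so $\delta_n\sigma_iT$ and $\sigma_{i+1}\delta_nT$ agree as labeled trees; only the augmentations remain to be compared. Writing the root of $T$ as $v_r$, I would record the two shift rules in uniform shorthand. By Definition \ref{def: action of Bn on (T,e)}, applying $\sigma_j$ adds $a_j(T)$ to $\varepsilon$, where $a_j(T)=\overline w(v_{r-1}^T)$ if $j=r-1$ and $a_j(T)=0$ otherwise; and by \eqref{eq: delta on (T,e) when vn is root} together with Lemma \ref{lem: delta on (T,e)}, applying $\delta_n$ subtracts $d(T)$, where $d(T)=1$ if $r=n$ and $d(T)=0$ otherwise. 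Expanding each word, the augmentation of $\delta_n\sigma_i(T,\varepsilon)$ is $\varepsilon+a_i(T)-d(\sigma_iT)$, while that of $\sigma_{i+1}\delta_n(T,\varepsilon)$ is $\varepsilon-d(T)+a_{i+1}(T^+)$. Thus the lemma reduces to the two equalities
\[
d(\sigma_iT)=d(T)\qquad\text{and}\qquad a_i(T)=a_{i+1}(T^+).
\]

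For the first equality I would argue that $\sigma_i$ with $i\le n-2$ only ever modifies the vertices labeled $i$ and $i+1$, all of which lie in $\{1,\dots,n-1\}$; hence it never touches the vertex labeled $n$, and—since the special rule is triggered only by the root, the unique child of the formal vertex $v_0$, which has a label $r$ with $v_i,v_{i+1}\neq v_r$ whenever $i\neq r-1$—it cannot move the root onto the vertex labeled $n$ nor off it. So the root of $\sigma_iT$ is the last vertex iff the root of $T$ is, giving $d(\sigma_iT)=d(T)$. For the second equality I would use that relabeling by $\delta_n$ shifts all labels up by $1$ while preserving each vertex's spanned subtree and its root/non-root status, so reduced weights are unchanged: for $r<n$ the vertex labeled $i$ in $T$ becomes the one labeled $i+1$ in $T^+$ and the root $v_r$ becomes $v_{r+1}$, so the single nonzero value of $a_{i+1}(T^+)$ occurs exactly at $i=r-1$ and equals $\overline w(v_r^{T^+})=\overline w(v_{r-1}^T)=a_i(T)$. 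The boundary case $r=n$, where the root of $T^+$ wraps around to $v_1$, must be checked separately, but there both $a_i(T)$ and $a_{i+1}(T^+)$ vanish precisely because $i\le n-2$.

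The main obstacle I anticipate is not any long computation but pinning down the first equality cleanly: one must be sure that $\sigma_i$ for $i\le n-2$ can neither promote a vertex to the root at position $n$ nor demote the root from position $n$. This is exactly what makes the two $\delta_n$-induced $-1$ shifts occur in lockstep on the two sides, and it is where the hypothesis $i\le n-2$ (rather than $i\le n-1$) is indispensable; the remainder is the routine substitution indicated above.
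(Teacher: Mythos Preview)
Your proposal is correct and follows essentially the same strategy as the paper: both reduce to tracking the augmentation shifts coming from $\sigma_i$ (nonzero only when $i=r-1$) and from $\delta_n$ (nonzero only when $r=n$), and both hinge on the observations that (i) for $i\le n-2$ the root never enters or leaves position $n$ under $\sigma_i$, and (ii) the reduced weight $\overline w(v_{r-1}^T)$ coincides with $\overline w(v_r^{T^+})$ since $T$ and $T^+$ share the same underlying forest. The paper presents this as an explicit case split ($r=n$ versus $r<n$, the latter subdivided into $i=r-1$ and $i\neq r-1$), whereas you package the same bookkeeping into the functions $a_j$ and $d$ and the two identities $d(\sigma_iT)=d(T)$, $a_i(T)=a_{i+1}(T^+)$; the content is the same.

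One small wording issue: your parenthetical ``which has a label $r$ with $v_i,v_{i+1}\neq v_r$ whenever $i\neq r-1$'' is not quite accurate (the root can also be $v_i$ when $i=r$), and in fact the root \emph{can} move under the forest action when $r=i$. The correct point---which you do state---is simply that any such movement stays within $\{i,i+1\}\subseteq\{1,\dots,n-1\}$, so the root is at position $n$ after $\sigma_i$ if and only if it was before. Tightening that sentence would make the argument for $d(\sigma_iT)=d(T)$ fully clean.
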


\begin{proof}
Let $v_r$ be the root of $T$. Then there are two cases.

Case (a): $r=n$ and $i\le r-2=n-2$. 

By \eqref{eq: delta on (T,e) when vn is root} we have $\delta_n(T,\varepsilon)=(T^+,\varepsilon-1)$ with root $v_0$. So, by Lemma \ref{lem: delta on (T,e)},
\[
	\sigma_{i+1}\delta_n(T,\varepsilon)=(\sigma_{i+1}(T^+),\varepsilon-1)=((\sigma_iT)^+,\varepsilon-1)=\delta_n(\sigma_iT,\varepsilon)=\delta_n\sigma_i(T,\varepsilon)
\]

Case (b): $r<n$ and $i\le n-2$.

If $i\neq r-1$ then $\varepsilon$ does not change and we have:
\[
	\delta_n\sigma_i(T,\varepsilon)=(\delta_n\sigma_iT,\varepsilon)=(\sigma_{i+1}\delta_n T,\varepsilon)=\sigma_{i+1}\delta_n (T,\varepsilon).
\]
When $i=r-1$, $\varepsilon$ changes to $\varepsilon+ \overline w(v_i^T)$. But $\overline w(v_i^T)=\overline w(v_{i+1}^{T^+})$ since $T,T^+$ have the same underlying forest. So,
\[
	\delta_n\sigma_i(T,\varepsilon)= \delta_n(\sigma_iT,\varepsilon+ \overline w(v_i^T))=(\delta_n\sigma_iT,\varepsilon+ \overline w(v_i^T))\qquad\qquad
\]
\[
	\qquad\qquad=(\sigma_{i+1}T^+,\varepsilon+\overline w(v_{i+1}^{T^+}))=\sigma_{i+1}(T^+,\varepsilon)=\sigma_{i+1}\delta_n(T,\varepsilon).
\]
This proves the lemma in all cases.
\end{proof}

\begin{lem}\label{lem: reduced weight identity}
Let $T\in \cT_n$ with root $v_r$ and suppose $i\neq r,r-1$. Let $v_j$ and $v_j'$ be the $i$th vertices of $T$ and $T'=\sigma_iT$ then
\[
	\overline w(v_i)+\overline w(v_{i+1})=\overline w(v_i')+\overline w(v_{i+1}')
\]
and $\overline w(v_j)=\overline w(v_j')$ for $j\neq i,i+1$.
\end{lem}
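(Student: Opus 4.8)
The plan is to combine the explicit description of the forest action of $\sigma_i$ (Cases 0--3 of the action on $\cF_n$) with a structural reading of the reduced weight, after which the entire verification collapses to a single invariance statement about one branch of $T$. First I would extract the consequences of the hypotheses $i\neq r,r-1$. Since $i\neq r$ and $i+1\neq r$, neither $v_i$ nor $v_{i+1}$ is the root of $T$; hence the special rule of Definition \ref{def: Bn action on Tn} is never triggered, $\sigma_i$ acts by the ordinary forest action, and in Cases 1--3 the relevant parent $v_k$ is an honest vertex of $T$ (if $v_k=v_0$ then $v_i$ or $v_{i+1}$ would be a root of $T$, i.e. equal to $v_r$). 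Because $r\neq i,i+1$, the root is untouched: $v_r$ is still the root of $T'$, so $\overline w(v_r)=1=\overline w(v_r')$ and the index $j=r$ is settled. Unwinding Definition \ref{def: augmented forest}, the reduced tree weight of a non-root vertex $v$ equals the size of the subtree rooted at $v$ when $v$ is a child of $v_r$, and equals $0$ otherwise; since neither $v_i$ nor $v_{i+1}$ is the root, only this dichotomy is relevant for them.

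The case that $v_i,v_{i+1}$ are unrelated (Case 0) is immediate, since $\sigma_i$ merely interchanges the labels $i$ and $i+1$: every subtree size and every child-of-$v_r$ relation is literally unchanged, so both displayed identities hold. For Cases 1--3 I would introduce $W$, the union of the subtrees rooted at $v_i$ and $v_{i+1}$ (nested in Cases 1 and 2, disjoint in Case 3). The crux is that $\sigma_i$ preserves $W$ as a vertex set, keeps $v_k\notin W$ as the unique parent lying above $W$, and only rearranges the internal structure of $W$ while keeping intact all relations among vertices outside $\{v_i,v_{i+1}\}$; this is exactly what the rules of Cases 1--3 assert.

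From this I would extract two facts. For $j\neq i,i+1$, both the subtree size of $v_j$ and the property of being a child of $v_r$ are unchanged: a vertex outside or above $W$ has $W$ hanging unchanged below the fixed vertex $v_k$, while a vertex strictly inside $W$ carries its own subtree along unchanged; hence $\overline w(v_j)=\overline w(v_j')$. Secondly, the vertices of $W$ that are children of $v_k$ form a subset of $\{v_i,v_{i+1}\}$ before $\sigma_i$ and of $\{v_i',v_{i+1}'\}$ after, and in every configuration their subtrees partition $W$; so the total subtree size of the children of $v_k$ lying in $W$ equals $|W|$ both before and after. The sum identity now follows by cases on $v_k$: if $v_k=v_r$ then $\overline w(v_i)+\overline w(v_{i+1})$ is precisely this total, namely $|W|$, and likewise $\overline w(v_i')+\overline w(v_{i+1}')=|W|$; if $v_k\neq v_r$ then each parent occurring among $v_i,v_{i+1},v_i',v_{i+1}'$ has label in $\{k,i,i+1\}$, hence differs from $r$, forcing all four reduced weights to vanish. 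Either way $\overline w(v_i)+\overline w(v_{i+1})=\overline w(v_i')+\overline w(v_{i+1}')$.

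I expect the main obstacle to be the bookkeeping behind the second fact: one must verify, using the precise redistribution of children in Cases 1, 2, and 3, that no vertex of $W$ other than (the images of) $v_i$ and $v_{i+1}$ ever becomes a child of $v_k$, and that the subtrees of the deeper vertices are genuinely transported unchanged. Once $W$ and its fixed entry point $v_k$ are isolated this is a finite, routine check, and the conceptual payoff is that the quantity $\overline w(v_i)+\overline w(v_{i+1})$ only ever registers the total mass $|W|$ of the branch, which $\sigma_i$ cannot alter.
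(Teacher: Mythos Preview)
Your argument is correct, and it takes a genuinely different route from the paper's proof.

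The paper first observes that $\overline w(v_r)=1$ and $\sum_{j\neq r}\overline w(v_j)=n-1$, so the sum identity follows once one knows $\overline w(v_j)=\overline w(v_j')$ for all $j\neq i,i+1$. To prove the latter, the paper passes through the bijection with complete exceptional sequences in type $A_{n-1}$: for the forests $F=\pi(T)$ and $F'=\pi(T')$, the corresponding sequences $E_\ast$, $E_\ast'$ agree in the $k$th slot (the one indexing $v_j$) because the braid move $\sigma_i$ only touches positions $i,i+1$, and the perpendicular categories are unchanged, so $E_k$ is relatively projective/injective in $E_\ast$ iff $E_k'$ is in $E_\ast'$. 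Via the characterization of roots in \cite{Ig-Sen} (Theorem~\ref{thm: forests in type An}), this means $v_k^F$ is a root of $F$ iff $v_k^{F'}$ is a root of $F'$, and since $E_k=E_k'$ the weights also agree; hence $\overline w(v_j)=\overline w(v_j')$.

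By contrast, you never leave the combinatorics of trees: you read $\overline w$ as ``subtree size if child of $v_r$, else $0$'', isolate the block $W$ carried by $\{v_i,v_{i+1}\}$ together with its single attachment point $v_k$, and check in each of Cases~0--3 that the mass $|W|$ and the identity of $v_k$ are preserved while everything outside $W$ is literally untouched. This is more elementary---it does not invoke the exceptional-sequence dictionary or the relative projectivity criterion from \cite{Ig-Sen}---at the cost of a short case analysis. The paper's approach is shorter on the page and conceptually tied to the rest of the theory, but it imports nontrivial results; yours would stand alone even if Section~\ref{sec: rooted trees} were read in isolation from representation theory. Both establish the lemma cleanly.
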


\begin{proof} The reduced weight of the root $v_r$ is equal to $1$ by definition. The other reduced weights add up to $n-1$. So, it suffices to show that $\overline w(v_j)=\overline w(v_j')$ for $j\neq i,i+1,r$.
Let $E_\ast=(E_1,\cdots,E_{n-1})$ and $E_\ast'=(E_1',\cdots,E_{n-1}')$ be the complete exceptional sequences for $A_{n-1}$ with straight orientation corresponding to the forests $F=\pi(T)$ and $F'=\pi(T')$. Let $v_k^F$ be the vertex corresponding to $v_j$. (Thus $k=j$ if $j<r$ and $k=j-1$ otherwise.) The braid move $\sigma_i$ does not change the perpendicular categories in which $E_k,E_k'$ lie. So, $E_k$ is relatively projective or injective in $E_\ast$ if and only if $E_k'$ is relatively projective or injective in $E_\ast'$. Therefore $v_k$ is a root of $F$ if and only if $v_k'$ is a root of $F'$. Also $E_k'=E_k$. So, the weight is unchanged. So, the reduced weight is unchanged.
\end{proof}

\begin{thm}\label{thm: action of Bn on (T,e)}
Definition \ref{def: action of Bn on (T,e)} gives an action of the braid group $B_n$ on $\widetilde \cT_n$.
\end{thm}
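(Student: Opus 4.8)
The plan is to verify that the operators $\sigma_1,\dots,\sigma_{n-1}$ defined on $\widetilde\cT_n$ in Definition \ref{def: action of Bn on (T,e)} satisfy the braid group relations, namely commutativity for $|i-j|\ge 2$ and the relation \eqref{eq: braid relation}. On the underlying trees both families already hold by Corollary \ref{cor: braid relations hold}, so in every identity the tree component is automatic and the only new content is that the two sides produce the \emph{same augmentation}. I would therefore follow the exact architecture of Corollary \ref{cor: braid relations hold}: first establish the commutativity relations on $\widetilde\cT_n$ directly, and then deduce \eqref{eq: braid relation} from the single case $i=n-2$ by conjugating with the fundamental braid, now using Lemma \ref{lem: delta action on tilde Tn} in place of Proposition \ref{prop: delta T= T+}.

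For commutativity I would track, as a two-letter word is applied to $(T,\varepsilon)$, both the position of the root and the cumulative change in $\varepsilon$. Two facts govern the bookkeeping. First, by Definition \ref{def: action of Bn on (T,e)} the generator $\sigma_k$ changes the augmentation only when applied to a tree whose root sits in position $k+1$, in which case it adds $\overline w(v_k)$. Second, $\sigma_k$ moves the root only when $k\in\{r-1,r\}$: the special rule sends the root from $r=k+1$ down to $k$, while $\sigma_r$ falls under Case~0 or Case~1 (the root has neither parent nor sibling to realize Cases~2--3) and sends the root from $r$ up to $r+1$. Fix $i<j$ with $j\ge i+2$, so the active windows $\{i,i+1\}$ and $\{j,j+1\}$ are disjoint, and split on the location of the root of $T$. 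If $r\notin\{i,i+1,j,j+1\}$, neither generator touches the root or the augmentation in either order. If $r\in\{i,i+1\}$, only the $\sigma_i$-step can alter $\varepsilon$; the far generator $\sigma_j$ leaves $\overline w(v_i)$ unchanged by Lemma \ref{lem: reduced weight identity} (whose hypothesis $j\neq r,r-1$ holds because $j>i+1\ge r$, and $i\neq j,j+1$), so both orders add the same amount, namely $\overline w(v_i^T)$ when $r=i+1$ and $0$ when $r=i$. The case $r\in\{j,j+1\}$ is symmetric. In every case the two orders return equal augmentations, giving $\sigma_i\sigma_j=\sigma_j\sigma_i$ on $\widetilde\cT_n$.

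With commutativity established, the braid relation is obtained by transcribing the computation of Corollary \ref{cor: braid relations hold} onto $\widetilde\cT_n$. Writing $\delta_n=\delta_{n-2}\sigma_{n-2}\sigma_{n-1}$ and invoking Lemma \ref{lem: delta action on tilde Tn} in the form $\delta_n\sigma_{n-2}=\sigma_{n-1}\delta_n$, one has
\[
\delta_{n-2}\sigma_{n-2}\sigma_{n-1}\sigma_{n-2}=\delta_n\sigma_{n-2}=\sigma_{n-1}\delta_n=\delta_{n-2}\sigma_{n-1}\sigma_{n-2}\sigma_{n-1},
\]
where the last equality uses that $\delta_{n-2}$, being a product of $\sigma_\ell$ with $\ell\le n-3$, commutes with $\sigma_{n-1}$ by the commutativity relations just proved. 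Cancelling the bijection $\delta_{n-2}$ yields $\sigma_{n-2}\sigma_{n-1}\sigma_{n-2}=\sigma_{n-1}\sigma_{n-2}\sigma_{n-1}$, and conjugating by powers of $\delta_n$ (again through Lemma \ref{lem: delta action on tilde Tn}) propagates this to \eqref{eq: braid relation} for every $i$, completing the proof.

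The main obstacle is the bookkeeping in the commutativity step. Because $\sigma_r$ itself can shift the root by one position, one must rule out the possibility that, during the two-step composite, the root is dragged out of the window of one generator and into the window of the other, which could manufacture an order-dependent augmentation change. The separation $j\ge i+2$ prevents this, and Lemma \ref{lem: reduced weight identity} ensures that the single reduced weight which may be added is the same whether or not the far generator has already acted. Once these two points are pinned down the remaining verifications are routine.
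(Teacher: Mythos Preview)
Your proposal is correct and follows essentially the same architecture as the paper's proof: establish commutativity of $\sigma_i,\sigma_j$ on $\widetilde\cT_n$ for $|i-j|\ge 2$ using Lemma \ref{lem: reduced weight identity}, then derive the braid relation \eqref{eq: braid relation} for $i=n-2$ from Lemma \ref{lem: delta action on tilde Tn} exactly as in Corollary \ref{cor: braid relations hold}, and conjugate by powers of $\delta_n$ to obtain the remaining cases. Your commutativity argument is slightly more detailed than the paper's---you explicitly track root movement under $\sigma_r$ (the non-special-rule case) and verify that the root cannot be dragged from one generator's window into the other's---whereas the paper simply splits on whether $r-1\in\{i,j\}$ and leaves these root-tracking facts implicit; but the substance is identical.
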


\begin{proof} This follows from Lemma \ref{lem: delta action on tilde Tn} in the same way that Corollary \ref{cor: braid relations hold} follows from Proposition \ref{prop: delta T= T+}. We just need to show the commutativity relation: that $\sigma_i,\sigma_j$ commute if $|i-j|\ge 2$. If $i,j\neq r-1$ this follows from Lemma \ref{lem: action of Bn on Tn satisfies commutativity relations} since $\sigma_i,\sigma_j$ do not change the augmentation $\varepsilon$.

Suppose $j=r-1$. Then $\sigma_j(T,\varepsilon)=(\sigma_jT, \varepsilon+\overline w(v_j^T))$. Since $|i-j|\ge2$ we have either $i\ge r+1$ or $i\le r-3$. By Lemma \ref{lem: reduced weight identity} we have $\overline w(v_j^T)=\overline w(v_j^{T'})$ for $T'=\sigma_iT$. Thus,
\[
	\sigma_j\sigma_i(T,\varepsilon)=\sigma_j(T',\varepsilon)=(\sigma_jT',\varepsilon+\overline w(v_j^{T'}))=\sigma_i(\sigma_jT,\varepsilon+\overline w(v_j^{T}))=\sigma_i\sigma_j(T,\varepsilon)
\]
since $\sigma_jT'=\sigma_j\sigma_iT=\sigma_i\sigma_jT$. So, $\sigma_i,\sigma_j$ commute and the proof is complete.
\end{proof}

\section{Oriented and pointed chord diagrams}\label{sec: chord diagrams}

Oriented chord diagrams are defined to be sets of oriented chords on a circle with $n$ marked points which are pairwise ``noncrossing'' and ordered in a good way. We also define ``pointed'' chord diagrams which are oriented chord diagrams together with an added loop at some vertex in the ``central region'' of the interior of the circle. We show that there is a bijection between oriented chords $\gamma_{ij}$ and objects $V_{ij}$ in $\cW_n$, the abelian tube of rank $n$.

\subsection{Unoriented strand and chord diagrams}\label{ss: unoriented chords}

We review the definition and enumeration of unoriented chord diagrams and strand diagrams from \cite{GIMO}.

Take a circle with $n+1$ marked points labeled $0$ to $n$. We usually order these clockwise around the circle, however, other orderings are sometimes convenient. (See Figure \ref{Fig: strand to chord diagram}.) By a \emph{chord} we mean a straight line segment connecting two of these marked points. There are $\binom{n+1}2=n(n+1)/2$ chords. A pair of chords is \emph{noncrossing} if they do not cross, i.e., do not meet except possibly at their endpoints. 

We define a \emph{chord diagram} to be a sequence $(c_1,\cdots,c_k)$ of pairwise noncrossing chords having the property that, at every marked point, the chords $c_i$ which have one endpoint at that marked point are ordered clockwise. It is easy to see that a chord diagram can have no oriented cycles since, in any oriented cycle, the chords must be numbered in increasing order as we go counterclockwise around the cycle and this is impossible. Thus, every region of the complement of a chord diagram has at least one part of its boundary being an arc on the circle.

\begin{lem}\label{lem: chord diagrams can be completed}
Every chord diagram of length $k<n$ on a circle with $n+1$ marked points can be completed to a chord diagram with $n$ chords.
\end{lem}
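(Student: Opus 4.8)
The plan is to reduce the statement to adding a single chord and then to induct on the length. First I would record that, by the discussion preceding the lemma, a chord diagram is the same thing as a \emph{noncrossing forest}: since every region of the complement meets the circle in an arc, the chords can contain no undirected cycle, so on $n+1$ marked points a chord diagram of length $k$ is a forest with exactly $n+1-k$ components whose chords cut the disk into exactly $k+1$ regions, each of which is a convex cell (an intersection of the disk with half-planes) meeting the circle. Thus it suffices to show that whenever $k<n$, so that there are at least two components, one can append a single chord $c_{k+1}$ while keeping the sequence a chord diagram; joining two distinct trees preserves the forest (hence chord-diagram) property, and iterating reaches length $n$.

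Next I would characterize the appendable chords. A new chord may be added as the highest-indexed chord precisely when it crosses nothing and is the clockwise-most chord at each of its two endpoints. At a marked point $p$ the clockwise-most sector is the one bordered by the arc from $p$ to its clockwise neighbor $p^+$; hence a chord drawn from $p$ into the region $R$ bordered by that arc is automatically clockwise-most at $p$, and by convexity of $R$ such a chord stays inside $R$ and crosses nothing. Consequently, if a single region $R$ carries on its boundary two arcs $(p,p^+)$ and $(q,q^+)$ with $p\ne q$ lying in \emph{different} components, then the straight chord $pq\subseteq R$ is appendable and joins two trees, which is exactly the inductive step.

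It remains to produce such a region, and this is the main obstacle. Since there are $n+1$ arcs distributed among only $k+1<n+1$ regions, some region has at least two boundary arcs; the difficulty is to force two of these arcs to have endpoints in different components. I would resolve this using the dual tree $\cD$ whose nodes are the $k+1$ regions and whose edges are the $k$ chords, each chord separating exactly two regions; this dual is connected with one fewer edge than node, hence is a tree. Assume for contradiction that every region borders only one component, and color each node of $\cD$ by that component. A chord lying in a component $T_j$ has both its endpoints on each adjacent region, so both regions it separates are colored $T_j$; a region colored $T_j$ therefore has all of its incident chords colored $T_j$, so its neighbors in $\cD$ are colored $T_j$ as well. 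By connectedness of $\cD$ all chords then lie in one component $T_{j_0}$. But $k<n$ forces a second component, so some vertex $v$ is isolated, and the region containing the arc at $v$ also borders a chord of $T_{j_0}$, so it touches two components, contradicting the colouring. Hence some region borders two components, which by the converse of the colouring observation yields two arcs $(p,p^+),(q,q^+)$ whose endpoints $p,q$ lie in different trees, completing the step.
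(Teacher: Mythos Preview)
Your proof is correct, but it takes a substantially more elaborate route than the paper's, and the extra machinery is unnecessary.

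The paper argues directly: by pigeonhole, since there are $n+1$ boundary arcs and only $k+1<n+1$ regions, some region $U$ has at least two arcs on its boundary. If two of these arcs are consecutive, the middle marked point $m_2$ is isolated and one appends the chord $m_2m_3$. Otherwise one connects suitable endpoints of two arcs of $U$; the new chord lies inside the convex region $U$ (hence is noncrossing) and is clockwise-most at each endpoint because $U$ is the clockwise-most sector there. That is the whole proof: once the appended chord is verified to be noncrossing and clockwise-most at both endpoints, the result is by definition a chord diagram.

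You add a further layer: you insist that the endpoints $p,q$ of the new chord lie in \emph{different} forest components, so as not to create a cycle, and you establish this via a dual-tree colouring argument. This is logically sound, but redundant. Since the chord-diagram axioms (noncrossing plus the clockwise ordering) already force the underlying graph to be a forest, once you have checked that $pq$ is noncrossing and clockwise-most at both ends you are done; the different-components condition then follows for free rather than needing to be arranged in advance. In effect you are proving a lemma (any two arc-starts of a single region lie in distinct components) that is true but not needed for the induction step.

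One small expository gap: your final implication ``some region borders two components $\Rightarrow$ that region has arcs $(p,p^+),(q,q^+)$ with $p,q$ in different trees'' is asserted as ``the converse of the colouring observation'' but not actually argued. It is true (tracing clockwise around $\partial R$, every marked point is chord-connected to the next arc-start, so if all arc-starts lay in one component then all boundary points would), but you should spell this out.
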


\begin{proof}
The circle is divided into $n+1$ arcs by the marked points. If $k<n$ then one of the $k+1$ regions in the complement of a chord diagram $(c_1,\cdots,c_k)$, call it $U$, has at least two arcs on its boundary. Suppose two arcs on the boundary of $U$ are consecutive with endpoints the marked points $m_1,m_2,m_3$ in clockwise order on the circle. Then $m_2$ is not an endpoint of any of the given chords $c_i$ and we take the chord connecting $m_2,m_3$ to the chord diagram and call it $c_{k+1}$. It will be clockwise from any $c_i$ incident to $m+3$. If the boundary of $U$ does not have consecutive arcs, we let $c_{k+1}$ be the chord connecting the rightmost marked points of two of the arcs. This will be clockwise from any $c_i$ incident to either of these marked points. Furthermore, $c_{k+1}$ will not cross any $c_i$ since $U$ is convex.
\end{proof}

\begin{thm}\label{thm: chord diagrams and exc seq of type An}
Chord diagrams of length $k$ on a circle with $n+1$ marked points are in bijection with exceptional sequences of length $k$ for any quiver of type $A_n$.
\end{thm}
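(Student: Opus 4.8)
The plan is to establish a bijection between chord diagrams of length $k$ on a circle with $n+1$ marked points and length $k$ exceptional sequences for a quiver $Q$ of type $A_n$, and I would fix the straight orientation on $Q$ to make the combinatorics explicit. First I would set up the dictionary at the level of a single chord and a single exceptional (indecomposable rigid brick) object. The indecomposables over an $A_n$ quiver are parametrized by intervals $[a,b]$ of vertices, equivalently by pairs of marked points $i<j$ with support $(i,j]$; to the chord $c$ connecting marked points $i$ and $j$ I associate the interval module $M_{ij}$ with that support. This is a bijection between the $\binom{n+1}{2}$ chords and the $\binom{n+1}{2}$ indecomposable modules, and every indecomposable over $A_n$ is a rigid brick, so each chord legitimately names an exceptional object.

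The key step is to translate the two defining conditions on chord diagrams — pairwise noncrossing, and the clockwise ordering at each shared marked point — into the homological conditions $\Hom(E_j,E_i)=0=\Ext(E_j,E_i)$ for $i<j$ that define an exceptional sequence. I would verify two local facts. First, that two chords $c,c'$ are noncrossing if and only if the corresponding modules are \emph{ext-orthogonal}, i.e. $\Ext^1$ vanishes in both directions; crossing chords correspond precisely to a nonsplit extension in one direction, which is the standard geometric model of $\Ext^1$ for type $A$. Second, that when two chords share a marked point, the clockwise ordering of the endpoints records exactly which of the two possible $\Hom$'s (or the remaining $\Ext$) is nonzero, and hence pins down the correct order $i<j$ so that all ``backward'' maps and extensions vanish. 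Disjoint chords that neither cross nor share an endpoint give modules that are fully $\Hom$- and $\Ext$-orthogonal, so they may be ordered arbitrarily, matching the fact that the diagram imposes no constraint between them.

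From these local statements I would assemble the global bijection. Given a chord diagram $(c_1,\dots,c_k)$, the acyclicity observation already recorded in the text — that a chord diagram has no oriented cycles, so the chords can be consistently ordered — guarantees that the induced partial order on objects extends to the given total order $c_1,\dots,c_k$ in a way compatible with all the pairwise $\Hom/\Ext$ vanishing conditions, producing a genuine exceptional sequence $(E_1,\dots,E_k)$. Conversely, from an exceptional sequence I read off the $k$ chords and recover the clockwise orderings at each marked point from the nonvanishing $\Hom$'s, and the exceptional-sequence inequalities force the noncrossing property. I would check these two constructions are mutually inverse.

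The main obstacle I expect is the careful bookkeeping at a shared marked point: one must show that the clockwise convention corresponds to exactly one nonzero $\Hom$ and translate ``$c_i$ is ordered clockwise before $c_j$ at the shared point'' into the precise statement $\Hom(E_j,E_i)=0$ with the surviving map going the right way, consistent with whether the shared point is the source or target endpoint of each interval. Getting the orientation conventions aligned — clockwise ordering versus the direction of maps versus the inequality $i<j$ — is the delicate part; once that single local calculation is nailed down, noncrossing $\leftrightarrow$ ext-orthogonality follows from the standard $A_n$ extension picture, and the global statement is a matter of combining the local facts with the acyclicity already proved. I would also invoke Lemma \ref{lem: chord diagrams can be completed} implicitly only to the extent that it confirms the count matches the known number of length $k$ exceptional sequences, providing a consistency check rather than part of the logical core.
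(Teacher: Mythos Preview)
Your approach is valid but differs substantially from the paper's. The paper does not verify the pairwise correspondence directly; instead it black-boxes the complete case $k=n$ from \cite{GIMO}, then uses a truncation/completion argument: two complete exceptional sequences (resp.\ complete chord diagrams) are declared equivalent when their first $k$ terms agree, and Lemma~\ref{lem: chord diagrams can be completed} together with the standard completability of exceptional sequences identifies the equivalence classes with the length-$k$ objects on each side. So Lemma~\ref{lem: chord diagrams can be completed} is load-bearing in the paper's argument, not a consistency check. Your route is more self-contained and avoids the detour through the complete case, at the cost of redoing the local $\Hom/\Ext$ computations that \cite{GIMO} already packages; the paper's route is shorter but relies on that citation.

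One point to tighten: your clean dichotomy ``noncrossing $\Leftrightarrow$ $\Ext^1$ vanishes in both directions; clockwise order at a shared endpoint $\Leftrightarrow$ which $\Hom$ is nonzero'' is not quite right. Chords sharing an endpoint are automatically noncrossing, yet the corresponding modules can have a nonzero $\Ext^1$ in one direction (e.g.\ adjacent simples). You half-acknowledge this with the parenthetical ``(or the remaining $\Ext$)'', but the statement as written is false. The correct local fact is simply that $(M_{ij},M_{ab})$ is an exceptional pair iff the chords are noncrossing \emph{and}, at any shared endpoint, the second chord is clockwise from the first; there is no separate $\Ext$-only characterization of noncrossing. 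Once you state it that way, your assembly of the global bijection from pairwise conditions goes through, since both ``chord diagram'' and ``exceptional sequence'' are defined by pairwise constraints.
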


The proof using strand diagrams is given below. The cardinality of this set is known. See, for example, \cite{I:prob}:

\begin{thm}\label{thm: number of exc seq for An-1}
The number of exceptional sequences of length $k$ for a quiver of type $A_n$ with any orientation is
\[
	\binom{n+1}{k+1}(n+1)^{k-1}.
\]
\end{thm}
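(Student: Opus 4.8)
The plan is to reduce the count to a clean enumerative recursion and verify it against a closed form, using the two extreme values of $k$ as anchors. By Theorem~\ref{thm: chord diagrams and exc seq of type An} the number is orientation independent and equals the number of length-$k$ chord diagrams on a circle with $n+1$ marked points, which is a convenient concrete model; but the enumeration itself I would run through perpendicular categories. The base cases already check out: for $k=0$ there is the empty sequence, and indeed $\binom{n+1}{1}(n+1)^{-1}=1$; for $k=n$ the complete exceptional sequences are the labeled trees on $n+1$ vertices recalled in the introduction, of which there are $(n+1)^{n-1}=\binom{n+1}{n+1}(n+1)^{n-1}$ by Cayley's formula.

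Write $f(n,k)$ for the number in question and $\hat F_n(x)=\sum_{k\ge0} f(n,k)\,x^k/k!$ for its exponential generating function. Deleting the first term of an exceptional sequence $(E_1,\dots,E_k)$ identifies it with a choice of exceptional object $E_1$ together with a length-$(k-1)$ exceptional sequence in the left perpendicular category ${}^\perp E_1$, so that $f(n,k)=\sum_{E_1} f_{{}^\perp E_1}(k-1)$. Since exceptional sequences in a disjoint union interleave freely, the exponential generating function is multiplicative over disjoint unions, $\hat F_{A_a\sqcup A_b}=\hat F_{A_a}\hat F_{A_b}$, and the deletion relation becomes the differential equation
\[
\hat F_n'(x)=\sum_{E_1}\hat F_{{}^\perp E_1}(x).
\]
For $A_n$ every exceptional object is an interval, there are $\binom{n+1}{2}$ of them, and each left perpendicular category is a disjoint union $A_a\sqcup A_b$ with $a+b=n-1$. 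Thus the whole problem collapses to knowing the multiset of splits $(a,b)$ as $E_1$ ranges over the intervals, after which the claim is equivalent to the closed form $\sum_k f(n,k)x^k=\bigl((1+(n+1)x)^{n+1}-1\bigr)/\bigl((n+1)^2x\bigr)$, i.e.\ to a routine binomial identity.

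The main obstacle is precisely the determination of this distribution of perpendicular types together with the verification that the resulting weighted sum telescopes to the product $\binom{n+1}{k+1}(n+1)^{k-1}$. In the chord-diagram model the same difficulty appears as the clockwise-ordering multiplicities: the number of valid orderings of a fixed underlying noncrossing forest is not constant, and one must show that these linear-extension weights conspire to give the clean product. A small case such as $A_3$, where four intervals give a connected perpendicular category $A_2$ while two give $A_1\sqcup A_1$, already shows that the splits depend subtly on the position of the interval, so a uniform bookkeeping is needed rather than a naive product over the two sides cut off by $E_1$.

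To carry this out I would follow the probabilistic method of \cite{I:prob}, which is also the engine behind Theorem~\ref{thm E}: construct the exceptional sequence one term at a time and show that the relevant counts at successive stages behave as independent events, so that $f(n,k)$ emerges as a genuine product rather than an unwieldy convolution. Lemma~\ref{lem: chord diagrams can be completed} guarantees that no partial sequence is a dead end, which is what lets this sequential count be run uniformly in $k$; matching the resulting product with $\binom{n+1}{k+1}(n+1)^{k-1}$ then finishes the proof.
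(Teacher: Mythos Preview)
The paper does not actually prove this theorem: it simply states ``The cardinality of this set is known. See, for example, \cite{I:prob}'' and moves on. Your proposal, after setting up a generating-function framework via perpendicular categories, ultimately lands in the same place: you explicitly identify the determination of the multiset of perpendicular splits and the verification of the telescoping product as ``the main obstacle,'' and then say you would resolve it by following the probabilistic method of \cite{I:prob}. So in effect you and the paper are both deferring the actual enumeration to the same external reference.

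Where your write-up differs is in the scaffolding you erect before that deferral. The exponential generating function and the differential equation $\hat F_n'(x)=\sum_{E_1}\hat F_{{}^\perp E_1}(x)$ are correct and pleasant, and the observation that $\hat F$ is multiplicative over disjoint unions is exactly what makes the recursion tractable. But none of this is carried to completion: you do not actually compute the multiset of $(a,b)$ splits (which, as you note with the $A_3$ example, is the substantive step), nor do you verify the binomial identity you write down. As a standalone proof this is incomplete in exactly the place you flag; as a gloss on why the cited result is plausible and how one might rederive it, it is accurate and informative.
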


We need this formula with $n$ reduced by 1:

\begin{cor}\label{cor: number of chord diagrams of length k}
The number of length $k$ chord diagrams on a circle with $n$ marked points is
\[
	\binom{n}{k+1}n^{k-1}=\frac1{k+1}\binom{n-1}k n^k.
\]
\end{cor}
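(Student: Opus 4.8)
The plan is to derive this directly from the two immediately preceding results by a shift of the marked-point index. First I would apply Theorem \ref{thm: chord diagrams and exc seq of type An} with $n$ replaced by $n-1$: this identifies length $k$ chord diagrams on a circle with $n$ marked points with length $k$ exceptional sequences for a quiver of type $A_{n-1}$. Then I would invoke Theorem \ref{thm: number of exc seq for An-1}, again with $n$ replaced by $n-1$, which counts these exceptional sequences as $\binom{n}{k+1}n^{k-1}$. This is exactly the first expression in the statement, so the combinatorial content is fully supplied by the two cited theorems.

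The second, purely algebraic, step is to verify the identity $\binom{n}{k+1}n^{k-1}=\frac{1}{k+1}\binom{n-1}{k}n^k$. I would expand both sides in factorials: the left side is $\frac{n!}{(k+1)!(n-k-1)!}\,n^{k-1}$, and using $n!=n\cdot(n-1)!$ this rewrites as $\frac{(n-1)!}{(k+1)!(n-1-k)!}\,n^{k}$, since $(n-k-1)!=(n-1-k)!$. The right side expands to $\frac{1}{k+1}\cdot\frac{(n-1)!}{k!(n-1-k)!}\,n^k=\frac{(n-1)!}{(k+1)!(n-1-k)!}\,n^k$, which agrees. This is a one-line computation.

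The main (and essentially only) obstacle is the index bookkeeping: both Theorem \ref{thm: chord diagrams and exc seq of type An} and Theorem \ref{thm: number of exc seq for An-1} are stated for type $A_n$ on a circle with $n+1$ marked points, so each requires the substitution $n\mapsto n-1$ to land on a circle with $n$ marked points. Aside from keeping this reindexing consistent across the bijection and the enumeration, no new ideas are needed; the corollary is a direct specialization followed by an elementary factorial manipulation.
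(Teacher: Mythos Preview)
Your proposal is correct and matches the paper's approach exactly: the paper introduces the corollary with the one-line remark ``We need this formula with $n$ reduced by 1,'' which is precisely the reindexing you describe. Your added verification of the binomial identity is a welcome detail that the paper leaves implicit.
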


In \cite{GIMO} it is shown that complete exceptional sequences for a quiver of type $A_n$ are in bijection with strand diagrams which are also in bijection with chord diagrams. We use this result, but with a different orientation convention. Namely, in our diagrams, the strands and chords are required to be ordered clockwise at each marked point.

In a quiver of type $A_n$, there are $n-1$ arrows which we number $\alpha_1,\cdots,\alpha_{n-1}$ where $\alpha_i$ connects vertex $i$ to vertex $i+1$. If $\alpha_i$ points from $i+1$ to $i$ we will call it a \emph{positive} arrow, otherwise, it is a \emph{negative} arrow. (This is the opposite of the sign convention in \cite{GIMO}.) We associate to each arrow $\alpha_i$ the point $(i,0)$ on the $x$-axis in the plane. We also take the points $(0,0)$ and $(n,0)$. For $0\le i<j\le n$, the \emph{strand} $c(i,j)$ is defined to be an isotopy class of arcs going from $(i,0)$ to $(j,0)$ in the $xy$-plane which is disjoint from the other points $(k,0)$ for integer $k$ and so that the arc has no vertical tangents except at endpoints and does not pass over any point $(k,0)$ if $\alpha_k$ is a positive arrow and does not pass under $(k,0)$ if $\alpha_k$ is a negative arrow.

To transform strands into chords, we change the second coordinate of each point $(i,0)$ so that the point lies on the circle with diameter $(0,0)$ to $(n,0)$ and so that the sign of the second coordinate is the sign of the arrow $\alpha_i$. Strands can then be straightened to form noncrossing chords. The indecomposable representations of the $A_n$ quiver are denoted $M_{ij}$ if the support of the representation is the half-open interval $(i,j]$. For example, $M_{12}$ is the simple module $S_2$. The module $M_{ij}$ is represented by the strand $c(i,j)$ and by the chord connecting marked points $i$ and $j$ on the circle. The theorem in \cite{GIMO} is that complete exceptional sequences are in bijection with complete chord diagrams (having $n$ chords) which are equivalent to complete strand diagrams where the ordering of the chords and strands is clockwise at each point. See also \cite{BGMS} and \cite[Fig 4]{AI}.

An example is given on the left side of Figure \ref{Fig: strand to chord diagram} for the indicated $A_5$ quiver.
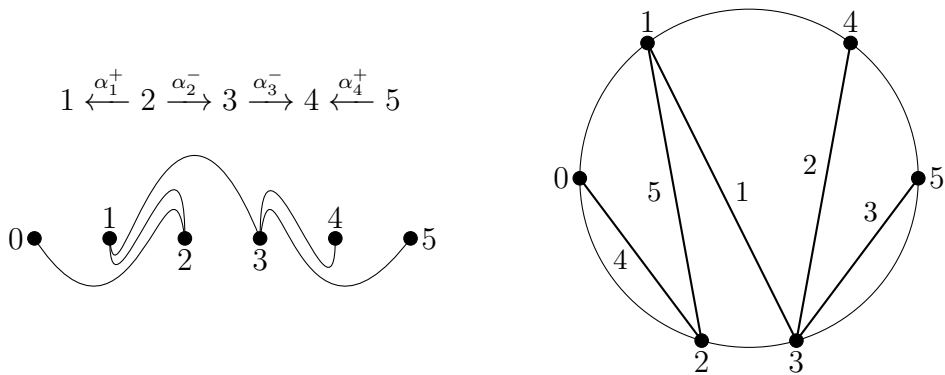
\begin{figure}[htbp]
\begin{center}
\begin{tikzpicture}
\clip (-.5,-2.65) rectangle (13,2.3);
\begin{scope}[yshift=-8mm]
\draw (2.6,2) node{$1\xleftarrow{\alpha_1^+} 2 \xrightarrow{\alpha_2^-} 3 \xrightarrow{\alpha_3^-} 4\xleftarrow{\alpha_4^+} 5$};
\foreach \x in {0,...,5} \draw[fill] (\x,0) circle[radius=.9mm];
\draw (0,0) node[left]{0};
\draw (1,0) node[above]{1};
\draw (2,0) node[below]{2};
\draw (3,0) node[below]{3};
\draw (4,0) node[above]{4};
\draw (5,0) node[right]{5};
\draw (0,0).. controls (1.3,-2) and (1.8,1.5)..(2,0);
\draw (1,0)..controls (1,-1.4) and (2,2)..(2,0);
\draw (1,0)..controls (1.1,-1.2) and (1.8,3)..(3,0);
\draw (3,0)..controls (3,2) and (4,-1.5)..(4,0);
\draw (3,0)..controls (3.2,1.5) and (3.5,-2)..(5,0);
\end{scope}
\begin{scope}[xshift=9.5cm,scale=.8]
\draw (0,0) circle[radius=25mm];
\coordinate (A0) at (-2.5,0);
\coordinate (A5) at (2.5,0);
\coordinate (A1) at (-1.5,2);
\coordinate (A4) at (1.5,2);
\coordinate (A2) at (-.7,-2.4);
\coordinate (A3) at (.7,-2.4);
\coordinate (C1) at (-.1,-.2);
\coordinate (C2) at (.9,.2);
\coordinate (C3) at (1.8,-.5);
\coordinate (C4) at (-1.9,-1.2);
\coordinate (C5) at (-1.4,-.2);
\foreach \x in {1,...,5}
\draw (C\x) node{\small$\x$};
\foreach \x in {(-2.5,0),(2.5,0),(-1.5,2),(1.5,2),(-.7,-2.4),(.7,-2.4)}
\draw[fill]  \x circle[radius=1mm];
\draw[thick] (A0)--(A2)--(A1)--(A3)--(A4) (A3)--(A5);
\draw (A0) node[left]{0};
\draw (A5) node[right]{5};
\draw (A2) node[below]{2};
\draw (A3) node[below]{3};
\draw (A1) node[above]{1};
\draw (A4) node[above]{4};
\end{scope}
\end{tikzpicture}
\caption{The exceptional sequence $(M_{13},M_{34},M_{35}, M_{02},M_{12})$ for the quiver of type $A_5$ at upper left corresponds to the strand diagram $(c(1,3),c(3,4),c(3,5), c(0,2),c(1,2))$ below it and the chord diagram on the right. Chords and strands are ordered clockwise at each vertex.
}
\label{Fig: strand to chord diagram}
\end{center}
\end{figure}

\begin{proof}[Proof of Theorem \ref{thm: chord diagrams and exc seq of type An}]
In \cite{GIMO} it is shown that complete exceptional sequences for a quiver of type $A_n$ are in bijection with strand and chord diagrams with $n$ strands/chords. Define equivalence relations on these sets: Two exceptional sequences are equivalent if their first $k$ terms are the same. Similarly, two chord diagrams are equivalent if the first $k$ chords are the same. By Lemma \ref{lem: chord diagrams can be completed}, any chord diagram with $k$ chords can be completed to a chord diagram with $n$ chords. Thus, the set of equivalence classes of complete chord diagrams are in bijection with chord diagrams of length $k$. Similarly, it is well-known that exceptional sequences of length $k$ on an hereditary algebra can be completed to an exceptional sequence of length $n$. So, length $k$ exceptional sequences are in bijection with equivalence classes of complete exceptional sequences. The bijection between complete exceptional sequences and complete chord diagrams induces a bijection between these equivalence classes proving the theorem.

(We could also quote the proof of the theorem of \cite{GIMO} which implies our theorem.)
\end{proof}

\subsection{Oriented chord diagrams}\label{ss: oriented chord diagrams}

Take a circle with $n$ marked points labeled $0$ to $n-1$ in clockwise order. For distinct $i,j$ in this set, the \emph{oriented chord} $\gamma_{ij}$ is defined to be the oriented line segment going from marked point $i$ to marked point $j$ on the circle. Let $\cC_n$ denote the set of such chords. Then $\cC_n$ has $n(n-1)$ elements. We define the compatibility relation on this set.

Each chord $\gamma_{ij}$ divides the interior of the circle into two convex open regions which we call its ``left side'' and ``right side''. The \emph{left side} is bounded by $\gamma_{ij}$ and the arc on the circle which goes clockwise from marked point $i$ to marked point $j$. The \emph{right side} of the oriented chord is bounded by $\gamma_{ij}$ and the arc on the circle going counterclockwise from $i$ to $j$. We sometimes call the left side of $\gamma_{ij}$ its \emph{support}.

We say that two oriented chords are \emph{compatible} or \emph{noncrossing} if, either their supports are disjoint (but not complementary), or the support of one is contained in the support of the other. By ``not complementary'' we mean that $\gamma_{ij},\gamma_{ji}$ are not compatible. It is clear that compatible chords do not intersect in their interiors. By a \emph{compatible pair} we mean an ordered pair of compatible oriented chords having the property that either they are disjoint, or they meet at a common endpoint, say $k$, and the second chord is clockwise from the first at endpoint $k$. See Figure \ref{fig: compatible chords}.

\begin{figure}[htbp]
\begin{center}
\begin{tikzpicture}[scale=.7]
\coordinate (A0) at (0,-2);
\coordinate (A1) at (-1.73,-1);
\coordinate (A2) at (-1.73,1);
\coordinate (A3) at (0,2);
\coordinate (A13) at (-.87,.5);
\coordinate (A113) at (-1.1,.5);
\coordinate (A443) at (.9,-.5);
\coordinate (A43) at (.87,-.5);
\coordinate (A4) at (1.73,1);
\coordinate (A5) at (1.73,-1);

\coordinate (X) at (-.4,-1.1);

\draw (X) node{$X$};
\draw (A1) node[left]{$k$};
\draw[thick] (0,0) circle[radius=20mm];
\foreach \x in {0,...,5}
\draw[fill] (A\x) circle[radius=1mm];
\draw[very thick,->] (A3)--(A13) (A1)--(A13);
\draw[very thick,->] (A4)--(A43) (A0)--(A43);
\draw[very thick,->] (0,0)--(A1) (A4)--(0,0) ;
\draw (A113) node[above]{$\alpha$};
\draw (A443) node[right]{$\gamma$};
\draw (0,.1) node[above]{$\beta$};
\end{tikzpicture}
\caption{$(\alpha,\beta)$ is a compatible pair since $\beta$ is clockwise from $\alpha$ at their common endpoint $k$ and the left side of $\alpha$ is disjoint from the left side of $\beta$. $\beta,\gamma$ are not compatible since region $X$ is on the left side of both. $\alpha,\gamma$ are compatible since the left side of $\alpha$ is contained in the left side of $\gamma$. Since $\alpha,\gamma$ do not share an endpoint, they are compatible in either order.}
\label{fig: compatible chords}
\end{center}
\end{figure}
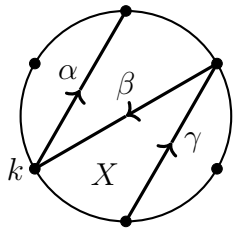

\begin{defn}
An \emph{ordered chord diagram} on a circle with $n$ marked points is defined to be an ordered sequence of oriented chords $(\gamma_1,\cdots,\gamma_k)$ having the property that, for any $1\le i<j\le k$, $(\gamma_i,\gamma_j)$ is a noncrossing pair of oriented chords and with the further property that the chords form a forest (disjoint union of trees). Equivalently, the set of underlying unoriented chords does not contain a cycle. In particular this means $k<n$, 
\end{defn}

In Figure \ref{fig: compatible chords}, $(\alpha,\beta), (\alpha,\gamma)$ and $(\gamma,\alpha)$ are oriented chord diagrams with 2 chords whereas $(\beta,\alpha)$ and $(\beta,\gamma)$ are not. Also, any oriented chord is an oriented chord diagram by itself.

\begin{defn}\label{def: central region}
Any oriented chord diagram $(\gamma_1,\cdots,\gamma_k)$ divides the interior of the circle into $k+1$ convex open regions. We define the \emph{central region} to be the unique region on the right side of every chord $\gamma_i$. 
\end{defn}

It is easy to see, by induction on $k\ge0$, that the central region exists and is unique. Indeed, if $U_{k-1}$ denotes the central region of the complement of $\gamma_i$ for $i<k$, either $\gamma_k$ is on the left side of some $\gamma_i$, placing $U_{k-1}$ on its right side, or $\gamma_k$ is on the right side of every $\gamma_i$ before it in which case $\gamma_k$ cuts $U_{k-1}$ in two parts, one of which is the central region $U_k$ of the complement of all the chords.

By definition of left and right sides it follows that all chords in an oriented chord diagram go clockwise around the central region. Thus, the orientation of each chord is determined by the location of the central region.

More precisely, we have the following.
\begin{lem}\label{lem: central region determines orientation}
If we forget the orientation, any oriented chord diagram gives an unoriented chord diagram of the same length. Conversely, given any unoriented chord diagram of length $k$, any of the $k+1$ regions in its complement can be designated as the central region to construct an oriented chord diagram of the same length.
\end{lem}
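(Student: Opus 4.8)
The statement has two halves which are mutually inverse, so the plan is to prove each half separately and then remark that the round trips are the identity. The forward half is essentially a restatement of the definitions; the substance is the converse, and its crux is a single geometric observation: \emph{if two noncrossing chords are oriented so that one fixed complementary region lies on the right of both, then the resulting oriented chords are compatible.} Once this is in hand, everything else is unwinding definitions together with facts already recorded in Subsection \ref{ss: unoriented chords}.

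For the forward direction I would start with an oriented chord diagram $(\gamma_1,\dots,\gamma_k)$ and forget orientations to get unoriented chords $(c_1,\dots,c_k)$. Compatible oriented chords do not meet in their interiors, so the $c_i$ are pairwise noncrossing. If $c_i$ and $c_j$ with $i<j$ share a marked point $m$, then since $(\gamma_i,\gamma_j)$ is a compatible pair, $\gamma_j$ is clockwise from $\gamma_i$ at $m$; because ``clockwise from'' refers only to the position of the chords at $m$ and is unaffected by orientation, $c_j$ is clockwise from $c_i$ there. Hence at every marked point the chords are ordered clockwise by index, which is exactly the definition of an unoriented chord diagram, of the same length $k$.

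For the converse, let $(c_1,\dots,c_k)$ be an unoriented chord diagram and fix one of the $k+1$ complementary regions $R$. Since $R$ is connected and meets no chord, it lies on one side of each $c_i$; orient $c_i$ as $\gamma_i$ so that $R$ is on its right, equivalently so that $\supp(\gamma_i)$ (the left side) is the side of $c_i$ away from $R$. I would then check the two requirements of an oriented chord diagram. For pairwise compatibility of supports, observe that for distinct noncrossing $c_i,c_j$ the chord $c_j$ lies, apart from a possibly shared endpoint, entirely on one side of $c_i$: if it lies on the side containing $R$ then $\supp(\gamma_j)$ is a subregion of that side and is disjoint from $\supp(\gamma_i)$, whereas if it lies on the opposite side then $c_j$ subdivides $\supp(\gamma_i)$ and $\supp(\gamma_j)\subseteq\supp(\gamma_i)$; in either case the supports are disjoint or nested, and they are never complementary since $c_i\neq c_j$. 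For the ordering at a shared endpoint $m$ with $i<j$, the unoriented diagram already has $c_j$ clockwise from $c_i$ at $m$, so $(\gamma_i,\gamma_j)$ is a compatible pair. The forest condition is inherited, since the underlying unoriented chords form a chord diagram and hence contain no cycle (Subsection \ref{ss: unoriented chords}). Thus $(\gamma_1,\dots,\gamma_k)$ is an oriented chord diagram of length $k$, and as every $\gamma_i$ has $R$ on its right, $R$ is the unique region on the right of all chords, i.e. $R$ is its central region.

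Finally I would note that the two constructions are inverse: forgetting orientation and then reorienting with the recovered central region returns the original diagram, because by the discussion following Definition \ref{def: central region} the orientation of each chord is determined by the location of the central region. The step I expect to be the main obstacle is the support-compatibility claim in the converse: one must verify, through the dichotomy of which side of $c_i$ the chord $c_j$ lies on, that orienting every chord toward a common region $R$ never produces crossing (complementary) supports. The ordering and forest conditions, by contrast, are read off directly from the unoriented chord diagram axioms and from Subsection \ref{ss: unoriented chords}.
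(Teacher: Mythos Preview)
Your proof is correct and follows the same line as the paper, which in fact does not give a separate proof of this lemma but treats it as immediate from the discussion preceding the statement (that the central region exists and that every chord is oriented clockwise around it). You have simply filled in the details the paper leaves implicit, including the forest condition and the support dichotomy, and these are all handled correctly.
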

This allows us to count the number of oriented chord diagrams.

\begin{thm}\label{thm: cardinality of Cn(k)}
The number of oriented chord diagrams of length $k$ in a circle with $n$ marked points is
\[
	\binom{n-1}k n^k
\]
\end{thm}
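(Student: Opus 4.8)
The plan is to reduce the count of oriented chord diagrams to the already-known count of unoriented ones via the forgetful map. First I would observe that forgetting orientations sends an oriented chord diagram of length $k$ to an unoriented chord diagram of the same length, by the first assertion of Lemma \ref{lem: central region determines orientation}. The strategy is then to show this map is exactly $(k+1)$-to-$1$, so that multiplying the unoriented count from Corollary \ref{cor: number of chord diagrams of length k} by $k+1$ yields the formula.

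To identify the fibers, I would set up a bijection between oriented chord diagrams of length $k$ and pairs $(D,R)$, where $D$ is an unoriented chord diagram of length $k$ and $R$ is one of the $k+1$ regions of its complement. The forward map sends an oriented diagram to its underlying unoriented diagram $D$ together with its central region $R$ (Definition \ref{def: central region}). The backward map takes $(D,R)$, designates $R$ as central, and orients every chord clockwise around $R$; the converse direction of Lemma \ref{lem: central region determines orientation} guarantees that this produces a genuine oriented chord diagram of the same length. These two maps are mutually inverse: since all chords of an oriented diagram run clockwise around its central region, the orientation is completely recovered from $R$, so forgetting and then re-orienting returns the original diagram.

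Because $k$ pairwise noncrossing chords forming a forest cut the disk into exactly $k+1$ regions, each unoriented diagram $D$ of length $k$ contributes exactly $k+1$ oriented diagrams. Combining this with the unoriented count $\tfrac1{k+1}\binom{n-1}k n^k$ of Corollary \ref{cor: number of chord diagrams of length k} gives
\[
	|\cC_n^{(k)}| = (k+1)\cdot\frac1{k+1}\binom{n-1}k n^k = \binom{n-1}k n^k.
\]

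The step I expect to require the most care is verifying that distinct regions $R\neq R'$ yield genuinely distinct oriented diagrams, so that the fiber really has size $k+1$ and not fewer. I would settle this by noting that two distinct connected regions of the complement are separated by at least one chord $c$, placing $R$ and $R'$ on opposite sides of $c$; consequently the clockwise-around-$R$ orientation of $c$ is the reverse of the clockwise-around-$R'$ orientation, so the two oriented diagrams differ in the orientation of $c$. This confirms that the $k+1$ choices of central region are pairwise distinct and completes the argument.
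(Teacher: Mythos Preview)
Your proposal is correct and follows essentially the same approach as the paper: use Lemma~\ref{lem: central region determines orientation} to see that the forgetful map from oriented to unoriented chord diagrams is $(k+1)$-to-$1$, then multiply the count in Corollary~\ref{cor: number of chord diagrams of length k} by $k+1$. You have simply spelled out in more detail the bijection with pairs $(D,R)$ and the injectivity of the region-to-orientation map, which the paper leaves implicit.
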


\begin{proof}
By the lemma, every unoriented chord diagram of length $k$ gives $k+1$ oriented chord diagrams of the same length. So, we multiply the number in Corollary \ref{cor: number of chord diagrams of length k} by $k+1$ to obtain the theorem.
\end{proof}

We denote by $\cC_n^{(k)}$ the set of oriented chord diagrams of length $k$ in a circle with $n$ marked points numbered clockwise from $0$ to $n-1$. Theorem \ref{thm: cardinality of Cn(k)} gives the cardinality of this set. We call an oriented chord diagram \emph{complete} if it has the maximum number of chords which is $k=n-1$. By Theorem \ref{thm: cardinality of Cn(k)}, the number of complete oriented chord diagrams is $n^{n-1}$. We will show that these are in bijection with augmented forests with $n-1$ vertices. But first, we need to add loops to complete oriented chord diagrams to form ``pointed chord diagrams'' which will be in bijection with the $n^n$ augmented rooted trees with $n$ vertices.

\subsection{Loops and pointed chord diagrams}\label{ss: pointed chord diagrams}
We add to oriented chord diagrams objects which we call \emph{loops} and denote by $\gamma_{kk}$. This will be a formally defined objects which we draw as a circle at marked point $k$ which is required to be disjoint from all chords in the chord diagram. The set of possible oriented chords and loops, denoted $\widetilde \cC_n$, is defined to be the set of all $\gamma_{ij}$ where $i,j$ are not necessarily distinct integers modulo $n$. Thus $\widetilde \cC_n$ has $n^2$ elements.

A sequence of $k$ elements $(\gamma_1,\cdots,\gamma_k)$ in the set $\widetilde \cC_n$ will be called \emph{noncrossing} if at most one of the $\gamma_i$ is a loop, the others form a sequence of noncrossing oriented chords. Furthermore, the loop, say $\gamma_\ell$, if one exists in the sequence, must lie in the central region if it is places at marked point $\ell$ in the position which is clockwise from all chords $\gamma_i$ at $\ell$ for $i<\ell$ and counterclockwise from all chords $\gamma_j$ at $\ell$ for $j>\ell$. An example is drawn in Figure \ref{fig: example of chord diagram with loop} where we observe that the orientation of all chords are determined by the central region which is determined by the loop and the position of the loop at marked point $\ell$ is determined by its position ($5$th) in the sequence.

Let $\widetilde\cC_n^{(k)}$ denote the set of noncrossing sequences of $k$ elements of $\widetilde\cC_n$. Elements of $\widetilde\cC_n^{(n)}$, the maximum size, will be called \emph{pointed chord diagrams}.

\begin{thm}\label{thm: number of pointed chord diagrams}
    The set $\widetilde\cC_n^{(n)}$ has $n^n$ elements.
\end{thm}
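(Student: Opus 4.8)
The plan is to count $\widetilde\cC_n^{(n)}$ by stripping off the unique loop. First I would observe that every noncrossing sequence of length $n$ must contain exactly one loop: by definition an ordered chord diagram consists of chords forming a forest, so it has at most $n-1$ chords, whence a length-$n$ noncrossing sequence cannot consist of chords alone, while at most one of its entries may be a loop. Deleting the loop from a pointed chord diagram therefore leaves a noncrossing sequence of $n-1$ oriented chords, i.e. a complete oriented chord diagram, and by Theorem \ref{thm: cardinality of Cn(k)} there are exactly $n^{n-1}$ of these. Thus it suffices to show that every complete oriented chord diagram admits exactly $n$ loop-insertions, for then $|\widetilde\cC_n^{(n)}|=n\cdot n^{n-1}=n^n$.

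Next I would reformulate the loop-insertion count position by position. Fix a complete oriented chord diagram $D=(\gamma_1,\dots,\gamma_{n-1})$ with central region $U$ (Definition \ref{def: central region}). Inserting a loop $\gamma_{kk}$ at position $p\in\{1,\dots,n\}$ is admissible precisely when the angular slot at the marked point $k$ that is clockwise from every chord of $D$ at $k$ of index $<p$ and counterclockwise from every chord at $k$ of index $\ge p$ opens into $U$. Given $p$ and $k$ this slot is uniquely determined, since the chords incident to $k$ are clockwise-ordered by their index. So the claim becomes: for each position $p$ there is exactly one marked point $k$ whose $p$-slot lies in $U$. Summing over the $n$ positions then yields exactly $n$ admissible insertions.

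To prove this I would argue by an interval-tiling. For a boundary vertex $k$ of $U$ there is a single sector of $U$ at $k$, bounded by the two edges of $\partial U$ meeting at $k$; write $q_a<q_{a+1}$ for the sequence-indices of these two edges, with the convention that the unique boundary arc of $U$ contributes the endpoints $0$ and $n$. (Here I use that, since $D$ is complete, each of the $n$ complementary regions has exactly one bounding arc, the $n$ arcs being distributed one per region.) The positions $p$ whose $p$-slot at $k$ equals this sector are exactly those with $q_a<p\le q_{a+1}$, an interval $I_k$. Reading the boundary chords of $U$ in the cyclic order in which they occur around $\partial U$, their indices are monotonic, so the $I_k$ are consecutive and partition $\{1,\dots,n\}$; hence each position lies in exactly one $I_k$, which is the claim.

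The main obstacle is the monotonicity step: that the chords bounding the central region, read around $\partial U$, have sequence-indices appearing in sorted order, so that the $I_k$ glue up to $\{1,\dots,n\}$ with neither gaps nor overlaps. This is where the orientation conventions do the work---every chord winds clockwise around $U$, and the clockwise order of chords at a shared endpoint matches their order in the sequence---and I expect to establish it by induction on the number of chords, using the inductive description of the central region given just after Definition \ref{def: central region} (adding a chord either leaves $U$ unchanged or cuts it, and in each case the new chord enters the cyclic boundary order consistently with its index). Note that completeness is genuinely needed: for shorter diagrams a region may carry several arcs and the per-diagram count is no longer constant, so the argument must be run only after reducing to the complete case as in the first paragraph. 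The remaining steps are routine given Theorem \ref{thm: cardinality of Cn(k)}.
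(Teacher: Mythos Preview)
Your argument is correct but takes a genuinely different route from the paper's. The paper avoids the monotonicity step entirely by stripping off \emph{both} the loop and the orientation: by Corollary~\ref{cor: number of chord diagrams of length k} there are $n^{n-2}$ unoriented complete chord diagrams, and one observes that \emph{every} choice of marked point $k$ (giving $n$ options) and sequence position $p$ (another $n$ options) produces a valid pointed chord diagram, because once the loop is placed in its designated angular slot at $k$, the region containing it is declared to be the central region and Lemma~\ref{lem: central region determines orientation} then fixes the orientations of all chords. This gives $n^{n-2}\cdot n^2=n^n$ with no further work.

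Your approach instead fixes the orientation first and counts loop-insertions into a given oriented diagram, which forces you to prove the per-position uniqueness and hence the monotonicity of boundary indices around the central region. That statement is true and your proposed inductive proof would succeed, but there is also a direct argument that bypasses the induction: at each boundary vertex $k$ of $U$, the incoming and outgoing edges (in the clockwise traversal of $\partial U$) both have $U$ on their right, which forces the $U$-sector at $k$ to sit clockwise of the incoming edge and counterclockwise of the outgoing edge; since chords at $k$ are clockwise-ordered by index, the incoming edge has the smaller index. Chaining this around $\partial U$ (with the single arc supplying the endpoints $0$ and $n$) yields the monotone cyclic order immediately. What your approach buys is the sharper statement that each sequence position admits exactly one marked point; what the paper's approach buys is that the count becomes a one-line consequence of the unoriented enumeration, with the loop doing double duty as the orientation datum.
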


\begin{proof}
    By Corollary \ref{cor: number of chord diagrams of length k}, there are $n^{n-2}$ unoriented chord diagrams of maximal length $k=n-1$. A loop can be added to this at any of the $n$ positions in the sequence and at any of the $n$ marked points. Thus, there are $n^2$ choices for the loop. The loop will determine the central region and the thus the orientation of all the chords, giving a pointed chord diagram. Thus, there are $n^{n-2}\cdot n^2=n^n$ pointed chord diagrams.
\end{proof}

This leads us to Theorem \ref{thm A}. In fact, the proof of the theorem above and the definition of augmented rooted trees mirrors the proof in \cite{Ringel2} that there are $n^n$ complete exceptional sequences for an algebra of type $B_n$ or $C_n$.

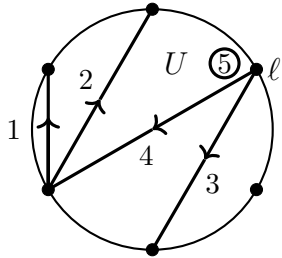
\begin{figure}[htbp]
\begin{center}
\begin{tikzpicture}[scale=.8]
\coordinate (A0) at (0,-2);
\coordinate (A1) at (-1.73,-1);
\coordinate (A2) at (-1.73,1);
\coordinate (A3) at (0,2);
\coordinate (A13) at (-.87,.5);
\coordinate (A113) at (-1.1,.5);
\coordinate (A443) at (.7,-.9);
\coordinate (A43) at (.87,-.5);
\coordinate (A4) at (1.73,1);
\coordinate (A5) at (1.73,-1);

\coordinate (X) at (.4,1.1);

\draw (X) node{$U$};
\draw (A4) node[right]{$\ell$};
\draw[thick] (0,0) circle[radius=20mm];
\foreach \x in {0,...,5}
\draw[fill] (A\x) circle[radius=1mm];
\draw[very thick,->] (A3)--(A13) (A1)--(A13);
\draw[very thick,->](A0)--(A43) (A4)--(A43) ;
\draw[very thick,->] (0,0)--(A1) (A4)--(0,0) ;
\draw[very thick,->] (A2)--(A1) (A1)--(-1.73,0.2) ;
\draw (A113) node[above]{\small$2$};
\draw (A443) node[right]{\small$3$};
\draw (-.1,-.1) node[below]{\small$4$};
\draw (-2,0) node[left]{\small$1$};
\draw (1.2,1.1) node{\small$5$};
\draw[very thick] (1.2,1.1) circle[radius=2.4mm];
\end{tikzpicture}
\caption{A noncrossing sequence of four oriented chords and one loop. The chords are the first four in the sequence and the loop, at $\ell$, is the $5$th term in the sequence. The loop is required to be clockwise from chords 3 and 4. It is also required to lie in the central region, denoted $U$.}
\label{fig: example of chord diagram with loop}
\end{center}
\end{figure}

\section{Theorem \ref{thm A}}\label{sec: Theorem A}

We prove Theorem \ref{thm A} which may be obvious to many readers. The purpose is to set up the notation which will make the bijections very clear. The notation $M_{ij}$, used in section \ref{ss: unoriented chords} for representation of quivers of type $A_{n-1}$, will be extended to (isomorphism classes of) indecomposable modules over an hereditary algebra of type $C_n$. There will be corresponding objects $V_{ij}$, which are bricks, in the tube $\cW_n$ of rank $n$, as outlined in the introduction. These will correspond to the objects $\gamma_{ij}$ (oriented chords or loops) from section \ref{ss: pointed chord diagrams} above. The indicated bijection between these three sets of objects will induce bijections between corresponding sets of exceptional sequences. The results of \cite{Ig-Sen} will be used to obtain the bijection with augmented trees. We also use \cite{IM} to compute the number of exceptional sets in $mod\text-\Lambda$ and in $\cW_n$.

\subsection{Exceptional sequences in $mod\text-\Lambda$}\label{ss: exc seq in mod Lambda}

Let $\Lambda$ be an hereditary algebra of type $C_n$ with straight orientation of the modulated quiver with the long root at the last vertex which is a source. This is sometimes written as:
\[
	K\leftarrow K\leftarrow\cdots \leftarrow K\leftarrow F
\]
where $K,F$ are fields and $F$ is a degree 2 extension of $K$. Thus, the first $n-1$ vertices form a quiver of type $A_{n-1}$ with straight orientation and we will use our favorite notation for indecomposable $\Lambda$-modules with support at these vertices, namely, for $0\le i<j<n$, $M_{ij}$ will denote the module with support on the half open interval $(i,j]$. By Theorem \ref{thm: chord diagrams and exc seq of type An} (see Figure \ref{Fig: strand to chord diagram}), these correspond to unoriented chords on a circle with $n$ marked points ordered clockwise. As representations of $\Lambda$ (the algebra of type $C_n$), we will associate to $M_{ij}$, for $i<j$, the oriented chord $\gamma_{ij}$ on this circle.

In \cite{Ringel2}, it is shown that the enumeration of complete exceptional sequences for any Dynkin algebra (such as one of type $B_n$ or $C_n$) is independent of the orientation of the (modulated) quiver. This also follows from APR-tilting (Remark \ref{rem: about APR and orientation}). Also, it is pointed out in \cite{Ringel2} that the combinatorics of $B_n$ and $C_n$ are equivalent. However, to keep the notation simple, we stick with the straight orientation in type $C_n$.

With the straight descending orientation, the projective modules are contained in each other in the sequence:
\[
	P_1\subset P_2\subset \cdots\subset P_n
\]
For all $j\le n$, $P_j$ has dimension vector
\[
	\undim P_j=\alpha_1+\alpha_2+\cdots+\alpha_j
\]
where, for $i<n$, $\alpha_i$ is the $i$th simple root, which is also the $i$th unit vector in $\ZZ^n$ and the last root $\alpha_n$ is defined to be the sum of all positive simple roots. Thus
\[
	\alpha_n=(1,1,\cdots,1).
\]
This gives consistent notation for $\undim P_j$ for all $j\le n$. 

Note that $P_j=M_{0j}$ for $0<j<n$. For $P_n$ we use the notation $P_n=M_{00}$. This is one of the $n$ indecomposable $\Lambda$-modules with endomorphism ring the larger field $F$ instead of $K$. The others are $M_{ii}=\tau^{-i}P_n$ with dimension vector\footnote{In type $B_n$ the dimension vectors of $M_{ii}$ must be divided by $2$ and we take $\alpha_n=(1,\dots,1,2)$. The support of each $M_{ij}$ is the same as in the $C_n$ case. So, Lemma \ref{lem: Mij <-> gamma-ij respects noncrossing pairs} holds with the same proof.}
\[
	\undim M_{ii}=\alpha_{i+1}+\alpha_{i+2}+\cdots+\alpha_n-\alpha_1-\alpha_2-\cdots-\alpha_i.
\]
More generally, for $0\le j\le i<n$, let $M_{ij}$ denote the indecomposable $\Lambda$-module with dimension vector
\[
	\undim M_{ij}=\alpha_{i+1}+\alpha_{i+2}+\cdots+\alpha_n-\alpha_1-\alpha_2-\cdots-\alpha_j
\]
Mnemically, the modules $M_{ij}$, for all $i,j$, are sums of the roots $\alpha_k$ starting at $k=i+1$ and going up to $k=j$ modulo $n$ with the sign of $\alpha_k$ reversed when we go past $k=n$. One can see that the support of $M_{ij}$ for $j\le i$ is $(j+1,n]$.

The Coxeter element of the Weyl group for $\Lambda$, in terms of the basic $\alpha_i$, is
\[
	\mat{0 & I_{n-1} \\ -1 & 0}.
\]
This makes $\tau M_{ij}=M_{i-1,j-1}$ with indices modulo $n$ for $i>0$ (i.e., for $M_{ij}$ not projective) and $\tau M_{0j}=\tau P_j=I_j[-1]$ where $I_j=M_{n-1,j-1}$. This makes the Auslander-Reiten quiver of $\Lambda$ fit the same pattern as that of the bricks in the tube $\cW_n$ which we review below. For example, for $n=4$ we have the following

\begin{figure}[htbp]
\begin{center}
\begin{tikzpicture}[xscale=1.6,yscale=1.25]
\begin{scope}
\draw[dashed] (1,0)--(4,0);
\draw[dashed] (1.5,0.75)--(4.5,0.75);
\draw[dashed] (2,1.5)--(5,1.5);
\draw[dashed] (2.5,2.25)--(5.5,2.25);
\foreach \x/\y in {(1,0)/01,(2,0)/12,(3,0)/23,(4,0)/30}
\draw[fill,white] \x circle[radius=2mm];
\foreach \x/\y in {(1,0)/01,(2,0)/12,(3,0)/23,(4,0)/30}
\draw \x node{$M_{\y}$};
\foreach \x/\y in {(1.5,.75)/02,(2.5,0.75)/13,(3.5,0.75)/20,(4.5,0.75)/31}
\draw[fill,white] \x circle[radius=2mm];
\foreach \x/\y in {(1.5,.75)/02,(2.5,0.75)/13,(3.5,0.75)/20,(4.5,0.75)/31}
\draw \x node{$M_{\y}$};
\foreach \x/\y in {(2,1.5)/03,(3,1.5)/10,(4,1.5)/21,(5,1.5)/32}
\draw[fill,white] \x circle[radius=2mm];
\foreach \x/\y in {(2,1.5)/03,(3,1.5)/10,(4,1.5)/21,(5,1.5)/32}
\draw \x node{$M_{\y}$};
\foreach \x/\y in {(2.5,2.25)/00,(3.5, 2.25)/11,(4.5, 2.25)/22,(5.5, 2.25)/33}
\draw[fill,white] \x circle[radius=2mm];
\foreach \x/\y in {(2.5,2.25)/00,(3.5, 2.25)/11,(4.5, 2.25)/22,(5.5, 2.25)/33}
\draw \x node{$M_{\y}$};
\foreach \x/\y in {(0.4,0)/1,(0.9,.75)/2,(1.4,1.5)/3,(1.9,2.25)/4}
\draw \x node{$P_\y=$};
\foreach \x/\y in {(4.55,0)/1,(5.05,.75)/2,(5.55,1.5)/3,(6.05,2.25)/4}
\draw \x node{$=I_\y$};
\foreach \x in {1,...,4}\draw (\x.25,.4) node{$\nearrow$};
\foreach \x in {1,...,4}\draw (\x.75,1.1) node{$\nearrow$};
\foreach \x in {2,...,5}\draw (\x.25,1.8) node{$\nearrow$};
\foreach \x in {1,...,3}\draw (\x.75,.35) node{$\searrow$};
\foreach \x in {2,...,4}\draw (\x.25,1.1) node{$\searrow$};
\foreach \x in {2,...,4}\draw (\x.75,1.85) node{$\searrow$};
\end{scope}
\end{tikzpicture}
\caption{The Auslander-Reiten quiver for $C_4$. Auslander-Reiten translation $\tau$ reduces indices $i,j$ in $M_{ij}$ by 1 modulo $n=4$.}
\label{fig: AR quiver for C4}
\end{center}
\end{figure}

Recall that a $\Lambda$-module $E$ is called \emph{exceptional} if it is rigid and indecomposable. This holds if $E$ is isomorphic to one of the $n^2$ modules $M_{ij}$ discussed above. An \emph{exceptional sequence} for $\Lambda$ is a sequence $(E_1,\cdots,E_k)$ of exceptional modules $E_i$ so that, for all $1\le i<j\le k$ we have:
\[
	\Hom_\Lambda(E_j,E_i)=0=\Ext_\Lambda(E_j,E_i).
\]
An exceptional sequence of length $2$ is called an \emph{exceptional pair}. So, the required condition is that $(E_i,E_j)$ is an exceptional pair for all $1\le i<j\le k$.

\begin{lem}\label{lem: Mij <-> gamma-ij respects noncrossing pairs}
$(M_{ij},M_{ab})$ is an exceptional pair of $\Lambda$-modules if and only if $(\gamma_{ij},\gamma_{ab})$ is a noncrossing pair of objects in $\widetilde\cC_n$.
\end{lem}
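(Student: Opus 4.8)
The plan is to turn the lemma into a pure statement about vanishing of $\Hom$-spaces and then to match this, configuration by configuration, against the combinatorial definition of a noncrossing pair. First I would use that $\Lambda$ is hereditary, so the Auslander--Reiten formula gives $\Ext_\Lambda(X,Y)\cong D\Hom_\Lambda(Y,\tau X)$, where $D$ denotes duality. Combined with the index shift $\tau M_{ab}=M_{a-1,b-1}$ recorded above (with the convention $\tau P=I[-1]$, so that maps into it vanish and $\Ext_\Lambda(P,-)=0$ is recovered), this shows that $(M_{ij},M_{ab})$ is an exceptional pair if and only if both
\[
	\Hom_\Lambda(M_{ab},M_{ij})=0 \qquad\text{and}\qquad \Hom_\Lambda(M_{ij},M_{a-1,b-1})=0.
\]
So the entire lemma reduces to controlling when one of these modules maps nontrivially to another, and the rotation by one marked point hidden in the second condition is exactly what records the orientation and ordering of the chords.

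Next I would establish the $\Hom$-rule. Reading the index of each $M_{cd}$ as the clockwise circular arc it cuts out, a nonzero homomorphism $M_{ab}\to M_{cd}$ factors as a surjection onto a top quotient of $M_{ab}$ followed by an inclusion as a bottom submodule of $M_{cd}$; since the sub- and quotient-modules of these modules are again of the same interval form, and since $\tau$ acts by the shift $(-1,-1)$ so that the computation is the cyclic version of the type $A_{n-1}$ analysis underlying Theorem \ref{thm: chord diagrams and exc seq of type An}, one gets that $\Hom_\Lambda(M_{ab},M_{cd})\neq0$ precisely when the two clockwise arcs overlap in the staircase pattern $a\le c<b\le d$ read modulo $n$. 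Because of the $\tau$-periodicity this single rule covers the wrap-around modules $M_{ij}$ with $i\ge j$ and the diagonal modules $M_{ii}$ (the loops) on the same footing, so I would not separate them out at this stage.

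Then I would compare the two displayed vanishing conditions with the geometry of $\gamma_{ij}$ and $\gamma_{ab}$, organized by the trichotomy appearing in the definition of a compatible pair. If the arcs cross or are complementary the chords are incompatible; here I would check that in each of the two orders one of the two $\Hom$-spaces above is forced nonzero (for crossing arcs a genuine map in one direction and an extension in the other; for complementary arcs the two supports abut at both endpoints $i$ and $j$, giving an obstruction from each), so that neither order is exceptional. If the arcs are compatible with no shared endpoint, all four $\Hom/\Ext$ groups vanish and both orders are exceptional, matching ``compatible in either order.'' If the arcs are compatible but share an endpoint, exactly one of the two conditions fails: the condition $\Hom_\Lambda(M_{ij},M_{a-1,b-1})\neq0$ is triggered precisely when rotating $\gamma_{ab}$ one notch counterclockwise makes the shared endpoint align, and this is exactly the order in which the second chord is \emph{not} clockwise from the first. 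Thus the order for which the pair is exceptional is exactly the order in which $(\gamma_{ij},\gamma_{ab})$ is a compatible, hence noncrossing, pair.

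I expect the genuinely type $C_n$ features to be the main obstacle: the complementary pairs $\gamma_{ij}/\gamma_{ji}$ and the loops $\gamma_{ii}$ arising from the modules $M_{ii}=\tau^{-i}P_n$, whose endomorphism ring is the larger field $F$. For complementary pairs I must verify that the supports abut at both shared endpoints so as to obstruct both orderings. For the loops I must match the algebra of $M_{ii}$ with the combinatorial requirement that a loop sit in the central region in the clockwise (respectively counterclockwise) position dictated by whether it is the first or second term, and here keeping the clockwise conventions straight, together with the factor of two coming from $F$ in the relevant dimension counts, is where the argument needs the most care. Once the $\Hom$-rule is in hand, the remaining cases are routine.
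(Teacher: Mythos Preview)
Your reduction via the Auslander--Reiten formula is fine, but the key step---the ``Hom rule'' $\Hom_\Lambda(M_{ab},M_{cd})\neq0 \iff a\le c<b\le d$ read cyclically---is not correct for the hereditary algebra $\Lambda$ of type $C_n$. That is the rule for the uniserial objects $W_{ab}$ in the tube $\cW_n$, and the paper uses it there in the proof of Lemma~\ref{lem: compatibility in Wn is same as in mod Cn}; but the $\Lambda$-modules $M_{ij}$ with $j\le i$ are \emph{not} cyclic intervals, and their $\Hom$-pattern is genuinely different. Concretely, take $n=4$, $M_{02}=P_2$ and $M_{30}$. The support of $M_{30}$ is $(0,4]$, so vertex $2$ lies in it and $\Hom_\Lambda(P_2,M_{30})\neq 0$. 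Your cyclic rule, however, asks for $0\le 3<2\le 0$ modulo $4$, which fails, so you would declare this $\Hom$ to vanish. Since $P_2$ is projective you also (correctly) get $\Ext_\Lambda(P_2,M_{30})=0$, and you would then conclude that $(M_{30},M_{02})$ is an exceptional pair. It is not, and correspondingly $(\gamma_{30},\gamma_{02})$ is a crossing pair (this is exactly the paper's Case~(b) with $j=0$: at the shared endpoint $0$ the second chord is \emph{not} clockwise from the first). So the trichotomy you set up, fed by the wrong $\Hom$-rule, produces a false positive.

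The underlying issue is that while the \emph{combined} ``$\Hom=0=\Ext$'' condition happens to coincide for $M_{ij}$ in $mod\text{-}\Lambda$ and for $V_{ij}$ in $\cW_n$ (that is precisely what this lemma together with Lemma~\ref{lem: compatibility in Wn is same as in mod Cn} establishes), the individual $\Hom$ and $\Ext$ groups do not match, so you cannot import the tube's $\Hom$-formula into $mod\text{-}\Lambda$. The paper sidesteps the need for any general $\Hom$-formula by first using $\tau$-equivariance on \emph{both} sides (modules and chords) to reduce to the case $M_{ab}=M_{0k}=P_k$; then $\Ext_\Lambda(P_k,-)=0$ automatically and the whole question becomes the elementary support condition $k\notin\operatorname{supp}M_{ij}$, which is checked against the chord picture in five cases. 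If you want to keep your approach, you would need to actually establish the correct $\Hom$-rule for the $C_n$ modules $M_{ij}$ (separating the cases $i<j$ and $j\le i$ and tracking the long-root vertex), which is more work than the paper's reduction.
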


\begin{proof}
It will suffice to prove this in the special case when $M_{ab}=P_k$, the $k$th projective module and $\gamma_{ab}=\gamma_{0k}$ if $k<n$ and $\gamma_{ab}$ is the loop $\gamma_{00}$ when $k=n$. The reason is that $(M_{ij},M_{ab})$ is an exception pair if and only if $(\tau M_{ij},\tau M_{ab})$ is an exceptional pair. We apply $\tau^a$ to get $\tau^aM_{ab}=M_{0k}=P_k$ where $k=b-a$ modulo $n$. (But beware the case $k=n$ where the notation is $P_n=M_{00}$.) Similarly, the notion of noncrossing pairs in $\widetilde\cC_n$ is defined geometrically and is thus invariant under rotation. Thus $(\gamma_{ij},\gamma_{ab})$ is a noncrossing pair if any only if $(\tau \gamma_{ij},\tau \gamma_{ab})$ is a noncrossing pair where $\tau$ is defined on all chords to be rotation to the left: $\tau\gamma_{ij}:=\gamma_{i-1,j-1}$ with indices reduced modulo $n$. We apply $\tau^a$ to get $\tau^a\gamma_{ab}=\gamma_{0k}$. 

Thus, it suffices to consider the special case $(a,b)=(0,k)$. In this case $(M_{ij},M_{0k})$ is an exceptional pair if and only if $k$ is not in the support of $M_{ij}$. There are 5 possibilities which we list. In the first two, $k$ is in the support of $M_{ij}$. In the last three it is not.
\begin{enumerate}
\item[(a)] $i<k\le j$. Then $k\in supp\,M_{ij}=(i,j]$.
\item[(b)] $k>j\le i$. Then $k\in supp\,M_{ij}=(j,n]$. This includes the case $k=n$ and $j\le i$.
\item[(c)] $k\le i<j$. Then $k\notin supp\,M_{ij}=(i,j]$.
\item[(d)] $i<j<k$. Then $k\notin supp\,M_{ij}=(i,j]$. This includes the case $k=n$ and $i<j$.
\item[(e)] $k\le j\le i$. Then $k\notin supp\,M_{ij}=(j,n]$.
\end{enumerate}
Figure \ref{Fig: Cases (a) and (b): crossing pairs} shows that $(\gamma_{ij},\gamma_{0k})$ is a crossing pair in cases (a) and (b). Case (b) includes the special case $(\gamma_{jj},\gamma_{00})$ which is crossing by definition since loops are not compatible with each other. Figure \ref{Fig: Cases (c,d,e): noncrossing pairs} shows that $(\gamma_{ij},\gamma_{0k})$ is a noncrossing pair in cases (c), (d) and (e). 
\end{proof}

\begin{figure}[htbp]
\begin{center}
\begin{tikzpicture}[scale=.8]
\begin{scope}  
\coordinate (A0) at (0,-2);
\coordinate (A1) at (-1.73,-1);
\coordinate (A2) at (-1.73,1);
\coordinate (A3) at (0,2);
\coordinate (A13) at (-.87,.5);
\coordinate (A4) at (1.73,1);
\coordinate (A14) at (0.87,.5);
\coordinate (A5) at (1.73,-1);
\coordinate (X) at (.4,-1);
\coordinate (Y) at (-1.3,.8);
\coordinate (Z) at (1.2,.2);
\draw (X) node{$\gamma_{0k}$};
\draw (Y) node{$\gamma_{ik}$};
\draw (Z) node{$\gamma_{ij}$};
\coordinate (C) at (0,-3);
\draw (C) node{Case (a)};
\draw[thick] (0,0) circle[radius=20mm];
\foreach \x in {0,1,3,4}
\draw[fill] (A\x) circle[radius=1mm];
\draw[very thick,->] (A3)--(A13) (A1)--(A13);
\draw[very thick,->](A0)--(A3) (A0)--(0,-.3) ;
\draw[very thick, ->] (A1)--(A4) (A1)--(A14);
\draw (A1) node[left]{$i$};
\draw (A4) node[right]{$j$};
\draw (A0) node[below]{$0$};
\draw (A3) node[above]{$k$};
\end{scope}
\begin{scope}[xshift=7cm]  
\coordinate (A0) at (0,-2);
\coordinate (A1) at (-1.73,-1);
\coordinate (A2) at (-1.73,1);
\coordinate (A3) at (0,2);
\coordinate (A13) at (-.87,.5);
\coordinate (A4) at (1.73,1);
\coordinate (A14) at (0.87,.5);
\coordinate (A5) at (1.73,-1);
\coordinate (X) at (.4,-1);
\coordinate (C) at (0,-3);
\draw (C) node{Case (b)};
\draw (X) node{$\gamma_{0k}$};
\draw[thick] (0,0) circle[radius=20mm];
\foreach \x in {0,1,2,3,4}
\draw[fill] (A\x) circle[radius=1mm];
\draw[very thick,->] (A2)--(A1) (A2)--(-1.73,0);
\draw[very thick,->](A0)--(A3) (A0)--(0,-.3) ;
\draw[very thick, ->] (A1)--(A4) (A4)--(A14);
\draw (A1) node[left]{$j$};
\draw (A2) node[left]{$i$};
\draw (A4) node[right]{$i'$};
\draw (A0) node[below]{$0$};
\draw (A3) node[above]{$k$};
\end{scope}
\end{tikzpicture}
\caption{In Case (a) at left, $(\gamma_{ik},\gamma_{0k})$ is not compatible since $\gamma_{ik}$ is clockwise from $\gamma_{0k}$. $(\gamma_{ij},\gamma_{0k})$ are crossing. In Case (b) on the right, $\gamma_{ij}$ and $\gamma_{0k}$ are not compatible since they go counterclockwise around the region between them. $(\gamma_{i'j},\gamma_{0k})$ are crossing.}
\label{Fig: Cases (a) and (b): crossing pairs}
\end{center}
\end{figure}
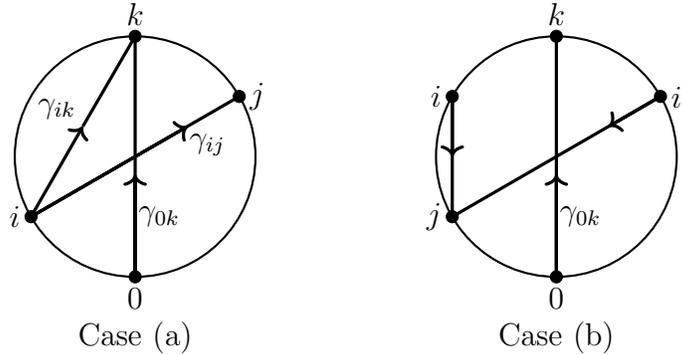
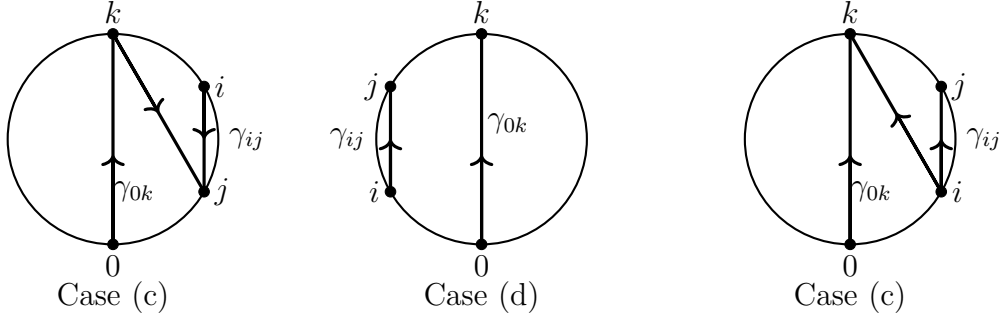
\begin{figure}[htbp]
\begin{center}
\begin{tikzpicture}[scale=.7]
\begin{scope}  
\coordinate (A0) at (0,-2);
\coordinate (A1) at (-1.73,-1);
\coordinate (A2) at (-1.73,1);
\coordinate (A3) at (0,2);
\coordinate (A53) at (.87,.5);
\coordinate (A4) at (1.73,1);
\coordinate (A14) at (0.87,.5);
\coordinate (A5) at (1.73,-1);
\coordinate (X) at (.4,-1);
\coordinate (Y) at (-1.3,.8);
\coordinate (Z) at (2,0);
\draw (X) node{$\gamma_{0k}$};
\draw (Z) node[right]{$\gamma_{ij}$};
\coordinate (C) at (0,-3);
\draw (C) node{Case (c)};
\draw[thick] (0,0) circle[radius=20mm];
\foreach \x in {0,5,3,4}
\draw[fill] (A\x) circle[radius=1mm];
\draw[very thick,->] (A3)--(A5) (A3)--(A53);
\draw[very thick,->](A0)--(A3) (A0)--(0,-.3) ;
\draw[very thick, ->] (A5)--(A4) (A4)--(1.73,0);
\draw (A5) node[right]{$j$};
\draw (A4) node[right]{$i$};
\draw (A0) node[below]{$0$};
\draw (A3) node[above]{$k$};
\end{scope}

\begin{scope}[xshift=7cm] 
\coordinate (A0) at (0,-2);
\coordinate (A1) at (-1.73,-1);
\coordinate (A2) at (-1.73,1);
\coordinate (A3) at (0,2);
\coordinate (A13) at (-.87,.5);
\coordinate (A4) at (1.73,1);
\coordinate (A14) at (0.87,.5);
\coordinate (A5) at (1.73,-1);
\coordinate (X) at (.5,.3);
\coordinate (C) at (0,-3);
\draw (C) node{Case (d)};
\draw (X) node{$\gamma_{0k}$};
\coordinate (Z) at (-2,0);

\draw (Z) node[left]{$\gamma_{ij}$};

\draw[thick] (0,0) circle[radius=20mm];
\foreach \x in {0,1,2,3}
\draw[fill] (A\x) circle[radius=1mm];
\draw[very thick,->] (A2)--(A1) (A1)--(-1.73,0);
\draw[very thick,->](A0)--(A3) (A0)--(0,-.3) ;
\draw (A1) node[left]{$i$};
\draw (A2) node[left]{$j$};
\draw (A0) node[below]{$0$};
\draw (A3) node[above]{$k$};
\end{scope}
\begin{scope}[xshift=14cm] 
\coordinate (A0) at (0,-2);
\coordinate (A1) at (-1.73,-1);
\coordinate (A2) at (-1.73,1);
\coordinate (A3) at (0,2);
\coordinate (A53) at (.87,.5);
\coordinate (A4) at (1.73,1);
\coordinate (A14) at (0.87,.5);
\coordinate (A5) at (1.73,-1);
\coordinate (X) at (.4,-1);
\coordinate (Y) at (-1.3,.8);
\coordinate (Z) at (2,0);
\draw (X) node{$\gamma_{0k}$};
\draw (Z) node[right]{$\gamma_{ij}$};
\coordinate (C) at (0,-3);
\draw (C) node{Case (c)};
\draw[thick] (0,0) circle[radius=20mm];
\foreach \x in {0,5,3,4}
\draw[fill] (A\x) circle[radius=1mm];
\draw[very thick,->] (A3)--(A5) (A5)--(A53);
\draw[very thick,->](A0)--(A3) (A0)--(0,-.3) ;
\draw[very thick, ->] (A5)--(A4) (A5)--(1.73,0);
\draw (A5) node[right]{$i$};
\draw (A4) node[right]{$j$};
\draw (A0) node[below]{$0$};
\draw (A3) node[above]{$k$};
\end{scope}
\end{tikzpicture}
\caption{In Case (c) at left, $\gamma_{ij},\gamma_{0k}$ are compatible in either order since they clockwise around the region between. $(\gamma_{kj},\gamma_{0k})$ is noncrossing since $\gamma_{ok}$ is clockwise from $\gamma_{kj}$. In Case (d) $(\gamma_{ij},\gamma_{0k})$ are crossing. In Case (d), $\gamma_{ij}$ and $\gamma_{0k}$ are compatible in either order. In Case (e). $\gamma_{ij},\gamma_{0k}$ are noncrossing in either order. $(\gamma_{ik},\gamma_{0k})$ is a noncrossing pair since $\gamma_{0k}$ is clockwise from $\gamma_{ik}$.}
\label{Fig: Cases (c,d,e): noncrossing pairs}
\end{center}
\end{figure}

Lemma \ref{lem: Mij <-> gamma-ij respects noncrossing pairs} implies the following.

\begin{thm}\label{thm: exc seq for Cn = nonX for chords and loops}
The correspondence $M_{ij}\leftrightarrow \gamma_{ij}$ induces a bijection:
\[
	\left\{
	\begin{matrix}\text{exceptional sequences of length $k$}\\
	\text{for $\Lambda$ an algebra of type $C_n$}
	\end{matrix}
	\right\}
	\xrightarrow\cong
	\left\{
	\begin{matrix}\text{length $k$ noncrossing sequences of}\\
	\text{loops and oriented chords in $\widetilde\cC_n$}
	\end{matrix}
	\right\}
\]
\end{thm}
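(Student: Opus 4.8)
The plan is to reduce the entire statement to the pairwise fact already established in Lemma~\ref{lem: Mij <-> gamma-ij respects noncrossing pairs}. First I would record that the assignment $M_{ij}\mapsto\gamma_{ij}$ is a bijection from the set of $n^2$ exceptional (rigid indecomposable) $\Lambda$-modules onto $\widetilde\cC_n$: the modules $M_{ij}$ with $i\neq j$ go to the oriented chords, and the $n$ modules $M_{kk}=\tau^{-k}P_n$ go to the $n$ loops $\gamma_{kk}$. Applying this termwise carries a sequence $(E_1,\dots,E_k)=(M_{i_1j_1},\dots,M_{i_kj_k})$ to $(\gamma_{i_1j_1},\dots,\gamma_{i_kj_k})$, and this termwise map is already a bijection between sequences of objects. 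Hence the theorem comes down to the single claim that $(E_1,\dots,E_k)$ is an exceptional sequence if and only if its image is a noncrossing sequence in the sense of Section~\ref{ss: pointed chord diagrams}.

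By definition an exceptional sequence is exactly one in which $(E_i,E_j)$ is an exceptional pair for every $i<j$, and Lemma~\ref{lem: Mij <-> gamma-ij respects noncrossing pairs} matches this, pair by pair, with the requirement that each $(\gamma_i,\gamma_j)$, $i<j$, be a noncrossing pair in $\widetilde\cC_n$. This makes the converse direction immediate: a noncrossing sequence is in particular pairwise noncrossing, so by the lemma it pulls back to a pairwise exceptional, i.e.\ exceptional, sequence. For the forward direction I must upgrade ``pairwise noncrossing'' to the full definition of a noncrossing sequence, namely that at most one term is a loop, that the chords form an ordered forest, and that the loop, if present, lies in the central region, clockwise from the earlier chords and counterclockwise from the later ones at its marked point.

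The loop requirements are quick. Two loops are never a noncrossing pair (cf.\ the special case $(\gamma_{jj},\gamma_{00})$ in the proof of Lemma~\ref{lem: Mij <-> gamma-ij respects noncrossing pairs}), so pairwise noncrossing already forbids a second loop; and the prescribed position of the loop $\gamma_\ell$ relative to the chords is exactly the content of the pairwise conditions $(\gamma_i,\gamma_\ell)$ for $i<\ell$ and $(\gamma_\ell,\gamma_j)$ for $\ell<j$. Since being in the central region simply means lying on the right side of \emph{every} chord, this is a pointwise-from-pairwise deduction and needs no global input: once $\gamma_\ell$ is on the correct side of each chord individually, it is in the central region in the correct position.

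The one condition that is genuinely global, and which I expect to be the main obstacle, is the forest (no-cycle) condition, because pairwise compatibility does not by itself rule out a cycle of clockwise-oriented chords. I would close this gap by the clockwise-ordering argument of Section~\ref{ss: unoriented chords}: requiring the chords to be ordered clockwise at each shared marked point is incompatible with a cycle, since traversing the cycle would force its chords to be indexed in strictly increasing order all the way around and back to the start, which is impossible. Equivalently, and as a sanity check on the module side, a cycle of chords yields a linear dependence among the corresponding dimension vectors $\undim M_{i_aj_a}$, whereas the terms of an exceptional sequence always have linearly independent dimension vectors; so an exceptional sequence can contain no cycle. Either argument shows that the image of an exceptional sequence satisfies the forest condition, which completes the forward direction and hence establishes the asserted bijection.
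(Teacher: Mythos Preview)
Your proof is correct and follows the same approach as the paper, which proves the theorem in a single sentence by citing Lemma~\ref{lem: Mij <-> gamma-ij respects noncrossing pairs}. You are more careful than the paper in checking that the global conditions defining a noncrossing sequence in $\widetilde\cC_n$ (at most one loop, the forest condition on the chords, and the placement of the loop in the central region) all follow from the pairwise noncrossing condition; the paper leaves these verifications implicit.
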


We observe that exceptional sequences of type $C_n$ are equivalent to weak exceptional sequences for the Nakayama algebra given an $n$-cycle modulo $rad^n=0$. These in turn are $\tau$-exceptional sequences \cite{BM} for the Nakayama algebra. See \cite{Sen2}.

\subsection{Soft exceptional sequences in the tube}\label{ss: soft exc seq in tubes}

As we said in the introduction, we take $\cW_n$, the abelian tube of rank $n$. Hereditary algebras of tame type all have tubes. These are $\tau$-periodic. Every module in the tube has a $\tau$ orbit with $n$ objects. Also, all tubes of rank $n$ are essentially isomorphic. The objects form the category of nilpotent representations of an oriented $n$ cycle. If necessary to be more specific, we take the exceptional tube of rank $n$ for the affine quiver algebra of type $\tilde A_n$ given by the quiver
\[
\xymatrixrowsep{10pt}
\xymatrix{
\ \\
0  & 1\ar[l] & 2\ar[l] & \cdots\ar[l] & n-1\ar[l]&
	n \ar[l] \ar@/_2pc/[lllll]} 
\]
As a subcategory of the module category of a tame algebra, the category $\cW_n$ is the extension closure of the quasi-simple modules at the mouth of the tube. Although it is given by a finite number of objects (the $n$ quasi-simple objects) it is an abelian category which is not finitely generated. This means there is no object $M$ in $\cW_n$ so that all other objects are quotients of direct sums of $M$ with itself. If this were true, all objects of $\cW_n$ would have Loevy-length bounded by that of $M$. However, the indecomposable objects of $\cW_n$ are uniserial with unbounded Loevy-length. $\cW_n$ has no projective objects. But $\cW_n$ is given by its $n$ simple objects and we say that it is \emph{spanned} by these objects.

In the $\tilde A_n$ case, these quasi-simple modules are $S_1,S_2,\cdots,S_{n-1}$ and the regular module with support at vertices $0,n$. The extension closure of the objects $S_i$ form a category of type $A_{n-1}$ which we call $\widetilde A_{n-1}$. However, tubes are rotationally symmetric since Auslander-Reiten translation $\tau$ rotates the tube. Thus, any $n-1$ consecutive quasi-simple modules will {span} a category of type $A_{n-1}$. 

We denote by $S_1,\cdots,S_n$ the simple objects of the category $\cW_n$. The quiver of $\cW_n$ is a single oriented cycle of length $n$ with descending orientation: 
\[
\xymatrixrowsep{20pt}
\xymatrix{
1 \ar@/^1pc/[rrrrr] & 2\ar[l] & 3\ar[l] & \cdots\ar[l] & n-1\ar[l]&
	n \ar[l]}. 
\]
Since $\cW_n$ is closed under $\tau$, we have by Auslander-Reiten duality that $\Ext(X,Y)$ is the vector space dual of $\Hom(Y,\tau X)$:
\[
	\Ext(X,Y)\cong D\Hom(Y,\tau X).
\]

The tube $\cW_n$ has an infinite number of indecomposable objects which are uniserial and uniquely determined up to isomorphism by their length and top. We denote these by $W_{ij}$ where $i<j$ are integers. This is the object of length $j-i$ with top $S_j$. The structure of these is well-known: $\tau W_{ij}=W_{i-1,j-1}$. Since all objects are $\tau$-periodic with period $n$, we have $W_{ij}=W_{i+pn,j+pn}$ for any integer $p$. The support of $\Hom$ is easy to determine: 
\[
\Hom(W_{ab},W_{ij})\neq 0\quad\text{ if }\quad a\le i+pn<b\le j+pn \text{ for some $p\in\ZZ$}.
\]
In particular, $W_{ij}$ is a \emph{brick}, i.e., its endomorphism ring is $K$ if and only if its length is $\le n$. Since $\Hom(W_{ab},W_{ij})\cong \Hom(\tau W_{ab},\tau W_{ij})$, we can restrict to the case when $a=0,b=k>0$ (and apply $\tau$, rotating the figures, to get all the other cases). Then, for $0<j,k\le n$ we have:
\begin{enumerate}
\item[(a)] $\Hom(W_{0k},W_{ij})\neq0$ if $0\le i<k\le j$.
\item[(b)] $\Ext(W_{0k},W_{ij})\cong D\Hom(W_{ij},\tau W_{0k})\neq0$ if $i<0\le j<k$.
\end{enumerate}
This is a well-known and often visualized in a diagram similar to Figure \ref{Fig: supports of hom and ext in Wn}.
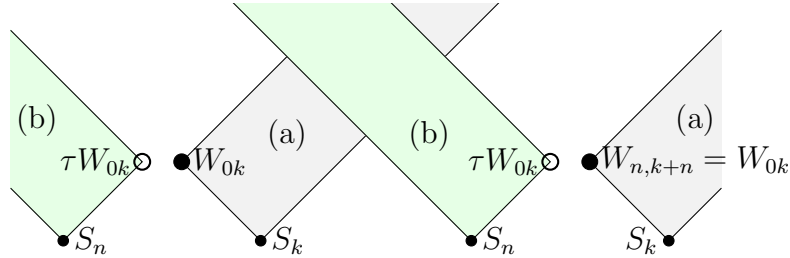
\begin{figure}[htbp]
\begin{center}
\begin{tikzpicture}[scale=.7]

\clip (-1,-1) rectangle (16,4.5);

\coordinate (A) at (0,1);
\coordinate (B) at (0,4);
\coordinate (S0) at (1,0);
\coordinate (TW1) at (2.5,1.5);
\coordinate (BB2) at (0.5,2.3);

\draw[fill,green!10!white] (A)--(B)--(TW1)--(S0)--(A);
\draw (A)--(S0)--(TW1)--(B);
\draw (BB2) node{(b)};
\draw[thick] (TW1) circle[radius=1.5mm];

\begin{scope}[xshift=-2.5mm]
\coordinate (E) at (7.5,5.5);
\coordinate (F) at (10.5,5.5);
\coordinate (W1) at (3.5,1.5);
\coordinate (Sk) at (5,0);
\coordinate (AA) at (5.5,2);
\coordinate (BB) at (8.2,2);
\draw[fill,gray!10!white] (F)--(E)--(W1)--(Sk)--(F);
\draw (E)--(W1)--(Sk)--(F);
\draw[fill] (W1) circle[radius=1.5mm];

\coordinate (C) at (3.5,5.5);
\coordinate (D) at (6.5,5.5);
\coordinate (Sn) at (9,0);
\coordinate (TW2) at (10.5,1.5);
\draw[fill,green!10!white] (C)--(D)--(TW2)--(Sn)--(C);
\draw (D)--(TW2)--(Sn)--(C);
\draw (AA) node{(a)};
\draw (BB) node{(b)};
\draw[thick] (TW2) circle[radius=1.5mm];
\end{scope}

\begin{scope}[xshift=-5mm]
\coordinate (W2) at (11.5,1.5);
\coordinate (S2) at (13,0);
\coordinate (G) at (14,4);
\coordinate (H) at (14,1);
\coordinate (AA2) at (13.5,2.3);
\draw[fill,gray!10!white] (H)--(G)--(W2)--(S2)--(H);
\draw (G)--(W2)--(S2)--(H);
\draw (AA2) node{(a)};
\draw[fill] (W2) circle[radius=1.5mm];
\end{scope}

\foreach \x in {S0,Sk,Sn,S2}
\draw[fill] (\x) circle[radius=1mm];
\draw (S0) node[right]{$S_n$};
\draw (Sn) node[right]{$S_n$};
\draw (Sk) node[right]{$S_k$};
\draw (S2) node[left]{$S_k$};
\draw (W1) node[right]{$W_{0k}$};
\draw (W2) node[right]{$W_{n,k+n}=W_{0k}$};
\draw (TW1) node[left]{$\tau W_{0k}$};
\draw (TW2) node[left]{$\tau W_{0k}$};
\end{tikzpicture}
\caption{(a) The support of $\Hom(W_{0k},-)$ is shown in gray. 
(b) the support of $\Ext(W_{0k},-)$ is shown in green.}
\label{Fig: supports of hom and ext in Wn}
\end{center}
\end{figure}

\begin{defn}
We define a \emph{soft exceptional sequence} in $\cW_n$ to be a sequence of bricks $(W_1,\cdots,W_k)$ so that $\Hom(W_j,W_i)=0=\Ext(W_j,W_i)$ for $1\le i<j\le k$. A soft exceptional sequence of length 2 is called an \emph{soft exceptional pair}. A soft exceptional sequence is called an \emph{exceptional sequence} if all of its objects are rigid.
\end{defn}

There are $n^2$ bricks in $\cW_n$ and we denote them by $V_{ij}$ for $0\le i,j<n$. These are:
\[
	V_{ij}=\begin{cases} W_{ij} & \text{if } i<j\\
   W_{i,j+n} & \text{if } i\ge j
    \end{cases}
\]
$V_{ij}$ is rigid if and only if $i\neq j$. Figure \ref{fig: bricks in Wn} shows the bricks in a tube of rank $4$.

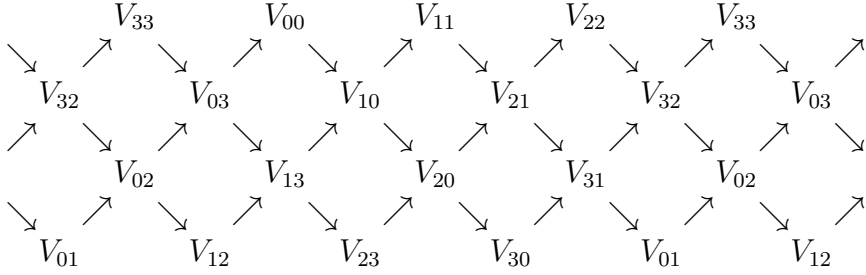
\begin{figure}[htbp]
\begin{center}
\begin{tikzpicture}[xscale=1.9,yscale=1.3]
\draw[dashed] (-.5,0)--(5.5,0);
\foreach \x/\y in {(0,0)/01,(1,0)/30,(2,0)/01,(3,0)/12,(4,0)/23,(5,0)/30}
\draw[fill,white] \x circle[radius=2mm];

\foreach \x/\y in {(0,0)/01,(1,0)/12,(2,0)/23,(3,0)/30,(4,0)/01,(5,0)/12}
\draw \x node{$V_{\y}$};
\foreach \x/\y in {(0.5,.75)/02,(1.5,.75)/13,(2.5,0.75)/20,(3.5,0.75)/31,(4.5,0.75)/02}
\draw \x node{$V_{\y}$};
\foreach \x/\y in {(0.5,2.25)/33,(1.5,2.25)/00,(2.5,2.25)/11,(3.5,2.25)/22,(4.5,2.25)/33}
\draw \x node{$V_{\y}$};
\foreach \x/\y in {(0,1.5)/32,(1,1.5)/03,(2,1.5)/10,(3,1.5)/21,(4,1.5)/32,(5,1.5)/03}
\draw \x node{$V_{\y}$};
\begin{scope}[xshift=-1cm]
\foreach \x in {1,...,6}\draw (\x.25,.4) node{$\nearrow$};
\foreach \x in {0,...,5}\draw (\x.75,1.1) node{$\nearrow$};
\foreach \x in {1,...,6}\draw (\x.25,1.9) node{$\nearrow$};
\foreach \x in {0,...,5}\draw (\x.75,.35) node{$\searrow$};
\foreach \x in {1,...,6}\draw (\x.25,1.1) node{$\searrow$};
\foreach \x in {0,...,5}\draw (\x.75,1.85) node{$\searrow$};
\end{scope}
\end{tikzpicture}
\caption{Bricks in the tube of rank 4. These are the objects at the mouth of the tube $\cW_4$. The objects in the top row are nonrigid bricks which all have the same dimension vector.}
\label{fig: bricks in Wn}
\end{center}
\end{figure}

\begin{lem}\label{lem: compatibility in Wn is same as in mod Cn}
$(V_{ij},V_{ab})$ is a soft exceptional pair in $\cW_n$ if and only if $(M_{ij},M_{ab})$ is an exceptional sequence in $mod\text-\Lambda$ for $\Lambda$ of type $C_n$.
\end{lem}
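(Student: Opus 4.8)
The plan is to factor the desired equivalence through oriented chords and loops and reduce everything to Lemma \ref{lem: Mij <-> gamma-ij respects noncrossing pairs}. Since a two-term exceptional sequence is exactly an exceptional pair, and a two-term soft exceptional sequence is a soft exceptional pair, that lemma already supplies $(M_{ij},M_{ab})$ is an exceptional pair $\iff$ $(\gamma_{ij},\gamma_{ab})$ is noncrossing. It therefore suffices to prove the parallel statement in the tube, namely that $(V_{ij},V_{ab})$ is a soft exceptional pair in $\cW_n$ if and only if $(\gamma_{ij},\gamma_{ab})$ is a noncrossing pair in $\widetilde\cC_n$; composing the two equivalences then yields the lemma.

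First I would reduce to a normal form exactly as in the proof of Lemma \ref{lem: Mij <-> gamma-ij respects noncrossing pairs}. The soft exceptional pair condition $\Hom(V_{ab},V_{ij})=0=\Ext(V_{ab},V_{ij})$ is invariant under $\tau$, since $\tau$ is an autoequivalence of the tube with $\tau V_{ij}=V_{i-1,j-1}$ and both $\Hom$ and $\Ext$ are preserved by any equivalence; noncrossing of chords is likewise invariant under the rotation $\tau\gamma_{ij}=\gamma_{i-1,j-1}$. Applying $\tau^{a}$ thus reduces the claim to the case $(a,b)=(0,k)$ with $k=b-a$ modulo $n$, where $V_{0k}=W_{0k}$ (and $V_{00}=W_{0n}$ is the nonrigid brick, corresponding to the loop $\gamma_{00}$). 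So I must show $\Hom(V_{0k},V_{ij})=0=\Ext(V_{0k},V_{ij})$ precisely when $(\gamma_{ij},\gamma_{0k})$ is noncrossing.

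The core of the argument is then the same five-case split (a)--(e) used in Lemma \ref{lem: Mij <-> gamma-ij respects noncrossing pairs}, now evaluated with the tube's $\Hom/\Ext$ support data (a),(b) recorded just before the present lemma. Writing $V_{ij}=W_{ij}$ when $i<j$ (cases (a),(c),(d)) and $V_{ij}=W_{i,j+n}$ when $j\le i$ (cases (b),(e)), I would check, directly from the support conditions $\Hom(W_{0k},W_{ij})\neq0\iff 0\le i<k\le j$ and $\Ext(W_{0k},W_{ij})\cong D\Hom(W_{ij},\tau W_{0k})$ and after choosing the appropriate period representative $W_{ab}=W_{a+pn,b+pn}$, that $\Hom(V_{0k},V_{ij})\neq0$ in case (a) and $\Ext(V_{0k},V_{ij})\neq0$ in case (b), while both vanish in cases (c), (d), (e). These are exactly the crossing, crossing, noncrossing, noncrossing, noncrossing verdicts recorded in Figures \ref{Fig: Cases (a) and (b): crossing pairs} and \ref{Fig: Cases (c,d,e): noncrossing pairs}, so the tube dichotomy agrees with the chord dichotomy case by case.

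The main obstacle is bookkeeping rather than conceptual: keeping the indices straight modulo $n$, selecting the representative $W_{i,j+n}$ versus $W_{ij}$ in each case, and choosing the right period shift $p$ when invoking the support conditions. The delicate endpoint is the nonrigid/loop case: when $k=0$ and $i=j$ both $V_{00}=W_{0n}$ and $V_{ii}=W_{i,i+n}$ have length $n$, and I must confirm that $\Ext(W_{0n},W_{i,i+n})\neq0$, so that $(V_{ii},V_{00})$ is never a soft exceptional pair, matching the convention that two loops $\gamma_{ii},\gamma_{00}$ are incompatible; this falls under case (b) with $k=n$, and the nonvanishing is visible from condition (b) after rewriting $W_{i,i+n}=W_{i-n,i}$. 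Once these edge cases are checked, the equivalence with $\widetilde\cC_n$ follows, and Lemma \ref{lem: Mij <-> gamma-ij respects noncrossing pairs} completes the proof.
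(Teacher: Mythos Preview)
Your proposal is correct and follows essentially the same approach as the paper: reduce via $\tau^a$ to the case $a=0$, then verify that the Hom/Ext support conditions in the tube pick out exactly cases (a) and (b) from the proof of Lemma \ref{lem: Mij <-> gamma-ij respects noncrossing pairs}. The only difference is cosmetic: the paper compares the tube's failure conditions directly to the $M_{ij}$ failure conditions rather than routing through the chord diagrams $\gamma_{ij}$, but since Lemma \ref{lem: Mij <-> gamma-ij respects noncrossing pairs} already establishes the $M_{ij}\leftrightarrow\gamma_{ij}$ equivalence, this is logically equivalent to your factoring.
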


\begin{proof} By applying $\tau^a$ we may assume $a=0$. Then, in the notation of $V_{ij}$, Conditions (a), (b) above are:
\begin{enumerate}
\item[(a)] $\Hom(V_{0k},V_{ij})\neq0$ if either $0\le i<k\le j$ or $j\le i<k$.
\item[(b)] $\Ext(V_{0k},V_{ij})\neq0$ if $j<k$ and $i\ge j$.
\end{enumerate}
the union of these two conditions, which describe all the cases when $(V_{ij},V_{0k})$ is not a soft exceptional pair is the same as the union of cases (a) and (b) in the proof of Lemma \ref{lem: Mij <-> gamma-ij respects noncrossing pairs} which are all cases where $(M_{ij},M_{0k})$ is not an exceptional pair. Thus, the compatibility conditions for $V_{ij}$ is the same as those for $M_{ij}$.
\end{proof}

This leads us to the proof of Theorem \ref{thm A}.

\subsection{Three versions of exceptional sequences of length $k$}\label{ss: three versions}

We start with the bijections
\[
	M_{ij}\leftrightarrow V_{ij} \leftrightarrow \gamma_{ij}
\]
between the following sets, each having $n^2$ elements.

(a) $\cE_n=$ the set of all (isomorphism classes of) exceptional objects $M_{ij}$ in $mod\text-\Lambda$ for $\Lambda$ an hereditary algebra of type $C_n$.

(b) $\cB_n$ is the set of all bricks in $\cW_n$:
\[
	\cB_n=\{V_{ij}\,|\, 0\le i,j<n \}.
\]

(c) $\widetilde\cC_n$ is the set of all loops and oriented chords in a circle with $n$ marked points:
\[
	\widetilde\cC_n=\{\gamma_{ij}\,|\, 0\le i,j<n\}.
\]
We assume the marked points are labeled $0$ through $n-1$ in clockwise order around the circle. This corresponds to the descending orientation of the modulated quiver for $\Lambda$.

\begin{thm}\label{thm: k-element version of Thm A}
For any $k\le n$, the bijection between these three sets given by $M_{ij}\leftrightarrow V_{ij} \leftrightarrow \gamma_{ij}$ gives a bijection between the following three sets.

\emph{(A)} $\cE_n^{(k)}=$ the set of exceptional sequences of length $k$ in $mod\text-\Lambda$.

\emph{(B)} $\cB_n^{(k)}=$ the set of all soft exceptional sequences of length $k$ in $\cW_n$.

\emph{(C)} $\widetilde\cC_n^{(k)}=$ is the set of all noncrossing sequences of $k$ elements of $\widetilde\cC_n$.
\end{thm}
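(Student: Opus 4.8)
The plan is to reduce everything to the level of pairs. Each of the three notions in the statement is, by definition, a conjunction of conditions indexed by the ordered pairs $(i,j)$ with $1\le i<j\le k$: a sequence $(E_1,\dots,E_k)$ lies in $\cE_n^{(k)}$ (resp. $\cB_n^{(k)}$) precisely when $(E_i,E_j)$ is an exceptional pair (resp. a soft exceptional pair) for every such $i<j$, and a sequence $(\gamma_1,\dots,\gamma_k)$ is a noncrossing sequence precisely when $(\gamma_i,\gamma_j)$ is a noncrossing pair for every $i<j$ (together with the global conditions on loops, addressed below). So once the binary relations are matched, the $k$-ary statement follows automatically, and the object bijection $M_{ij}\leftrightarrow V_{ij}\leftrightarrow\gamma_{ij}$ lifts to sequences simply by applying it termwise.

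The matching of the binary relations is exactly the content of the two lemmas already established. Lemma \ref{lem: compatibility in Wn is same as in mod Cn} gives that $(V_{ij},V_{ab})$ is a soft exceptional pair iff $(M_{ij},M_{ab})$ is an exceptional pair, which immediately yields the bijection between (A) and (B), since both sets are cut out by the same family of pairwise conditions transported through $M_{ij}\leftrightarrow V_{ij}$. Lemma \ref{lem: Mij <-> gamma-ij respects noncrossing pairs} gives that $(M_{ij},M_{ab})$ is an exceptional pair iff $(\gamma_{ij},\gamma_{ab})$ is a noncrossing pair, transporting the pairwise conditions to the chord side through $M_{ij}\leftrightarrow\gamma_{ij}$. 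Composing, all three families of pairwise conditions agree under $M_{ij}\leftrightarrow V_{ij}\leftrightarrow\gamma_{ij}$.

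The one point requiring real care, and the step I expect to be the main obstacle, is that membership in $\widetilde\cC_n^{(k)}$ is not a priori a purely pairwise condition: a noncrossing sequence must also have at most one loop, its chords must form a forest, and a loop (if present) must sit in the central region in the position dictated by its index. I therefore have to show these global conditions are forced by the pairwise noncrossing conditions. The loop count is immediate, since two distinct loops are never a noncrossing pair, matching the fact that $(M_{ii},M_{\ell\ell})$ is never an exceptional pair, so at most one $\gamma_i$ is a loop. The loop-placement condition is likewise forced: the ordered noncrossing relation of the loop $\gamma_\ell$ with each chord at its marked point is exactly the requirement that $\gamma_\ell$ be clockwise from $\gamma_i$ for $i<\ell$ and counterclockwise from $\gamma_j$ for $j>\ell$, so the pairwise data already pins down the position and places the loop in the central region.

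The genuinely geometric step is the forest condition: I must rule out that the underlying unoriented chords of a pairwise-noncrossing sequence contain a cycle (note that pairwise compatibility alone, read unordered, does not exclude a polygon, so the \emph{ordered} clockwise convention is essential here). For this I would invoke the no-cycle principle already recorded for unoriented chord diagrams, namely that the clockwise-ordering-at-each-marked-point condition forbids oriented cycles, since traversing a hypothetical cycle would force the chord indices to increase monotonically and then return to their starting value. Concretely, forgetting orientation via Lemma \ref{lem: central region determines orientation} turns a pairwise-noncrossing oriented sequence into an unoriented chord diagram of the same length, which is cycle-free; equivalently, the inductive construction of the central region following Definition \ref{def: central region} shows that each successive chord either lies on the left side of an earlier chord or strictly cuts the current central region, and never closes a cycle. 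With the forest condition secured, all three sets are described by identical pairwise data, and the termwise bijection $M_{ij}\leftrightarrow V_{ij}\leftrightarrow\gamma_{ij}$ carries length-$k$ sequences to length-$k$ sequences in each direction, completing the argument.
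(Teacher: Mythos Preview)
Your proposal is correct and follows essentially the same approach as the paper: invoke Lemma~\ref{lem: compatibility in Wn is same as in mod Cn} for the bijection between (A) and (B), and Lemma~\ref{lem: Mij <-> gamma-ij respects noncrossing pairs} (equivalently, Theorem~\ref{thm: exc seq for Cn = nonX for chords and loops}) for the bijection between (A) and (C), then lift termwise. Your additional care in verifying that the global conditions defining $\widetilde\cC_n^{(k)}$ (at most one loop, forest condition, loop placement in the central region) are forced by the pairwise noncrossing conditions is a point the paper passes over silently, and your cycle argument via the clockwise-ordering-at-each-vertex constraint is a sound way to handle it.
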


\begin{proof}Lemma \ref{lem: compatibility in Wn is same as in mod Cn} implies that the correspondence $V_{ij}\leftrightarrow M_{ij}$ gives a bijection between (A) and (B). Theorem \ref{thm: exc seq for Cn = nonX for chords and loops} says that the bijection $M_{ij}\leftrightarrow \gamma_{ij}$ gives a bijections between (A) and (C). \end{proof}

This gives most of Theorem \ref{thm A}. The size of these sets is given in Corollary \ref{cor: no of exc seq for Bn/Cn}. It remains to give a bijection between the three sets $\cE_n^{(n)}$, $\cB_n^{(n)}$ and $\widetilde \cC_n^{(n)}$ and the set $\widetilde\cT_n$ of augmented rooted labeled trees.
 

\subsection{Bijection with $\widetilde\cT_n$}\label{ss: bijection with augmented trees}

The last bijection is with $\widetilde \cT_n$, the set of augmented rooted labeled trees. We need to review the bijection from our previous paper, with the orientation of the quiver reversed.

\begin{thm}\cite{Ig-Sen}\label{thm: An case}
There is a bijection between 

$\cF_{n}=$ the set of rooted labeled forests with $n$ vertices and

$\cE_{n}^A=$ the set of complete exceptional sequences for linear $A_{n}$: 
\[
1\leftarrow 2\leftarrow\cdots\leftarrow n.
\]
Furthermore, this bijection is given by associating to a complete exceptional sequence $(E_1,\cdots,E_{n})$ the Hasse diagram of the $E_i$ partially ordered by inclusion of supports.
\end{thm}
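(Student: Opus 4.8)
The plan is to make the Hasse-diagram map $\Phi\colon\cE_n^A\to\cF_n$ explicit, verify that it takes values in $\cF_n$, and then upgrade it to a bijection. First I would record that both sets have $(n+1)^{n-1}$ elements: for $\cF_n$ this is Cayley's count of rooted labeled forests (adjoin one common root to obtain a tree on $n+1$ vertices), and for $\cE_n^A$ it is Theorem~\ref{thm: number of exc seq for An-1} with $k=n$. For the orientation $1\leftarrow2\leftarrow\cdots\leftarrow n$ the indecomposables are the interval modules $M_{ij}$ with $\supp M_{ij}=(i,j]$, and the essential structural input is that the supports occurring in a complete exceptional sequence form a \emph{laminar} family: any two are nested or disjoint. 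I would deduce this from Theorem~\ref{thm: chord diagrams and exc seq of type An}, observing that two intervals partially overlap exactly when the corresponding chords cross on the circle with $n+1$ marked points, whereas a chord diagram contains no crossing pair. (Equivalently, one checks directly that a crossing pair $M_{ij},M_{cd}$ has a nonzero $\Hom$ or $\Ext$ in each order, so the two cannot coexist in an exceptional sequence.)

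Granting laminarity, the forest property is formal. For a laminar family the supports properly containing a fixed $\supp E_i$ are totally ordered by inclusion, since any two of them meet in $\supp E_i\neq\emptyset$, hence are not disjoint, hence nested; thus $\supp E_i$ has a unique minimal proper superset among the terms, i.e.\ a unique parent. This is precisely the statement that the Hasse diagram of the containment poset is a forest. Labelling the vertex coming from $E_i$ by its position $i$ (distinctness of the terms of an exceptional sequence gives $n$ distinct vertices) and directing each cover edge from the larger to the smaller support makes each connected component a rooted tree whose root is its unique maximal support. Hence $\Phi$ is well defined with image in $\cF_n$ and is the map described in the statement.

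It remains to prove $\Phi$ is bijective, and since the two sets are finite of equal cardinality it suffices to produce a right inverse $\Psi\colon\cF_n\to\cE_n^A$ with $\Phi\circ\Psi=\mathrm{id}$. I would build $\Psi$ recursively from the roots, assigning to each vertex $v_i$ an interval of length equal to the weight (subtree size) of $v_i$, placing the pairwise disjoint intervals of the children inside the interval of the parent, and reading off the left-to-right order of siblings from their labels. Because the children weights sum to the parent weight minus one, the children always fit with exactly one unit to spare, so no overfilling can occur in the construction; the orientation of each interval and the relative order of two disjoint supports are then forced, since for a pair of disjoint interval modules exactly one of the two orders is an exceptional pair. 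The main obstacle is exactly this reconstruction: the tree shape alone does not determine the supports (one must use the positional labels together with exceptionality to pin each interval down), and one must verify both that $\Psi$ outputs a genuine complete exceptional sequence and that $\Phi\circ\Psi=\mathrm{id}$. The consistency of the length-equals-weight assignment is what makes this work, and conversely any ``overfilled'' configuration in which children tile a parent exactly is excluded precisely because it would force a cyclic chain of the mandatory orderings, contradicting the existence of a valid exceptional order; this is the combinatorial heart carried out in \cite{Ig-Sen}.
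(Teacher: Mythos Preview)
The paper does not prove this theorem; it is quoted from \cite{Ig-Sen} and used as input for the bijections in Section~\ref{ss: bijection with augmented trees}. So there is no ``paper's own proof'' to compare against, only the original argument in \cite{Ig-Sen}.

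Your outline is structurally sound and matches the strategy of \cite{Ig-Sen}: establish the laminar property of supports (so that $\Phi$ lands in $\cF_n$), match cardinalities via Cayley and Theorem~\ref{thm: number of exc seq for An-1}, and build an explicit right inverse $\Psi$ using the weight-equals-support-length heuristic. That is indeed the combinatorial heart.

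One genuine slip: the assertion that ``for a pair of disjoint interval modules exactly one of the two orders is an exceptional pair'' is false as stated. For the linear $A_n$, if $M_{ij}$ and $M_{cd}$ have supports that are disjoint and \emph{non-adjacent} (say $j<c-1$), then both $\Hom$ and $\Ext^1$ vanish in each direction, so both orders are exceptional pairs. The order is forced only when the supports are adjacent (e.g.\ $\Ext^1(S_{k+1},S_k)\neq0$ for $1\leftarrow2\leftarrow\cdots$). What actually pins down the sibling placement in $\Psi$ is the collection of all pairwise constraints together with the requirement that the children's intervals, plus the one leftover unit, tile the parent's interval; the adjacency constraints at the seams then propagate to fix everything. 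Your sentence ``reading off the left-to-right order of siblings from their labels'' is closer to what \cite{Ig-Sen} actually does, but you should drop the incorrect pairwise justification and instead argue via the tiling-plus-adjacency mechanism.
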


The bijection $\cF_n\cong \cE_n^A$ restricts to a bijection between $\cT_n$, the set of rooted labeled trees $T$ with $n$ vertices and $\cA_n$, the set of complete exceptional sequences $E_\ast=(E_1,\cdots,E_n)$ for linear $A_n$ which include the projective-injective $P_n=I_1=M_{0n}$.

The bijection $\widetilde \cT_n\cong \cB_n^{(n)}$ is given by sending $(T,k)$ to $\tau^{-k}E_\ast$ where $E_\ast$ is the exceptional sequence corresponding to $T$. The bijection $\widetilde \cT_n\cong \widetilde\cC_n^{(n)}$ is easier to describe since $\tau$ just rotates the circle and the support of an oriented chord $\gamma_{ij}$ is defined to be the set of arcs on the circle on the left side of $\gamma_{ij}$ which, in the case $i<j$, is the half open interval $(i,j]$. 

\begin{thm}\label{thm: A with proof}
There are bijections:
\[
	\cE_n^{(n)}\cong \cB_n^{(n)}\cong\widetilde \cC_n^{(n)}\cong\widetilde\cT_n
\]
where the first three sets are in bijection by Theorem \ref{thm: k-element version of Thm A} and the bijection with $\widetilde \cT_n$ is given by taking $(\gamma_1,\cdots,\gamma_n)\in \widetilde \cC_n^{(n)}$ to $(T,\ell)$ where $T$ is the Hasse diagram of the set $\{\gamma_i\}$ ordered by inclusion of supports and $\ell$ is the marked point where the loop is.
\end{thm}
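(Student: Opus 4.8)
The first three bijections $\cE_n^{(n)}\cong\cB_n^{(n)}\cong\widetilde\cC_n^{(n)}$ are already supplied by Theorem \ref{thm: k-element version of Thm A}, so the plan is to establish only the last bijection $\widetilde\cC_n^{(n)}\cong\widetilde\cT_n$ and to check it is realized by the stated Hasse-diagram-plus-loop recipe. First I would verify the recipe is well defined. A pointed chord diagram $(\gamma_1,\dots,\gamma_n)$ contains exactly one loop $\gamma_{\ell\ell}$, which we regard as the degenerate chord of full support (length $n$); under inclusion of supports it is then the unique maximal element. Consequently the Hasse diagram of $\{\gamma_i\}$ has a unique top element and, together with the fact (to be borrowed from Theorem \ref{thm: An case}) that such Hasse diagrams are forests, it is a single rooted tree $T$ on $n$ vertices, labeled by position in the sequence, with root the position of the loop. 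Setting $\varepsilon=\ell$ then gives a well-defined map $\widetilde\cC_n^{(n)}\to\widetilde\cT_n=\cT_n\times\ZZ_n$.

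To invert this map I would separate the augmentation from the tree using rotation. The operation $\tau$, sending $\gamma_{ij}\mapsto\gamma_{i-1,j-1}$, is a symmetry of the entire configuration, so it carries $\widetilde\cC_n^{(n)}$ to itself, shifts the loop's marked point by one, and leaves the labeled Hasse tree $T$ unchanged (inclusion of supports is rotation-invariant). Thus the $n$ ``augmentation slices'' are permuted cyclically by $\tau$, and it suffices to analyze the slice with the loop at marked point $0$, transporting the general case by a power of $\tau$.

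The heart of the argument is the identification of this slice with type $A_n$. For a diagram whose loop sits at $0$, I would \emph{open} the loop into a genuine chord: split the marked point $0$ of the $n$-point circle into two points $0$ and $n$, turning $\gamma_{00}$ into the chord from $0$ to $n$, i.e. the full-support module $M_{0n}$, and producing a circle with the $n+1$ marked points $0,1,\dots,n$. The claim is that this identifies pointed chord diagrams with loop at $0$ with complete chord diagrams for linear $A_n$ containing the chord $(0,n)$, equivalently with $\cA_n$. The content to verify is that the loop at $0$ and the chord $(0,n)$ cut the disk in the same way (the central region at $0$ becomes the region bounded below by $(0,n)$), so that both ``noncrossing'' and ``inclusion of supports'' are preserved; this is precisely the compatibility bookkeeping already carried out in Lemma \ref{lem: Mij <-> gamma-ij respects noncrossing pairs}, now read with $(0,n)$ playing the role of the distinguished chord, and the only delicate point is the treatment of chords incident to the split point. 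This is the step I expect to be the main obstacle. Under this identification the loop becomes $M_{0n}$, which has full support and is the unique maximal element, hence the root of the Hasse tree.

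Finally I would apply Theorem \ref{thm: An case}, restricted to the bijection $\cT_n\cong\cA_n$: the Hasse diagram by inclusion of supports sends a complete $A_n$ exceptional sequence containing $M_{0n}$ to a rooted labeled tree whose root is the position of $M_{0n}$. Composing, the loop-at-$0$ diagrams biject with $\cT_n$ compatibly with the stated recipe, and reinstating the augmentation via $\tau$ yields $\widetilde\cC_n^{(n)}\cong\cT_n\times\ZZ_n=\widetilde\cT_n$, with $\varepsilon$ the marked point of the loop. As a sanity check and alternative finish, both sides have $n^n$ elements by Theorem \ref{thm: number of pointed chord diagrams}, so well-definedness together with injectivity would already force the map to be a bijection.
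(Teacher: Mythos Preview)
Your proposal is correct and follows essentially the same route as the paper: use the rotation symmetry $\tau$ to reduce to the case of the loop at $0$, then invoke Theorem \ref{thm: An case} restricted to $\cT_n\cong\cA_n$ (complete $A_n$ exceptional sequences containing $M_{0n}$). The paper's proof is a terse two sentences doing exactly this; your version spells out the well-definedness, the geometric ``open the loop into the chord $(0,n)$'' picture, and a cardinality check, but none of this constitutes a different strategy.
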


Thus, if the $k$-th term of $(\gamma_1,\cdots,\gamma_n)$ is the loop $\gamma_k=\gamma_{\ell\ell}$, the augmentation of the corresponding augmented tree is $\varepsilon =\ell$. The number $k$ means that the $k$th vertex $v_k$ is the root of $T$.

\begin{proof}
The correspondence described in the theorem commutes with rotation of the pointed chord diagram and reduction of the augmentation $\varepsilon=\ell$. This reduces to the case $\varepsilon=0$ which follows from Theorem \ref{thm: An case} (restricted to $\cT_n\subset\cF_{n-1}$).
\end{proof}

\subsection{Proof of Theorem \ref{thm B}}\label{ss: proof of Thm B}

There is a bijection between $\widetilde\cF_n$, the set of augmented rooted labeled forests with $n-1$ vertices and $\cR_n^{(n-1)}$, the set of complete exceptional sequences for $\cW_n$. The proof is that both are in bijection with a third set: $\cC_n^{(n-1)}$, the set of complete oriented chord diagrams in a circle with $n$ marked points. 

The bijection $\cR_n^{(n-1)}\cong \cC_n^{(n-1)}$ is elementwise. 

\begin{thm}\label{thm: bijection Rn=Cn}
The obvious bijection $V_{ij}\leftrightarrow \gamma_{ij}$ between the sets:

$\cR_n=$ the set of rigid objects in $\cW_n$ and

$\cC_n=$ the set of oriented chords in the circle with $n$ marked points

\noindent gives a bijection, for any $k<n$, between

$\cR_n^{(k)}=$ the set of exceptional sequences of length $k$ in $\cW_n$ and

$ \cC_n^{(k)}=$ the set of noncrossing chord diagrams of length $k$ in the circle with $n$ marked points.
\end{thm}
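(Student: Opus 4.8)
The plan is to deduce this theorem by restricting the bijection of Theorem \ref{thm: k-element version of Thm A} to the rigid objects, equivalently to the chords that are not loops. First I would record the elementwise statement: under the correspondence $V_{ij}\leftrightarrow\gamma_{ij}$, the brick $V_{ij}$ is rigid precisely when $i\neq j$, and $\gamma_{ij}$ is an oriented chord (as opposed to a loop) precisely when $i\neq j$. Hence $V_{ij}\leftrightarrow\gamma_{ij}$ restricts to a bijection between $\cR_n$, the rigid objects of $\cW_n$, and $\cC_n$, the oriented chords on the circle with $n$ marked points.

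Next I would promote this to sequences using Theorem \ref{thm: k-element version of Thm A}, which already supplies a bijection $\cB_n^{(k)}\cong\widetilde\cC_n^{(k)}$ via $V_{ij}\leftrightarrow\gamma_{ij}$. By definition a soft exceptional sequence $(V_1,\cdots,V_k)$ is an exceptional sequence in $\cW_n$, i.e.\ an element of $\cR_n^{(k)}$, exactly when every $V_i$ is rigid, that is, when no term is a nonrigid brick $V_{\ell\ell}$. By the elementwise statement this holds if and only if the corresponding noncrossing sequence $(\gamma_1,\cdots,\gamma_k)$ contains no loop. Thus the bijection $\cB_n^{(k)}\cong\widetilde\cC_n^{(k)}$ carries $\cR_n^{(k)}$ onto the set of loop-free noncrossing sequences in $\widetilde\cC_n^{(k)}$.

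It then remains to identify the loop-free noncrossing sequences with the oriented chord diagrams $\cC_n^{(k)}$. A loop-free element of $\widetilde\cC_n^{(k)}$ is a sequence of oriented chords that is pairwise a compatible pair, which is exactly the pairwise requirement in the definition of an oriented chord diagram; the only further condition for membership in $\cC_n^{(k)}$ is that the underlying chords form a forest. I expect this acyclicity to be the single point requiring genuine argument. I would obtain it from the central-region construction following Definition \ref{def: central region}, whose inductive step uses only pairwise noncrossing-ness: any pairwise-noncrossing sequence of oriented chords admits a unique region lying on the right of every chord, so all chords are oriented clockwise around it, and such a consistent orientation is incompatible with an unoriented cycle among the chords. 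Alternatively, one may observe that the classes of the rigid bricks in an exceptional sequence are linearly independent in the Grothendieck group of $\cW_n$, whereas a cycle of chords forces a linear dependence among the dimension vectors of the corresponding $V_{ij}$ (for instance $\undim V_{02}=\undim V_{01}+\undim V_{12}$ for a triangle). Either way a loop-free noncrossing sequence automatically lies in $\cC_n^{(k)}$, and conversely every oriented chord diagram is such a sequence; combining the three identifications yields the bijection $\cR_n^{(k)}\cong\cC_n^{(k)}$ for $k<n$.
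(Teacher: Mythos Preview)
Your approach is essentially the paper's: restrict the bijection of Theorem~\ref{thm: k-element version of Thm A} (equivalently, Lemmas~\ref{lem: Mij <-> gamma-ij respects noncrossing pairs} and~\ref{lem: compatibility in Wn is same as in mod Cn}) to the rigid objects on one side and the loop-free sequences on the other. The paper's proof is the one-line observation that pairwise compatibility is preserved; you unpack this correctly.

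Where you do more work than necessary is the forest condition. By the paper's definition of $\widetilde\cC_n^{(k)}$, the non-loop part of a noncrossing sequence is already required to be an oriented chord diagram, forest condition included. Hence the loop-free elements of $\widetilde\cC_n^{(k)}$ are \emph{by definition} exactly $\cC_n^{(k)}$, and no separate acyclicity argument is needed once you invoke Theorem~\ref{thm: k-element version of Thm A}.

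That said, your two proposed arguments for acyclicity both have gaps. The central-region argument does not suffice: the clockwise-oriented triangle $\gamma_{ab},\gamma_{bc},\gamma_{ca}$ (with $a,b,c$ clockwise on the circle) has a perfectly good region on the right of all three chords, yet forms a cycle. What actually rules it out is the clockwise \emph{ordering} condition at the shared vertices, which forces the indices to increase monotonically around the cycle---impossible. (This is exactly the argument the paper gives for unoriented chord diagrams in Section~\ref{ss: unoriented chords}.) Your linear-independence argument is also incomplete: the relation $\undim V_{02}=\undim V_{01}+\undim V_{12}$ holds for that particular orientation of the triangle, but for the orientation $V_{01},V_{12},V_{20}$ in $\cW_n$ with $n\ge4$ the three dimension vectors are linearly independent, so linear independence alone does not exclude cycles of chords.
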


The cardinality of these sets was given in Theorem \ref{thm: cardinality of Cn(k)}.

\begin{proof}
We have shown, in Lemmas \ref{lem: Mij <-> gamma-ij respects noncrossing pairs} and \ref{lem: compatibility in Wn is same as in mod Cn}, that the bijection $V_{ij}\leftrightarrow \gamma_{ij}$ respects pairwise compatibility.
\end{proof}

To finish the proof of Theorem \ref{thm B} we take $k=n-1$. The bijection $\cR_n^{(n-1)}\cong \cC_n^{(n-1)}$ is given by Theorem \ref{thm: bijection Rn=Cn} above. The bijection $\widetilde\cF_{n-1}\cong \cC_n^{(n-1)}$ is induced by the epimorphism
\[
	\widetilde \cT_n\cong \widetilde\cC_n^{(n)}\onto \cC_n^{(n-1)}
\] 
where the map $\widetilde\cC_n^{(n)}\onto \cC_n^{(n-1)}$ is given by deleting the loop. Since deleting the loop in a pointed chord diagram corresponds to deleting the root of the corresponding augmented tree, there is a unique induced bijection $\widetilde\cF_{n-1}\cong \cC_n^{(n-1)}$ making the following diagram commute.
\[
\xymatrix{
\widetilde \cT_n\ar[d]\ar[r]^{\cong} &
	\widetilde\cC_n^{(n)}\ar[d]\\
\widetilde\cF_{n-1}\ar[r]^\cong &\cC_n^{(n-1)}
	} 
\]

\begin{thm}\label{thm B with proof}
There are bijections
\[
	\cR_n^{(n-1)}\cong \cC_n^{(n-1)}\cong \widetilde\cF_{n-1}
\]
as described above.
\end{thm}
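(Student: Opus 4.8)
The plan is to treat the two displayed bijections separately and then recognize the second as the map induced on quotients by the top row of the commutative square. The bijection $\cR_n^{(n-1)} \cong \cC_n^{(n-1)}$ needs no fresh argument: it is exactly the case $k = n-1$ of Theorem \ref{thm: bijection Rn=Cn}, where the element-wise correspondence $V_{ij} \leftrightarrow \gamma_{ij}$ is shown to preserve pairwise compatibility via Lemmas \ref{lem: Mij <-> gamma-ij respects noncrossing pairs} and \ref{lem: compatibility in Wn is same as in mod Cn}. I would simply invoke it.

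For $\widetilde\cF_{n-1} \cong \cC_n^{(n-1)}$, the idea is to descend the bijection $\phi \colon \widetilde\cT_n \xrightarrow{\cong} \widetilde\cC_n^{(n)}$ of Theorem \ref{thm: A with proof} along the two vertical surjections of the square: the root-deletion epimorphism $f\colon \widetilde\cT_n \to \widetilde\cF_{n-1}$ defined through \eqref{eq: def of epsilon F} on the left, and loop-deletion $g\colon \widetilde\cC_n^{(n)} \onto \cC_n^{(n-1)}$ on the right (well defined because the $n-1$ surviving chords keep their orientations, with a central region where the loop used to be). The standard fact I would use is that $\phi$ descends to a bijection of the quotients precisely when $f(x)=f(x')$ holds exactly when $g(\phi(x))=g(\phi(x'))$; the square then commutes by construction, and since $|\widetilde\cF_{n-1}| = |\cC_n^{(n-1)}| = n^{n-1}$ by Theorem \ref{thm: cardinality of Cn(k)}, the induced map is automatically a bijection. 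Thus the whole proof reduces to this equivalence of fibers.

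The fiber matching is where the real content lies, and I would extract it from the explicit form of $\phi$ in Theorem \ref{thm: A with proof}: the loop of a pointed chord diagram corresponds to the root of the augmented tree, its marked point is the augmentation, and its position in the sequence records which vertex is the root. Hence deleting the loop corresponds precisely to deleting the root, so $\phi$ intertwines $f$ and $g$ on the level of the ``forgotten'' data. What must still be checked is that the surviving $n-1$ oriented chords of $\phi(T,\varepsilon)$ — those attached to the non-root vertices — depend only on the underlying augmented forest $(\pi(T),\varepsilon_F)$ and not on which vertex was chosen as root. This is exactly what Definition \ref{def: augmented forest} is engineered to provide: the augmentation is a function on all of $\{0,1,\dots,n-1\}$, and the supports $(i,j]$ of the non-root objects, hence their chords, are reconstructed from the forest $F$ together with the values $\varepsilon_F(0),\dots,\varepsilon_F(n-1)$, none of which refers to a distinguished root.

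The main obstacle is therefore the root-independence of these supports. I expect to establish it using the relabelling moves already analysed just before Definition \ref{def: action of Bn on (T,e)}: when $v_n$ is the root, the augmented trees $(T,\varepsilon),\sigma_{n-1}(T,\varepsilon),\dots,\delta_n(T,\varepsilon)$ all share the same underlying augmented forest, and under $\phi$ these special-rule moves relocate the loop among its admissible positions while fixing every non-loop chord. Since $f^{-1}(F,\varepsilon_F)$ is swept out by re-inserting a labelled root (with $\varepsilon$ then forced to equal $\varepsilon_F(r-1)$), this shows its image under $\phi$ is a single fiber of $g$, giving the required equivalence $f(x)=f(x')\iff g(\phi(x))=g(\phi(x'))$. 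The induced bijection $\widetilde\cF_{n-1} \cong \cC_n^{(n-1)}$ then completes the proof, the constant value $n^{n-1}$ serving throughout as a consistency check.
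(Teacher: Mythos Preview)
Your proposal is correct and follows essentially the same approach as the paper: the first bijection is Theorem~\ref{thm: bijection Rn=Cn} at $k=n-1$, and the second is induced on quotients from the commutative square, with the fiber matching coming from the fact that root deletion in $\widetilde\cT_n$ corresponds under $\phi$ to loop deletion in $\widetilde\cC_n^{(n)}$. The paper is terser---it simply asserts the root/loop correspondence and concludes---whereas you supply additional verification via the special-rule braid moves; this extra care is fine but not strictly needed, since once both vertical maps are $n$-to-$1$ and the square commutes set-theoretically (root $\leftrightarrow$ loop by the very description in Theorem~\ref{thm: A with proof}), the induced map between the $n^{n-1}$-element sets is automatically a bijection.
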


Since we now have a good description of all of these bijections we can compare the action of the braid group $B_n$ on the sets in Theorem \ref{thm: A with proof}. The proof will use the bijections in Theorem \ref{thm B with proof} above.

\begin{cor}\label{cor: bijection is Bn equivariant}
The bijection between augmented rooted labeled tree and complete exceptional sequences for an algebra of type $C_n$ respects the action of the braid group $B_n$.
\end{cor}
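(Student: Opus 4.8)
The goal is to show that the bijection $\Phi\colon\widetilde\cT_n\xrightarrow{\cong}\cE_n^{(n)}$ of Theorem \ref{thm: A with proof} intertwines the action of Definition \ref{def: action of Bn on (T,e)} on augmented trees (an action by Theorem \ref{thm: action of Bn on (T,e)}) with the usual mutation action of $B_n$ on complete exceptional sequences \cite{Crawley-Boevey,RingelExcSeq}. Since both are $B_n$-actions and $B_n$ is generated by $\sigma_1,\dots,\sigma_{n-1}$, it is enough to verify $\Phi\sigma_i=\sigma_i\Phi$ on each generator. I would organize this around the fundamental braid $\delta_n$: because $\delta_n\sigma_i=\sigma_{i+1}\delta_n$ (Proposition \ref{prop: delta T= T+}) holds in $B_n$ and hence in every $B_n$-set, once $\Phi$ is known to intertwine $\delta_n$, the verification of the remaining generators can be transported from a single one by conjugating with powers of $\delta_n$.

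Recall that $\Phi(T,\varepsilon)=\tau^{-\varepsilon}E_\ast$, where $E_\ast=(E_1,\dots,E_n)$ is the $C_n$-sequence attached to the rooted tree $T$, with the root $M_{00}=P_n$ (the image of the loop) sitting in the root position $r$. Two bookkeeping facts make $\Phi$ behave well. First, the Coxeter twist $\tau$—most transparently the period-$n$ rotation of the tube $\cW_n$, transported to $mod\text-\Lambda$ by Lemma \ref{lem: compatibility in Wn is same as in mod Cn}—commutes with every braid mutation, so applying $\tau^{-\varepsilon}$ to all terms is harmless. Second, for $\sigma_i$ with $i\ne r-1$ the augmentation is unchanged and the tree move is the ordinary forest move of \cite{Ig-Sen}; here $\Phi\sigma_i=\sigma_i\Phi$ is exactly the $A_{n-1}$-equivariance of the forest bijection of \cite{Ig-Sen}, read through $M_{ij}\leftrightarrow\gamma_{ij}$ and the $\tau$-twist. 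Likewise, when $v_n$ is not the root, the intertwining of $\delta_n$ is precisely the content of the proof of Proposition \ref{prop: delta T= T+}, which matches $T\mapsto T^+$ with $\delta_nE_\ast=(\tau^\ast E_n,E_1,\dots,E_{n-1})$ of \cite[Theorem 4.5]{Ig-Sen}, the augmentation staying fixed by Lemma \ref{lem: delta on (T,e)}.

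The one genuinely new point—the \emph{special rule}—is the case $i=r-1$, i.e.\ the braid move applied to the pair in positions $r-1,r$ whose second entry is $\tau^{-\varepsilon}M_{00}=M_{\varepsilon\varepsilon}$, the image of the loop. On the tree side this is merely a relabeling of the last two vertices together with the augmentation shift $\varepsilon\mapsto\varepsilon+\overline w(v_{r-1}^T)$ of Definition \ref{def: action of Bn on (T,e)}. I would verify directly that the mutation of the exceptional pair $(E_{r-1},M_{\varepsilon\varepsilon})$ reproduces exactly this data. The cleanest route is through the pointed chord diagram of Theorem \ref{thm: A with proof}: the loop occupies position $r$, and the braid move slides it past the chord in position $r-1$; this changes the central region, hence the orientation of the chords and the marked point carrying the loop, in precisely the way prescribed by the special rule. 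By rotation invariance of the chord picture this computation is uniform in $r$, so it suffices to record it once, say for $\sigma_{n-1}$ with the loop in the last position.

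I expect this loop-mutation computation to be the main obstacle, since it is the only step not inherited from the type $A_{n-1}$ theory of \cite{Ig-Sen} or from the $\delta_n$-formula, and it is where the non-rigid brick (equivalently the projective $P_n$, equivalently the loop) must be handled by hand. Once it is in place, the $r=n$ case of the $\delta_n$-intertwining follows from \eqref{eq: delta on (T,e) when vn is root}, giving $\Phi\delta_n=\delta_n\Phi$ on all of $\widetilde\cT_n$; combined with the full intertwining of $\sigma_{n-1}$ and the relation $\sigma_{i+1}=\delta_n\sigma_i\delta_n^{-1}$, conjugation yields $\Phi\sigma_i=\sigma_i\Phi$ for every $i$, and hence the asserted $B_n$-equivariance of $\Phi$.
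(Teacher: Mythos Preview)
Your plan is essentially the paper's: check each generator $\sigma_i$, splitting into the generic case $i\neq r-1$ (handled via \cite{Ig-Sen}) and the special-rule case $i=r-1$ (handled through pointed chord diagrams). The $\delta_n$/conjugation layer you add is unnecessary overhead---once you can verify $\Phi\sigma_i=\sigma_i\Phi$ for every $i$ by this case split, nothing needs to be transported, and your proposed check of $\Phi\delta_n=\delta_n\Phi$ already presupposes the generic $\sigma_i$-intertwining. The paper simply runs the case split. It also packages the generic case more cleanly than you do: rather than separately invoking $\tau$-commutation and the $A_{n-1}$ forest bijection, it observes that the elementwise bijection $M_{ij}\leftrightarrow\gamma_{ij}$ makes $\cE_n^{(n)}\cong\widetilde\cC_n^{(n)}$ automatically $B_n$-equivariant, since on both sides $\sigma_i$ is ``delete term $i{+}1$, insert the unique compatible object at position $i$.'' This reduces everything to the equivariance of $\widetilde\cC_n^{(n)}\cong\widetilde\cT_n$.

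One point in your special-case sketch is wrong. You say that sliding the loop past the chord ``changes the central region, hence the orientation of the chords.'' It does not. The braid move $\sigma_{r-1}$ deletes the loop from position $r$ and shifts the chord $\gamma_{r-1}$ to position $r$, so the ordered sequence of oriented chords---and hence the central region---is \emph{unchanged}; only the loop changes, appearing now in position $r{-}1$ at some new marked point. The paper's argument for this case is built on exactly this observation: since the special rule preserves the underlying augmented forest (as remarked after \eqref{eq: def of epsilon F}), and this corresponds via Theorem \ref{thm B with proof} to preserving the underlying oriented chord diagram, both $\sigma_{r-1}$ on the pointed chord diagram and the special rule on $(T,\varepsilon)$ produce pointed chord diagrams with the same $n{-}1$ chords and a loop in position $r{-}1$. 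Uniqueness of the completing object then forces them to agree, so the augmentation shift $\varepsilon\mapsto\varepsilon+\overline w(v_{r-1}^T)$ comes out for free rather than requiring the direct mutation computation you anticipate as the main obstacle.
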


\begin{proof} Recall from \cite{RingelExcSeq} and \cite{Crawley-Boevey} that the action of each of the generators $\sigma_i$ of the braid group on a complete exceptional sequence $E_\ast=(E_1,\cdots,E_n)$ over any hereditary algebra is given by deleting $E_{i+1}$ from the sequence and inserting a new term $E_i'$ before $E_i$:
\[
	\sigma_i E_\ast=(E_1,\cdots,E_{i-1},E_i',E_i,E_{i+2},\cdots,E_n).
\]
Since $E_i'$ is uniquely determined by the other terms in the sequence, $\sigma_i E_\ast$ is well-defined. 

There is a corresponding action of the braid group on pointed chord diagrams:
\[
	\sigma_i(\gamma_1,\cdots,\gamma_n)=(\gamma_1,\cdots,\gamma_{i-1},\gamma_i',\gamma_i,\gamma_{i+2},\cdots,\gamma_n)
\]
where $\gamma_i'$ is the unique object of $\widetilde\cC_n$ which completes the pointed chord diagrams. If $\beta$ denotes the bijection $\beta:\cE_n\cong \widetilde\cC_n$ and $\gamma_i=\beta(E_i)$ for each $i$ then $\gamma_i'$ is given by $\gamma_i'=\beta(E_i')$.

In \cite{Ig-Sen} we defined the action of $\sigma_i$ on forests to corresponds to the action on complete exceptional sequences for $A_n$. We are restricting this action to $\cT_n\subset\cF_n$. However, $\cT_n$ is not invariant under the action of the Braid group. $\sigma_i T\in \cT_n$ except possibly in the special case when the $i+1$st vertex of $T$ is the root $v_{i+1}=v_r$. In that case, we have the special rule that $\sigma_i T$ is simply $T$ with vertices $v_i,v_{i+1}$ in the other order, i.e., the root of $T$ becomes its $i$th vertex. We also recall that, for the augmented tree $(T,\varepsilon)$, the action of $\sigma_i$ does not change the augmentation except in this special case where we have:
\[
	\sigma_i(T,\varepsilon)=(\sigma_iT,\varepsilon').
\]
We will not need the formula for $\varepsilon'$ since it is uniquely determined by the fact that $(T,\varepsilon)$ and $(\sigma_iT,\varepsilon')$ have the same underlying augmented forest. By Theorem \ref{thm B with proof} above, the pointed chord diagrams corresponding to $(T,\varepsilon)$ and $(\sigma_iT,\varepsilon')$ have the same underlying oriented chord diagram. The only thing different is the location and position of the loop. For $(T,\varepsilon)$ the root is $\gamma_{\varepsilon\varepsilon}$ which is in position $i+1$ in the corresponding pointed chord diagram. For $\sigma_i(T,\varepsilon)=(\sigma_iT,\varepsilon')$, the loop is in position $i$. It is the unique object which fits in the $i$th position of the sequence of oriented chords and loops. Therefore, by definition of the braid group action, the new pointed chord diagram is $\sigma_i$ applied to the old one. This is also true when $v_{i+1}$ is not the root of $T$ since, in that case by \cite{Ig-Sen}, $\sigma_i$ acts on $T$ in the way corresponding to the action of $\sigma_i$ on exceptional sequences. In the corresponding pointed chord diagram, the loop does not move, we delete the oriented chord $\gamma_{i+1}$ and insert a new oriented chord in position $i$ of the pointed chord diagram. Thus the bijection $\widetilde \cC_n^{(n)}\cong\widetilde\cT_n$ is $B_n$-equivariant.

Since the bijections $\cE_n^{(n)}\cong \cB_n^{(n)}\cong\widetilde \cC_n^{(n)}$ are elementwise, the action of the braid group on these three sets is given by the same description, i.e., $\sigma_i$ deletes the $i+1$st object and inserts the unique possible new $i$th object. So, the action of $B_n$ on all of four sets agree. This proves the corollary.
\end{proof}

By definition, the action of the braid group on $\cT_n$ and $\widetilde\cT_n$ agree, i.e., the projection map $\widetilde \cT_n\to \cT_n$ is $B_n$-equivariant. This is an example of a more general phenomenon. Given the action of any group $G$ on any set $X$ and given any normal subgroup $N$ of $G$, it is easy to see that there is an induced action of $G$ on $X/N$, the set of orbits in $X$ of the action of $N$. For $G=B_n$, the center of $B_n$ is generated by $\Delta^2=\delta_n^n$ \cite{Garside}. By Theorem \ref{thm: action of delta n on tilde Tn}, this acts on augmented trees by $\Delta^2(T,\varepsilon)=(T,\varepsilon-1)$. Therefore, the set of orbits of the action of $\Delta^2=\delta_n^n$ on $\widetilde\cT_n$ is $\cT_n$ and we understand why the map $\widetilde\cT_n\to\cT_n$ is $B_n$-equivariant.


\subsection{Exceptional sets}\label{ss: exceptional sets}

In \cite{IM} the concept of \emph{exceptional set} is introduced. This is defined to be a set of objects in $mod\text-\Lambda$ (or $\cW_n$) which can be ordered to form a complete exceptional sequence. In \cite[Theorem 4.3]{IM} it was shown that the notion of a object being relatively projective in an exceptional sequence is independent of the order of the sequence. Thus, signed exceptional sets can also be defined. Analogous to this we could define (unordered) chord sets, oriented chord sets and pointed chord sets. The following was shown in \cite[Theorem 5.4]{IM}, but it was first shown in \cite{Araya}.

\begin{thm}\cite{Araya}\label{thm: number of exc sets for An}
The number of exceptional set for $A_n$ is 
\[
	\frac{(3n)!}{n!(2n+1)!}=\frac1{3n+1}\binom{3n+1}n.
\]
\end{thm}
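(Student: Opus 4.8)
The plan is to translate the problem into the enumeration of noncrossing trees, for which there is a classical product formula obtainable from a functional equation and Lagrange inversion, and to make the chord-diagram machinery already developed in the paper do the reduction. First I would invoke Theorem \ref{thm: chord diagrams and exc seq of type An}: complete exceptional sequences for a quiver of type $A_n$ are in bijection with complete chord diagrams, i.e., sequences of $n$ pairwise noncrossing chords on a circle with $n+1$ marked points. Since each indecomposable $M_{ij}$ corresponds to a single chord, an exceptional set is precisely the underlying \emph{unordered} set of chords of some complete chord diagram. Thus I must count the unordered sets of $n$ noncrossing chords on $n+1$ marked points which can be ordered so as to form a valid chord diagram.

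Next I would identify these sets with noncrossing spanning trees on the $n+1$ marked points. The $n$ chords are noncrossing and contain no unoriented cycle: a cycle among the chords would enclose a region of the complement bounded entirely by chords, contradicting the observation (in the paragraph preceding Lemma \ref{lem: chord diagrams can be completed}) that every region has a boundary arc. A noncrossing acyclic graph with $n$ edges on $n+1$ vertices has exactly one component, hence is a spanning tree. Conversely, I would check that every noncrossing spanning tree on $n+1$ points admits a total order of its chords that is clockwise at each marked point; equivalently, the ``clockwise at each vertex'' constraints define an acyclic relation, which is exactly the no-oriented-cycle argument used for chord diagrams, and a topological sort then produces a valid ordering, so the tree genuinely is an exceptional set. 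This gives a bijection between exceptional sets for $A_n$ and noncrossing trees with $n$ edges.

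It then remains to count noncrossing trees with $n$ edges. Let $y(z)=\sum_{n\ge0}t_nz^n$, where $t_n$ is the number of noncrossing trees with $n$ edges on a circle. A standard root-edge (``butterfly'') decomposition of a rooted noncrossing tree splits it, at the endpoints of the root edge, into three independent noncrossing-tree factors, yielding the functional equation $y=1+zy^3$; this is the content of the classical enumeration of noncrossing trees due to Flajolet--Noy and Noy. Writing $y=1+u$ so that $u=z(1+u)^3$ and applying Lagrange inversion gives
\[
	t_n=[z^n]y=\frac1n[t^{n-1}](1+t)^{3n}=\frac1n\binom{3n}{n-1}=\frac{(3n)!}{n!(2n+1)!}=\frac1{3n+1}\binom{3n+1}{n},
\]
which is exactly the asserted count. (Equivalently, one may observe that noncrossing trees with $n$ edges are equinumerous with ternary trees with $n$ internal nodes, whose generating function satisfies the same cubic equation.)

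I expect the main obstacle to be establishing the cubic functional equation $y=1+zy^3$, that is, making the three-fold decomposition of rooted noncrossing trees precise and verifying that it respects the noncrossing condition; once this is in place the Lagrange inversion is routine. A secondary point needing care is the converse in the second step — checking that \emph{every} noncrossing tree is orderable into a chord diagram — although this follows from the same acyclicity argument underlying Lemma \ref{lem: chord diagrams can be completed}. If one prefers to bypass the generating-function computation, one can instead cite the classical noncrossing-tree count directly and present only the bijective reduction (exceptional sets $\leftrightarrow$ noncrossing trees) as the new content.
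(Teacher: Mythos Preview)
The paper does not supply its own proof of this theorem; it simply cites the result from \cite{Araya} and \cite{IM} and uses it as input for Corollaries \ref{cor: exc sets in Wn} and \ref{cor: exc sets of Cn}. So there is no ``paper's proof'' to compare against, and what you have written is a self-contained argument for a result the paper treats as background.

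Your argument is correct and is, in outline, Araya's original proof. The reduction step (exceptional sets for $A_n$ $\leftrightarrow$ noncrossing spanning trees on $n+1$ marked points) is exactly the content of \cite{Araya}, and you recover it cleanly from the chord-diagram model of Theorem \ref{thm: chord diagrams and exc seq of type An}: the acyclicity observation before Lemma \ref{lem: chord diagrams can be completed} indeed rules out \emph{any} polygonal cycle of chords (not just ``oriented'' ones), so a complete chord diagram is a noncrossing tree; conversely, the clockwise-at-each-vertex constraints on a noncrossing tree are acyclic because a precedence cycle $e_1\prec e_2\prec\cdots\prec e_k\prec e_1$ would give a closed walk in the tree with no immediate backtracks, which cannot exist. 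The enumeration step is the classical count of noncrossing trees (Noy; Flajolet--Noy), and your Lagrange-inversion computation from $y=1+zy^3$ is correct: $t_n=\frac1n\binom{3n}{n-1}=\frac{(3n)!}{n!(2n+1)!}$.

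Two small remarks. First, for the functional equation, the cleanest justification is the one you mention parenthetically: noncrossing trees with $n$ edges are in bijection with ternary trees with $n$ internal nodes, whose generating function visibly satisfies $y=1+zy^3$; the direct ``butterfly'' decomposition of noncrossing trees also works but requires a little more care than you indicate. Second, since the paper only cites the theorem, if you retain a proof here you should flag it as an exposition of Araya's argument rather than new content.
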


Using Theorems \ref{thm A} and \ref{thm B} we can deduce the number of exceptional sets for $\cW_n$ and for algebras of type $C_n$.

\begin{cor}\label{cor: exc sets in Wn}
The number of exceptional sets in $\cW_n$ is
\[
	\frac{n(3n-3)!}{(n-1)!(2n-1)!}.
\]
\end{cor}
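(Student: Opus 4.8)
The plan is to count exceptional sets in $\cW_n$ through the corresponding \emph{unordered} oriented chord diagrams and then to strip off the orientation, reducing to Araya's count for type $A_{n-1}$ (Theorem \ref{thm: number of exc sets for An}). First I would recall that, since $\cW_n$ has no projective objects, its complete exceptional sequences have length $n-1$, and by the elementwise bijection $V_{ij}\leftrightarrow\gamma_{ij}$ of Theorem \ref{thm: bijection Rn=Cn} these correspond to complete oriented chord diagrams on a circle with $n$ marked points. Because that bijection is elementwise, it carries exceptional \emph{sets} in $\cW_n$ to \emph{unordered} complete oriented chord diagrams, that is, to sets of $n-1$ pairwise compatible oriented chords whose underlying unoriented chords form a noncrossing spanning tree on the $n$ marked points. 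Such a tree cuts the disk into exactly $n$ regions.

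The key step is to separate the orientation data from the unoriented data. By Definition \ref{def: central region} the central region of an oriented chord diagram is the unique region lying on the right of every chord, so it is recoverable from the set of oriented chords alone; conversely, by Lemma \ref{lem: central region determines orientation} every one of the $n$ regions of an unoriented complete chord diagram may be designated as the central region to produce a valid oriented chord diagram, and distinct regions force distinct orientations. Hence the assignment sending an unordered oriented chord set to the pair (underlying unoriented chord set, central region) is a bijection between exceptional sets in $\cW_n$ and pairs consisting of an unordered complete chord diagram on $n$ points together with a choice of one of its $n$ regions.

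It then remains to count the unordered complete chord diagrams on $n$ marked points. By Theorem \ref{thm: chord diagrams and exc seq of type An}, complete chord diagrams on $n$ marked points biject with complete exceptional sequences for a quiver of type $A_{n-1}$, and since this bijection sends the $i$th chord to the $i$th module it is compatible with forgetting the order, so it identifies unordered complete chord diagrams with exceptional sets for $A_{n-1}$. Theorem \ref{thm: number of exc sets for An} (with $n$ replaced by $n-1$) then gives their number as $\frac{(3n-3)!}{(n-1)!(2n-1)!}$. Multiplying by the $n$ choices of central region yields the claimed count
\[
	\frac{n(3n-3)!}{(n-1)!(2n-1)!}.
\]

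The main obstacle to making this rigorous is the bijection in the middle paragraph: one must verify that every noncrossing spanning tree genuinely admits a clockwise ordering for each designated central region, so that each pair really arises from an exceptional set, and that the central region is uniquely determined by the orientations, so that the map is injective. Both points follow from the existence-and-uniqueness argument for the central region given just after Definition \ref{def: central region} together with Lemma \ref{lem: central region determines orientation}; the care needed is purely the bookkeeping that nothing is over- or under-counted when passing among ordered sequences, unordered sets, and (tree, region) pairs.
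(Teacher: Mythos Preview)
Your proposal is correct and follows essentially the same approach as the paper: both arguments pass from exceptional sets in $\cW_n$ to unordered complete oriented chord diagrams via the elementwise bijection $V_{ij}\leftrightarrow\gamma_{ij}$, then split such a diagram into an unordered unoriented chord diagram together with a choice of central region (using Lemma \ref{lem: central region determines orientation}), and finally count the unoriented diagrams via Araya's formula for exceptional sets in type $A_{n-1}$, multiplying by the $n$ regions. The only difference is that you spell out more carefully why the orientation data is equivalent to the choice of central region, which the paper simply cites from the proof of Theorem \ref{thm: cardinality of Cn(k)}.
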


\begin{proof} We already know that exceptional sets for $A_{n-1}$ are in bijection with unordered complete chord diagrams in a circle with $n$ marked points. By Theorem \ref{thm: number of exc sets for An} above, there are $\frac{(3n-3)!}{(n-1)!(2n-1)!}$ such diagrams. By Theorem \ref{thm B with proof} above, exceptional sets for $\cW_n$ are in bijection with unordered complete oriented chord diagrams in a circle with $n$ marked points. As we observed in the proof of Theorem \ref{thm: cardinality of Cn(k)}, these are given by unoriented chord diagrams together with a choice of central region. Since there are $n$ regions in the complement of the $n-1$ chords, we multiply the number of unordered unoriented chord diagrams by $n$. This proves the theorem.
\end{proof}

\begin{cor}\label{cor: exc sets of Cn}
The number of exceptional sets for algebras of type $C_n$ is
\[
	\binom{3n-2}{n-1}.
\]
\end{cor}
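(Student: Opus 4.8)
The plan is to count exceptional sets for $C_n$ as unordered pointed chord diagrams and reduce to the Araya count of Theorem \ref{thm: number of exc sets for An}. By the elementwise bijection $M_{ij}\leftrightarrow\gamma_{ij}$ of Theorem \ref{thm: A with proof}, forgetting the order of a complete exceptional sequence for $C_n$ corresponds exactly to forgetting the order of the corresponding pointed chord diagram. Thus exceptional sets for $C_n$ are in bijection with \emph{pointed chord sets}: unordered collections consisting of $n-1$ oriented chords forming a complete oriented chord diagram together with one loop lying in the central region. So it suffices to count these.

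Following the strategy in the proof of Corollary \ref{cor: exc sets in Wn}, I would encode a pointed chord set by a triple $(D,R,p)$, where $D$ is the underlying unoriented chord diagram (with $n-1$ chords on $n$ marked points), $R$ is the region of the complement of $D$ designated as central (which by Lemma \ref{lem: central region determines orientation} recovers the orientation of every chord), and $p$ is the marked point where the loop sits. The loop must lie in $R$, so the only constraint is that $p$ be a marked point on the boundary of $R$. Conversely, every such triple with $p\in\partial R$ determines a valid pointed chord set, and distinct triples give distinct sets: a different $D$ changes the unoriented diagram, a different $R$ changes the orientations, and a different $p$ changes the loop. Hence the number of exceptional sets for $C_n$ equals $\sum_D N(D)$, where $N(D)$ is the number of pairs $(R,p)$ with $p$ a marked point on $\partial R$.

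The key computation is that $N(D)=3n-2$ for \emph{every} $D$. Let $d_p$ denote the number of chords incident to the marked point $p$. Since the regions are convex (Definition \ref{def: central region}), each region meets $p$ in at most one angular sector; and the $d_p$ chords at $p$ cut the fan at $p$ into exactly $d_p+1$ sectors, each lying in a distinct region. Thus $p$ lies on the boundary of exactly $d_p+1$ regions, and summing the incidences gives
\[
	N(D)=\sum_p (d_p+1)=\Big(\sum_p d_p\Big)+n=2(n-1)+n=3n-2,
\]
using that the $n-1$ chords contribute $\sum_p d_p=2(n-1)$. The crucial point is that this count does not depend on $D$.

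Finally, by Theorem \ref{thm: number of exc sets for An} applied with $n$ replaced by $n-1$, the number of unoriented chord diagrams on $n$ marked points is $\frac{(3n-3)!}{(n-1)!(2n-1)!}$. Multiplying,
\[
	(3n-2)\cdot \frac{(3n-3)!}{(n-1)!(2n-1)!}=\frac{(3n-2)!}{(n-1)!(2n-1)!}=\binom{3n-2}{n-1}.
\]
I expect the main obstacle to be the rigorous justification that each marked point meets exactly $d_p+1$ regions, i.e. that the angular sectors at $p$ belong to pairwise distinct regions; this relies essentially on the convexity of the regions together with the chords forming a forest. Once this is established, the constancy of $N(D)$ and hence the product formula follow immediately.
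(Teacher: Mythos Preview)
Your proposal is correct and follows essentially the same approach as the paper: pass to unordered pointed chord diagrams via Theorem~\ref{thm: A with proof}, fix an unoriented diagram $D$, and count the number of ways to place the loop (equivalently, choose a marked point $p$ together with an adjacent region $R$), observing that this count is $3n-2$ independently of $D$, then multiply by the Araya count for $A_{n-1}$. The paper phrases the $3n-2$ computation as ``start with the $n$ marked points; each chord cuts two of the angles and creates $2$ new places,'' which is exactly your identity $\sum_p(d_p+1)=2(n-1)+n$; your worry about distinct angular sectors lying in distinct regions is handled (as you suspected) by convexity of the regions.
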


\begin{proof}
We follow the proof of the previous corollary but use Theorem \ref{thm A} instead of Theorem \ref{thm B}. We need to take one of the $\frac{(3n-3)!}{(n-1)!(2n-1)!}$ unordered unoriented chord diagrams with $n-1$ chords in a circle with $n$ marked points. This time we add a loop at one of the marked points. We also need to place the loop in one of the regions which abut the marked point in order to specify the central region so that the chords will become oriented. To count the number of possibilities, we start with the $n$ marked points. Each chord cuts two of the angles and creates 2 new places. So there are $n+2(n-1)=3n-2$ choices giving
\[
	(3n-2)\frac{(3n-3)!}{(n-1)!(2n-1)!}=\binom{3n-2}{n-1}
\]
unordered pointed chord diagrams which are in bijection with exceptional sets.
\end{proof}

\begin{rem}\label{rem: about APR and orientation}
It follows from APR-tilting \cite{APR} that the number of exceptional sequences and exceptional sets is independent of the orientation of the quiver. This is because $(X,Y)$ is an exceptional pair in $mod\text-\Lambda$ for $\Lambda$ hereditary if and only if, in the bounded derived category,
\[
	\Hom(Y[a],X[b])=0
\]
for all integers $a,b$. Thus, an exceptional sequence of modules is equivalent to an exceptional sequence of orbits under the shift operator $[1]$ of objects in the bounded derived category which is independent of orientation of the quiver.

However, different orientations of the quiver might change the combinatorial model for exceptional sequences and also may change which objects are relatively projective and injective. So, orientation cannot be ignored.
\end{rem}

\section{Signed exceptional sequences in the tube $\cW_n$}\label{sec: signed exc seq in Wn}

In this section we prove Theorem \ref{thm F} which determines the probability distribution of relative projectives in a complete exceptional sequence for the tube $\cW_n$ and Lemma \ref{lem D} which gives the expected correspondence between signed exceptional sequences in $\cW_n$ and ordered rigid objects.

\subsection{Proof of Theorem \ref{thm F}}\label{ss: proof of Theorem G}

We recall the analogous result for type $A_n$.

\begin{thm}\cite{Ig-Sen}\label{thm: forests in type An}
There is a bijection
\[
	\left\{
	\begin{matrix}\text{complete exceptional sequences}\\
	E_\ast=(E_1,\cdots,E_n)\\
	\text{for linearly ordered quiver of type $A_n$}
	\end{matrix}
	\right\}
	\xrightarrow\cong
	\left\{
	\begin{matrix}\text{rooted labeled forests}\\
	\text{$F$ with $n$ vertices}
	\end{matrix}
	\right\}
\]
 Furthermore, $E_j$ is relatively projective in $E_\ast$ if and only if the $j$th vertex $v_j$ of the corresponding forest $F$ is either a descending vertex or a root of $F$. Similarly, $E_j$ is relatively injective in $E_\ast$ if and only if $v_j$ is either an ascending vertex or a root of $F$.
\end{thm}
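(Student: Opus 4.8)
The plan is to take the bijection itself as furnished by Theorem \ref{thm: An case}: to a complete exceptional sequence $E_\ast=(E_1,\dots,E_n)$ for the quiver $1\leftarrow 2\leftarrow\cdots\leftarrow n$ one assigns the Hasse diagram $F$ of the supports $\supp E_i=(a_i,b_i]$ ordered by inclusion, with $v_i\leftrightarrow E_i$; the exceptional conditions make the intervals pairwise nested-or-disjoint, so inclusion is a forest order and the map is invertible. The substance left to prove is therefore only the identification of the relatively projective and relatively injective terms, so that is where I would concentrate.

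First I would pin down the two notions operationally. Put $\cB_j=E_{j+1}^\perp\cap\cdots\cap E_n^\perp$, the right perpendicular of the terms following $E_j$; then $(E_1,\dots,E_j)$ is a complete exceptional sequence of the wide subcategory $\cB_j$ with $E_j$ its last term, and $E_j$ is relatively projective in $E_\ast$ exactly when $E_j$ is Ext-projective in $\cB_j$. Dually $E_j$ is relatively injective exactly when it is Ext-injective in $\cA_j={}^\perp E_1\cap\cdots\cap{}^\perp E_{j-1}$. Since a wide subcategory of a hereditary category is hereditary and inherits $\Ext^1$, the first condition is equivalent to $\Ext(E_j,S)=0$ for every simple object $S$ of $\cB_j$. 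The computations are driven by the interval formulas, read off from the resolution $0\to P_a\to P_b\to M_{ab}\to0$:
\[
	\Hom(M_{ab},M_{cd})\neq0\iff a\le c<b\le d,\qquad \Ext(M_{ab},M_{cd})\neq0\iff c<a\le d<b.
\]

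With these in hand the non-root vertices whose immediate parent $v_p$ shares an endpoint with $v_j$ are handled at once. If the right endpoints agree ($b_p=b_j$) then $\Hom(E_p,E_j)\neq0$, so the exceptional condition forces $p<j$, i.e. $v_j$ is ascending; moreover $p<j$ gives $E_p\in\cB_j$, and the short exact sequence $0\to M_{a_p,a_j}\to E_p\to E_j\to0$ exhibits a kernel $K\in\cB_j$ with $\Ext(E_j,K)\neq0$, so $E_j$ is not Ext-projective. If instead the left endpoints agree ($a_p=a_j$) then $\Hom(E_j,E_p)\neq0$ forces $p>j$, i.e. $v_j$ is descending; now any $X\in\cB_j$ with $\Ext(E_j,X)\neq0$ (that is, $\supp X=(c,d]$ with $c<a_j\le d<b_j$) would also satisfy $\Ext(E_p,X)\neq0$, contradicting $X\in E_p^\perp$, so $E_j$ is Ext-projective. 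In both cases the forest rule matches the claim.

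The genuine difficulty is the remaining configuration, in which the immediate parent contains $v_j$ strictly on both sides: there all of $\Hom$ and $\Ext$ between $E_p$ and $E_j$ vanish, so the parent imposes no order constraint and the type of $v_j$ is decided only by the remaining terms lying in $\cB_j$ (one checks this case really can be either ascending or descending). I would settle it by inducting on $n$, deleting the last term $E_n$ and passing to the exceptional sequence $(E_1,\dots,E_{n-1})$ in $E_n^\perp$, a product of shorter linearly oriented type-$A$ categories. The point that makes the induction run is that for every $j<n$ the category $\cB_j$ is literally unchanged when computed inside $E_n^\perp$, so relative projectivity of $E_j$ is insensitive to the deletion; the base case is that $E_n$ is relatively projective, i.e. projective, precisely when $a_n=0$, which by the Hom rule together with the ordering constraints of the sequence is exactly the condition that $v_n$ be a root. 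The part I expect to be most delicate, and the heart of the argument, is the forest bookkeeping: one must show that deleting $v_n$ and contracting supports carries $F$ to the forest of the reduced sequence while preserving the descending/ascending type of every surviving vertex, so that the inductive hypothesis transports correctly. Finally the relatively injective statement follows formally from the relatively projective one by the duality $\Lambda\mapsto\Lambda\op$ together with the relabeling $i\mapsto n+1-i$, which restores the linear orientation, interchanges projectives with injectives, and turns ascending vertices into descending ones.
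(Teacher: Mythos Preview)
This theorem is quoted from \cite{Ig-Sen} and is not proved in the present paper, so there is no in-paper argument to compare against. Your outline is essentially correct and would go through once the inductive ``forest bookkeeping'' is made precise; let me flag the one place where your description is slightly off.

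Your direct handling of the two endpoint-sharing cases is clean and correct, and the observation that $\cB_j$ is literally unchanged when computed inside $E_n^\perp$ is exactly what makes the induction legitimate on the representation-theoretic side. The imprecision is in the phrase ``preserving the descending/ascending type of every surviving vertex'': this is not literally true. For $E_n=M_{ab}$ one has $E_n^\perp\cong A_{b-a-1}\times A_{n-b+a}$, with the proper descendants of $v_n$ landing in the first factor and everything else (ancestors, their other descendants, and the other root-subtrees) in the second. The support-inclusion relations \emph{within} each factor agree with those in $A_n$, but the relations \emph{between} the two factors are lost. In particular the children of $v_n$, all descending in $F$ since $n$ is maximal, become \emph{roots} in the reduced forest $F'$ rather than reconnecting to $v_n$'s former parent. (A small example: $(M_{04},M_{34},M_{12},M_{13})$ in $A_4$ has $v_3$ descending under $v_4$, but in $M_{13}^\perp\cong A_1\times A_2$ the module $M_{12}$ sits alone in the $A_1$ factor and $v_3$ is a root.) What \emph{is} preserved is precisely the dichotomy you need: every surviving vertex is ``descending-or-root'' in $F'$ if and only if it was so in $F$, because the only parent that changes is that of the children of $v_n$ (descending $\to$ root), while every other vertex keeps the same parent. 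Once stated this way the verification is routine, and together with your treatment of $E_n$ itself---where ``$v_n$ is a root $\iff a_n=0$'' uses that the root supports partition $\{1,\dots,n\}$ and the leftmost root carries the largest index by the $\Ext$ ordering among roots---the induction closes. Your duality remark for the relatively injective statement is fine.
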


We show that a similar statement holds for $\cW_n$.

\begin{thm}\label{thm: relative projectives in Wn}
Let $V_\ast=(V_1,\cdots,V_{n-1})$ be a complete exceptional sequence for the tube of rank $n$. Let $(F,\varepsilon)$ be the corresponding augmented rooted labeled forest. Then $V_j$ is relatively projective in $V_\ast$ if and only if the $j$th vertex $v_j$ of $F$ is a descending vertex. Similarly, $V_j$ is relatively injective if and only if $v_j$ is an ascending vertex of $F$.
\end{thm}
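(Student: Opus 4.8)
The plan is to deduce the theorem from its type $A_{n-1}$ analogue, Theorem \ref{thm: forests in type An}, by comparing relative projectivity computed in the ambient tube with relative projectivity computed in a linear subcategory. First I would normalize the sequence: since $\tau$ is an autoequivalence of $\cW_n$ which rotates the associated oriented chord diagram and preserves $\Hom$, $\Ext$, every perpendicular category, and the inclusion order of supports (hence the rooted labeled forest $F$ and the positions $v_j$), I may replace $V_\ast$ by $\tau^m V_\ast$. As in the proof of Theorem \ref{thm: cardinality of Cn(k)}, a complete oriented chord diagram is a spanning tree of $n-1$ chords, and each of its $n$ complementary regions abuts exactly one boundary arc; choosing $m$ so that the central region abuts the arc from $n-1$ to $0$, every chord $\gamma_{i_jj_j}$ then has $0\le i_j<j_j\le n-1$. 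Equivalently $V_j=W_{i_jj_j}$ lies in the wide subcategory $\widetilde A_{n-1}=\langle S_1,\dots,S_{n-1}\rangle\cong mod\text-A_{n-1}$, and $(V_1,\dots,V_{n-1})$ becomes a complete exceptional sequence $E_\ast$ for linear $A_{n-1}$ with the \emph{same} forest $F$ (inclusion of supports agreeing with inclusion of left sides of the chords).

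Next I would recall, following \cite{IT13} and \cite{IM} (where order-independence is Theorem 4.3), that $V_j$ is relatively projective in $V_\ast$ precisely when it is $\Ext$-projective in the right perpendicular category $\cW^{(j)}=\{V_{j+1},\dots,V_{n-1}\}^\perp=\{X:\Hom(V_m,X)=\Ext(V_m,X)=0,\ m>j\}$, that is, $\Ext(V_j,X)=0$ for all $X\in\cW^{(j)}$. The crucial point is that this perpendicular is formed inside the tube $\cW_n$, so it strictly contains the analogous perpendicular $\{E_{j+1},\dots,E_{n-1}\}^\perp$ formed inside $\widetilde A_{n-1}$; the extra objects are exactly the bricks whose support wraps across the central arc and so involves the quasi-simple $S_n$. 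Because $\Ext$-projectivity against a larger category is a stronger requirement, relative projectivity in $\cW_n$ immediately forces relative projectivity in $mod\text-A_{n-1}$, which by Theorem \ref{thm: forests in type An} means $v_j$ is descending or a root. Thus one implication is automatic, and it remains only to show that the $S_n$-involving objects obstruct $\Ext$-projectivity exactly at the roots.

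For this I would use Auslander--Reiten duality $\Ext(V_j,X)\cong D\Hom(X,\tau V_j)$ with $\tau V_j=V_{i_j-1,j_j-1}$: an obstruction is a brick $X\in\cW^{(j)}$ with a nonzero map to $\tau V_j$, and the relevant such $X$ are those supported across the central arc through $S_n$. When $v_j$ is a root its support $(i_j,j_j]$ is maximal and no later term $V_k$ contains it, so the wrap-around brick $X$ (for instance one whose socle $S_n$ sits inside $\tau V_j$) survives in $\{V_{j+1},\dots,V_{n-1}\}^\perp$; hence $\Ext(V_j,X)\ne0$ and $V_j$ is not relatively projective. In the extreme case $j=n-1$ this degenerates to $\cW^{(n-1)}=\cW_n$, which has no projectives, recovering $\PP(V_{n-1}\text{ rel proj})=0$. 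When $v_j$ is descending, a later term $V_k$ with $k>j$ has support strictly containing $(i_j,j_j]$, and the condition $X\in V_k^\perp$ forces every candidate wrap-around brick to satisfy $\Hom(X,\tau V_j)=0$, so no obstruction survives and $V_j$ remains relatively projective. Combined with the automatic implication this gives the equivalence. The relative injective statement then follows by the order-reversing duality of the tube, which exchanges relatively projective with relatively injective and descending with ascending vertices.

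I expect the \emph{main obstacle} to be the final support bookkeeping of the previous paragraph: proving precisely that a wrap-around obstructing brick $X$ lies in $\{V_{j+1},\dots,V_{n-1}\}^\perp$ if and only if $v_j$ is a root. This is exactly where the cyclic, rather than linear, geometry of the tube enters, and it requires a careful noncrossing-chord analysis of how the later terms of the sequence---equivalently the ancestors of $v_j$ in $F$---either block or permit bricks whose support crosses the central region.
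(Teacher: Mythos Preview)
Your reduction to the linear $A_{n-1}$ subcategory via a $\tau$-rotation, and the observation that any obstruction to relative projectivity in $\widetilde A_{n-1}$ persists in the larger perpendicular, correctly handles the ``not relatively projective'' direction for ascending vertices; this is exactly the paper's Lemma \ref{lem: rel proj in tube implies descending}. Your sketch for roots (produce a wrap-around obstructing brick) is also in the spirit of the paper's Lemma \ref{lem: roots are not projective in tubes}, though you would need the explicit construction of the objects $X_i$ to make it rigorous.

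The genuine gap is in the forward direction, and it stems from a misreading of the definition. You take $\cW^{(j)}=\{V_{j+1},\dots,V_{n-1}\}^\perp$ \emph{inside the tube} $\cW_n$. But as Remark \ref{rem: discrepancy with Chen-Igusa theorem} spells out, ``relatively projective'' is defined by completing to an exceptional sequence in $mod\text-\Lambda$ for an ambient tame hereditary $\Lambda$ and testing projectivity in the perpendicular formed there. This distinction is not cosmetic: the tube-internal perpendicular is again a tube (of rank $j+1$), and a tube has no $\Ext$-projective objects whatsoever, since $\Ext(V,\tau V)\cong D\End(\tau V)\neq 0$ for every $V$. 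With your definition, \emph{nothing} would be relatively projective, and the theorem would be false.

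Once the perpendicular is taken in $mod\text-\Lambda$, your argument that ``descending $\Rightarrow$ relatively projective'' no longer closes. You only rule out wrap-around bricks in $\cW_n$, but the $mod\text-\Lambda$ perpendicular also contains preprojective $\Lambda$-modules $X$ with $\Hom(X,\tau V_j)\neq 0$, and you give no reason why these cannot obstruct. The paper's Lemma \ref{lem: descending vertices are relatively projective} avoids this combinatorial bookkeeping entirely: it builds a filtration $0=B_0\subset\cdots\subset B_s$ of projective objects in $X_k^\perp\subset mod\text-\Lambda$ with subquotients $X_{j_t}$, then uses a lifting argument against the projective cover of $X_i$ in $Z^\perp$ to force $X_i$ itself to be projective there. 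That algebraic step is what your proposal is missing.
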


We show only the first statement. The second statement follows by duality. 

A rooted labeled forest has three kinds of vertices: roots, ascending vertices and descending vertices. We will show that the objects in $V_\ast$ corresponding to roots and ascending vertices are never relatively projective while the third kind are always relatively projective. This will prove the theorem.

\begin{rem}\label{rem: discrepancy with Chen-Igusa theorem} By Chen and Igusa \cite{CI}, every object in a complete exceptional sequence over a hereditary algebra is either relatively injective or relatively projective. Theorem \ref{thm: relative projectives in Wn} implies that, in the abelian tube $\cW_n$, a complete exceptional sequence will always have objects which are neither relatively projective nor relatively injective since these corresponding to the roots of the labeled forest and every forest has at least one root. The discrepancy is explained as follows. Any tube $\cW_n$ is a wide subcategory in $mod\text-\Lambda$ for some tame hereditary algebra $\Lambda$ which is not uniquely determined.
To determine whether an object in an exceptional sequence$(V_1,\cdots,V_{n-1})$  in the tube $\cW_n$ is relatively projective, we add terms to the left of $V_1$ to form a complete exceptional sequence for $\Lambda$. Then $V_k$ is relatively projective n $\cW_n$ if it is relatively projective in the completed exceptional sequence for $mod\text-\Lambda$. The proofs of Lemmas \ref{lem: roots are not projective in tubes}, \ref{lem: rel proj in tube implies descending}, \ref{lem: descending vertices are relatively projective} below imply that this criterion does not depend of the choice of $\Lambda$! The analogous statement for relatively injective objects uses the dual argument which requires completing the exceptional sequence by adding terms to the right of $V_{n-1}$. Completing on the left and right does not give the same complete exceptional sequence for $\Lambda$ which is why the result of \cite{CI} does not apply.
\end{rem}

\begin{lem}\label{lem: roots are not projective in tubes}
If $v_k$ is a root of $F$ then $V_k$ is not relatively projective in the tube.
\end{lem}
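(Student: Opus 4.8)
The plan is to use the criterion for relative projectivity supplied by Remark \ref{rem: discrepancy with Chen-Igusa theorem}: after completing $V_\ast$ on the left to a complete exceptional sequence for $mod\text-\Lambda$, the object $V_k$ is relatively projective if and only if it is a projective object of the right perpendicular category
\[
	\mathcal W_k=(V_{k+1},\cdots,V_{n-1})^\perp=\{Z:\Hom(V_l,Z)=0=\Ext(V_l,Z)\text{ for all }l>k\}.
\]
Because $\mathcal W_k$ depends only on the terms occurring \emph{after} $V_k$ (the added completion terms all lie to its left), this description is manifestly independent of the chosen completion, which is the independence promised in the remark; note also $V_k\in\mathcal W_k$ by the exceptional-sequence relations. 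Thus it suffices to exhibit a single $Y\in\mathcal W_k$ with $\Ext(V_k,Y)\neq0$. By Auslander--Reiten duality in the tube, $\Ext(V_k,Y)\cong D\Hom(Y,\tau V_k)$, so I only need a $Y\in\mathcal W_k$ admitting a nonzero map $Y\to\tau V_k$; writing $V_k=V_{ij}$, the object $\tau V_k$ has top $S_{j-1}$, so any $Y$ with top $S_{j-1}$ surjecting onto $\tau V_k$ will do.

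For the witness I would take $Y$ to be the unique brick of maximal length $n-1$ with top $S_{j-1}$, namely $Y=V_{j,\,j-1}=W_{j,\,j-1+n}$, whose support is every simple except $S_j=\operatorname{top}(V_k)$. Since $\tau V_k$ is a top quotient of the uniserial module $Y$, one gets $\Hom(Y,\tau V_k)\neq0$ and hence $\Ext(V_k,Y)\neq0$; this step uses nothing about roots. (When $k=n-1$ the category $\mathcal W_{n-1}$ is all of $mod\text-\Lambda$ and the claim is immediate, since the regular module $V_{n-1}$ is never projective in $mod\text-\Lambda$; one may even take $Y=\tau V_k$.)

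The main obstacle — and the only place the hypothesis that $v_k$ is a root enters — is verifying $Y\in\mathcal W_k$, i.e. that $\Hom(V_l,Y)=0=\Ext(V_l,Y)$ for every $l>k$. Here I would argue combinatorially using the explicit Hom/Ext supports (a),(b) of Subsection \ref{ss: soft exc seq in tubes}. Since $v_k$ is a root, the support $(i,j]$ of $V_k$ is maximal among $V_1,\cdots,V_{n-1}$; hence every later term $V_l$ is either a descendant of $v_k$ (support contained in $(i,j]$) or lies in a different tree (support disjoint from $(i,j]$), and in neither case can $v_k$ have a later ancestor whose support wraps past the omitted top $S_j$. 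Translating these two support configurations through conditions (a),(b), together with $\Hom(V_l,V_k)=0=\Ext(V_l,V_k)$, should show that no $V_l$ maps into $Y$ and that $Y$ maps into no $\tau V_l$, giving $Y\in\mathcal W_k$. I expect the delicate part to be the bookkeeping of the nested-versus-disjoint cases modulo $n$; it is precisely the existence of a \emph{later} parent (the defining feature of a descending vertex) that would make a map $V_l\to Y$ appear, which is consistent with descending vertices being the relatively projective ones in the companion lemmas.
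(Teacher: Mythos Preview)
Your proposed witness $Y=V_{j,j-1}$ does not always land in $\mathcal W_k$; the gap is in the $\Ext$-condition for descendants of $v_k$. If some later term $V_l$ with $l>k$ has the \emph{same top} $S_j$ as $V_k$, say $V_l=V_{a,j}$ with $i<a<j$, then $\tau V_l=V_{a-1,\,j-1}$ has top $S_{j-1}=\operatorname{top}(Y)$, so there is a surjection $Y\twoheadrightarrow\tau V_l$ and hence $\Ext(V_l,Y)\neq0$. Such a $V_l$ is perfectly consistent with the exceptional-sequence relations $\Hom(V_l,V_k)=0=\Ext(V_l,V_k)$, and nothing in your nested/disjoint dichotomy rules it out. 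A concrete failure: in $\cW_3$ take $(V_1,V_2)=(V_{02},V_{12})$, so $v_1$ is the unique root and $v_2$ its ascending child sharing top $S_2$; your $Y=V_{2,1}$ then satisfies $\Hom(Y,\tau V_2)=\Hom(V_{21},V_{01})\neq0$, whence $\Ext(V_2,Y)\neq0$ and $Y\notin\mathcal W_1$.

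The paper avoids this by anchoring the witness at the unique \emph{missing} simple $S_0$ rather than at $\operatorname{top}(V_k)$. For the root $V_{k_{r-i}}$ it takes $X_i$ on the ray ascending from $S_0$ and on the coray of $V_{k_{r-i}}$. Then $\Hom(V_l,X_i)=0$ for every $l$ because no $V_l$ contains $S_0$ in its support, and $\Hom(X_i,\tau V_l)\neq0$ forces $V_l$ to lie on the ray of $V_{k_{r-i}}$, which in turn forces $l\le k_{r-i}$. Thus the correct witness genuinely depends on which root $v_k$ is among $v_{k_1},\ldots,v_{k_r}$, not merely on $V_k$; your uniform choice $V_{j,j-1}$ cannot work.
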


\begin{proof}
Let $v_{k_1}, \cdots, v_{k_r}$ be the roots of $F$ with $k_1<k_2<\cdots<k_r$. The corresponding objects $V_{k_i}$ are in reverse order in the Auslander-Reiten sequence. See Figure \ref{fig: roots are not projective in tubes.}. The indices are decreasing from left to right since each $V_{k_j}$ extends $V_{k_{j+1}}$. Label the quasi-simple objects at the mouth of the tube $0,1,2,\cdots,n-1$ starting with the unique simple object $S_0$ which is not in the support of any object in the exceptional sequence. Let $X_0,X_1$, etc. be the objects on the ray (blue in Figure \ref{fig: roots are not projective in tubes.}) at the simple $S_0$ with $X_0=S_0$ and $X_1,X_2,\cdots$ the objects on the blue line which are also on the same coray (slope $-1$ line) as $V_{k_{r}},V_{k_{r-1}},\cdots$, resp. See Figure \ref{fig: roots are not projective in tubes.}. Looking at the position of $X_i$, we see that $\Ext^1(V_{k_{r-i}},X_i)\neq 0$. 

Also, $X_i$ is not in the right $hom$-$ext$ perpendicular category of $\bigoplus V_j$ for $j>k_{r-i}$. This is because (1) $\Hom(V_j,X)=0$ for all $V_j$ in the exceptional sequence and all objects $X$ on the ray ascending from $S_0=X_0$ (blue in Figure \ref{fig: roots are not projective in tubes.}) and (2) $\Hom(X_i,\tau V_j)\neq0$ only for $V_j$ on the same ray as $V_{k_{r-i}}$. But any such $V_j$ maps nontrivially to $V_{k_{r-i}}$. So, we must have $j\le{k_{r-i}}$. Therefore, $X_i$ lies in the right $hom$-$ext$ perpendicular category of $V_j$ for all $j>{k_{r-i}}$. Therefore, $V_{k_{r-i}}$ is not relatively projective in the tube.
\end{proof}
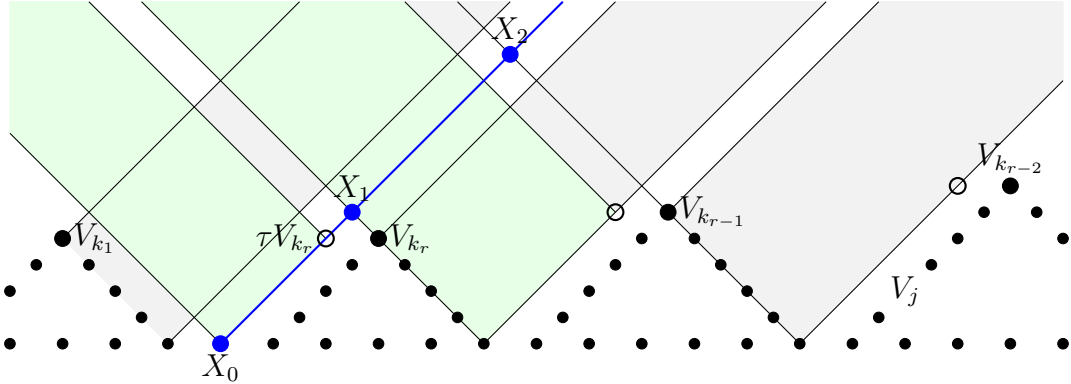
\begin{figure}[htbp]
\begin{center}
\begin{tikzpicture}[scale=.6]
\coordinate (X0) at (0,0);
\coordinate (X1) at (2.5,2.5);
\coordinate (X2) at (5.5,5.5);
\coordinate (X2p) at (6.5,6.5);
\coordinate (En) at (-3,2);
\coordinate (Enp) at (1.5,6.5);
\coordinate (E0) at (3,2);
\coordinate (E0p) at (7.5,6.5);
\coordinate (E00) at (2,2);
\coordinate (E00p) at (-2.5,6.5);
\coordinate (E1) at (8.5,2.5);
\coordinate (E11) at (7.5,2.5);
\coordinate (E11p) at (3.5,6.5);
\coordinate (E1p) at (12.5,6.5);
\coordinate (E2) at (15,3);
\coordinate (E22) at (14,3);
\coordinate (Z) at (13,1);
\draw[fill,gray!10!white](Enp)--(En)--(-1,0)--(5.5,6.5)--(Enp);
\draw[fill,gray!10!white](E0p)--(E0)--(5,0)--(11.5,6.5)--(E0p);
\draw[fill,gray!10!white](E1p)--(E1)--(11,0)--(16,5)--(16,6.5)--(E1p);
\draw[fill,green!10!white](E00p)--(E00)--(0,0)--(-4,4)--(-4,6.5)--(E00p);
\draw[fill,green!10!white](E11p)--(E11)--(5,0)--(-1.5,6.5)--(E11p);
\draw (En)--(Enp) (-1,0)--(5.5,6.5);
\draw (E0)--(E0p) (5,0)--(11.5,6.5);
\draw (E1)--(E1p) (11,0)--(16,5);
\draw (E00)--(E00p) (0,0)--(-4,4);
\draw (E11)--(E11p) (5,0)--(-1.5,6.5) (11,0)--(4.5,6.5);
\draw[blue,thick] (X0)--(X2p);
\foreach \x in {-4,...,16}\draw[fill] (\x,0) circle[radius=1mm];
\foreach \x in {(-1.5,.5),(-2,1),(-2.5,1.5),(-3.5,1.5),(-4,1)}  
\draw[fill] \x circle[radius=1mm];
\foreach \x in {(3.5,1.5),(4,1),(4.5,.5),(2.5,1.5),(2,1),(1.5,.5)} 
\draw[fill] \x circle[radius=1mm];
\foreach \x in {(9,2),(9.5,1.5),(10,1),(10.5,.5), (8,2), (7.5,1.5), (7,1), (6.5,.5)} 
\draw[fill] \x circle[radius=1mm];
\foreach \x in {(12.5,.5), (13.5,1.5), (14,2), (14.5,2.5), (15.5,2.5),(16,2)} 
\draw[fill] \x circle[radius=1mm];

\draw[fill,blue] (X0) circle[radius=1.5mm];
\draw (X0) node[below]{$X_0$};
\draw[fill,blue] (X1) circle[radius=1.5mm];
\draw (X1) node[above]{$X_1$};
\draw[fill,blue] (X2) circle[radius=1.5mm];
\draw (X2) node[above]{$X_2$};
\draw[fill] (E0) circle[radius=1.5mm];
\draw (E0) node[right]{$V_{k_r}$};
\draw[thick] (E22) circle[radius=1.5mm];
\draw[thick] (E11) circle[radius=1.5mm];
\draw[thick] (E00) circle[radius=1.5mm];
\draw (E00) node[left]{$\tau V_{k_r}$};
\draw[fill] (En) circle[radius=1.5mm];
\draw (En) node[right]{$V_{k_1}$};
\draw[fill] (E1) circle[radius=1.5mm];
\draw[fill] (E2) circle[radius=1.5mm];
\draw (E1) node[right]{$V_{k_{r-1}}$};
\draw (E2) node[above]{$V_{k_{r-2}}$};
\draw (Z) node{$V_j$};
\end{tikzpicture}
\caption{Objects $V_{k_i}$ corresponding to the roots of $F$ are the tops of triangles containing the other objects of the exceptional sequence. Shaded gray are the supports of $\Hom(V_{k_i},-)$. In green are the supports of $\Ext^1(V_{k_i},-)$. $X_i$ is the object preventing $V_{k_{r-i}}$ from being relatively projective in the tube.}
\label{fig: roots are not projective in tubes.}
\end{center}
\end{figure}

\begin{lem}\label{lem: rel proj in tube implies descending}
If $v_q$ is an ascending vertex of $F$ then $V_q$ is not relatively projective in $\cW_n$.
\end{lem}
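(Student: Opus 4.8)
The plan is to use the same criterion for relative projectivity that underlies Lemma \ref{lem: roots are not projective in tubes}. Writing $\cW_{>q}$ for the right $\Hom$--$\Ext$ perpendicular category of $\{V_{q+1},\dots,V_{n-1}\}$ inside $\cW_n$, the exceptional-sequence relations show that $(V_1,\dots,V_q)$ is a complete exceptional sequence in the wide subcategory $\cW_{>q}$, and $V_q$ is relatively projective in $\cW_n$ precisely when it is $\Ext$-projective in $\cW_{>q}$, i.e. when $\Ext^1(V_q,X)=0$ for every $X\in\cW_{>q}$ \cite{IM}. So it suffices to exhibit a single $X\in\cW_{>q}$ with $\Ext^1(V_q,X)\neq0$. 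After applying a suitable power of $\tau$ I may assume $V_q=V_{ab}$, with socle $S_{a+1}$. Since $v_q$ is ascending it is a child of some $v_p$ with $p<q$; every term $V_p$ with $p<q$ lies in $\cW_{>q}$ by the exceptional relations, and $\supp V_q\subsetneq\supp V_p$.

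The obstruction $X$ is produced by extending $V_q$ downward, below its socle. By the Auslander--Reiten formula $\Ext^1(V_q,X)\cong D\Hom(X,\tau V_q)$, and $\tau V_q=V_{a-1,b-1}$ has socle $S_a$; hence any object mapping nonzero to $\tau V_q$ — in particular any submodule $V_{a-1,a+m}$ with $0\le m\le b-1-a$, and most simply $X=S_a$ itself — satisfies $\Ext^1(V_q,X)\neq0$. There is a clean special case: when the inclusion $\supp V_q\subsetneq\supp V_p$ is flush at the top, so that there is a surjection $V_p\twoheadrightarrow V_q$, I may instead take $X=K=\ker(V_p\to V_q)$; since wide subcategories are closed under kernels, $K\in\cW_{>q}$, and the defining sequence $0\to K\to V_p\to V_q\to0$ is non-split (as $V_p$ is indecomposable), giving $\Ext^1(V_q,K)\neq0$ immediately. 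In the remaining case, where $\supp V_q$ is strictly interior to $\supp V_p$ and $V_p,V_q$ are mutually perpendicular, I fall back on the ray construction, choosing $X$ among the objects $S_a=V_{a-1,a},V_{a-1,a+1},\dots$ on the ray at $S_a$, exactly as the objects $X_i$ were chosen in the proof of Lemma \ref{lem: roots are not projective in tubes}.

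The step I expect to be the real obstacle is verifying that the chosen $X$ actually lies in $\cW_{>q}$, i.e. that $\Hom(V_j,X)=0=\Ext(V_j,X)$ for every later term $V_j$ with $j>q$. This is automatic in the kernel case, but the interior case requires controlling the location of all later terms relative to $V_q$; this is precisely where the hypothesis that $v_q$ is ascending enters, since having an earlier parent $v_p$ pins the later objects away from the socle end of $V_q$ and lets the ray be pushed up far enough to clear them, mirroring the role of the ``free'' simple $S_0$ in Lemma \ref{lem: roots are not projective in tubes}. Finally, I should stress the conceptual point flagged in Remark \ref{rem: discrepancy with Chen-Igusa theorem}: because $\cW_n$ has no projective objects, the whole argument must be carried out inside the tube, so although the conclusion agrees with the type $A_{n}$ statement of Theorem \ref{thm: forests in type An}, it cannot simply be deduced from it by passing to a projective cover.
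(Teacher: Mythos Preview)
Your proposal takes a harder route than necessary and leaves a genuine gap. The paper's proof is a two-line reduction to Theorem~\ref{thm: forests in type An}: all the $V_j$ lie in the wide subcategory $\widetilde A_{n-1}\subset\cW_n$ spanned by the $n-1$ simples other than the unused $S_0$ (cf.\ the setup of Lemma~\ref{lem: roots are not projective in tubes}), and since this embedding is full and closed under extensions, $\Hom$ and $\Ext$ between objects of $\widetilde A_{n-1}$ are the same computed in either category. The type-$A$ theorem then hands you an obstruction $X\in\widetilde A_{n-1}$ that works verbatim in $\cW_n$. Your reading of Remark~\ref{rem: discrepancy with Chen-Igusa theorem} is off: that remark concerns the impossibility of completing the sequence on both sides at once inside $mod\text-\Lambda$, not the unavailability of the $A_{n-1}$ result; no projective covers in $\cW_n$ are invoked.

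Your kernel argument in the flush-at-top case is correct, but your ray proposal in the interior case fails outright, not merely for lack of detail. In $\cW_5$ take the complete exceptional sequence $(V_{04},V_{24},V_{12},V_{34})$: here $v_2$ is ascending with parent $v_1$, so $V_q=V_{24}$ and $a=2$. The later sibling $V_3=V_{12}=S_2$ has top $S_a$, hence maps nontrivially into the socle of every object $V_{1,2+m}$ on your ray; none of them lies in $\cW_{>2}$. Your heuristic that the earlier parent ``pins the later objects away from the socle end of $V_q$'' is exactly what breaks. The actual obstruction here is $X=V_{02}$, on a different ray entirely --- which is the sort of choice Theorem~\ref{thm: forests in type An} locates for you without any case analysis.
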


\begin{proof}
This follows from Theorem \ref{thm: forests in type An} which implies that, when $v_q$ is an ascending vertex of $F$, $V_q$ is not relatively projective in the $A_{n-1}$ quiver category and thus there is an object $X$ in the prependicular category of $V_j,j>q$ in the triangle so that $\Ext^1(V_q,X)\neq0$. Since the $A_{n-1}$ quiver category is exactly embedded in the tube, $\Ext^1(V_q,X)\neq0$ in the tube. 
\end{proof}


\begin{lem}\label{lem: descending vertices are relatively projective}
If $v_i$ is a descending vertex in $F$ then $X_i$ is a relatively projective object in the exceptional sequence.
\end{lem}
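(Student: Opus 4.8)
The plan is to prove the positive direction by contradiction, using the criterion implicit in the two preceding lemmas: $V_i$ is relatively projective in $V_\ast$ exactly when $\Ext^1(V_i,X)=0$ for every object $X$ in the right $hom$-$ext$ perpendicular category $\{V_\ell:\ell>i\}^\perp$ of the later terms. Since both $\Ext^1(V_i,-)$ and membership in that perpendicular category are additive, it suffices to rule out an \emph{indecomposable} obstruction $X=W_{cd}$ with $\Ext^1(V_i,X)\neq0$ and $X\perp\{V_\ell:\ell>i\}$.

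First I would record the support inequalities forced by the descending hypothesis. Writing $V_i=W_{ab}$, let $v_k$ (with $k>i$) be the parent of $v_i$, so $\supp V_i\subsetneq\supp V_k=:(a',b']$, whence $a'\le a$ and $b\le b'$. Because $i<k$, the pair $(V_i,V_k)$ must be exceptional, so $\Hom(V_k,V_i)=0$; by the formula $\Hom(W_{a'b'},W_{ab})\neq0$ iff $a'\le a<b'\le b$, this forces $b<b'$. Thus $a'\le a<b<b'$, all lying in the arc $\{1,\dots,n-1\}$ that avoids the distinguished simple $S_0$.

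Next I would translate everything through the $\Hom/\Ext$ support rules of Section \ref{ss: soft exc seq in tubes}. Using $\Ext^1(V_i,X)\cong D\Hom(X,\tau V_i)$ with $\tau V_i=W_{a-1,b-1}$, the hypothesis $\Ext^1(V_i,X)\neq0$ becomes $c\le a-1$ and $a\le d\le b-1$. On the other hand $X\perp V_k$ gives in particular $\Ext^1(V_k,X)=0$; since $\tau V_k=W_{a'-1,b'-1}$ and the two conditions $a'-1<d$ and $d\le b'-1$ hold automatically (from $d\ge a\ge a'$ and $d\le b-1<b'-1$), the vanishing can only come from $c\ge a'$. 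Combining, $a'\le c<d\le b-1$, so $0\le c$ and $d\le n-2$; hence $\supp X=(c,d]$ avoids the marked point $0$ and $X$ lies in the type $A_{n-1}$ subcategory $\cA$ spanned by $S_1,\dots,S_{n-1}$.

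This reduction is the crux and the main obstacle: the point is that perpendicularity to the \emph{single} object $V_k$ (the parent) already confines any obstruction to $\cA$, whereas for a root the obstructions genuinely use $S_0$, which is exactly why Lemma \ref{lem: roots are not projective in tubes} holds. Once $X\in\cA$ is established, the argument closes: $X$ then lies in the $\cA$-perpendicular category of $\{V_\ell:\ell>i\}$, and since $v_i$ is descending, Theorem \ref{thm: forests in type An} says $V_i$ is relatively projective in the type $A_{n-1}$ exceptional sequence, so $\Ext^1_\cA(V_i,X)=0$; as $\cA$ is exactly embedded in the tube, $\Ext^1(V_i,X)=0$, contradicting the choice of $X$. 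Therefore no obstruction exists and $V_i$ is relatively projective, which together with Lemmas \ref{lem: roots are not projective in tubes} and \ref{lem: rel proj in tube implies descending} completes Theorem \ref{thm: relative projectives in Wn}.
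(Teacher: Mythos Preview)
Your proof is correct and takes a genuinely different route from the paper's. The paper argues \emph{constructively}: writing $v_i=v_{j_s}$ with siblings $v_{j_1},\dots,v_{j_{s-1}}$ (all children of $v_k$ coming after $v_i$), it builds a filtration $0=B_0\subset B_1\subset\cdots\subset B_s\subset X_k$ with $B_t/B_{t-1}\cong X_{j_t}$ and each $B_t$ projective in $X_k^\perp$; a lifting argument then forces $X_i=X_{j_s}$ to split off from its projective cover in $Z^\perp$ where $Z=X_k\oplus X_{j_1}\oplus\cdots\oplus X_{j_{s-1}}$.

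Your argument instead mirrors Lemma~\ref{lem: rel proj in tube implies descending}: you reduce to the $A_{n-1}$ subcategory $\mathcal A$ by showing that the single $\Ext$-orthogonality condition $\Ext^1(V_k,X)=0$ (perpendicularity to the \emph{parent}) already traps any putative obstruction $X$ inside $\mathcal A$, whence Theorem~\ref{thm: forests in type An} finishes. The index computation is sound: after normalising $X=W_{cd}$ so that $c\le a-1<d\le b-1$, the $p=0$ instance of $\Ext^1(V_k,X)=0$ forces $c\ge a'\ge 0$, so $0\le c<d\le n-2$ and $X\in\mathcal A$. What your approach buys is a unified picture for Lemmas~\ref{lem: rel proj in tube implies descending} and~\ref{lem: descending vertices are relatively projective}: non-root vertices behave exactly as in $A_{n-1}$ because the parent confines all obstructions to $\mathcal A$, while roots have no parent and the obstruction genuinely uses $S_0$ (cf.\ the $X_i$ in Lemma~\ref{lem: roots are not projective in tubes}). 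The paper's filtration argument, by contrast, is self-contained and does not invoke the full $A_{n-1}$ classification, only the structure of $X_k^\perp$.
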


\begin{proof} Suppose that $v_i$ is child of $v_k$ where $i<k$ and $v_{j_p}$ are all of the children of $v_k$ which come after $v_i$ where
\[
	i=j_s<j_{s-1}<\cdots<j_1<k.
\]
 Then we show that $X_i$ is relatively projective. We assume that $\cW_n$ is embedded in the module category of some hereditary algebra $\Lambda$ and that $\tau$, acting on $\cW_n$, is Auslander-Reiten translation in $mod\text-\Lambda$.

\underline{Claim 1}. $X_{j_1}$ is relatively projective in $X_k^\perp$.

Pf: Since $v_{j_1}$ is the last child in the forest that comes before $v_k$, $X_{j_1}$ is a submodule of $X_k$. So, $X_{j_1}=V_{ab}$ and $X_k=V_{ac}$ for some $a<b<c$. Then $\tau X_{j_1}=V_{a-1,b-1}$ is a submodule of $\tau X_k=V_{a-1,c-1}$. This is what we need. Suppose that $P$ is the projective cover of $X_{j_1}$ in $X_k^\perp$. Since $X_k^\perp$ is an abelian category exactly embedded in $mod\text-\Lambda$, it contains the kernel $K$ of $P\onto X_{j_1}$. But, the extension $K\to P\to X_{j_1}$ gives an element of $\Hom_\Lambda(K,\tau X_{j_1})$. Since $\tau X_{j_1}\subset \tau X_k$, we have $\Hom_\Lambda(K,\tau X_k)\neq0$ contradicting that $K\in X_k^\perp$. Therefore $K=0$ and $P=X_{j_1}$ is projective in $X_k^\perp$.

\underline{Claim 2}. There is a filtration
\[
	0=B_0\subset B_1\subset B_2\subset\cdots\subset B_s\subset X_k
\]
so that (a) $B_t/B_{t-1}\cong X_{j_t}$ for each $t$ and (b) $B_t$ is a projective object of $X_k^\perp$ for every $t$.

Pf: (b) is analogous to Claim 1. (a) follows from the following description of the modules $X_{j_t}$: Each $X_{j_t}=V_{a_t,a_{t+1}}$. So, we let $B_t=V_{a_1,a_{t+1}}$ and (a) will be satisfied.

We now show that $X_i=X_{j_s}$ is relatively projective. More precisely, we show that it is a projective object of $Z^\perp$ where $Z=X_k\oplus X_{j_1}\oplus X_{j_2}\oplus \cdots\oplus X_{j_{s-1}}$. To show this, suppose not. Let $P$ be the projective cover of $X_i$ in $Z^\perp$. Since $B_s$ is a projective object of $X_k^\perp$ which contains $Z^\perp$, the epimorphism $f: B_s\onto X_i$ lifts to $\tilde f:B_s\to P$. However, the kernel of $f$ is $B_{s-1}$ which has a filtration with subquotients $X_{j_1},\cdots,X_{j_{s-1}}$. Therefore, $\Hom(B_{s-1},P)=0$. So, $\tilde f:B_s\to P$ factors through $X_i$ making $X_i$ a direct summand of $P$ and therefore a projective object in $Z^\perp$.
\end{proof}

These three lemmas complete the proof of Theorem \ref{thm: relative projectives in Wn}. The probability distribution of relative projective objects in a complete exceptional sequence in the tube $\cW_n$ now follows directly from the following result.

\begin{thm}\cite[Corollary 5.2]{Ig-Sen}\label{thm: probabilities for descending vertices}
Let $F$ be a random rooted labeled forest with $n$ vertices. Then, the probability that $v_j$ is a descending vertex is
\[
	\mathbb P(v_j\text{ is descending})=\frac{n-j}{n+1}.
\] 
Furthermore, these events, for distinct $j$, are independent.
\end{thm}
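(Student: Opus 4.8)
The plan is to prove a single generating-function identity that yields both the marginal probabilities and their independence at once. Writing $\mathrm{Desc}(F)=\{\,j:v_j\text{ is descending in }F\,\}$ (i.e. $v_j$ is a child of some $v_k$ with $k>j$), I would first establish
\[
	\sum_{F}\ \prod_{j\in \mathrm{Desc}(F)} x_j\ =\ \prod_{j=1}^{n-1}\bigl[(j+1)+(n-j)x_j\bigr],
\]
the sum running over all $(n+1)^{n-1}$ rooted labeled forests on $\{1,\dots,n\}$. Setting every $x_j=1$ recovers the total count $(n+1)^{n-1}$, and dividing by it gives $\mathbb{E}\bigl[\prod_j x_j^{[v_j\text{ desc}]}\bigr]=\prod_{j=1}^{n-1}\frac{(j+1)+(n-j)x_j}{n+1}$, a product whose $j$-th factor depends only on $x_j$. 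That factorization of the joint probability generating function is exactly the statement that the events $\{v_j\text{ descending}\}$ are mutually independent, with $\mathbb{P}(v_j\text{ descending})$ the coefficient of $x_j$, namely $\frac{n-j}{n+1}$; the case $j=n$ gives probability $0$, matching the absence of $x_n$ from the product.

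To prove the product identity I would pass to the arborescence model. Adjoin a root $v_0$ to $F$ to obtain a tree $F_+$ on $\{0,1,\dots,n\}$, with every edge directed from a vertex toward its parent (ultimately toward $0$). Then $v_j$ is descending precisely when the edge leaving $v_j$ goes to a larger label, so weighting the directed edge $j\to k$ by $x_j$ when $k>j$ and by $1$ when $0\le k<j$ makes $\prod_{j\in\mathrm{Desc}(F)}x_j$ the weight of the whole arborescence. The weighted directed Matrix--Tree theorem then equates $\sum_F\prod_{\mathrm{Desc}}x_j$ with the principal minor $\det\widetilde L$, where $\widetilde L$ is the out-degree Laplacian with row and column $0$ deleted: explicitly $\widetilde L_{jj}=j+(n-j)x_j$, while $\widetilde L_{jk}=-1$ for $k<j$ and $\widetilde L_{jk}=-x_j$ for $k>j$. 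Note that the per-vertex parent choice only accounts for the diagonal $j+(n-j)x_j$; the shift to $(j+1)+(n-j)x_j$ and the eventual factorization are forced by the acyclicity (off-diagonal) structure, which is why the independence is a genuine theorem rather than an obvious product of independent parent choices.

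The main computational step is the evaluation $\det\widetilde L=\prod_{j=1}^{n-1}[(j+1)+(n-j)x_j]$. I would apply the row operations $R_j\mapsto R_j-R_{j+1}$ for $j=1,\dots,n-1$: a direct check shows these clear every entry strictly to the left of the diagonal in rows $1,\dots,n-1$ and convert the $j$-th diagonal entry into exactly $(j+1)+(n-j)x_j$, leaving only the last row $[-1,\dots,-1,n]$ off the triangular pattern. Eliminating that last row against the $n-1$ pivots above it then leaves the bottom-right corner equal to $1$, so $\det\widetilde L$ is the product of the pivots. I expect this final point --- checking that the Schur complement of the triangular $(n-1)\times(n-1)$ block contributes exactly $1$ --- to be the only delicate part; everything preceding it is bookkeeping, and I have confirmed the whole identity by hand for $n\le 3$. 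A purely combinatorial alternative would insert the vertex $v_n$ into a forest on $\{1,\dots,n-1\}$, but the new descending option ``parent $=v_n$'' shifts every coefficient $(n-1-j)\mapsto(n-j)$ and muddies the recursion, so I would lead with the Matrix--Tree evaluation and keep the insertion argument only as a sanity check.
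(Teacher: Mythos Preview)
The paper does not prove this statement; it is quoted verbatim from the cited reference \cite{Ig-Sen} and then used to derive Corollary~\ref{cor: second part of thm F}. So there is no in-paper argument to compare against, and the comments below concern only the internal soundness of your proposal.

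Your Matrix--Tree route is correct and gives the claimed product. The step you flag as delicate --- that the bottom-right Schur complement equals $1$ --- is in fact immediate once you notice that every row of $\widetilde L$ sums to $1$: row $j$ contributes $-(j-1)+[\,j+(n-j)x_j\,]-(n-j)x_j=1$. After the operations $R_j\mapsto R_j-R_{j+1}$ for $j<n$, the first $n-1$ rows therefore sum to $0$ while row $n$ still sums to $1$. Now apply the single column operation $C_n\mapsto C_1+\cdots+C_n$: column $n$ becomes $(0,\ldots,0,1)^T$, and expansion along it gives $\det\widetilde L=1\cdot\det M$ with $M$ the upper-triangular $(n-1)\times(n-1)$ block whose diagonal is $(j+1)+(n-j)x_j$. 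In your Schur-complement language: each elimination step against row $n$ adds a multiple of a row of sum $0$, so the row sum of the last row stays $1$; when only the $(n,n)$ entry survives, it must equal $1$. This closes the gap you left open.

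As for comparison: the argument in \cite{Ig-Sen} proceeds through the bijection between rooted labeled forests and complete exceptional sequences of type $A_n$, identifying descending vertices with a representation-theoretic property and counting those directly (the same style of argument this paper runs in Section~\ref{sec: signed exc seq for Cn} for types $B_n/C_n$). Your weighted-arborescence computation is a genuinely different, purely combinatorial proof that bypasses representation theory entirely; it is shorter and self-contained for the forest statement, at the cost of not exhibiting the link to exceptional sequences that the surrounding paper exploits.
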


Combining Theorems \ref{thm: relative projectives in Wn}
and \ref{thm: probabilities for descending vertices} (with $n$ replaced by $n-1$), we obtain the following.

\begin{cor}\label{cor: second part of thm F}
Let $(V_1,\cdots,V_{n-1})$ be a random complete exceptional sequence in $\cW_n$. Then, the probability that $V_j$ is relatively projective is
\[
	\mathbb P(V_j\text{ is relatively projective})=\frac{n-j-1}{n}.
\] 
Furthermore, these events, for distinct $j$, are independent.
\end{cor}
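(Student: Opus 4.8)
The plan is to combine Theorem \ref{thm: relative projectives in Wn} with Theorem \ref{thm: probabilities for descending vertices}, transporting the uniform distribution through the bijection of Theorem \ref{thm B with proof}. First I would recall that $\cR_n^{(n-1)}$, the set of complete exceptional sequences $(V_1,\cdots,V_{n-1})$ in $\cW_n$, is in bijection with the set $\widetilde\cF_{n-1}$ of augmented rooted labeled forests $(F,\varepsilon)$, where $F$ is a rooted labeled forest on $n-1$ vertices and $\varepsilon$ ranges over the $n$ possible augmentations. Since this is a bijection of finite sets, a uniformly random complete exceptional sequence corresponds to a uniformly random augmented forest; and because the $n$ augmentations are attached independently of $F$, the underlying forest $F$ is itself uniformly distributed on $\cF_{n-1}$ and independent of $\varepsilon$.

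The crucial input, supplied by Theorem \ref{thm: relative projectives in Wn}, is that $V_j$ is relatively projective in $V_\ast$ if and only if the $j$th vertex $v_j$ of $F$ is a descending vertex. This criterion depends only on the underlying forest $F$ and not on the augmentation $\varepsilon$. Consequently the event $\{V_j\text{ is relatively projective}\}$ coincides, under the bijection, with the event $\{v_j\text{ is a descending vertex of }F\}$ for a uniformly random $F\in\cF_{n-1}$, and in particular the joint distribution of relative projectivity across the $V_j$ matches that of the descending-vertex events in $F$.

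It then remains to apply Theorem \ref{thm: probabilities for descending vertices} with $n$ replaced by $n-1$, since $F$ has $n-1$ vertices. That theorem gives
\[
	\mathbb P(v_j\text{ is descending})=\frac{(n-1)-j}{(n-1)+1}=\frac{n-j-1}{n},
\]
and asserts that these events are independent for distinct $j$. Transporting back through the bijection yields both the stated probability and the independence. There is no real obstacle here beyond bookkeeping: the only points requiring care are verifying that the uniform measure on $\cR_n^{(n-1)}$ pushes forward to the uniform measure on forests after discarding the augmentation, and correctly performing the index shift $n\mapsto n-1$ in the probability formula.
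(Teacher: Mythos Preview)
Your proposal is correct and matches the paper's own argument: the paper derives the corollary by combining Theorem~\ref{thm: relative projectives in Wn} with Theorem~\ref{thm: probabilities for descending vertices} after the index shift $n\mapsto n-1$. Your added remarks about transporting the uniform measure through the bijection and discarding the augmentation are accurate bookkeeping that the paper leaves implicit.
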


Dually, and since no vertex of a forest can be both ascending and descending, we have the following.

\begin{cor}\label{cor: dual of second part of F}
The probability that $V_j$ is relatively injective is
\[
	\mathbb P(V_j\text{ is relatively injective})=\frac{j-1}{n}.
\] 
Furthermore, these events, for distinct $j$, are independent. However, relative injectivity and relative projectivity are not independent since $V_j$ will never be both relatively injective and relatively projective.
\end{cor}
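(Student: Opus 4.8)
The plan is to deduce this dual statement from the already-established facts about descending vertices by exploiting a label-reversing symmetry of forests. By Theorem~\ref{thm: relative projectives in Wn}, $V_j$ is relatively injective in $V_\ast$ exactly when the $j$th vertex $v_j$ of the underlying rooted labeled forest $F$ (which has $n-1$ vertices) is an ascending vertex. Thus the whole corollary is a statement about the distribution of ascending vertices in a random rooted labeled forest with $n-1$ vertices, and it suffices to transport Theorem~\ref{thm: probabilities for descending vertices} across this symmetry.

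First I would introduce the involution $\rho$ on the set of rooted labeled forests with $m=n-1$ vertices which fixes the underlying unlabeled rooted forest and reverses the labels, sending the vertex labeled $i$ to the vertex labeled $m+1-i=n-i$. Being an involution of a finite set, $\rho$ preserves the uniform distribution. The key point is that $\rho$ interchanges ascending and descending vertices: a vertex whose parent carries a smaller label acquires, after reversal, a parent with a larger label. Tracking labels, $v_j$ is ascending in $F$ if and only if $v_{n-j}$ is descending in $\rho(F)$, while roots are carried to roots and remain neither ascending nor descending. Hence
\[
\mathbb P(v_j\text{ ascending})=\mathbb P(v_{n-j}\text{ descending}),
\]
and Theorem~\ref{thm: probabilities for descending vertices}, applied with $n$ replaced by $n-1$, evaluates the right-hand side as $\frac{(n-1)-(n-j)}{n}=\frac{j-1}{n}$, as required. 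Independence of the ascending events for distinct $j$ follows because $\rho$ identifies their joint law with that of the descending events reindexed by $j\mapsto n-j$, and the latter are independent by the same theorem.

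For the final assertion I would note that a vertex of a forest has a unique parent and so cannot be simultaneously ascending and descending; by Theorem~\ref{thm: relative projectives in Wn} this means $V_j$ is never both relatively injective and relatively projective, so the joint event has probability $0$, whereas the product of the two marginals $\frac{j-1}{n}\cdot\frac{n-j-1}{n}$ is strictly positive for $1<j<n-1$. The two families of events are therefore not independent. I do not expect a genuine obstacle here; the most delicate point is merely verifying that $\rho$ is measure-preserving and that the ascending/descending interchange is exact, with roots behaving neutrally. Everything else is bookkeeping once Theorems~\ref{thm: relative projectives in Wn} and~\ref{thm: probabilities for descending vertices} are in hand.
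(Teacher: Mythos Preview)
Your proposal is correct and is essentially a fleshed-out version of what the paper does. The paper's own justification consists of the single word ``Dually'' together with the remark that no vertex of a forest can be both ascending and descending; your label-reversing involution $\rho$ is exactly one way to make that duality precise at the combinatorial level, and the computation $\frac{(n-1)-(n-j)}{n}=\frac{j-1}{n}$ and the independence transfer are right.

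One small remark: the paper's ``dually'' could equally well be read as the categorical duality on the tube (reversing an exceptional sequence and swapping relative projectives with relative injectives), which sends position $j$ to position $n-j$ and gives the same numerics via Corollary~\ref{cor: second part of thm F}. Your forest-level argument and this category-level argument are two faces of the same symmetry, and either suffices. Your treatment of the non-independence clause is also fine; note that for $j=1$ or $j=n-1$ one of the marginals is zero, so there the product-of-marginals test is vacuous, but for the intermediate $j$ your inequality is genuine and establishes the claim.
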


\subsection{Proof of Lemma \ref{lem D}}\label{ss: proof of Lemma E}

We review the main results of \cite{IT13} about cluster morphisms.

For $\cA$ a finitely generated wide subcategory of $mod\text-\Lambda$ for $\Lambda$ hereditary, let $\cC_\cA$ be the set consisting of (isomorphism classes of) indecomposable objects of the cluster category of $\cA$, i.e., indecomposable rigid objects of $\cA$ and shifted indecomposable projective objects of $\cA$: $P[1]$. A finite subset $T=\{T_1,\cdots,T_k\}$ of $\cC_\cA$ is called a \emph{partial cluster tilting set} if the $T_i$ are pairwise ext-orthogonal.

Let $\cB$ be a finitely generated wide subcategory of $\cA$. Then a \emph{cluster morphism} 
\[
[T]:\cA\to \cB
\]
 is defined to be a partial cluster tilting set $T$ in $\cC_\cA$ so that 
\[
	\cB=\cA\cap |T|^\perp
\]
where $|T|=\bigoplus |T_i|$ and $|T_i|\in\cA$ is the underlying object of $T_i\in \cA\cup \cA[1]$. We say $[T]$ has \emph{length} $k$ if $T$ has $k$ elements.
 
A \emph{signed exceptional sequence} $(X_1,\cdots,X_k)$ of length $k$ in $\cA$ is defined to be a sequence of $k$ composable cluster morphisms of length 1 starting at $\cA$:
\[
	\cB_0\xleftarrow{[X_1]} \cB_1\xleftarrow{[X_2]} \cdots \xleftarrow{[X_k]}  \cB_k=\cA.
\]Thus each $X_i\in\cC_{\cB_i}\subset \cA\cup \cA[1]$ and this definition is equivalent to the earlier definition of a signed exceptional sequence, namely, $(|X_1|,\cdots,|X_k|)$ is an exceptional sequence in $\cA$ with relatively projective objects allowed to be shifted.

The composition of these as morphisms in the cluster morphism category is the cluster morphism $[T]:\cA\to \cB_0$ where $T=\{T_1,\cdots,T_k\}\subset \cC_\cA$ is the unique partial cluster tilting set so that, for each $i$, $\undim T_i-\undim X_i$ is a $\mathbb Z$-linear combination of the vectors $\undim X_j$ for $j>i$. This implies that
\[
	\cB_0=\cA\cap |T|^\perp=\cA\cap |X|^\perp.
\]
The main theorem about cluster morphisms is the following.

\begin{thm}\cite{IT13}\label{thm: cluster morphism bijection}
This construction gives a bijection between signed exceptional sequences of length $k$ in $\cA$ and ordered partial cluster tilting set in $\cC_\cA$ of size $k$.
\end{thm}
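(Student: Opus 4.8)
The plan is to prove the bijection by induction on $k$, reading a signed exceptional sequence from its right-hand end. I would first record both maps explicitly. The forward map $\Phi$ sends $(X_1,\dots,X_k)$ to the tuple $(T_1,\dots,T_k)$ singled out by the stated condition: $T_i\in\cC_\cA$ is ext-orthogonal to $X_{i+1},\dots,X_k$ and satisfies $\undim T_i-\undim X_i\in\ZZ\langle\undim X_j: j>i\rangle$. The first task is to check that this pins down $T_i$ uniquely — objects of $\cC_\cA$ over a hereditary algebra are determined by their dimension vectors (rigid modules by positive real Schur roots, shifted projectives $P[1]$ by the negatives $-\undim P$), so the congruence together with the ext-orthogonality conditions leaves at most one candidate — and that the resulting set $T=\{T_1,\dots,T_k\}$ really is a partial cluster tilting set with $\cA\cap|T|^\perp=\cA\cap|X|^\perp=\cB_0$. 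This is precisely the assertion that composition in the cluster morphism category is well defined, so I would either cite that structure from \cite{IT13} or reprove it directly from the exchange sequences producing each $T_i$ from $X_i$.

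For the inverse $\Psi$ I would recurse downward. Given an ordered partial cluster tilting set $(T_1,\dots,T_k)$, the span in the defining condition is empty when $i=k$, forcing $X_k=T_k$ and hence $\cB_{k-1}=\cA\cap|T_k|^\perp$. The heart of the argument is then a perpendicular-reduction lemma.

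\textbf{Key Lemma.} For an indecomposable $X\in\cC_\cA$ with $\cB=\cA\cap|X|^\perp$, there is a bijection
\[
\rho_X:\{Y\in\cC_\cA: Y \text{ ext-orthogonal to } X,\ Y\not\cong X\}\xrightarrow{\ \cong\ }\cC_\cB,
\]
characterized on dimension vectors by $\undim Y\equiv\undim\rho_X(Y)\pmod{\ZZ\,\undim X}$, which preserves ext-vanishing and therefore carries partial cluster tilting sets to partial cluster tilting sets.

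Granting this, I apply $\rho_{T_k}$ to $(T_1,\dots,T_{k-1})$ to obtain an ordered partial cluster tilting set of size $k-1$ in $\cC_{\cB_{k-1}}$, invoke the inductive hypothesis to get a signed exceptional sequence $(X_1,\dots,X_{k-1})$ in $\cB_{k-1}$, and adjoin $X_k$ as the $k$th term. That $\Phi$ and $\Psi$ are mutually inverse follows by matching dimension vectors: the correction $\undim T_i-\undim X_i$ accumulated by composing the length-$1$ morphisms is undone, one factor of $\undim X_j$ at a time, by the iterated reductions $\rho_{T_k},\rho_{T_{k-1}'},\dots$, so a further induction on $k$ shows $\Phi\circ\Psi$ and $\Psi\circ\Phi$ are identities. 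The base case $k=1$ is immediate, since then $T_1=X_1$ with no conditions to impose.

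The main obstacle is the Key Lemma. It requires (i) that $\cB=\cA\cap|X|^\perp$ is again a finitely generated wide subcategory of rank one less, so that the inductive hypothesis applies — this is the Geigle--Lenzing/Schofield perpendicular-category theorem; and (ii) an identification of $\cC_\cB$ with the ext-orthogonal complement of $X$ inside $\cC_\cA$ compatible with the shift $[1]$, which is a Calabi--Yau (Iyama--Yoshino) reduction of the cluster category. The genuinely delicate point is the interaction with signs: each $T_i$ may be a rigid object or a shifted projective $P[1]$, and the reduction sends the shifted projectives of $\cB$ to objects of $\cC_\cA$ that need not be shifts of projectives of $\cA$. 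Controlling this bookkeeping is exactly what makes the count come out to \emph{ordered partial cluster tilting sets} rather than ordinary exceptional sequences, and Lemma \ref{lem D} of this paper is the special case of that bookkeeping when $\cA$ has no projective objects.
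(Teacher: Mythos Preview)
The paper does not actually prove this theorem; it is cited from \cite{IT13}. The proof block immediately following, labeled ``Proof of Theorem \ref{thm: cluster morphism bijection}'', is mislabeled: that argument takes the bijection $\theta_k$ of Theorem \ref{thm: cluster morphism bijection} as already established and uses it to deduce Theorem \ref{thm: Lemma E} (the special case of a wide subcategory $\cW$ with no projective objects, which is Lemma \ref{lem D} of the introduction). So there is no proof in this paper for you to compare against.

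That said, your outline is essentially the argument of \cite{IT13}: set $X_k=T_k$, pass to the perpendicular $\cB_{k-1}=\cA\cap|T_k|^\perp$, and recurse via a reduction lemma identifying $\cC_{\cB_{k-1}}$ with the ext-orthogonal complement of $T_k$ inside $\cC_\cA$. You have correctly located the substantive content in your Key Lemma and in its interaction with shifted projectives. Two caveats worth flagging. First, the bijection $\rho_X$ is not purely linear-algebraic: when $|X|$ is a module, sending an ext-orthogonal $Y\in\cC_\cA$ to an object of $\cC_\cB$ generally requires an approximation or exchange triangle (the ``twist'' in \cite{IT13}), not just a dimension-vector adjustment; your congruence condition characterizes the result but does not construct it. Second, the well-definedness of $\Phi$ --- that a $T_i$ with the stated dimension-vector congruence and ext-orthogonality exists at all --- is itself part of the theorem, equivalent to the assertion that composition in the cluster morphism category is well defined, and should be proved rather than cited back to the very structure whose existence is at stake.
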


The following special case of this bijection was Lemma \ref{lem D} in the introduction.

\begin{thm}\label{thm: Lemma E}
Let $\cW$ be any wide subcategory of $mod\text-\Lambda$ which contains no projective $\Lambda$-modules. Then, there is a bijection:
\[
	\left\{
	\begin{matrix}\text{signed exceptional sequences}\\
	\text{of length $k$ in $\cW$}
	\end{matrix}
	\right\}
	\xrightarrow[\cong]{\theta_k}
	\left\{
	\begin{matrix}\text{sequences of $k$ ext-orthogonal}\\
	\text{rigid objects in $\cW$}
	\end{matrix}
	\right\}
\]
\end{thm}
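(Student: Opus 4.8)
The plan is to deduce this directly from Theorem \ref{thm: cluster morphism bijection} by specializing $\cA=\cW$ and then simplifying the target set using the hypothesis that $\cW$ has no projective objects. The bijection $\theta_k$ will be nothing more than the restriction of the \cite{IT13} correspondence: a signed exceptional sequence $(X_1,\cdots,X_k)$, realized as a chain of length-one cluster morphisms $\cB_0 \xleftarrow{[X_1]}\cdots\xleftarrow{[X_k]}\cB_k=\cW$, is sent to the partial cluster tilting set $T=\{T_1,\cdots,T_k\}\subset\cC_\cW$ characterized by the property that $\undim T_i-\undim X_i$ is a $\ZZ$-linear combination of the $\undim X_j$ for $j>i$.

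First I would record the shape of $\cC_\cW$. By definition $\cC_\cW$ consists of the indecomposable rigid objects of $\cW$ together with the shifted indecomposable projective objects $P[1]$ of the abelian category $\cW$. The hypothesis is precisely that there are no such $P$, so $\cC_\cW$ contains no shifted objects and is exactly the set of indecomposable rigid objects of $\cW$. Consequently an ordered partial cluster tilting set of size $k$ in $\cC_\cW$ is nothing more than a sequence $(T_1,\cdots,T_k)$ of pairwise ext-orthogonal rigid objects of $\cW$, which is exactly the right-hand set of the theorem. Feeding this identification into Theorem \ref{thm: cluster morphism bijection} then yields the bijection $\theta_k$.

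I want to stress one point that I would verify rather than assume, since it keeps the statement from being a triviality. Even though $\cW$ has no projectives, the intermediate subcategories $\cB_i\subseteq\cW$ occurring in a signed exceptional sequence may well have their own relative projectives (for instance the type-$A_{n-1}$ subcategory sitting inside the tube $\cW_n$ does), so the $X_i$ genuinely can be shifted. What the no-projectives hypothesis controls is only the ambient cluster category $\cC_\cW$ in which the output set $T$ lives, not the cluster categories of the $\cB_i$; the content of the theorem is that the \cite{IT13} correspondence converts each signed term $X_i$ into an honestly rigid object $T_i\in\cW$ through the displayed dimension-vector relation.

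The step I expect to require the most care is the applicability of Theorem \ref{thm: cluster morphism bijection} to $\cW$ when $\cW$ is not finitely generated, as happens for the tube $\cW_n$. The cluster-morphism formalism of \cite{IT13} is set up for finitely generated wide subcategories, so I would argue that this hypothesis can be relaxed here: $\cW_n$ is a length category with only finitely many simple objects, and any signed exceptional sequence or partial cluster tilting set of length $k<n$ involves only finitely many indecomposables, all lying in a finitely generated wide subcategory of $mod\text-\Lambda$ to which the bijection applies verbatim. Checking that this reduction is compatible with the maps $\theta_k$ is the one place where the non-finite-generation of the tube has to be addressed explicitly.
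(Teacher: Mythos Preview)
Your approach sets $\cA=\cW$ in Theorem~\ref{thm: cluster morphism bijection}; the paper sets $\cA=mod\text-\Lambda$ and restricts. Your key step, ``the hypothesis is precisely that there are no such $P$,'' is a misreading: the hypothesis says $\cW$ contains no projective \emph{$\Lambda$-modules}, not that $\cW$ has no projective objects of its own. These diverge whenever $\cW$ is finitely generated. For example, with $\Lambda$ of type $A_2$ oriented $1\leftarrow 2$ and $\cW=\add(S_2)$, the module $S_2$ is projective in $\cW$ but not in $mod\text-\Lambda$; so $\cC_\cW=\{S_2,S_2[1]\}$ and your simplification of the target set fails, while the theorem as stated still holds (one signed exceptional sequence of length~$1$, one rigid object). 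Your argument would prove the result under the stronger hypothesis ``$\cW$ has no intrinsic projectives,'' which happens to hold for the tube but is not what is assumed.

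The paper instead restricts the bijection of Theorem~\ref{thm: cluster morphism bijection} on $mod\text-\Lambda$ to those signed exceptional sequences with every $|X_i|\in\cW$. If $\cB=|X|^\perp$ then $^\perp\cB$ is the smallest wide subcategory of $mod\text-\Lambda$ containing the $|X_i|$, so $^\perp\cB\subset\cW$; but the $|T_i|$ also lie in $^\perp\cB$, hence in $\cW$, and now the stated hypothesis rules out $T_i=P[1]$ for a $\Lambda$-projective $P$. The converse runs the same perpendicular argument starting from the $T_i$. This choice of ambient category uses the hypothesis exactly as written and makes your finite-generation worry evaporate, since $mod\text-\Lambda$ is always finitely generated---in effect your own proposed fix (pass to a finitely generated wide subcategory containing everything in sight) is what the paper does, with $mod\text-\Lambda$ itself as the canonical choice.
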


\begin{proof}[Proof of Theorem \ref{thm: cluster morphism bijection}]
Let $\cA=mod\text-\Lambda$ and consider the bijection
\[
	\theta_k(X_1,\cdots,X_k)=(T_1,\cdots,T_k)
\]
from Theorem \ref{thm: cluster morphism bijection}. If $|X_i|\in \cW$ for all $i$ and $\cB=|X|^\perp$, then $\cB$ is finitely generated, being a perpendicular category, and $\,^\perp \cB$ is the smallest wide subcategory of $mod\text-\Lambda$ which contains all $|X_i|$. Thus $\,^\perp \cB\subset\cW$. So, $|T_i|\in \,^\perp\cB\subset \cW$. Since $\cW$ contains no projective $\Lambda$-modules, $T_i$ is an object of $\cW$, not a shifted projective. So, $(T_1,\cdots,T_k)$ is a $k$-tuple of ext-orthogonal rigid objects of $\cW$.

Conversely, suppose $T_i\in \cW$ for all $i$. Then $\cB=|X|^\perp=|T|^\perp$ has left perpendicular category $\,^\perp\cB\subset \cW$. So, $|X_i|\in \,^\perp\cB\subset\cW$ for all $i$ as claimed.
\end{proof}

\begin{cor}\label{cor: thm D}
There is a bijection between signed exceptional sequences of length $k$ in the tube $\cW_n$ and signed exceptional sequences of length $k$ in module categories of type $B_{n-1}$ or $C_{n-1}$ (with any orientation of the arrows).
\end{cor}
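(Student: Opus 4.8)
The plan is to assemble the bijection from three ingredients already available: Theorem \ref{thm: Lemma E} (our Lemma D), the cluster-morphism bijection of \cite{IT13} recorded in Theorem \ref{thm: cluster morphism bijection}, and the comparison of \cite{BMV} between the cluster tube of rank $n$ and the cluster category of type $C_{n-1}$. First I would apply Theorem \ref{thm: Lemma E} with $\cW=\cW_n$. As noted in Section \ref{ss: soft exc seq in tubes}, the abelian tube $\cW_n$ contains no projective objects, so the hypothesis is met and we get a bijection $\theta_k$ between signed exceptional sequences of length $k$ in $\cW_n$ and ordered $k$-tuples of ext-orthogonal rigid objects in $\cW_n$. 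Because $\cW_n$ has no projectives, the associated cluster category (the cluster tube of rank $n$) has no shifted-projective summands, so its indecomposable rigid objects are exactly the rigid objects of $\cW_n$ and the notion of ext-orthogonality agrees; hence the target of $\theta_k$ is identified with ordered partial cluster tilting sets of size $k$ in the cluster tube of rank $n$.

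Next I would invoke \cite{BMV}, which puts the maximal rigid objects of the cluster tube of rank $n$ in bijection with clusters of type $C_{n-1}$. The one point requiring verification is that this correspondence respects the exchange combinatorics, i.e.\ it carries compatible (ext-orthogonal) pairs to compatible pairs, so that it restricts to a bijection between partial cluster tilting sets of each size $k$, and hence (preserving the chosen order) between ordered such sets. This holds because the bijection of \cite{BMV} is induced by an explicit, compatibility-preserving map of the underlying combinatorial arc/chord models. Finally, applying Theorem \ref{thm: cluster morphism bijection} to $mod\text-\Lambda'$ for $\Lambda'$ of type $C_{n-1}$ identifies ordered partial cluster tilting sets of size $k$ in the cluster category of $C_{n-1}$ with signed exceptional sequences of length $k$ in $mod\text-\Lambda'$. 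Composing these four identifications yields the desired bijection, and the case of type $B_{n-1}$ and of arbitrary orientation follows from the orientation-independence recorded in Remark \ref{rem: about APR and orientation} together with the equivalence of the $B$ and $C$ combinatorics established in \cite{Ringel2}.

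The main obstacle is the middle step: promoting the \cite{BMV} comparison from a statement about \emph{maximal} rigid objects to one about partial cluster tilting sets of every size $k$. Every other step is a direct application of a result quoted above, so the genuine work is confined to this compatibility argument; I expect it to be routine given the combinatorial description of the \cite{BMV} bijection, but it is the single place where an actual argument, rather than a citation, is needed.
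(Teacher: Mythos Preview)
Your proposal is correct and follows essentially the same three-step chain as the paper: apply Theorem \ref{thm: Lemma E} to $\cW_n$ (which has no projectives), invoke \cite{BMV} to pass to ordered partial cluster tilting sets for $mod\text-\Lambda$ of type $B_{n-1}/C_{n-1}$, and then apply Theorem \ref{thm: cluster morphism bijection}. The paper's proof simply cites \cite{BMV} for the middle step without flagging the partial-versus-maximal issue you raise, so your version is in fact slightly more scrupulous than the original.
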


\begin{proof}Since $\cW$ has no projective objects we have, by Lemma \ref{lem D} (Theorem \ref{thm: Lemma E}), a bijection between signed exceptional sequences of length $k$ in $\cW$ and ordered $k$-tuples of ext-orthogonal objects of $\cW$. By \cite{BMV}, these are in bijection with ordered partial cluster tilting sets of size $k$ for $mod\text-\Lambda$ if $\Lambda$ is hereditary of type $B_{n-1}$ or $C_{n-1}$. By Theorem \ref{thm: cluster morphism bijection}, these are in bijection with signed exceptional sequences of length $k$ in $mod\text-\Lambda$.
\end{proof}

\subsection{Bijection between signed exceptional sequences in $\cW_{n+1}$ and in $mod\text-\Lambda$ for $\Lambda$ of type $C_{n}$}\label{ss: bijection signed exc seq Wn and Cn-1}

We increase the index $n$ by 1 for convenience.
Going through the proof of Theorem \ref{thm C} (Corollary \ref{cor: thm D} above), the bijection is given by composing the following three bijections for $1\le k\le n$. We also add a fourth mapping $\alpha$ which is a monomorphism, not a bijection. The last set has $2^kn^k\binom nk$ elements since we allow each of the $k$ terms in each of the $n^k\binom nk$ soft exceptional sequence to have any sign.
\[
\xymatrix{
(V_1,\cdots,V_k)  & \{\text{length $k$ signed exceptional sequences in $\cW_{n+1}$}\} \ar[d]^{\theta_k}_\cong\\
(R_1,\cdots,R_k)  & \{\text{$k$-tuple of ext-orthogonal rigid objects in $\cW_{n+1}$}\} \ar[d]^{\beta_k}_\cong\\
(T_1,\cdots,T_k) & \{\text{ordered partial cluster of size $k$ in $mod\text-\Lambda$}\}\ar[d]^{\chi_k}_\cong\\
(X_1,\cdots,X_k)  & \{\text{length $k$ signed exceptional sequence in $mod\text-\Lambda$}\}\ar[d]^{\alpha_k}_\cap\\
(W_1,\cdots,W_k)  & \{\text{length $k$ soft exceptional sequences in $\cW_{n}$ with signs}\} 
	}
\]
By definition, the three bijections are recursive in the sense that, for $k\ge2$, dropping the first terms $V_1,R_1,T_1,X_1$ gives the bijections for $k-1$:
\[
	(V_2,\cdots,V_k)\xrightarrow{\theta_{k-1}} (R_2,\cdots,R_k)\xrightarrow{\beta_{k-1}} (T_2,\cdots,T_k)\xleftarrow{\theta_{k-1}}(X_2,\cdots,X_k) 
\]
Both $\theta_k$ and $\chi_k$ are given by linear twist equations (from \cite{IT13}): For $k=1$, we have the identity: $\theta_1=id$, $\chi_1=id$. (Thus, for any $k$, $R_k=V_k$, $X_k=T_k$.) 

For $k\ge2$, if $\theta_{k-1}(V_2,\cdots,V_k)=(R_2,\cdots,R_k)$, then $R_1$ is the unique rigid object of $\cW_{n+1}$ which is ext-orthogonal to $R_2,\cdots,R_k$ and whose dimension vector $\undim R_1\in \ZZ^{n+1}$ is congruent to $\undim V_1$ modulo the span of $\undim V_j$ for $2\le j\le k$. By Lemma \ref{lem D} (Theorem \ref{thm: Lemma E}), there is a unique positive vector $\undim R_1$ fitting this description.

Similarly, given $\chi_{k-1}(T_2,\cdots,T_k)=(X_2,\cdots,X_k)$, $X_1$ is the unique indecomposable object of $mod\text-\Lambda\coprod mod\text-\Lambda[1]$ so that $|X_1|$, the underlying module of $X_1$, extends the given exceptional sequence to $(|X_1|,|X_2|,\cdots,|X_k|)$ and whose dimension vector, $\undim X_1=(\text{sign} X_1)\undim |X_1|\in\ZZ^n$, is congruent to $\undim T_1$ modulo the span of $\undim X_j$ for $2\le j\le k$. If this formula gives a negative vector for $\undim X_1$ then the theorem is that $|X_1|$ is relatively projective in the exceptional sequence $(|X_1|,|X_2|,\cdots,|X_k|)$.

The bijection $\beta_k$ is from \cite{BMV}. It is given term-by-term by choosing an equivalence between the Auslander-Reiten quiver of the cluster category of $\Lambda$ and the ``mouth of the tube'' which is the portion of the Auslander-Reiten quiver of $\cW_{n+1}$ consisting of the rigid objects. Up to isomorphism, the cluster category of $\Lambda$, for $\Lambda$ of type $C_{n}$, is independent of the orientation of the quiver of $\Lambda$. So, we take it to be the straight descending orientation:
\[
	1\leftarrow 2\leftarrow \cdots\leftarrow {n}
\]
with the long root at the last vertex. We take one of the standard models:
\[
	\RR\leftarrow \RR\leftarrow \cdots\leftarrow\RR\leftarrow \CC.
\]
Then we have a bijection $\beta$ from the set of rigid objects of $\cW_{n+1}$, which are $V_{ij}$ for distinct $0\le i,j \le n$, to the set of bricks in $\cW_n$ with certain signs allowed. The bricks are $W_{ij}$ with not necessarily distinct $ i,j$ taken modulo $n$. We will obtain negative signs only for $W_{nj}=W_{0j}$. Thus $W_{0j}[1]$ will be in the image of $\beta$. The formula for $\beta$ will be:
\[
	\beta(V_{ij})=\begin{cases} W_{ij} & \text{if } 0\le i<j\le n\\
   W_{0,j+1}[1] & \text{if } i=n\\
   W_{i,j+1} & \text{otherwise}
    \end{cases}
\]
See Figure \ref{fig: W4 to W3} for the case $n=3$.

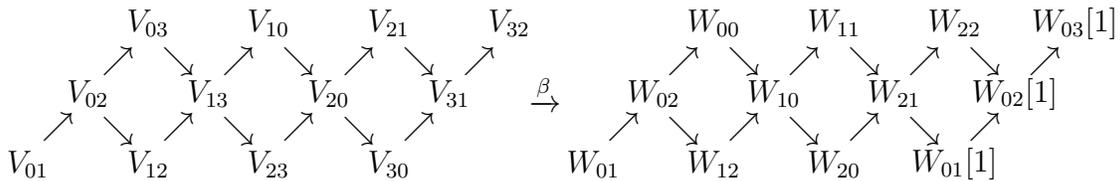
\begin{figure}[htbp]
\begin{center}
\begin{tikzpicture}[xscale=1.6,yscale=1.25]
\begin{scope}
\foreach \x/\y in {(1,0)/01,(2,0)/12,(3,0)/23,(4,0)/30}
\draw \x node{$V_{\y}$};
\foreach \x/\y in {(1.5,.75)/02,(2.5,0.75)/13,(3.5,0.75)/20,(4.5,0.75)/31}
\draw \x node{$V_{\y}$};
\foreach \x/\y in {(2,1.5)/03,(3,1.5)/10,(4,1.5)/21,(5,1.5)/32}
\draw \x node{$V_{\y}$};
\foreach \x in {1,...,4}\draw (\x.25,.4) node{$\nearrow$};
\foreach \x in {1,...,4}\draw (\x.75,1.1) node{$\nearrow$};
\foreach \x in {1,...,3}\draw (\x.75,.35) node{$\searrow$};
\foreach \x in {2,...,4}\draw (\x.25,1.1) node{$\searrow$};
\end{scope}
\draw (5.3,.75) node{$\xrightarrow\beta$};
\begin{scope}[xshift=4.7cm]
\foreach \x/\y in {(1,0)/01,(2,0)/12,(3,0)/20}
\draw \x node{$W_{\y}$};
\draw (4,0) node{$W_{01}[1]$};
\draw (4.5,0.75) node{$W_{02}[1]$};
\draw (5,1.5) node{$W_{03}[1]$};
\foreach \x/\y in {(1.5,.75)/02,(2.5,0.75)/10,(3.5,0.75)/21}
\draw \x node{$W_{\y}$};
\foreach \x/\y in {(2,1.5)/00,(3,1.5)/11,(4,1.5)/22}
\draw \x node{$W_{\y}$};
\foreach \x in {1,...,4}\draw (\x.25,.4) node{$\nearrow$};
\foreach \x in {1,...,4}\draw (\x.75,1.1) node{$\nearrow$};
\foreach \x in {1,2,3}\draw (\x.75,.35) node{$\searrow$};
\foreach \x in {2,3,4}\draw (\x.25,1.1) node{$\searrow$};
\end{scope}
\end{tikzpicture}
\caption{$\beta$ maps $V_{ij}$ to $W_{ij}$ for $i<j$ and $\beta(V_{ij})=W_{i,j+1}$ for $i>j$ except that $\beta(V_{nj})=W_{0,j+1}[1]$ where the indices for $W$ are modulo $n=3$.}
\label{fig: W4 to W3}
\end{center}
\end{figure}
The objects $W_{ij}$ correspond to rigid indecomposables in $mod\text-\Lambda$ for $\Lambda$ of type $C_n$. For example, $W_{0j}$ corresponds to $P_j$, the $j$th projective object, $W_{0j}[1]$ is the shifted projective and $W_{n-1,j}$ is the $j+1$st injective. Signed exceptional sequences in $mod\text-\Lambda$ will have other shifted objects, not just shifted projectives, but we use the notation $W_{ij}[1]$ for those.

Here is an example with $k=n=3$.

\begin{eg}
Take the signed exceptional sequence $(V_{12}[1],V_{13},V_{01})$. $V_{12}$ is relatively projective since, in the forest, the first vertex is descending:
\begin{center}
\begin{tikzpicture}
\coordinate (T) at (-1.5,.4);
\coordinate (E) at (2.8,.4);
\coordinate (A) at (0,0);
\coordinate (C) at (1.5,.8);
\coordinate (D) at (1,0);
\begin{scope}
\draw (T) node{$(F,\varepsilon)=$};
\draw (E) node{,  $\varepsilon=3310$};
\draw[thick] (A) (C)--(D);
	\foreach \x/\y in {A/1,C/3,D/2}
	\draw[fill,white] (\x) circle[radius=2mm];
	\foreach \x/\y in {A/3,C/2,D/1}
	\draw[thick] (\x) circle[radius=2mm] node{$\y$};
\end{scope}
\begin{scope} [xshift=7cm,yshift=4mm,scale=.5] 
\draw[thick] (0,0) circle[radius=25mm];
\draw[very thick] (2.5,0)--(-2.5,0)--(0,2.5);
\draw[very thick] (0,-2.5)--(-2.5,0);
\draw[ thick,->] (-2.5,0)--(-.5,2);
\draw[ thick,->] (-2.5,0)--(.5,0);
\draw[ thick,->] (0,-2.5)--(-1.7,-.8);
\draw (0,2.5) node[above]{$2$};
\draw (-2.5,0) node[left]{$1$};
\draw (2.5,0) node[right]{$3$};
\draw (0,-2.5) node[below]{$0$};
\draw (-1.5,1.5) node{\small $1$};
\draw (-.2,.3) node{\small $2$};
\draw[thick] (.3,-1.1) circle[radius=5mm] ;
\draw (-1.3,-1.6) node{\small $3$};
\end{scope}
\end{tikzpicture}
\end{center}
The corresponding $3$-tuple of ext-orthogonal rigid objects in $\cW_4$ is $(V_{23},V_{03},V_{01})$.
\begin{enumerate}
\item The last object $V_{01}$ is always the same.
\item $V_{13}$ extends $V_{01}$ so we replace it with the extension $V_{03}$ with dimension vector
\[
	\undim V_{03}=\undim V_{13}+\undim V_{01}.
\]
\item $V_{12}$ extends $V_{01}$ so we replace $V_{12}[1]$ with $V_{23}$ with dimension vector
\[
	\undim V_{23}=\undim V_{03}-\undim V_{01}-\undim V_{12}.
\]
\item If $V_{12}$ were not shifted, we would replace it with $V_{02}$ with dimension vector
\[
	\undim V_{02}=\undim V_{12}+\undim V_{01}.
\]
\end{enumerate}
Since $\beta(V_{23},V_{03},V_{01})=(W_{20},W_{00},W_{01})$, the corresponding ordered cluster in $mod\text-\Lambda$ is $(I_1,P_3,P_1)$. This corresponds to the signed exceptional sequence $(X_1[1],X_3,P_1)$ where $X_1=W_{12}$, $X_3=W_{11}$ and $P_1=W_{01}$ in Figure \ref{fig: W4 to W3}.
\end{eg}

\begin{eg}\label{example in introduction}
For the example in the introduction as shown in Figure \ref{Fig: example of forest and directed chord diagram}:
\[
\begin{array}{ll}
(V_{30},V_{12}[1],V_{13}) & \text{a signed exceptional sequence in $\cW_4$ corresponds to}\\
(V_{10},V_{23},V_{13}) & \text{ext-orthogonal objects in $\cW_4$. Applying $\beta$ gives} \\
(W_{11},W_{20},W_{10}) & \text{in $\cW_3$ which corresponds to}\\
(X_3,I_1,X_2) & \text{in the cluster category of $mod\text-\Lambda$ for $\Lambda$ of type $C_3$ corresponding to}
\\
(P_3[1],X_1[1],X_2) & \text{a signed exceptional sequence in $mod\text-\Lambda$.}
\end{array}
\]
\end{eg}


\section{Probability distribution of relative projectives for $B_n/C_n$}\label{sec: signed exc seq for Cn}

Although this section is inspired by the correspondence between exceptional sequences of type $B_n$ and $C_n$ and augmented rooted labeled trees with $n$ vertices, our results about relatively projective objects in exceptional sequences of type $B_n/C_n$ do not use this correspondence. We also do not assume the quiver to be linearly oriented.

We will show that, in an exceptional sequence of length $k$: $(E_k,E_{k-1},\cdots,E_1)$ over a modulated quiver of type $B_n$ or $C_n$, the probability that $E_j$ is relatively projective, i.e., a projective object in the right perpendicular category, denoted $(E_{j-1}\oplus\cdots\oplus E_1)^\perp$, is equal to $\frac jn$ and these events are independent for different $j$. The independence for different $j$ will follow from the statement that the isomorphism class of the perpendicular category $(E_{j}\oplus\cdots\oplus E_1)^\perp$ is independent of whether or not $E_j$ is relatively projective. The proof is a simplified version of the proof in the $A_n$ case \cite{I:prob}.

\subsection{Counting subgraphs of a linear graph}\label{ss: counting subgraphs of linear graphs}
For any $m\ge0$, let $L_m$ denote the linear graph with vertices $0,1,2\cdots,m$ connected by $m$ edges $e_i$ connecting $i-1$ to $i$. The long root is at vertex 0. For example, $L_4$ is
\[
    L_4:\quad 0\,{\frac{e_1}\qquad }
\,1\,{\frac{e_2}\qquad }
\,2\,{\frac{e_3}\qquad }
\, 3\,{\frac{e_4}\qquad }
\, 4.
\]
For $\mu\ge0$ and $\lambda=(\lambda_0\le \lambda_1\le \cdots\le \lambda_k)$ a nonnegative partition of $n-\mu-k-1$ into $k+1$ parts $\lambda_i\ge0$, let $\cS_n(\mu;\lambda)$ denote the set of all subgraphs $G$ of $L_n$ consisting of $L_\mu$ and $k+1$ other linear subgraphs isomorphic to $L_{\lambda_i}$. Thus $G$ contains all the vertices of $L_n$ but is missing the edge $e_{\mu+1}$ and $k$ other edges $e_i$ where $\mu+1<i\le n$. For example,
\[
    G:\quad 0\,{\frac{e_1}\qquad }
\,1\,{\qquad }
\,2\,{\frac{e_3}\qquad }
\, 3\,{\qquad }
\, 4
\]
is an element of $\cS_4(1;(0,1))$ with $n=4, \mu=1, k=1, \lambda_0=0, \lambda_1=1$.

\begin{lem}
    The size of the set $\cS_n(\mu,\lambda)$ where $\lambda=(\lambda_0,\cdots,\lambda_k)$ is
    \[
    |\cS_n(\mu;\lambda)|={\frac{(k+1)!}{\prod n_p!}}
    \]
    where $n_p$ is the number of parts $\lambda_i=p$.
\end{lem}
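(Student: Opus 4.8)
The plan is to exhibit a bijection between $\cS_n(\mu;\lambda)$ and the set of distinct rearrangements of the multiset of parts $\{\lambda_0,\ldots,\lambda_k\}$, and then to invoke the standard count of multiset permutations.

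First I would pin down the part of $G$ that carries no choice. Since every deleted edge other than $e_{\mu+1}$ has index strictly greater than $\mu+1$, all of $e_1,\ldots,e_\mu$ survive in $G$ while $e_{\mu+1}$ does not. Hence the vertices $0,1,\ldots,\mu$ form one connected block---the copy of $L_\mu$---and this block is the same for every $G\in\cS_n(\mu;\lambda)$, so it contributes nothing to the count.

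Next I would analyze the remaining freedom. Deleting $e_{\mu+1}$ detaches $L_\mu$ and leaves the path $P$ on the vertices $\mu+1,\ldots,n$, which is a copy of $L_{n-\mu-1}$. Each $G\in\cS_n(\mu;\lambda)$ is obtained by deleting exactly $k$ of the $n-\mu-1$ edges of $P$, cutting $P$ into $k+1$ consecutive blocks; reading their lengths from left to right gives a sequence that is a rearrangement of $(\lambda_0,\ldots,\lambda_k)$. Conversely, any prescribed left-to-right sequence of block lengths summing to $n-\mu-k-1$ determines a unique set of $k$ deleted edges in $P$, hence a unique $G$. This establishes the bijection between $\cS_n(\mu;\lambda)$ and the distinct orderings of the multiset $\{\lambda_0,\ldots,\lambda_k\}$.

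Finally, the number of distinct orderings of a multiset in which $n_p$ of the parts equal $p$ is the multinomial coefficient $(k+1)!/\prod_p n_p!$, which is the asserted value of $|\cS_n(\mu;\lambda)|$. There is no substantial obstacle here: the only point requiring a moment's care is verifying that the left-to-right block decomposition is genuinely reversible---that a choice of $k$ deleted edges and a choice of ordered block lengths determine one another---together with the bookkeeping that $P$ has $n-\mu-1$ edges, so deleting $k$ of them leaves $k+1$ blocks with $\lambda_0+\cdots+\lambda_k=n-\mu-k-1$ edges in total, consistent with $\lambda$ being a partition of $n-\mu-k-1$.
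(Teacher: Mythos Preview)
Your proof is correct and follows essentially the same approach as the paper: both identify elements of $\cS_n(\mu;\lambda)$ with orderings of the multiset $\{\lambda_0,\ldots,\lambda_k\}$ placed to the right of the fixed $L_\mu$, and then count these orderings by the multinomial coefficient $(k+1)!/\prod_p n_p!$. Your version simply spells out the bijection in more detail than the paper's two-sentence proof.
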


\begin{proof}
    Elements of $\cS_n(\mu;\lambda)$ consist of $L_\mu$ followed by $k+1$ linear graphs $L_{\lambda_i}$ in some order. The number of these is $(k+1)!$ divided by the indicated redundancy factor.
\end{proof}

Let $\widetilde\cS_n(\mu;\lambda)$ be the set of all pairs $(G,e)$ where $G\in \cS_n(\mu;\lambda)$ and $e$ is one of the $n$ edges of $L_n$. Then we have:
\[
|\widetilde\cS_n(\mu;\lambda)|=n|\cS_n(\mu;\lambda)|.
\]

Let $\widetilde\cS_n^p(\mu;\lambda)$ be the set of all $(G,e)\in \widetilde\cS_n(\mu;\lambda)$ where $e$ is one of the $k+1$ edges of $L_n$ not in $G$. Then, either $e=e_{\mu+1}$ or $e=e_i$ is one of the $k$ other edges with $\mu+2\le i\le n$ not in $G$. We have:
\[
|\widetilde\cS_n^p(\mu;\lambda)|=(k+1)|\cS_n(\mu;\lambda)|
.
\]
Given $(G,e)\in \widetilde\cS_n^p(\mu;\lambda)$, consider what happens when the deleted edge $e$ is put back into $G$. Then we get $G'=G\cup\{e\}$ where either 
\begin{enumerate}
    \item $G'\in\cS_n(\mu';\lambda')$ where $\lambda'$ is $\lambda$ with one part $\lambda_i$ deleted and $\mu'=\mu+\lambda_i+1$ or
    \item $G'\in\cS_n(\mu;\lambda'')$ where $\lambda''$ is $\lambda$ with two parts $\lambda_i,\lambda_j$ deleted and one new part $\lambda_i+\lambda_j+1$ added.
\end{enumerate}
Given $G'$ as above, how may ways can we delete an edge $e$ from $G'$ to obtain an element of $\cS_n(\mu;\lambda)$ (with $G=G'\backslash e$ not necessarily the one we started with)?
\begin{enumerate}
    \item If $G'\in \cS_n(\mu';\lambda')$, there is only one possibility: $e=e_{\mu+1}$.
    \item If $G'\in\cS_n(\mu;\lambda'')$ we must first choose one of the $n_c''$ parts of $\lambda''$ equal to $c=\lambda_i+\lambda_j+1$. Then we have two cases depending on where $a=b$ or $a\neq b$ where $a=\lambda_i,b=\lambda_j$. If $a=b$ there is only one edge in $L_c$ which can be deleted to produce two subgraphs isomorphic to $L_a,L_b$. If $a\neq b$ there are two edges that can be deleted. Thus the number of possible edges $e$ that can be deleted from $G'$ to obtain an element of $\cS_n(\mu;\lambda)$ is $n_c''2X(a,b)$ where
    \[
    X(a,b)=\begin{cases}
        \frac12 &\text{ if } $a=b$\\
        1 &\text{ otherwise}.
    \end{cases}
    \]
\end{enumerate}

We obtain an equality from these two counts of the same set $|\widetilde\cS_n^p(\mu;\lambda)|$:
\begin{equation}\label{eq: recursive count of S(mu,l)}
    (k+1)|\cS_n(\mu;\lambda)|=\sum_{\lambda'}|\cS_n(\mu';\lambda')|+\sum_{\lambda''}n_c''2X(a,b)|\cS_n(\mu;\lambda'')|
\end{equation}
where the first sum is over all distinct $\lambda'$ equal to $\lambda$ with one term $a=\lambda_i$ deleted and $\mu'=\mu+a+1$ and the second sum is over all distinct $\lambda''$ obtained from $\lambda$ by deleting two parts $a=\lambda_i$ and $b=\lambda_j$ and adding the new part $c=a+b+1$. 

We summarize \eqref{eq: recursive count of S(mu,l)}: For each $\lambda'$, each element of $\cS_n(\mu';\lambda')$ gives one element of $\widetilde\cS_n^p(\mu;\lambda)$. For each $\lambda''$, each element of $\cS_n(\mu;\lambda'')$ gives $n_c''2X(a,b)$ elements of $\widetilde\cS_n^p(\mu;\lambda)$.

By counting the size of $\widetilde\cS_n(\mu;\lambda)$ in two ways we obtain another formula for $|\cS_n(\mu;\lambda)|$.
\begin{lem}\label{lem: second recursive count of S(mu,l)}
With the same notation as in \eqref{eq: recursive count of S(mu,l)} we have:
\[
n|\cS_n(\mu;\lambda)|=\sum_{\lambda'} (\mu+a+1)|\cS_n(\mu';\lambda')|+\sum_{\lambda''} n_c''(c+1)X(a,b) |\cS_n(\mu;\lambda'')|.
\]
\end{lem}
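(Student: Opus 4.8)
The plan is to compute $|\widetilde\cS_n(\mu;\lambda)|$ in a second way and match it to the asserted right-hand side. The left-hand side is immediate: by definition of $\widetilde\cS_n(\mu;\lambda)$ we have $|\widetilde\cS_n(\mu;\lambda)|=n|\cS_n(\mu;\lambda)|$. For the right-hand side I would reuse the merging setup behind \eqref{eq: recursive count of S(mu,l)} essentially verbatim: every pair $(G,e)$ will be sorted according to a canonically chosen deleted edge $f$ of $G$, and reinserting $f$ produces a merged graph $G'=G\cup\{f\}$ of exactly the two types occurring there --- either $f=e_{\mu+1}$, giving Case 1 with $G'\in\cS_n(\mu';\lambda')$ and $\mu'=\mu+a+1$, or $f$ a later deleted edge, giving Case 2 with $G'\in\cS_n(\mu;\lambda'')$ and $c=a+b+1$. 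The only difference from \eqref{eq: recursive count of S(mu,l)} is that the marked edge $e$ is now allowed to lie inside $G$, so each merged graph $G'$ will have a larger fiber.

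The rule I would use to choose $f$ from $(G,e)$ is the following ``left-merge'': if $e$ is itself a deleted edge, set $f=e$; if $e$ lies in $L_\mu$ or in the first piece (the one adjacent to $L_\mu$ across $e_{\mu+1}$), set $f=e_{\mu+1}$; and if $e$ lies in a non-first piece, let $f$ be the deleted edge immediately to the left of that piece. A short check shows that, for each fixed $G$, this partitions the $n$ edges of $L_n$ into the Case 1 edges --- namely $e_{\mu+1}$ together with the $\mu$ edges of $L_\mu$ and the $a$ edges of the first piece --- and the Case 2 edges --- the remaining $k$ deleted edges together with the edges lying in the non-first pieces. Thus the whole fiber structure is organized by $G'$ exactly as before.

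It then remains to count the fiber over each $G'$. In Case 1 the split at $e_{\mu+1}$ is forced, so each $G'\in\cS_n(\mu';\lambda')$ comes from a unique $G$, and $e$ ranges over the $\mu'=\mu+a+1$ edges of the initial block $L_{\mu'}$ of $G'$; this yields the first sum $\sum_{\lambda'}(\mu+a+1)|\cS_n(\mu';\lambda')|$. In Case 2, for a fixed size-$c$ component $L_c$ of $G'$ and a fixed unordered pair $\{a,b\}$ with $a+b+1=c$, I would count preimages by summing over the ordered ways of splitting $L_c$ into a left piece of size $p$ and a right piece of size $q$ with $\{p,q\}=\{a,b\}$ the number $1+q$, counting the split edge itself plus the $q$ edges of the right piece (these are precisely the edges the rule sends to this split). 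For $a\neq b$ both orders occur, giving $(1+a)+(1+b)=c+1$, and for $a=b$ only one order occurs, giving $1+a=(c+1)/2$; in either case the total is $(c+1)X(a,b)$. Multiplying by the $n_c''$ choices of component and summing over $\lambda''$ gives the second sum, and adding the two contributions completes the proof. The step needing the most care is this Case 2 bookkeeping: one must check that the left-merge rule ties the marked edge to a single split (so that splitting and marking are not counted independently, which would wrongly produce a factor like $2cX(a,b)$), and that the passage from ordered to unordered splits is absorbed exactly by $X(a,b)$.
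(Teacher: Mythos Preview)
Your proposal is correct and follows essentially the same approach as the paper: your ``left-merge'' rule is exactly the map $\varphi:\widetilde\cS_n(\mu;\lambda)\to\widetilde\cS_n^p(\mu;\lambda)$ constructed there, and your fiber counts in Cases 1 and 2 agree with the paper's case analysis (including the ordered-versus-unordered split bookkeeping absorbed by $X(a,b)$).
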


\begin{proof}
We construct a mapping $\varphi:\widetilde\cS_n(\mu;\lambda)\to \widetilde\cS_n^p(\mu;\lambda)$. Each term on the right hand side of \eqref{eq: recursive count of S(mu,l)} corresponds to an element of $\widetilde\cS_n^p(\mu;\lambda)$. If we multiply this with the size of its inverse image in $\widetilde\cS_n(\mu;\lambda)$ we will obtain the required formula for the size of $\widetilde\cS_n(\mu;\lambda)$. 

For any $G\in \cS_n(\mu;\lambda)$ let $G_0,G_1,\cdots,G_{k+1}$ be the components of $G$ in order and let $g_i\ge0$ be the number of edges in $G_i$. Thus, $g_0=\mu$ and the other $g_i$ are some permutation of the ${\lambda_j}$ in $\lambda$. Let $d_0,\cdots,d_k$ be the missing edges in order. Thus $d_i$ is the edge between $G_i$ and $G_{i+1}$. For any $(G,e)\in \widetilde\cS_n(\mu;\lambda)$, let $\varphi(G,e)$ be given as follows.
\begin{enumerate}
\item $\varphi(G,e)=(G,e)$ if $e\notin G$.
\item $\varphi(G,e)=(G,d_0)$ if $e\in G_0\cup G_1$. Thus $\varphi^{-1}(G,d_0)$ has $\mu+g_1+1$ elements.
\item $\varphi(G,e)=(G,d_i)$ if $e\in G_{i+1}$ and $i\ge2$. So, $|\varphi^{-1}(G,d_i)|=g_{i+1}+1$ if $i\ge2$.
\end{enumerate}

Now we count the number of elements of $\widetilde\cS_n(\mu;\lambda)$ corresponding to each element of the right hand side of \eqref{eq: recursive count of S(mu,l)}.
\begin{enumerate}
	\item Each element of $\cS_n(\mu';\lambda')$ gives an element $(G,d_0)\in \widetilde\cS_n^p(\mu;\lambda)$ with $g_0=\mu,g_1=a$. This has $\mu'=\mu+a+1$ inverse image points in $\widetilde\cS_n(\mu;\lambda)$.
	\item Take $G''\in\cS_n(\mu;\lambda'')$ where $a=b$ and $c=2a+1$. There are $n_c''$ components of $G''$ of size $c$ (excluding $G_0''$). In each such component, the middle edge $p$ is removed giving two new components of $G=G''\backslash p$ both of size $a$. Then $(G,p)$ is the corresponding element of $\widetilde \cS_n^p(\mu;\lambda)$ with $a+1$ inverse image points in $\widetilde \cS_n^p(\mu;\lambda)$. This gives
	\[
	(a+1)n_c''=n_c''(c+1)X(a,a)
	\]
elements of $\widetilde \cS_n(\mu;\lambda)$ since $X(a,a)=\frac12$ and $c+1=2a+2$.
	\item Take $G''\in\cS_n(\mu;\lambda'')$ where $a\neq b$ and $c=a+b+1$. There are $n_c''$ components of $G''$ of size $c$ (excluding $G_0''$). In each such component there are two edges, say $p,q$, which can be deleted to produce an element of $\cS_n(\mu;\lambda)$. In $G''\backslash p$, the $L_c$ component becomes $L_a\coprod L_b$ and in $G''\backslash q$, the $L_c$ becomes $L_b\coprod L_a$. In the first case the inverse image in $\widetilde\cS_n(\mu;\lambda)$ has $b+1$ elements, in the second case it has $a+1$ elements for a total of
	\[
		n_c''(a+1+b+1)=n_c''(c+1)X(a,b)
	\]
	inverse image points since $X(a,b)=1$.
\end{enumerate}
Adding these up gives the lemma.
\end{proof}

\subsection{Counting exceptional sequences of type $B_n$ or $C_n$}\label{ss: counting exc seq for Cn}

For $\mu\ge0$ and $\lambda=(\lambda_0,\cdots,\lambda_k)$ a nonnegative partition of $n-\mu-k-1$, let $\cN_n(\mu;\lambda)$ denote the set of all exceptional sequences $E_\ast=(E_{k+1},E_k,\cdots,E_1)$ for $B_n$ or $C_n$ whose perpendicular category $\cE=(E_{k+1}\oplus\cdots\oplus E_1)^\perp$ has type $B_\mu\times \prod A_{\lambda_i}$ and let $\cN_n^p(\mu;\lambda)$ be the subset of $\cN_n(\mu;\lambda)$ of exceptional sequences in which $E_{k+1}$ is relatively projective, i.e., a projective object of $\cE'=(E_k\oplus \cdots\oplus E_1)^\perp$. 

We will show that $|\cN_n^p(\mu;\lambda)|/|\cN_n(\mu;\lambda)|=\frac{k+1}n$ for all $\mu,\lambda$. For $k=0$, $\lambda=(\lambda_0)=(n-\mu-1)$ and $\cN_n(\mu;\lambda)$ consists of a single $\tau$-orbit, namely that of $P_{\mu+1}$, the $\mu+1$st projective object, and $\cN_n^p(\mu;\lambda)=\{P_{\mu+1}\}$ contains only that one element. Since every $\tau$-orbit contains $n$ elements including one projective object, for $k=0$ we have $|\cN_n(\mu;\lambda)|=n$ and $|\cN_n^p(\mu;\lambda)|=1$. This is the $k=0$ case of the following theorem.

\begin{thm}\label{thm: counting exc seq of type Bn/Cn}
    For $k\ge 0$, $\mu\ge0$ and $\lambda=(\lambda_0,\cdots,\lambda_k)$ a nonnegative partition of $n-\mu-k-1$ we have
    \[
|\cN_n(\mu,\lambda)|=n^{k}|\widetilde\cS_n(\mu,\lambda)|=n^{k+1}|\cS_n(\mu,\lambda)|=\frac{n^{k+1}(k+1)!}{\prod n_p!}
    \]
    \[
|\cN_n^p(\mu,\lambda)|=n^{k}|\widetilde\cS_n^p(\mu,\lambda)|=n^{k}(k+1)|\cS_n(\mu,\lambda)|=\frac{k+1}n|\cN_n(\mu,\lambda)|
    \]    
where $n_p$ is the number of parts $\lambda_i$ of $\lambda$ equal to $p$.
\end{thm}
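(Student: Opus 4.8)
The plan is to prove both displayed formulas simultaneously by induction on $k$, the case $k=0$ having been settled in the text preceding the statement. The inductive step rests on a ``prepending'' recursion for exceptional sequences that will be matched, term by term, against the recursion of Lemma~\ref{lem: second recursive count of S(mu,l)} and equation~\eqref{eq: recursive count of S(mu,l)}.

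First I would set up the recursion. Given $E_\ast=(E_{k+1},E_k,\cdots,E_1)\in\cN_n(\mu;\lambda)$, deleting the leftmost term leaves an exceptional sequence $(E_k,\cdots,E_1)$ whose perpendicular category $\cE'=(E_k\oplus\cdots\oplus E_1)^\perp$ satisfies $\cE=E_{k+1}^\perp\cap\cE'$, and as $(E_k,\cdots,E_1)$ is fixed, $E_{k+1}$ ranges over all exceptional objects of $\cE'$. Since $\cE$ has type $B_\mu\times\prod A_{\lambda_i}$ and $\cE'$ has one higher rank, the type of $\cE'$ is either $B_{\mu'}\times\prod A_{\lambda'_i}$ with $\mu'=\mu+a+1$ (a \emph{$B$-merge}, occurring when $E_{k+1}$ lies in the $C$-type factor of $\cE'$) or $B_\mu\times\prod A_{\lambda''_i}$ (an \emph{$A$-merge}, when $E_{k+1}$ lies in an $A$-type factor). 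Because a perpendicular category is again hereditary and, up to the relevant equivalence, is determined by its type, the number of $E_{k+1}$ yielding a given target depends only on the type of $\cE'$, not on the particular sequence; this both gives the recursion and underlies the independence assertion. Thus
\[
|\cN_n(\mu;\lambda)|=\sum_{\lambda'}c'\,|\cN_n(\mu';\lambda')|+\sum_{\lambda''}c''\,|\cN_n(\mu;\lambda'')|,
\]
with the analogous identity for $|\cN_n^p(\mu;\lambda)|$ obtained by restricting $E_{k+1}$ to be projective in $\cE'$, equivalently projective in the single factor it inhabits.

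The heart of the argument is computing these coefficients, which is representation theory of the individual factors. For a $B$-merge I would show that in type $C_m$ the $m^2$ exceptional objects $M_{ij}$ form $m$ $\tau$-orbits of size $m$, indexed by the class of $j-i$ modulo $m$, and that each perpendicular type $B_\mu\times A_a$ with $\mu+a=m-1$ is realized by exactly one orbit; this is checked on the representatives $M_{0j}=P_j$, whose perpendicular is $A_{j-1}\times C_{m-j}$. Hence $c'=m=\mu+a+1=\mu'$, and since each orbit contains exactly one projective, precisely one of these objects is relatively projective, contributing $1$ to the $\cN^p$ count. For an $A$-merge I would show that in type $A_c$ the exceptional objects $E$ with $E^\perp\cong A_a\times A_b$ (where $a+b=c-1$) number $(c+1)X(a,b)$, and that the projectives among them are exactly the $P_i$ with $\{i-1,c-i\}=\{a,b\}$, using $P_i^\perp\cong A_{i-1}\times A_{c-i}$, of which there are $2X(a,b)$. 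Multiplying by the factor $n_c''$ recording which of the equal $A_c$ factors is split gives $c''=n_c''(c+1)X(a,b)$ for $\cN$ and $n_c''\,2X(a,b)$ for $\cN^p$.

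With these coefficients the induction closes at once. Substituting the inductive hypothesis $|\cN_n(\mathrm{smaller})|=n^{k}|\cS_n(\mathrm{smaller})|$ into the two recursions and factoring out $n^{k}$, the $\cN$-recursion becomes exactly $n^{k}$ times Lemma~\ref{lem: second recursive count of S(mu,l)} (using $\mu'=\mu+a+1$), giving $|\cN_n(\mu;\lambda)|=n^{k+1}|\cS_n(\mu;\lambda)|$, while the $\cN^p$-recursion becomes $n^{k}$ times \eqref{eq: recursive count of S(mu,l)}, giving $|\cN_n^p(\mu;\lambda)|=n^{k}(k+1)|\cS_n(\mu;\lambda)|$; the remaining equalities then follow from $|\widetilde\cS_n(\mu;\lambda)|=n|\cS_n(\mu;\lambda)|$ and $|\widetilde\cS_n^p(\mu;\lambda)|=(k+1)|\cS_n(\mu;\lambda)|$. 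The main obstacle is the coefficient count, and in particular the clean fact that in type $C_m$ an entire $\tau$-orbit, not merely the single projective, realizes each perpendicular split: this is exactly what produces the factor $\mu'=\mu+a+1$ rather than $1$. Verifying this carefully, together with its type-$A$ analogue and the claim that all these counts are insensitive to the ambient algebra $\Lambda$, is where the real work lies.
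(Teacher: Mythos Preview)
Your proposal is correct and follows essentially the same approach as the paper: the paper packages your coefficient computation as a separate Lemma~\ref{lem: recursive formula for Nn(m,l)} (proved by the same case analysis of whether $E_{k+1}$ lies in the $B_{\mu'}$ factor or an $A_c$ factor, with the same $\tau$-orbit count $\mu'=\mu+a+1$ in the $B$-case and $(c+1)X(a,b)$ in the $A$-case, and one resp.\ $2X(a,b)$ projectives) and then runs the identical induction against Lemma~\ref{lem: second recursive count of S(mu,l)} and~\eqref{eq: recursive count of S(mu,l)}. Your framing of the $A_c$ count directly in terms of objects with prescribed perpendicular type rather than $\tau$-orbits is a cosmetic difference only.
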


This will follow almost immediately from the following lemma.

\begin{lem}\label{lem: recursive formula for Nn(m,l)} Using the notation in Equation \eqref{eq: recursive count of S(mu,l)} we have the following.
\[
|\cN_n(\mu;\lambda)|=\sum_{\lambda'} (\mu+a+1)|\cN_n(\mu';\lambda')|+\sum_{\lambda''} n_c''(c+1)X(a,b) |\cN_n(\mu;\lambda'')|.
\]
\[
|\cN_n^p(\mu;\lambda)|=\sum_{\lambda'} |\cN_n(\mu';\lambda')|+\sum_{\lambda''} n_c''2X(a,b) |\cN_n(\mu;\lambda'')|.
\]
\end{lem}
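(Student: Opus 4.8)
The plan is to prove both identities at once by studying the truncation map $\delta$ that deletes the first (leftmost) object $E_{k+1}$ from a sequence $(E_{k+1},E_k,\dots,E_1)\in\cN_n(\mu;\lambda)$, producing the length-$k$ exceptional sequence $(E_k,\dots,E_1)$. Writing $\cE'=(E_k\oplus\cdots\oplus E_1)^\perp$ and $\cE=(E_{k+1}\oplus\cdots\oplus E_1)^\perp$, the object $E_{k+1}$ is an exceptional object of $\cE'$ and $\cE=E_{k+1}^{\perp}$ taken inside $\cE'$. Since the factors of a product category are hom- and ext-orthogonal, the indecomposable $E_{k+1}$ lies in a single factor of $\cE'$, and $\cE$ is obtained from $\cE'$ by replacing that one factor with its internal perpendicular while leaving the others fixed. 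Thus $\delta$ carries $\cN_n(\mu;\lambda)$ into the union of the length-$k$ sets $\cN_n(\mu';\lambda')$ and $\cN_n(\mu;\lambda'')$ indexing the two sums, and both identities will follow by computing the size of each fiber of $\delta$ and summing: the first identity counting all admissible $E_{k+1}$, the second only those that are projective objects of $\cE'$, i.e. the relatively projective ones.

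The computation rests on the classification of the relevant perpendicular categories, which I would record first. The perpendicular category of an exceptional sequence for $B_n/C_n$ is always of the form $B_\mu\times\prod A_{\lambda_i}$ with a single (possibly trivial) type-$B$ factor carrying the long root, and the perpendicular of one further exceptional object either (i) splits the type-$B$ factor $B_{\mu'}$ as $B_\mu\times A_a$ with $\mu'=\mu+a+1$, or (ii) splits one type-$A$ factor $A_c$ as $A_a\times A_b$ with $c=a+b+1$. Case (i) is exactly the $\lambda'$ term and case (ii) the $\lambda''$ term of \eqref{eq: recursive count of S(mu,l)}, so the problem reduces to two purely local counts inside a single factor: (F1) how many exceptional objects $E$ of $B_{\mu'}$ have $E^\perp$ of type $B_\mu\times A_a$, and how many of these are projective; (F2) how many exceptional objects $E$ of $A_c$ have $E^\perp$ of type $A_a\times A_b$, and how many of these are projective. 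By Remark \ref{rem: about APR and orientation} these numbers are independent of orientation, so I may compute them in the straight orientation.

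For (F1) I would use the uniform Auslander--Reiten structure of type $C_{\mu'}$ visible in Figure \ref{fig: AR quiver for C4}: there are $(\mu')^2$ exceptional objects forming $\mu'$ $\tau$-orbits of size $\mu'$, each containing exactly one projective. The perpendicular type is constant along a $\tau$-orbit (as $\tau$ is a derived autoequivalence, $(\tau E)^\perp\simeq\tau(E^\perp)$), and the $\mu'$ projectives $P_1,\dots,P_{\mu'}$, with $P_j^\perp$ given by deleting vertex $j$, realize the $\mu'$ distinct types $B_{j-1}\times A_{\mu'-j}$ once each. Hence each type $B_\mu\times A_a$ is realized by exactly one orbit, giving $\mu'=\mu+a+1$ objects and a single projective among them, the coefficients $\mu+a+1$ and $1$. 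For (F2) I would use that the perpendicular of the interval module $M_{ij}$ in $A_c$ is $A_{j-i-1}\times A_{c-j+i}$; counting the pairs $(i,j)$ with $\{j-i-1,\,c-j+i\}=\{a,b\}$ yields $(c+1)X(a,b)$, while the projectives among them are the $2X(a,b)$ vertex-deletions that split $A_c$ into sizes $a$ and $b$. Multiplying by the number $n_c''$ of $A_c$-factors of $\cE'$ gives the $\lambda''$ coefficients $n_c''(c+1)X(a,b)$ and $n_c''\,2X(a,b)$.

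Since each fiber size depends only on the isomorphism type of $\cE'$, summing over all truncated sequences grouped by that type turns the fiber count into the stated linear combinations of $|\cN_n(\mu';\lambda')|$ and $|\cN_n(\mu;\lambda'')|$, matching Lemma \ref{lem: second recursive count of S(mu,l)} for the first identity and \eqref{eq: recursive count of S(mu,l)} for the second; no induction is needed beyond the structural classification. I expect the main obstacle to be precisely that classification --- checking that exactly one type-$B$ factor persists and that cases (i),(ii) exhaust the possibilities --- together with the claim that perpendicular type is constant on $\tau$-orbits, which must be argued through the derived category or the rotational description of the AR quiver rather than through an autoequivalence of $mod\text-\Lambda$, where $\tau$ is not defined on projectives. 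Once this local structure theory is in place, the type-$A$ count (F2) is elementary and the two projective counts reduce to vertex deletion, so the match with the subgraph recursions is immediate.
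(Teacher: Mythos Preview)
Your proposal is correct and follows essentially the same approach as the paper: both analyze the truncation $(E_{k+1},E_k,\dots,E_1)\mapsto(E_k,\dots,E_1)$, split into the two cases according to whether $E_{k+1}$ lies in the $B_{\mu'}$ factor or in an $A_c$ factor of $\cE'$, and count the admissible $E_{k+1}$ (resp.\ projective $E_{k+1}$) in each case via $\tau$-orbits indexed by projectives. Your presentation is slightly more explicit in isolating the structural inputs (constancy of perpendicular type along $\tau$-orbits, the vertex-deletion description of $P_j^\perp$), which the paper states more tersely, but the substance of the argument is the same.
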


\begin{proof}
Given an exceptional sequence $E_\ast=(E_{k+1},E_k,\cdots,E_1)\in \cN_n(\mu;\lambda)$ we consider the shorter exceptional sequence $E_\ast'=(E_k,\cdots,E_1)$. There are two disjoint possibilities. Either $E_\ast'\in \cN_n(\mu';\lambda')$ or $E_\ast'\in \cN_n(\mu;\lambda'')$. We will see that there are $\mu+a+1$ possibilities for $E_{k+1}$ in the first case, one of which is relatively projective, and $n_c''(c+1)X(a,b)$ possibilities for $E_{k+1}$ in the second case, $2n_c''X(a,b)$ of which are relatively projective. This will prove both formulas in the lemma.

In more detail, take the first case $E_\ast'\in \cN_n(\mu';\lambda')$. The perpendicular category of $E_\ast'$ has type $B_{\mu'}\times \prod A_{\lambda_j'}$ and $E_{k+1}$ lies in $B_{\mu'}$ where $\mu'=\mu+a+1$. Furthermore, $E_{k+1}$ must lie in the $\tau$ orbit of the $\mu+1$st projective object of $mod\text-B_{\mu'}$ in order for $E_\ast=(E_{k+1},E_\ast')$ to lie in $\cN_n(\mu;\lambda)$. There are $\mu'=\mu+a+1$ objects in that $\tau$ orbit, one of which is projective. This gives the coefficients of $|\cN_n(\mu';\lambda')|$ in the two sums.

In the second case $E_\ast'\in \cN_n(\mu;\lambda'')$ where $\lambda''$ is $\lambda$ with parts of size $a,b$ removed and a new part of size $c=a+b+1$ added. $E_{k+1}$ must lie in one of the $n_c''$ copies of $A_c$ which occur in the perpendicular category of $E_\ast'$. When $a=b$, $E_{k+1}$ must lie in the $\tau$ orbit of the middle projective $P_{a+1}$ and there are $(c+1)/2=(c+1)X(a,a)$ object in that $\tau$-orbit. One of these is projective (and $1=2X(a,a)$). When $a\neq b$, $E_{k+1}$ must lie in one of two $\tau$-orbits, that of $P_{a+1}$ or $P_{b+1}$. The union of these has $a+b+2=(c+1)X(a,b)$ number of element. Two of these  $E_{k+1}$ are projective (one in each $\tau$-orbit), making $2X(a,b)$ projective elements. In both subcases of Case 2 there are $n_c''(c+1)X(a,b)$ choices of $E_{k+1}$ and $2n_c''X(a,b)$ of these are (relatively) projective. This gives the coefficients in both $\lambda''$ summands which finishes the proof of the lemma.
\end{proof}

\begin{proof}[Proof of Theorem \ref{thm: counting exc seq of type Bn/Cn}]
    By induction on $k$ we have that $|\cN_n(\mu',\lambda')|=n^k|\cS_n(\mu',\lambda')|$ and $|\cN_n(\mu,\lambda'')|=n^k|\cS_n(\mu,\lambda'')|$. Inserting these into the right hand side in Lemma \ref{lem: recursive formula for Nn(m,l)} we obtain $n^{k+1}|\cS_n(\mu,\lambda)|$ in the first equation by Lemma \ref{lem: second recursive count of S(mu,l)} and $(k+1)n^k|\cS_n(\mu,\lambda)|$ in the second equation by \eqref{eq: recursive count of S(mu,l)}. This proves both statements in the theorem.
\end{proof}

Theorem \ref{thm: counting exc seq of type Bn/Cn} implies the following with $k$ replacing $k+1$. 

\begin{cor}\label{cor: independence of rel projectivity}
In a random exceptional sequence of $\ell$ for $B_n$ or $C_n$, the probability that $E_k$ for $k\le \ell$ is relatively projective is equal to $k/n$ and this events is independent of the isomorphism class of the perpendicular category $(E_k\oplus\cdots\oplus E_1)^\perp$.
\end{cor}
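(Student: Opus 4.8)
The plan is to reduce the statement about the interior position $E_k$ of a length-$\ell$ sequence to Theorem \ref{thm: counting exc seq of type Bn/Cn} applied to length-$k$ sequences, by fibering the length-$\ell$ sequences over their length-$k$ initial segments and letting the extension count cancel. The starting observation is that both quantities of interest --- whether $E_k$ is relatively projective, and the isomorphism type of the perpendicular category $\cD:=(E_k\oplus\cdots\oplus E_1)^\perp$ --- depend only on the initial segment $(E_k,\cdots,E_1)$ and not on the remaining terms $E_{k+1},\cdots,E_\ell$. Indeed, $E_k$ being relatively projective means it is a projective object of $(E_{k-1}\oplus\cdots\oplus E_1)^\perp$, a condition involving only $E_1,\cdots,E_k$, and $\cD$ is built from the same objects. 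Moreover, Theorem \ref{thm: counting exc seq of type Bn/Cn}, read with $k$ in place of $k+1$, asserts exactly that among length-$k$ exceptional sequences of a fixed perpendicular type $(\mu;\lambda)$ the fraction whose leftmost object $E_k$ is relatively projective equals $k/n$, with no dependence on $(\mu;\lambda)$; here $\cN_n(\mu;\lambda)$ and $\cN_n^p(\mu;\lambda)$ denote the corresponding length-$k$ sets.

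First I would make the fibering precise. Given an exceptional sequence $(E_\ell,\cdots,E_1)$, the vanishing conditions $\Hom(E_j,E_i)=0=\Ext(E_j,E_i)$ for $i\le k<j$ say precisely that $E_{k+1},\cdots,E_\ell$ lie in $\cD=(E_k\oplus\cdots\oplus E_1)^\perp$, while the remaining conditions say that $(E_\ell,\cdots,E_{k+1})$ is itself an exceptional sequence in $\cD$; conversely any length-$(\ell-k)$ exceptional sequence in $\cD$ extends the given initial segment. Hence the number of length-$\ell$ sequences with a prescribed initial segment equals the number $w_{\ell-k}(\mu;\lambda)$ of length-$(\ell-k)$ exceptional sequences in $\cD$, which depends only on the isomorphism class of $\cD$, that is, only on $(\mu;\lambda)$.

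With this in hand the computation is immediate. The fibering gives, for each type $(\mu;\lambda)$,
\[
\#\{\text{length-}\ell\text{ sequences of type }(\mu;\lambda)\}=w_{\ell-k}(\mu;\lambda)\,|\cN_n(\mu;\lambda)|,
\]
\[
\#\{\text{length-}\ell\text{ sequences of type }(\mu;\lambda)\text{ with }E_k\text{ rel.\ proj.}\}=w_{\ell-k}(\mu;\lambda)\,|\cN_n^p(\mu;\lambda)|.
\]
Dividing, the common factor $w_{\ell-k}(\mu;\lambda)$ cancels, so the probability that $E_k$ is relatively projective, conditioned on $\cD$ having type $(\mu;\lambda)$, equals $|\cN_n^p(\mu;\lambda)|/|\cN_n(\mu;\lambda)|=k/n$ by Theorem \ref{thm: counting exc seq of type Bn/Cn}. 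Since this value is the same for every type, the event ``$E_k$ relatively projective'' is independent of the isomorphism class of $\cD$, and summing over types (law of total probability) yields $\mathbb P(E_k\text{ rel proj})=k/n$ unconditionally.

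The only genuinely delicate point is the claim that the extension count $w_{\ell-k}(\mu;\lambda)$ depends only on the isomorphism type of $\cD$. This rests on the standard facts that the perpendicular category $\cD$ is an exact wide subcategory equivalent to $mod\text-\Lambda'$ for a modulated algebra $\Lambda'$ of type $B_\mu\times\prod A_{\lambda_i}$ determined up to equivalence by $(\mu;\lambda)$, and that the number of exceptional sequences of a given length is invariant under such equivalences. Once these are invoked, the cancellation above does all the remaining work; in particular, the independence of relative projectivity across different positions $j$ is exactly the assertion that the conditional ratio is insensitive to the perpendicular type, which is what we have established.
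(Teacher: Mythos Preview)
Your proof is correct and follows the same core idea as the paper: both reduce to Theorem \ref{thm: counting exc seq of type Bn/Cn}, which gives the constant ratio $|\cN_n^p(\mu;\lambda)|/|\cN_n(\mu;\lambda)|=k/n$ across all perpendicular types. The paper's proof is a one-line invocation of that theorem; you make explicit one step the paper leaves implicit, namely how to pass from length-$k$ sequences (which is what Theorem \ref{thm: counting exc seq of type Bn/Cn} literally counts) to length-$\ell$ sequences. Your fibering argument---that each initial segment has $w_{\ell-k}(\mu;\lambda)$ extensions depending only on the perpendicular type, so this factor cancels in the conditional probability---is the right way to close that gap, and it is genuinely needed since the marginal distribution on initial segments is not uniform.
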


\begin{proof}
    Each isomorphism class of perpendicular categories is given by a pair $(\mu,\lambda)$ and for each such pair the fraction of those $(E_k,\cdots,E_1)$ with that perpendicular category for which $E_k$ is relatively projective is $k/n$. So, the events are independent.
\end{proof}

\begin{cor}\label{cor: number of signed exc seq}
The events $D_k$ that $E_k$ is relatively projective are independent for distinct $k$.
\end{cor}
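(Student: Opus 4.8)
The plan is to prove a stronger \emph{conditional} statement and induct on the number of terms. For a random length-$\ell$ exceptional sequence write $\cE_j=(E_j\oplus\cdots\oplus E_1)^\perp$ and let $t_j$ denote its isomorphism type $(\mu,\lambda)$; thus $D_j$ is the event that $E_j$ is a projective object of $\cE_{j-1}$. I would prove, by induction on $m$, the assertion $(\star_m)$: \emph{conditioned on $t_m=(\mu,\lambda)$, the events $D_1,\dots,D_m$ are mutually independent with $\mathbb P(D_i)=i/n$, and this conditional joint law is the same product law for every achievable type $(\mu,\lambda)$.} Granting $(\star_\ell)$, the independence of $D_1,\dots,D_\ell$ is immediate from the elementary mixing principle that a random vector whose conditional law given $t_\ell$ is a fixed product measure (not depending on the value of $t_\ell$) has that same product measure as its unconditional law. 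The base case $(\star_1)$ is exactly Corollary~\ref{cor: independence of rel projectivity} with $k=1$: conditioned on the type of $(E_1)^\perp$, the single event $D_1$ has probability $1/n$ regardless of the type.

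For the inductive step $(\star_{m-1})\Rightarrow(\star_m)$ I work inside the fiber $\cN_n(\mu;\lambda)$ of length-$m$ sequences with $t_m=(\mu,\lambda)$ and condition further on the type $s$ of $\cE_{m-1}$, which by Lemma~\ref{lem: recursive formula for Nn(m,l)} is one of the finitely many types $(\mu',\lambda')$ or $(\mu,\lambda'')$ that can precede $(\mu,\lambda)$. Two facts drive the step. First, since the number of top terms $E_m$ extending a fixed bottom sequence $(E_{m-1},\dots,E_1)$ of type $s$ to type $(\mu,\lambda)$ depends only on the transition $s\to(\mu,\lambda)$ and not on the particular bottom sequence (this is precisely the count in the proof of Lemma~\ref{lem: recursive formula for Nn(m,l)}), conditioning on $t_{m-1}=s$ makes the bottom $m-1$ terms uniform over the length-$(m-1)$ fiber of type $s$; hence by $(\star_{m-1})$ the vector $(D_1,\dots,D_{m-1})$ has the universal product law and is independent of $s$. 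Second, by the same count the number of admissible $E_m$ and the number of those that are relatively projective depend only on $s\to(\mu,\lambda)$, so the conditional probability of $D_m$ given $t_{m-1}=s$ and the specific bottom sequence depends only on $s$; therefore $D_m$ is conditionally independent of $(D_1,\dots,D_{m-1})$ given $t_{m-1}$.

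Combining the two facts, the joint conditional law inside the fiber factors as
\[
\mathbb P\big(D_1=d_1,\dots,D_m=d_m \mid t_m\big)
=\Big(\prod_{i<m}\mathbb P(D_i=d_i)\Big)\,\mathbb P\big(D_m=d_m\mid t_m\big),
\]
where the last factor arises by summing $\mathbb P(D_m=d_m\mid t_{m-1}=s)$ against $\mathbb P(t_{m-1}=s\mid t_m)$ over the predecessor types $s$, and equals $m/n$ or $1-m/n$ \emph{independently of} $(\mu,\lambda)$ by Corollary~\ref{cor: independence of rel projectivity}. Since the factors $\mathbb P(D_i=d_i)$ for $i<m$ are the universal values supplied by $(\star_{m-1})$, this is the desired product law, the same for every $(\mu,\lambda)$; this establishes $(\star_m)$ and closes the induction.

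The step I expect to be the main obstacle is the second fact, and conceptually the reason a direct ``peel off the top term'' argument fails. Conditioned only on the lower terms $(E_{m-1},\dots,E_1)$, the probability that $E_m$ is relatively projective is $(\operatorname{rank}\cE_{m-1})/(\#\,\text{exceptional objects of }\cE_{m-1})=(n-m+1)/(\#\,\text{exc}\,\cE_{m-1})$, which genuinely varies with the type of $\cE_{m-1}$; so $D_m$ is \emph{not} independent of the full past, and naive conditioning on $\cE_{m-1}$ does not give a constant probability. The resolution — and the precise role of Corollary~\ref{cor: independence of rel projectivity} — is that one must condition on the type of $\cE_m$, the perpendicular category \emph{after} adjoining $E_m$, rather than on $\cE_{m-1}$: only then is the relative-projectivity probability the universal value $m/n$, and the type-dependence present at each intermediate stage washes out exactly upon averaging over the predecessor types, as recorded in the displayed factorization.
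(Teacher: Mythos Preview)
Your argument is correct and proves a slightly stronger conditional statement $(\star_m)$ than needed. It rests on the same two ingredients as the paper: the Markov-like fact that, once the bottom segment $(E_{m-1},\dots,E_1)$ is fixed, the number (and relatively-projective count) of admissible top terms depends only on the isomorphism type of the perpendicular category, together with Corollary~\ref{cor: independence of rel projectivity}.

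The paper's own proof is a one-sentence version that runs the induction in the opposite direction. Instead of conditioning on $t_m$ and inducting upward, it peels off $D_k$ from the bottom: given $(E_k,\dots,E_1)$, the conditional law of $(D_{k+1},\dots,D_\ell)$ factors through the isomorphism type $T_k$ of $(E_k\oplus\cdots\oplus E_1)^\perp$ (since the remaining terms form a uniform exceptional sequence in a category of that type); and by Corollary~\ref{cor: independence of rel projectivity}, $D_k$ is independent of $T_k$. Hence $D_k$ is independent of $(D_{k+1},\dots,D_\ell)$, and iterating over $k$ gives mutual independence. This is shorter because it avoids the extra layer of conditioning on $t_{m-1}$ inside the fiber of $t_m$; your approach, on the other hand, yields the refined statement that the joint law is the same product measure on every fiber $\cN_n(\mu;\lambda)$, which is a genuine strengthening. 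Your final paragraph correctly identifies why conditioning on $\cE_{m-1}$ alone fails, and this diagnostic is exactly why both arguments must route through the type of $\cE_k$ (for appropriate $k$) rather than through $\cE_{k-1}$.
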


\begin{proof}
    For $j>k$, the event $D_j$ depends only of the isomorphism class of the perpendicular category $(E_k\oplus\cdots\oplus E_1)^\perp$ which is independent of $D_k$.
\end{proof}

Corollaries \ref{cor: independence of rel projectivity} and \ref{cor: number of signed exc seq} give Theorem \ref{thm E}. The next corollary completes the proof of Theorem \ref{thm A}.

\begin{cor}\label{cor: no of exc seq for Bn/Cn}
The number of exceptional sequences of length $k$ for $B_n$ or $C_n$ is $n^k\binom nk$.
\end{cor}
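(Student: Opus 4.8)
The plan is to partition the set of all length-$k$ exceptional sequences for $B_n$ or $C_n$ according to the isomorphism type of their perpendicular category and then sum the counts supplied by Theorem \ref{thm: counting exc seq of type Bn/Cn}. Every length-$k$ exceptional sequence $(E_k,\cdots,E_1)$ has perpendicular category $(E_k\oplus\cdots\oplus E_1)^\perp$ of type $B_\mu\times\prod_i A_{\lambda_i}$ for a unique $\mu\ge0$ and a nonnegative partition $\lambda=(\lambda_0\le\cdots\le\lambda_{k-1})$ of $n-\mu-k$ into $k$ parts; the rank count $\mu+\sum_i\lambda_i=n-k$ forces this bookkeeping. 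Thus the quantity we seek is $\sum_{\mu,\lambda}|\cN_n(\mu;\lambda)|$, the sum ranging over all admissible types. Applying Theorem \ref{thm: counting exc seq of type Bn/Cn} with $k$ in place of $k+1$ gives $|\cN_n(\mu;\lambda)|=n^k\,k!/\prod_p n_p!$, so the total equals
\[
n^k\sum_{\mu\ge0}\sum_{\lambda}\frac{k!}{\prod_p n_p!},
\]
with the inner sum over partitions $\lambda$ of $n-\mu-k$ into $k$ nonnegative parts and $n_p$ the number of parts equal to $p$.

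The next step is to recognize $k!/\prod_p n_p!$ as the number of distinct orderings of the multiset $\lambda$, i.e.\ the number of compositions of $n-\mu-k$ into $k$ ordered nonnegative parts that rearrange to $\lambda$ (zero parts included). Summing this multinomial over all partitions of $m:=n-\mu-k$ therefore counts all such compositions, which by stars-and-bars is $\binom{m+k-1}{k-1}=\binom{n-\mu-1}{k-1}$. This collapses the double sum to
\[
n^k\sum_{\mu=0}^{n-k}\binom{n-\mu-1}{k-1}.
\]

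Finally, reindexing by $j=n-\mu-1$, so that $j$ runs from $k-1$ up to $n-1$, and invoking the hockey-stick identity $\sum_{j=k-1}^{n-1}\binom{j}{k-1}=\binom nk$, yields the claimed value $n^k\binom nk$. No single step is a genuine obstacle here, since the heavy lifting was done in Theorem \ref{thm: counting exc seq of type Bn/Cn}; the only care required is in the index shift from length $k+1$ to length $k$ and in confirming that the admissible range is exactly $0\le\mu\le n-k$, which is precisely the support of both the stars-and-bars count and the hockey-stick summation.
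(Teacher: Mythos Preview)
Your argument is correct. The paper states this corollary without proof, leaving it as an immediate consequence of Theorem \ref{thm: counting exc seq of type Bn/Cn}; your derivation via stars-and-bars and the hockey-stick identity is a valid way to carry out that summation.

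There is, however, a shorter route that the paper's notation is set up to suggest. Theorem \ref{thm: counting exc seq of type Bn/Cn} gives (after the index shift you describe) $|\cN_n(\mu;\lambda)|=n^k|\cS_n(\mu;\lambda)|$, so the total number of length-$k$ exceptional sequences is $n^k\sum_{\mu,\lambda}|\cS_n(\mu;\lambda)|$. But by the definition in subsection \ref{ss: counting subgraphs of linear graphs}, as $(\mu,\lambda)$ ranges over all admissible types the sets $\cS_n(\mu;\lambda)$ partition the collection of all subgraphs of $L_n$ obtained by deleting exactly $k$ of its $n$ edges. Hence $\sum_{\mu,\lambda}|\cS_n(\mu;\lambda)|=\binom nk$ on the nose, and the result follows without invoking stars-and-bars or the hockey-stick identity. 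Your two combinatorial identities are precisely what one obtains by grouping these edge-deletions first by $\mu$ (the position of the leftmost deleted edge) and then summing; so the approaches agree, but recognizing the sum as counting $k$-element edge-subsets directly avoids the intermediate manipulations.
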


\begin{cor}\label{cor: no of signed exc seq for Bn/Cn}
The number of signed exceptional sequences of length $k$ for $B_n$ or $C_n$ is 
\[
(n+1)\cdots(n+k)\binom nk= \frac{(n+k)!}{k!(n-k)!}.
\]
\end{cor}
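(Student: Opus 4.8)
The plan is to count signed exceptional sequences by stratifying over ordinary exceptional sequences and weighting each by its number of admissible sign patterns. Recall that a signed exceptional sequence of length $k$ consists of an exceptional sequence $(E_1,\dots,E_k)$ together with a sign for each term, where a term may be shifted exactly when it is relatively projective in the sequence; since relative projectivity of a term is independent of the order of the sequence by \cite[Theorem 4.3]{IM}, each relatively projective term offers two choices and every other term offers one. Writing $r(E_\ast)$ for the number of relatively projective terms of an exceptional sequence $E_\ast$, the number of signed exceptional sequences of length $k$ is therefore
\[
\sum_{E_\ast}2^{r(E_\ast)},
\]
the sum ranging over all length $k$ exceptional sequences for $B_n$ or $C_n$.

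First I would convert this sum into an expectation. By Corollary \ref{cor: no of exc seq for Bn/Cn} there are $n^k\binom nk$ such sequences, so placing the uniform distribution on them gives
\[
\sum_{E_\ast}2^{r(E_\ast)}=n^k\binom nk\,\mathbb E\bigl[2^{r}\bigr].
\]
Letting $D_j$ be the indicator of the event that $E_j$ is relatively projective, we have $r=\sum_{j=1}^k D_j$ and hence $2^{r}=\prod_{j=1}^k 2^{D_j}$. The key input is Corollary \ref{cor: number of signed exc seq}, which asserts that the events $D_j$ are independent; this lets the expectation of the product factor as a product of expectations,
\[
\mathbb E\bigl[2^{r}\bigr]=\prod_{j=1}^k\mathbb E\bigl[2^{D_j}\bigr]=\prod_{j=1}^k\bigl(1+\mathbb P(E_j\text{ is rel proj})\bigr),
\]
using $\mathbb E[2^{D_j}]=2\,\mathbb P(D_j=1)+\mathbb P(D_j=0)=1+\mathbb P(D_j=1)$.

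Finally I would insert the probabilities from Corollary \ref{cor: independence of rel projectivity} (equivalently Theorem \ref{thm E}), namely $\mathbb P(E_j\text{ is rel proj})=j/n$, and evaluate the telescoping product
\[
\prod_{j=1}^k\Bigl(1+\frac jn\Bigr)=\prod_{j=1}^k\frac{n+j}{n}=\frac{(n+1)(n+2)\cdots(n+k)}{n^{k}}.
\]
Multiplying by $n^k\binom nk$ cancels the factor $n^k$ and produces $\binom nk(n+1)\cdots(n+k)=\frac{(n+k)!}{k!(n-k)!}$, which is the asserted count.

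I do not expect a genuine obstacle here: the substantive work is already contained in Theorem \ref{thm: counting exc seq of type Bn/Cn} and its corollaries. The only point requiring care is the factorization of $\mathbb E\bigl[2^{r}\bigr]$, which is legitimate solely because of the independence of the $D_j$ supplied by Corollary \ref{cor: number of signed exc seq}; without independence one could only bound, rather than evaluate, the weighted sum. Everything else is elementary algebra.
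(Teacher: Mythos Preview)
Your proof is correct and follows essentially the same approach as the paper: the paper's equation \eqref{eq: number of length k signed exc seq} in the introduction computes the count as $n^k\binom nk\prod(1+\mathbb P(E_j\text{ is rel proj}))$, which is exactly your weighted sum after factoring via independence, and Corollary \ref{cor: generating function for rel proj Bn,Cn} packages the same computation as a generating function evaluated at $z_i=2$. One minor remark: the citation to \cite[Theorem 4.3]{IM} about order-independence of relative projectivity is unnecessary here, since you are counting \emph{ordered} signed sequences and the sign rule is already well-defined term by term.
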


Since signed exceptional sequences are in bijection with ordered cluster we have:

\begin{cor}\label{cor: no of clusters for Bn/Cn}
The number of partial cluster of size $k$ for $B_n$ or $C_n$ is 
\[
\frac{(n+k)!}{k!k!(n-k)!}.
\]
\end{cor}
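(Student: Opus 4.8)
The plan is to pass from signed exceptional sequences to unordered partial clusters by dividing out the orderings. By Corollary \ref{cor: no of signed exc seq for Bn/Cn}, the number of signed exceptional sequences of length $k$ for $B_n$ or $C_n$ equals $\frac{(n+k)!}{k!(n-k)!}$. By Theorem \ref{thm: cluster morphism bijection} (applied with $\cA=mod\text-\Lambda$ for $\Lambda$ of type $B_n$ or $C_n$), these signed exceptional sequences are in bijection with \emph{ordered} partial cluster tilting sets of size $k$ in $\cC_\cA$. So it remains only to count ordered partial clusters in terms of unordered ones.

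First I would recall the definition from Section \ref{ss: proof of Lemma E}: a partial cluster tilting set $T=\{T_1,\cdots,T_k\}$ is simply a set of $k$ pairwise ext-orthogonal objects of $\cC_\cA$. The crucial observation is that the defining condition is symmetric in the $T_i$; it depends only on the underlying set and not on any chosen ordering. Consequently every one of the $k!$ orderings of a given partial cluster of size $k$ is again a valid ordered partial cluster tilting set, and distinct orderings produce distinct sequences because the $k$ elements of a set are pairwise distinct. Hence the forgetful map from ordered partial clusters to (unordered) partial clusters is exactly $k!$-to-$1$.

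Combining the two steps, the number of partial clusters of size $k$ is
\[
\frac1{k!}\cdot\frac{(n+k)!}{k!(n-k)!}=\frac{(n+k)!}{k!\,k!\,(n-k)!},
\]
as claimed. The only point requiring care is the $k!$-to-$1$ claim, namely that ext-orthogonality is genuinely order-independent so that all $k!$ orderings are admissible; since this is immediate from the definition, I do not expect any real obstacle here.
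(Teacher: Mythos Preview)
Your proof is correct and follows exactly the paper's approach: the paper simply remarks ``Since signed exceptional sequences are in bijection with ordered cluster we have'' and divides the count from Corollary~\ref{cor: no of signed exc seq for Bn/Cn} by $k!$. Your additional justification that ext-orthogonality is symmetric (so the forgetful map is $k!$-to-$1$) makes explicit what the paper leaves implicit.
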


A \emph{partial cluster} of size $k$ is a rigid object in the cluster category having $k$ nonisomorphic indecomposable summands. For example, when $k=1$, this number is $n(n+1)=n^2+n$ which counts the $n^2$ indecomposable modules and the $n$ shifted projective modules $P_i[1]$. For $n=k=3$ we have $3^3=27$ exceptional sequence and $4\cdot 5\cdot 6= 120$ signed exceptional sequences giving $20$ clusters for $B_3$ and $C_3$

In terms of generating functions, the distribution of relative projectives in an exceptional sequence for $B_n$ or $C_n$ is given as follows.

For any hereditary algebra $\Lambda$, let $f_{\Lambda,k}(z_k,\cdots,z_1)$ be the $k$-variable generating function 
\[
	f_{\Lambda,k}(z_k,\cdots,z_1)=\sum_\beta a_\beta z^\beta
\]
where the sum is over all multi-indices $\beta=(b_k,\cdots,b_1)\in \{0,1\}^k$ and $a_\beta$ is the number of exceptional sequences $(E_k,\cdots, E_1)$ for $\Lambda$ for which $E_i$ is relatively projective when $b_i=1$ and not relatively projective for $b_i=0$.

\begin{cor}\label{cor: generating function for rel proj Bn,Cn}
The $k$-variable generating function for $\Lambda=B_n$ or $C_n$ is
\[
	f_{\Lambda,k}(z_k,\cdots,z_1)=\binom nk\prod_{i=1}^k (n-i+iz_i)
\]
\end{cor}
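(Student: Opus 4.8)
The plan is to assemble the generating function directly from the probabilistic data already extracted in Corollaries \ref{cor: independence of rel projectivity}, \ref{cor: number of signed exc seq}, and \ref{cor: no of exc seq for Bn/Cn}, with no further representation theory required. First I would fix the bookkeeping so that the indices match the definition of $f_{\Lambda,k}$: here an exceptional sequence is written $(E_k,\cdots,E_1)$ and ``$E_i$ relatively projective'' means $E_i$ is a projective object of the right perpendicular category $(E_{i-1}\oplus\cdots\oplus E_1)^\perp$. With this convention, Corollary \ref{cor: independence of rel projectivity} gives $\PP(E_i\text{ is rel proj})=i/n$, Corollary \ref{cor: number of signed exc seq} gives that the events $D_i=\{E_i\text{ is rel proj}\}$ are mutually independent, and Corollary \ref{cor: no of exc seq for Bn/Cn} gives that there are exactly $n^k\binom nk$ exceptional sequences of length $k$.

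Next I would compute the coefficient $a_\beta$ for a fixed profile $\beta=(b_k,\cdots,b_1)\in\{0,1\}^k$. Because the $D_i$ are independent with $\PP(D_i)=i/n$ and $\PP(D_i^c)=(n-i)/n$, the fraction of exceptional sequences whose relative-projectivity profile is exactly $\beta$ equals $\prod_{b_i=1}(i/n)\prod_{b_i=0}\big((n-i)/n\big)$. Multiplying by the total count yields
\[
	a_\beta=n^k\binom nk\prod_{i\,:\,b_i=1}\frac in\prod_{i\,:\,b_i=0}\frac{n-i}n.
\]

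Finally I would substitute this into $f_{\Lambda,k}=\sum_\beta a_\beta z^\beta$, where $z^\beta=\prod_i z_i^{b_i}$, and use the standard fact that a sum over $\{0,1\}^k$ of a product of per-coordinate factors factors as a product of two-term sums. This gives
\[
	f_{\Lambda,k}(z_k,\cdots,z_1)=n^k\binom nk\prod_{i=1}^k\left(\frac in\,z_i+\frac{n-i}n\right)=\binom nk\prod_{i=1}^k\big(n-i+iz_i\big),
\]
where the last equality pulls a factor $1/n$ out of each of the $k$ terms, cancelling the leading $n^k$. This is exactly the claimed formula.

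I do not expect a genuine obstacle: the statement is a packaging of Theorem \ref{thm E} into generating-function form, and the only point requiring care is the index-matching in the first paragraph (ensuring the $i/n$ of Corollary \ref{cor: independence of rel projectivity} is attached to the variable $z_i$ of the same index) together with the explicit invocation of independence to justify the factorization of $\sum_\beta a_\beta z^\beta$ into a product. Everything else is the routine algebra displayed above.
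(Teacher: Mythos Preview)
Your proposal is correct and follows essentially the same approach as the paper: both use independence (Corollary \ref{cor: number of signed exc seq}), the probabilities $\PP(D_i)=i/n$ (Corollary \ref{cor: independence of rel projectivity}), and the total count $n^k\binom nk$ (Corollary \ref{cor: no of exc seq for Bn/Cn}) to factor the generating function as a product of Bernoulli terms. The paper writes each factor as $1+(z_i-1)\tfrac{i}{n}$ while you write it as $\tfrac{i}{n}z_i+\tfrac{n-i}{n}$, which is the same expression.
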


\begin{proof}
Since the probability of $E_i$ being relatively projective are independent of each other by Corollary \ref{cor: number of signed exc seq}, the generating function is given by multiplying the number of exceptional sequences by the product of the terms
\[
	1+(z_i-1)\PP(\text{$E_i$ is relatively projective})
\]
Since $\PP(\text{$E_i$ is relatively projective})=\frac in$ by Corollary \ref{cor: independence of rel projectivity} and the number of exceptional sequences is $n^k\binom nk$, the generating function is:
\[
	n^k\binom nk\prod_{i=1}^k \left(
	1+(z_i-1)\frac in
	\right)=\binom nk \prod_{i=1}^k(n+(z_i-1)i)
\]
as claimed.
\end{proof}

Corollary \ref{cor: no of exc seq for Bn/Cn} also follows from this by plugging in $z_i=2$ for all $i$.

\section*{Acknowledgements}
The authors would like to thank Gordana Todorov for her help and encouragement during the collaboration which lead to this paper. Also, the first author thanks Aslak Bakke Buan for discussions comparing cluster tubes and abelian tubes many years ago. He also thanks Bin Zhu for more recent discussions about cluster algebras and cluster tubes \cite{ZZ}. The first author also thanks Shujian Chen for very helpful discussions about chord diagrams. The first author is supported by Simons Foundation Grant \#686616.

We are thankful to the organizers of the 33rd meeting of Representation Theory of Algebras and Related Topics at University of Sherbrooke 2023, where these results were presented.


\begin{thebibliography}{9}


\bibitem{AI} Apruzzese, P. J., and Igusa, K. \emph{Stability conditions for affine type A.} Algebras and representation theory 23 (2020): 2079--2111. 

\bibitem{Araya} Araya, T., \emph{Exceptional sequences over path algebras of type $A_n$ and non-crossing spanning trees.} Algebras and Representation Theory 16 (2013): 239--250. 

\bibitem{Ath} Athanasiadis, C. A., \emph{On noncrossing and nonnesting partitions for classical reflection groups.} the electronic journal of combinatorics 5.1 (1998): R42.  

\bibitem{APR} Auslander, M., Platzeck, M.I., and Reiten, I., \emph{Coxeter functors without diagrams.} Transactions of the American Mathematical Society 250 (1979): 1--46. 


\bibitem{BGMS} Barnard, E., Gunawan, E., Meehan, E., and Schiffler, R., \emph{Cambrian combinatorics on quiver representations (type An)}, Advances in Applied Mathematics 143 (2023): 102428. 

\bibitem{BMV} Buan, A.B., Marsh, B., and Vatne D.F., \emph{Cluster structures from 2-Calabi-Yau categories with loops.} Mathematische Zeitschrift 265 (2010): 951--970. 

\bibitem{CI} Chen, S., and Igusa, K. (2024). \emph{Bijection between positive clusters and projectively signed exceptional sequences. Communications in Algebra}, 1--14. https://doi.org/10.1080/00927872.2024.2429739

\bibitem{BM} Buan, A.B., and Marsh, B., \emph{$\tau$-exceptional sequences.} Journal of Algebra 585 (2021): 36--68. 


\bibitem{Crawley-Boevey} Crawley-Boevey, W. (1993). \textit{Exceptional sequences of representations of quivers}. Representations of algebras (Ottawa, ON, 1992), 14, 117--124.



\bibitem{Garside} Garside, F. A., \emph{The braid group and other groups}, The Quarterly Journal of Mathematics, Volume 20, Issue 1, 1969, Pages 235--254. 


\bibitem{GIMO}  Garver, A., Igusa, K., Matherne, J.P., and Ostroff, J., \textit{Combinatorics of exceptional sequences in type A.} The Electronic Journal of Combinatorics, Vol 26, Issue 1 (2019) P1.20.



\bibitem{IM} Igusa, K., and Maresca. R., \emph{On Clusters and Exceptional Sets in Types $\mathbb {A} $ and $\tilde {\mathbb {A}} $.} arXiv:2209.12326, to appear in JAA. 



\bibitem{Ig-Sen} Igusa, K. and Sen, E., \textit{Exceptional sequences and rooted labeled forests}. Journal of Algebra and Its Applications 28 Mar 2024. 


\bibitem{IT13} Igusa, K. and Todorov, G., \textit{Signed exceptional sequences and the cluster morphism category}, arXiv: 1706.2222.


\bibitem{I:prob} Igusa, K. \emph{Probability distribution for exceptional sequences of type $A_n$.} arXiv:2112.04996. 


\bibitem{Ringel2} Obaid, M. A., Nauman, S. K., Shammakh, W. S. A., Fakieh, W. M., and Ringel, C. M. (2013). \textit{The number of complete exceptional sequences for a Dynkin algebra}. Colloquium Mathematicum 133 (2013), 197--210.

\bibitem{Reading} Reading, N.  \textit{Clusters, Coxeter-sortable elements and noncrossing partitions}, Transactions of the American Mathematical Society 359.12 (2007): 5931--5958. 

\bibitem{RingelExcSeq} Ringel, C. M. (1994). \textit{The braid group action on the set of exceptional sequences of a hereditary Artin algebra}. Contemporary Mathematics, 171, 339--352.



\bibitem{Sen2} Sen, E., \textit{Weak exceptional sequences}. {Quaestiones Mathematicae}, 44, no. 9 (2021) 1155--1171. 

\bibitem{ZZ} Zhou, Y., and Zhu, B., \textit{Cluster algebras arising from cluster tubes.} Journal of the London Mathematical Society 89.3 (2014): 703--723. 

\end{thebibliography}
\end{document}